\tikzset{node distance=2cm, auto}
\newtheorem{theorem}{Theorem}[section]
\newtheorem{lemma}[theorem]{Lemma}
\newtheorem{proposition}[theorem]{Proposition}
\newtheorem{corollary}[theorem]{Corollary}
\theoremstyle{definition}
\newtheorem{example}[theorem]{Example}
\newtheorem{definition}[theorem]{Definition}
\newtheorem{remark}[theorem]{Remark} 
\begin{document}

\title[Homogeneous bands	]%
{Homogeneous bands}



\author{Thomas Quinn-Gregson}
\email{tdqg500@york.ac.uk}
\address{Department of Mathematics\\University
  of York\\York YO10 5DD\\UK}

\subjclass[2010]{Primary  20M19 ; Secondary 03C10 }



\keywords{Homogeneous, bands, strong amalgamation}

\thanks{This work forms part of my Phd at the University of York, supervised by Professor Victoria Gould, and funded by EPSRC}


\begin{abstract} 
A band $B$ is a semigroup such that $e^2=e$ for all $e\in B$. A countable band is called homogeneous if every isomorphism between finitely generated subbands extends to an automorphism of the band. In this paper, we give a complete classification of all the homogeneous bands. We prove that a homogeneous band belongs to the variety of regular bands, and has a  homogeneous structure semilattice. 
\end{abstract}


\maketitle

\section{Introduction}\label{sec:intro}

A \textit{structure} is a set $M$ together with a collection of finitary operations and relations defined on $M$. A countable structure $M$ is \textit{homogeneous} if any isomorphism between finitely generated (f.g.) substructures extends to an automorphism of $M$. Much of the model theoretic interest in homogeneous structures has been due to their strong connections to $\aleph_0$-categoricity and quantifier elimination in the context of \textit{uniformly locally finite} (ULF) structures (see, for example, \cite[Theorem 6.4.1]{Hodges97}). A structure $M$ is  ULF if there exists a function $f:\mathbb{N}\rightarrow \mathbb{N}$ such that for every substructure $N$ of $M$, if $N$ has a generating set of cardinality at most $n$, then $N$ has cardinality at most $f(n)$. If a ULF structure $M$ has finitely many operations and relations, then the property of homogeneity is equivalent to $M$ being both $\aleph_0$-categorical and having quantifier elimination. It is worth noting that any structure $M$ with no operations (known as \textit{combinatorial}) is ULF, since any subset $X$ is a substructure under the restriction of the relations on $M$ to $X$. 
  
 Consequently, homogeneity of mainly combinatorial structures has been studied by several authors, and complete classifications have been obtained for a number of structures including graphs in \cite{Lachlan80}, partially ordered sets in \cite{Schmerl79} and (lower) semilattices in \cite{Droste85} and \cite{Truss99}. There has also been significant progress in the classification of homogeneous groups and rings (see, for example, \cite{Cherlin91}, \cite{Cherlin93} and \cite{Saracino84}), and a complete classification of homogeneous finite groups is known (\cite{Cherlin2000}, \cite{Li99}). However, there are a number of fundamental algebraic structures which have yet to be explored from the point of view of homogeneity: in particular semigroups.
 
In this paper we consider the homogeneity of bands, where a band is a semigroup consisting entirely of idempotents. Our main result is to give a complete description of homogeneous bands. We show that every homogeneous band lies in the variety of  \textit{regular} bands and examine how our results fit in with known classifications, in particular showing that the structure semilattice of a homogeneous band is itself homogeneous. We may thus think of classification of homogeneous bands as an extension of the classification of homogeneous semilattices. All structures will be assumed to be countable.  

It follows from the work in \cite{Lean54} that bands are ULF, and so we need only to consider isomorphisms between finite subbands. Moreover, as each subsemigroup of a band is a band, we shall write `homogeneous bands' to mean homogeneous as a semigroup, without ambiguity. 

In Section 2 we outline the basic theory of bands, and a number of varieties of bands are described. All bands are shown to be built from a semilattice and a collection of simple bands, and the homogeneity of these two building blocks of all bands are completely described. In Section 3 a stronger form of homogeneity is considered and used to obtain a number of results on the homogeneity of regular bands. The structure of a general homogeneous band is also considered, and the results are used in Section 4 to obtain a description of all homogeneous normal bands. In Section 5 we consider the homogeneity of linearly ordered bands, that is, bands with structure semilattice a chain, and these are fully classified. Finally, in Section 6 all other bands are studied and are shown to yield no new homogeneous bands.

\section{The theory of bands}\label{sec:theory}

Much of the early work on bands was to determine their lattice of varieties:  a feat that was independently completed by Biryukov, Fennemore and Gerhard. In addition, Fennemore (\cite{Fennemore70}) determined all identities on bands, showing that every variety of bands can be defined by a single identity. The lower part of the lattice of varieties of bands contains the following vital varieties for this paper (see Figure \ref{lattice bands}): 
\begin{align*}
& \mathcal{LZ}:\text{left zero bands: } xy=x \text{ and }   \mathcal{RZ}:\text{right zero bands: } xy=y,\\
& \mathcal{RB}=\mathcal{LZ}\lor \mathcal{RZ}: \text{rectangular bands: } xyx=x, \\
& \mathcal{SL}: \text{semilattices: } xy=yx, \\
& \mathcal{LN}:\text{left normal bands: } xyz=xzy \text{ and } \mathcal{RN}: \text{right normal bands: } xyz=yxz, \\
& \mathcal{N}=\mathcal{LN}\lor \mathcal{RN}: \text{normal bands: } xyzx=xzyx, \\
& \mathcal{LG}: \text{left regular bands: } xyx = xy \text{ and } \mathcal{RG}: \text{right regular bands: } xyx = yx, \\
& \mathcal{G}=\mathcal{LG}\lor \mathcal{RG}: \text{regular bands: } zxzyz = zxyz,
\end{align*}
where the given relation characterizes the variety in the variety of bands. 

\begin{figure}
\begin{center}
\includegraphics[page=1,scale=0.6]{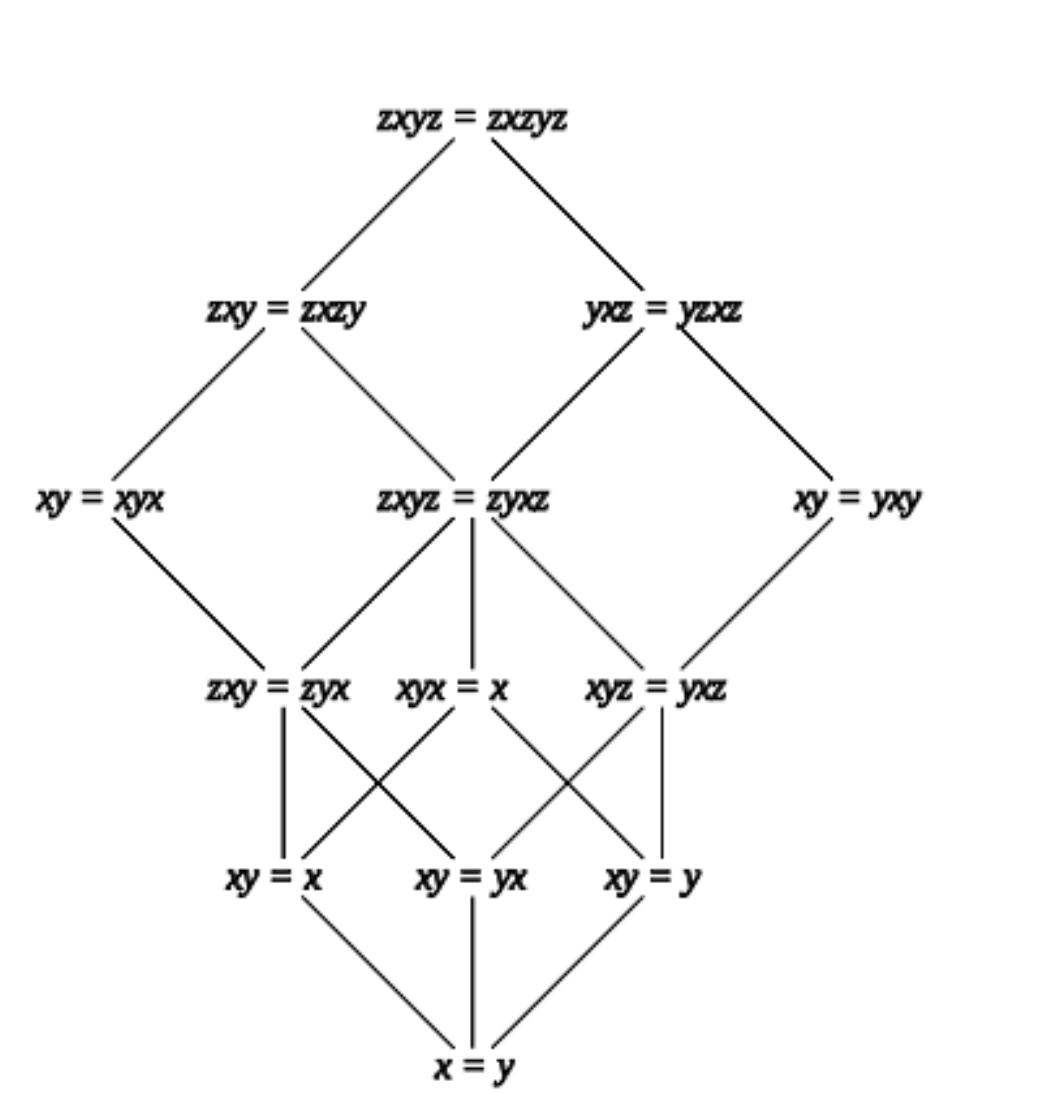}
\captionof{figure}{Lower part of the lattice of varieties of bands}
\label{lattice bands} 
\end{center}
\end{figure}
We shall proceed to give alternative descriptions of these varieties. We first consider the ideal structure on a band via its \textit{Green's relations}. Define a pair of quasi orders $\leq_r$ and $\leq_l$ on a band $B$ by 
\[   e \, \leq_r \, f \Leftrightarrow fe=e, \quad  e \, \leq_l \, f \Leftrightarrow ef=e.
\]
The relations $\leq_r$ and $\leq_l$ are called the \textit{Greens right and left quasi orders}, respectively. Note that $\leq_r$ is \textit{left compatible with multiplication on $B$}, that is, if $e\leq_r f$ and $g\in B$ then $ge \leq_r gf$; dually for $\leq_l$. Green's relations on $B$ are the equivalence relations given by
\begin{align*}
&  e \, \mathcal{R} \, f \Leftrightarrow [e\leq_r f, f \leq_r e] \Leftrightarrow [ef=f, fe=e];  \\
& e \, \mathcal{L} \, f \Leftrightarrow [e\leq_l f, f \leq_l e]  \Leftrightarrow [ef=e, fe=f]; \\
& e \, \mathcal{D} \, f \Leftrightarrow e \, \mathcal{J} \, f \Leftrightarrow [efe=e, fef=f]. 
\end{align*}
Every band comes equipped with a partial order $\leq$, called the \textit{natural order on $B$}, given by 
\[ e\leq f \Leftrightarrow ef=fe=e. 
\] 
Note that the natural order is preserved under morphisms, that is, if $\phi:B\rightarrow B'$ is a morphism between bands $B$ and $B'$  then for $e,f\in B$, 
\[ e\leq f \Rightarrow ef=fe=e \Rightarrow e\phi f \phi=f \phi e\phi =e\phi \Rightarrow e\phi\leq f\phi. 
\] 
\noindent {\bf Notation} Let $(P,\leq)$ be a poset with subsets $M,N$. Where convenient we denote the property that $m\geq n$ for all $m\in M$, $n\in N$ as $M\geq N$, and similarly for $M>N$. If $M=\{m\}$ then we may simplify this as $m>N$, and similarly for $M>n$. Given $p,q\in P$, if $p\not\geq q$ and $q\not\geq p$ then we say that \textit{$p$ and $q$ are incomparable}, denoted $p\perp q$. 

Given a band $B$ and a subset $A$ of $B$, we denote $\langle A \rangle$ as the subband generated by $A$, noting that $\langle A \rangle$ is the intersection of all subbands of $B$ which contain $A$. 

\begin{lemma} \label{less compatible} Let $B$ be a band with subsets $M$ and $N$ such that $M\geq N$ (under the natural order on $B$). Then $\langle M \rangle \geq \langle N \rangle$.
\end{lemma}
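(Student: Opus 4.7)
The plan is to prove the lemma by a straightforward two-stage induction on the word-length of elements in $\langle M\rangle$ and $\langle N\rangle$. The hypothesis $M \geq N$ unpacks to: for every $m \in M$ and $n \in N$, we have $mn = nm = n$. The goal is to upgrade this pairwise relation to one between the generated subbands, so the natural strategy is to first extend it to $\langle M\rangle \geq N$, then to $\langle M\rangle \geq \langle N\rangle$.

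For the first stage, I would show by induction on the length of a word representing $x \in \langle M\rangle$ (in the generators $M$) that $x \geq n$ for every $n \in N$. The base case $x \in M$ is exactly the hypothesis. For the inductive step, write $x = x_1 x_2$ with $x_1, x_2 \in \langle M\rangle$ of shorter length, so by induction $x_i n = n x_i = n$ for each $i$. Then a short computation gives
\[
xn = x_1 x_2 n = x_1 n = n, \qquad nx = n x_1 x_2 = n x_2 = n,
\]
so $x \geq n$. Hence $\langle M\rangle \geq N$.

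For the second stage, fix any $x \in \langle M\rangle$ and run the analogous induction on the length of a word representing $y \in \langle N\rangle$, using what was just established as the base case. If $y = y_1 y_2$ with $xy_i = y_i x = y_i$ for $i=1,2$, then
\[
xy = x y_1 y_2 = y_1 y_2 = y, \qquad yx = y_1 y_2 x = y_1 y_2 = y,
\]
so $x \geq y$. This yields $\langle M\rangle \geq \langle N\rangle$, as required.

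There is no real obstacle: the argument is essentially a bookkeeping exercise. The only minor point to be careful about is that elements of $\langle M\rangle$ and $\langle N\rangle$ are not single generators but arbitrary products, so one must genuinely use the inductive hypothesis twice per step (once on each factor) rather than appealing directly to the base case. The symmetric structure of the natural order (requiring both $xy = x$ on one side and $yx = x$ on the other) is what forces the two-sided computations above, but both are equally routine.
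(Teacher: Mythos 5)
Your proof is correct and is essentially the paper's argument, just organized as a two-stage induction on word length rather than the paper's direct telescoping computation $(e_1\cdots e_r)(f_1\cdots f_s)=f_1\cdots f_s$ (which peels off the $e_k$ one at a time using $e_k\geq f_1$ and $e_k\geq f_s$). Both routes amount to the same elementary verification of $xy=yx=y$ for arbitrary products of generators.
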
 

\begin{proof}
Let $e_1,e_2,\dots ,e_r\in M$ and $f_1,f_2,\dots,f_s\in N$. Then as $e_k>\{f_1,f_s\}$ for all $1\leq k \leq r$, 
\[ (e_1e_2\cdots e_r)(f_1f_2\cdots f_s) =(e_1e_2\cdots e_{r-1})(f_1f_2\cdots f_s)=\cdots =f_1f_2\cdots f_s
\] 
and similarly $(f_1f_2\cdots f_s)(e_1e_2\cdots e_r)=f_1f_2\cdots f_s$. 
\end{proof}

Henceforth, a relation $\leq$ defined on a band will be assumed to be the natural order, unless stated otherwise. Recall that a \textit{semilattice} is a commutative band. A \textit{lower semilattice} is a poset in which the meet (denoted $\wedge$) of any pair of elements exists.

Let $(Y, \leq)$ be a lower semilattice; we may then consider the structure $(Y,\leq,\wedge)$, that is, a set with a single binary operation and relation. Homogeneity of lower semilattices was first considered by Droste in \cite{Droste85} and, together with Truss and Kuske, in \cite{Truss99}. Note that both articles consider the homogeneity of the corresponding structure $(Y,\leq,\wedge)$ (wherein \cite{Droste85} a lower semilattice is also considered simply as a poset). Since any morphism of the corresponding (semigroup) semilattice $(Y,\wedge)$ preserves $\leq$, their work effectively considers homogeneity of (semigroup) semilattices.

 With this in mind, for the rest of the paper, we refer simply to {\em semilattices} and {\em homogeneous semilattices}. Note that by Schmerl's classification of homogeneous posets in \cite{Schmerl79}, a semilattice is homogeneous as a poset if and only if it is isomorphic to $(\mathbb{Q},\leq)$.

A semilattice $Y$ is called a \textit{semilinear order} if $(Y,\leq)$ is non-linear and, for all $\alpha\in Y$, the set $\{\beta\in Y:\beta\leq \alpha\}$ is linearly ordered. This is equivalent to $Y$ not containing a \textit{diamond}, that is, distinct $\delta,\alpha,\gamma,\beta\in Y$ such that $\delta>\{\alpha,\gamma\}>\beta$ and $\alpha \perp \gamma$ with $\alpha\gamma=\beta$. There exists a unique (up to isomorphism) homogeneous semilattice which embeds all finite semilattices, called the \textit{universal semilattice}. 

\begin{proposition}[{{\cite{Droste85, Truss99}}}]  A non-trivial homogeneous semilattice is isomorphic to either $(\mathbb{Q},\leq)$, the universal semilattice or a semilinear order. 
\end{proposition}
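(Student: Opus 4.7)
I would prove the classification by combining Fra\"iss\'e theory with a trichotomy on the order type of $Y$. Since $Y$ is countable and homogeneous, its age $\mathcal{A}$ (the class of finite subsemilattices of $Y$, up to isomorphism) has the amalgamation, joint embedding and hereditary properties, and $Y$ is the Fra\"iss\'e limit of $\mathcal{A}$; thus $Y$ is determined up to isomorphism by $\mathcal{A}$. The problem therefore reduces to identifying $\mathcal{A}$, and I would do this by splitting into three cases according to whether $Y$ is a chain, non-linear with a diamond, or non-linear and diamond-free.

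In the chain case, every finite subsemilattice of $Y$ is a finite chain. Homogeneity applied to one-element subsemilattices shows $Y$ has neither a minimum nor a maximum (any extremum could not be sent to a non-extremum by an automorphism), and homogeneity applied to two-element chains, together with a three-element witness in $\mathcal{A}$, forces density: given $a<b$ in $Y$, embed a chain $c_1<c_2<c_3$ and extend the isomorphism $\{c_1,c_3\}\to\{a,b\}$ to an automorphism of $Y$; the image of $c_2$ then lies strictly between $a$ and $b$. Cantor's theorem now yields $Y\cong(\mathbb{Q},\le)$. The diamond-free non-linear case is immediate from the definition of a semilinear order given earlier.

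The heart of the argument, and the main obstacle, is the diamond case. Here the plan is to show by induction on $|S|$ that every finite semilattice $S$ embeds into $Y$; once this is established, $\mathcal{A}$ is the class of all finite semilattices and Fra\"iss\'e's theorem identifies $Y$ as the universal semilattice. For the inductive step, I would pick a maximal element $m\in S$, embed $S\setminus\{m\}$ into $Y$ by the inductive hypothesis, and then use homogeneity together with the amalgamation property of $\mathcal{A}$ to adjoin a new element realising the prescribed meet-pattern of $m$ with the image of $S\setminus\{m\}$ --- the diamond supplying the initial non-trivial meet-structure that the amalgamation procedure can propagate. Verifying that a single diamond, combined with homogeneity, is enough to realise an arbitrary finite meet-pattern is the delicate point, since the meets $m\wedge s$ must be simultaneously arranged to equal the correct element of $S\setminus\{m\}$ for every such $s$; I would handle this by iterating amalgamation one element of $S\setminus\{m\}$ at a time, each time transporting a copy of the diamond into the required configuration via an automorphism supplied by homogeneity.
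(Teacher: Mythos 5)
The paper does not actually prove this proposition --- it is quoted from \cite{Droste85} and \cite{Truss99} --- so your attempt can only be measured against those sources and against the analogous arguments the paper does carry out later (notably the ``transport the diamond to get upper bounds'' trick in the proof of Lemma \ref{embed Y}). Your trichotomy is the right skeleton. The linear case is complete and correct (no endpoints by $1$-homogeneity, density by extending $\{c_1,c_3\}\to\{a,b\}$, then Cantor), and the diamond-free non-linear case is indeed immediate from the equivalence built into the paper's definition of a semilinear order.

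The gap is in the diamond case, precisely at the point you flag as delicate. The tool you name --- ``homogeneity together with the amalgamation property of $\mathcal{A}$'' --- cannot produce the required one-point extension. The age of a homogeneous structure only has the \emph{weak} amalgamation property (the paper is explicit about this in its footnote to (AP)): amalgamating $S\setminus\{m\}$ with $I\cup\{m\}$ over $I=\{s\in S\setminus\{m\}:s<m\}$ yields \emph{some} finite subsemilattice of $Y$ into which both pieces embed compatibly over $I$, but in that amalgam the meet of the image of $m$ with the image of an element $s\notin I$ may be strictly larger than the prescribed $\iota(m\wedge s)$, and points outside $I$ may even be identified; nothing forces the amalgam to be $S$ itself. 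What must be shown is that there exists $y\in Y$ with $y\wedge\iota(s)=\iota(m\wedge s)$ \emph{exactly}, simultaneously for all $s$. Transporting the diamond by automorphisms (as in Lemma \ref{embed Y}) readily gives a $y$ above $\iota(I)$, hence with $y\wedge\iota(s)\geq\iota(m\wedge s)$; cutting the surplus meets down to equality is the real content of the Droste--Kuske--Truss argument and needs further auxiliary configurations (for instance: given $a>b$, an element $c$ with $c\wedge a=b$ and $c\perp a$) together with a strengthened induction. Your fallback of ``iterating one element at a time, transporting a copy of the diamond via an automorphism'' also cannot be implemented literally: an automorphism fixing $\iota(S\setminus\{m\})$ pointwise preserves the isomorphism type of the subsemilattice generated by $y$ and $\iota(S\setminus\{m\})$, which is exactly what each adjustment is supposed to change, so every adjustment must construct a genuinely new element, and that construction is missing.
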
 

The second variety of bands required for the construction of an arbitrary band are \textit{rectangular bands}, that is, bands satisfying the identity $xyx=x$. Note that a band is rectangular if and only if it contains a single $\mathcal{D}$-class (and is thus \textit{simple}). Similarly \textit{left zero bands} are precisely the bands with a single $\mathcal{L}$-class (dually for \textit{right zero bands}).

\begin{proposition}[{{\cite[Theorem 1.1.3]{Howie94}}}] Let $L$ be a left zero semigroup and $R$ a right zero semigroup. Then $B_{L,R}=(L \times R, \cdot)$ forms a rectangular band, with operation given by
\[    (i, j) \cdot (k, \ell) = (i, \ell). 
\] 
Conversely, every rectangular band is isomorphic to some $B_{L,R}$. Greens relations on $B_{L,R}$ simplify to 
\[ (i,j) \, \mathcal{R} \, (k, \ell) \Leftrightarrow i=k \text{ and } (i,j) \, \mathcal{L} \, (k, \ell) \Leftrightarrow j=\ell. 
\]
\end{proposition}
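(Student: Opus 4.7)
The plan is to verify the two directions separately, with the converse direction being the substantive one. For the forward direction, I would directly check that the multiplication on $L\times R$ is associative and idempotent, and satisfies $(i,j)(k,\ell)(i,j)=(i,\ell)(i,j)=(i,j)$, so $B_{L,R}$ is a rectangular band. The computation of Green's relations is then immediate: $(i,j)\,\mathcal{R}\,(k,\ell)$ requires $(i,j)(k,\ell)=(k,\ell)$, which forces $i=k$, and conversely $i=k$ suffices; and dually for $\mathcal{L}$.

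For the converse, let $B$ be a rectangular band. First I would establish the useful identity $xyz=xz$ in $B$. One way: compute $(xyz)(xz)=xy(zxz)=xyz$ and $(xz)(xyz)=(xzx)yz=xyz$, so $xyz\leq xz$ under the natural order; but in a rectangular band the natural order is trivial, since $a\leq b$ gives $ab=ba=a$, and combined with $bab=b$ one deduces $a=b$. Hence $xyz=xz$ throughout $B$.

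Next, fix any $e\in B$ and set $L=Be$, $R=eB$. Using $xyz=xz$ one checks that $L$ is a left zero subband (for $ye,y'e\in L$, $(ye)(y'e)=ye$) and dually that $R$ is a right zero subband. Define the map
\[ \phi:B\to L\times R,\qquad \phi(x)=(xe,ex). \]
Since $xex=x$ (rectangular identity), $\phi(x)\phi(y)=(xe,ex)(ye,ey)=(xe,ey)=(xye,exy)=\phi(xy)$, using $xyz=xz$ to collapse the middle factors; so $\phi$ is a homomorphism. Given any $(a,b)\in L\times R$, write $a=ye$, $b=ez$; then $ab=yeez=yez$, and a direct calculation using $xyz=xz$ gives $(ab)e=a$ and $e(ab)=b$, so $\phi(ab)=(a,b)$, yielding surjectivity. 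Injectivity follows from $x=xex=(xe)(ex)$. Thus $\phi$ is an isomorphism onto $B_{L,R}$.

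The only point that requires any care is the reduction $xyz=xz$ (equivalently, triviality of the natural order on a rectangular band); everything else is routine manipulation. Once this identity is in hand, the verification that $\phi$ is a homomorphism and bijection is essentially automatic, and the description of $\mathcal{R}$ and $\mathcal{L}$ transfers from $B_{L,R}$ to $B$ via the isomorphism.
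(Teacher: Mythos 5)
Your proof is correct. The paper does not prove this proposition but simply cites \cite[Theorem 1.1.3]{Howie94}, and your argument --- reducing to the identity $xyz=xz$ via triviality of the natural order, fixing an idempotent $e$, and showing $x\mapsto(xe,ex)$ is an isomorphism onto $Be\times eB$ --- is the standard proof of that cited result.
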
 

An isomorphism theorem for rectangular bands follows immediately from \cite[Corollary 4.4.3]{Howie94}, and is stated below: 

\begin{proposition}\label{rec bands iso} A pair of rectangular bands $B_{L,R}$ and $B_{L',R'}$ are isomorphic if and only if $|L|=|L'|$ and $|R|=|R'|$. Moreover, if $\phi_L:L\rightarrow L'$ and $\phi_R:R \rightarrow R'$ are a pair of bijections, then the map $\phi:B_{L,R} \rightarrow B_{L',R'}$ defined by 
\[ (i,j)\phi=(i \phi_L,j\phi_R)
\] 
is an isomorphism. Every isomorphism from $B_{L,R}$ to $B_{L',R'}$ can be constructed in this way, and may be denoted $ \phi=\phi_L \times \phi_R$. 
\end{proposition}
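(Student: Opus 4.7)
The plan is to prove the result by a direct analysis using the Green's relation description just given. I first observe that for any bijections $\phi_L:L\to L'$ and $\phi_R:R\to R'$, the map $\phi=\phi_L\times\phi_R$ is bijective on the underlying sets and satisfies
\[ \bigl((i,j)(k,\ell)\bigr)\phi = (i,\ell)\phi = (i\phi_L,\ell\phi_R) = (i\phi_L,j\phi_R)(k\phi_L,\ell\phi_R) = (i,j)\phi\cdot(k,\ell)\phi, \]
using only the multiplication rule on rectangular bands. Hence every such $\phi_L\times\phi_R$ is an isomorphism, which immediately yields the ``if'' direction of the cardinality equivalence.

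For the converse, let $\phi:B_{L,R}\to B_{L',R'}$ be an arbitrary isomorphism. Since semigroup isomorphisms preserve Green's relations, $\phi$ maps $\mathcal{R}$-classes onto $\mathcal{R}$-classes and $\mathcal{L}$-classes onto $\mathcal{L}$-classes. By the previous proposition, the $\mathcal{R}$-classes of $B_{L,R}$ are precisely the sets $\{i\}\times R$ for $i\in L$, and similarly for $B_{L',R'}$; so for each $i\in L$ there is a unique $i'\in L'$ with $(\{i\}\times R)\phi=\{i'\}\times R'$. Setting $i\phi_L=i'$ defines a map $\phi_L:L\to L'$, and dually, using $\mathcal{L}$-classes, we obtain $\phi_R:R\to R'$. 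Both are bijections because $\phi$ induces bijective correspondences between the respective collections of $\mathcal{R}$- and $\mathcal{L}$-classes. Finally, for any $(i,j)\in B_{L,R}$, the image $(i,j)\phi$ lies in
\[ \bigl(\{i\phi_L\}\times R'\bigr)\cap\bigl(L'\times\{j\phi_R\}\bigr) = \{(i\phi_L,j\phi_R)\}, \]
forcing $\phi=\phi_L\times\phi_R$ as required.

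Combining the two halves yields the cardinality characterisation together with the claimed form of every isomorphism. The only step requiring any care is justifying that $\phi$ preserves first coordinates in the appropriate sense: this rests entirely on the Green's relation description in the preceding proposition, which reduces the question to the preservation of $\mathcal{R}$ (and dually $\mathcal{L}$) by semigroup isomorphisms. Everything else is a routine verification of the homomorphism identity and bookkeeping of bijections.
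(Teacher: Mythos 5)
Your proof is correct. Note that the paper itself gives no argument for this proposition: it simply records the statement as following "immediately" from \cite[Corollary 4.4.3]{Howie94}. Your argument --- verifying directly that $\phi_L\times\phi_R$ is a homomorphism, and conversely recovering $\phi_L$ and $\phi_R$ from an arbitrary isomorphism via its action on $\mathcal{R}$- and $\mathcal{L}$-classes (which are preserved since isomorphisms preserve Green's relations), then pinning down $(i,j)\phi$ as the unique point in the intersection of the image $\mathcal{R}$-class and image $\mathcal{L}$-class --- is exactly the standard route taken in Howie, so it correctly supplies the proof the paper leaves to the reference.
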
 

For each $n,m\in \mathbb{N^*}=\mathbb{N}\cup\{\aleph_0\}$, we may thus denote $B_{n,m}$ to be the unique (up to isomorphism) rectangular band with $n$ $\mathcal{R}$-classes and $m$ $\mathcal{L}$-classes. 

\begin{corollary}\label{rec bands homog} Let $B_i=L_i\times R_i$ $(i=1,2)$ be a pair of isomorphic rectangular bands, with subbands $A_i$ ($i=1,2$). Then any isomorphism from $A_1$ to $A_2$ can be extended to an isomorphism from $B_1$ to $B_2$. In particular, rectangular bands are homogeneous. 
\end{corollary}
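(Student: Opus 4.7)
The plan is to reduce everything to Proposition \ref{rec bands iso} by first showing that every subband of a rectangular band is itself a direct product of a left zero and a right zero band, built from the projections.

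First I would observe that if $A$ is a subband of $B_{L,R}=L\times R$, then $A=\pi_L(A)\times \pi_R(A)$, where $\pi_L,\pi_R$ are the two coordinate projections. One inclusion is trivial, and the other follows from the rectangular band identity: for $(i,j),(k,\ell)\in A$ we have $(i,j)(k,\ell)=(i,\ell)\in A$, so all ``mixed'' pairs over the projections already belong to $A$. In particular, each $A_i$ is isomorphic to $L(A_i)\times R(A_i)$ where $L(A_i):=\pi_{L_i}(A_i)\subseteq L_i$ and $R(A_i):=\pi_{R_i}(A_i)\subseteq R_i$.

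Next, given an isomorphism $\psi:A_1\to A_2$, Proposition \ref{rec bands iso} (applied to the subbands viewed as rectangular bands in their own right) yields bijections $\psi_L:L(A_1)\to L(A_2)$ and $\psi_R:R(A_1)\to R(A_2)$ with $\psi=\psi_L\times\psi_R$. Since $B_1\cong B_2$, Proposition \ref{rec bands iso} also gives $|L_1|=|L_2|$ and $|R_1|=|R_2|$. Combined with $|L(A_1)|=|L(A_2)|$ and $|R(A_1)|=|R(A_2)|$, this forces $|L_1\setminus L(A_1)|=|L_2\setminus L(A_2)|$ and $|R_1\setminus R(A_1)|=|R_2\setminus R(A_2)|$, so $\psi_L$ and $\psi_R$ extend to bijections $\phi_L:L_1\to L_2$ and $\phi_R:R_1\to R_2$ respectively.

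Finally, $\phi:=\phi_L\times\phi_R$ is an isomorphism $B_1\to B_2$ by Proposition \ref{rec bands iso}, and by construction it extends $\psi$, giving the first claim. Homogeneity of a (countable) rectangular band $B$ then follows at once by taking $B_1=B_2=B$: every isomorphism between subbands---in particular between finitely generated ones---lifts to an automorphism of $B$.

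The main obstacle, modest as it is, is the opening structural observation that subbands factor as $L(A)\times R(A)$; once this is in hand, Proposition \ref{rec bands iso} does all the remaining work, and the cardinality bookkeeping to extend the component bijections is routine.
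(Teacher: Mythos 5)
Your proof is correct and follows essentially the same route as the paper's: decompose the subbands as full direct products via the rectangular band identity, split the given isomorphism into component bijections by Proposition \ref{rec bands iso}, extend those bijections, and recombine. The only caveat --- shared by the paper's own proof --- is that extending $\psi_L$ to $\phi_L$ genuinely requires $|L_1\setminus L(A_1)|=|L_2\setminus L(A_2)|$, which does \emph{not} follow from $|L_1|=|L_2|$ and $|L(A_1)|=|L(A_2)|$ when these cardinals are infinite (take $A_1=B_1$ and $A_2$ a proper copy of $B_2$ inside $B_2$); this is harmless for the homogeneity conclusion, where the $A_i$ are finite.
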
 

\begin{proof} Since the class of rectangular bands forms a variety, each $A_i$ is a rectangular band and $A_i=L_i'\times R_i'$ for some $L_i'\subseteq L_i$, $R_i'\subseteq R_i$. Let $\theta=\theta_{L'}\times \theta_{R'}:A_1\rightarrow A_2$ be an isomorphism. Extend the bijection $\theta_{L'}:L_1'\rightarrow L_2'$ to a bijection $\theta_L:L_1\rightarrow L_2$, and similarly obtain $\theta_{R}$. Then $\hat{\theta}=\theta_L\times \theta_R$ extends $\theta$ as required. Taking $B_1=B_2$ gives the final result. 
\end{proof}

A structure theorem for bands was obtained by McLean in \cite{Lean54}, and can be stated as follows:  

\begin{proposition}\label{structure bands} Let $B$ be an arbitrary band. Then $\mathcal{D}$ is a congruence on $B$ and is such that $Y=S/\mathcal{D}$ is a semilattice and the $\mathcal{D}$-classes are rectangular bands. In particular $B$ is a semilattice of rectangular bands $B_{\alpha}$ (which are the $\mathcal{D}$-classes), that is,  
\[ B=\bigcup_{\alpha\in Y} B_{\alpha} \quad  \text{with} \quad   B_{\alpha}B_{\beta}\subseteq B_{\alpha \beta}.
\] 
\end{proposition}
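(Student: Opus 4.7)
My plan is to verify the three claims—$\mathcal{D}$ is a congruence on $B$, the quotient $B/\mathcal{D}$ is a semilattice, and each $\mathcal{D}$-class is a rectangular band—using the characterisation $e\,\mathcal{D}\, f \iff efe=e,\, fef=f$ recorded earlier. The heart of the argument will be showing that $\mathcal{D}$ respects multiplication; once that is established, the quotient structure and the rectangular nature of each class follow quickly.

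First I would verify that $\mathcal{D}$ is an equivalence. Reflexivity and symmetry are immediate from the characterisation. For transitivity, assuming $efe=e,\, fef=f$ and $fgf=f,\, gfg=g$, I compute
\[ ege = e(fgf)e = efe = e, \]
using $fgf = f$, and dually $geg = g$. Next I prove the commutation identity $ab\,\mathcal{D}\, ba$ for all $a, b \in B$ by direct calculation:
\[ (ab)(ba)(ab) = a b^2 a^2 b = abab = (ab)^2 = ab, \]
and symmetrically $(ba)(ab)(ba) = ba$.

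The main obstacle is the congruence property. Thanks to the commutation identity and transitivity of $\mathcal{D}$, it is enough to prove left compatibility: if $e \,\mathcal{D}\, f$ then $ge\, \mathcal{D}\, gf$ for every $g \in B$. Indeed, right compatibility will then follow from the chain $eg\, \mathcal{D}\, ge\, \mathcal{D}\, gf\, \mathcal{D}\, fg$, where the outer equivalences are instances of the commutation identity and the middle one is the left-compatibility assertion. To prove left compatibility I need $(ge)(gf)(ge) = ge$ and $(gf)(ge)(gf) = gf$. My strategy is to start by left-multiplying the defining identities by $g$ to obtain $gefe = ge$ and $gfef = gf$, and then to manipulate the expression $gegfge$ using these together with idempotency of $g$ and of the compound elements $ge$ and $gf$ until it collapses to $ge$; the second identity is symmetric.

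Once $\mathcal{D}$ is a congruence, the quotient $B/\mathcal{D}$ is a homomorphic image of a band, hence a band itself, and is commutative by the commutation identity, so it is a semilattice $Y$. Writing $B_\alpha$ for the $\mathcal{D}$-class indexed by $\alpha \in Y$, the inclusion $B_\alpha B_\beta \subseteq B_{\alpha\beta}$ is a direct restatement of $\mathcal{D}$ being a congruence; in particular each $B_\alpha$ is a subband. Finally, the defining condition $efe = e$ for all $e, f \in B_\alpha$ is exactly the rectangular band axiom $xyx = x$ restricted to $B_\alpha$, so each $B_\alpha$ is rectangular, yielding the semilattice-of-rectangular-bands decomposition.
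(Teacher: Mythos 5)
The paper does not actually prove this proposition itself --- it is McLean's structure theorem, quoted from \cite{Lean54} --- so your argument has to stand on its own, and it has two genuine gaps, both located exactly where the real work of the theorem lives. First, your transitivity computation is invalid. From $f\,\mathcal{D}\,g$ you have $fgf=f$ and $gfg=g$, so the only legitimate substitution for $g$ is $g=gfg$; the first equality in your chain, $ege=e(fgf)e$, silently replaces $g$ by $fgf$, which would require $fgf=g$ rather than the hypothesis $fgf=f$. Since $e(fgf)e=efe=e$ holds trivially, the displayed chain amounts to asserting the conclusion. (A clean repair is to first prove that, in a band, $efe=e$ is equivalent to $e\in B^1fB^1$; then your relation is exactly Green's $\mathcal{J}=\mathcal{D}$, defined by equality of principal ideals, and transitivity is automatic.)

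Second, the compatibility step --- the actual content of the theorem --- is never carried out. You correctly reduce everything to showing $(ge)(gf)(ge)=ge$ and $(gf)(ge)(gf)=gf$ whenever $efe=e$ and $fef=f$, and your reduction of right compatibility to left compatibility via the commutation identity is fine (granting transitivity). But at the decisive point you only announce a strategy: ``manipulate $gegfge$ \ldots until it collapses to $ge$.'' This is not a routine collapse; the relations $gefe=ge$, $gfef=gf$ and idempotency do not obviously reduce $gegfge$ to $ge$, and this is precisely the computation that McLean's proof (see also \cite[Theorem 4.4.1]{Howie94}) must make work, typically by passing through the ideal-theoretic description of $\mathcal{J}$ rather than by a direct identity-chase. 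Until that step is exhibited, the congruence property, and with it the whole decomposition, is unproved. The remaining pieces of your argument --- the commutation identity $(ab)(ba)(ab)=ab$, the quotient being a commutative band and hence a semilattice, the inclusion $B_{\alpha}B_{\beta}\subseteq B_{\alpha\beta}$ as a restatement of the congruence property, and each class being rectangular because $efe=e$ within a class --- are correct and are the standard endgame.
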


The semilattice $Y$ is called the \textit{structure semilattice} of $B$. It follows from the proposition above that we understand the global structure of an arbitrary band, but not necessarily the local structure. We now consider a well-known construction to obtain bands with an explicit operation (see \cite[p.22]{Petrich99}, for example).  

Let $Y$ be a semilattice. To each $\alpha \in Y$ associate a rectangular band $B_{\alpha}$ and assume that $B_{\alpha} \cap B_{\beta} = \emptyset$ if $\alpha \neq \beta$. For each pair $\alpha, \beta \in Y$ with $\alpha \geq \beta$, let $\psi_{\alpha, \beta}: B_{\alpha} \rightarrow B_{\beta}$ be a morphism, which we call a \textit{connecting morphism}, and assume that the following conditions hold: 
\begin{enumerate}[label=(\roman*)]
\item if $\alpha \in Y$ then $\psi_{\alpha, \alpha} = 1_{B_{\alpha}}$, the identity automorphism of $B_{\alpha}$; 
\item if $\alpha, \beta, \gamma\in Y$ are such that $\alpha \geq \beta \geq \gamma$ then $\psi_{\alpha, \beta} \psi_{\beta, \gamma} = \psi_{\alpha, \gamma}$.
\end{enumerate} 

That is, the connecting morphisms compose transitively. On the set $B=\bigcup_{\alpha \in Y} B_{\alpha}$ define a multiplication by 
\[ a * b = (a \psi_{\alpha, \alpha \beta})(b \psi_{\beta, \alpha \beta})
\] 
for $a\in B_{\alpha}, b \in B_{\beta}$, and denote the resulting structure by  $B=[Y;B_{\alpha}; \psi_{\alpha, \beta}]$. Then $B$  is a band, and is called a \textit{strong semilattice $Y$ of rectangular bands $B_{\alpha}$} ($\alpha\in Y$). 

 Note also that for $e_{\alpha}\in B_{\alpha},f_{\beta}\in B_{\beta}$ we have $e_{\alpha} \geq f_{\beta} $ if and only if $\alpha \geq \beta$ and $e_{\alpha}\psi_{\alpha,\beta}=f_{\beta}$ (\cite[Lemma IV.1.7]{Petrich99}).

\begin{lemma}[{{\cite[Proposition 4.6.14]{Howie94}}}] A band is normal if and only if it is isomorphic to a strong semilattice of rectangular bands. 
\end{lemma}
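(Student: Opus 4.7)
I will prove both implications. The reverse direction is an elementary computation; the forward direction requires constructing the connecting morphisms.

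For $(\Leftarrow)$, let $B = [Y; B_\alpha; \psi_{\alpha,\beta}]$ and fix $a \in B_\alpha$, $x \in B_\gamma$, $y \in B_\delta$. Setting $\mu = \alpha\gamma\delta$, the products $axya$ and $ayxa$ lie in the rectangular band $B_\mu$ and, after passing through the appropriate connecting morphisms, are expressed as $a'x'y'a'$ and $a'y'x'a'$ respectively, where $a' = a\psi_{\alpha,\mu}$, $x' = x\psi_{\gamma,\mu}$, $y' = y\psi_{\delta,\mu}$. Since $uvwu = u(vw)u = u$ in any rectangular band, both reduce to $a'$, establishing the normal identity $axya = ayxa$.

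For $(\Rightarrow)$, assume $B$ is normal. By Proposition~\ref{structure bands}, $B = \bigcup_{\alpha\in Y} B_\alpha$ is a semilattice of rectangular bands, and the task is to exhibit connecting morphisms making this decomposition strong. For $\alpha \geq \beta$ and $a \in B_\alpha$, I define
\[
a\psi_{\alpha,\beta} = aea \qquad (\text{for any } e \in B_\beta),
\]
noting $aea \in B_{\alpha\beta\alpha} = B_\beta$. The critical step is well-definedness: given $e, f \in B_\beta$, using $efe = e$ and $fef = f$ (the rectangular identity in $B_\beta$) together with three applications of normality $axya = ayxa$, one chains
\[
aea = a(efe)a = aefea = aefa = afea = a(fef)a = afa,
\]
where the intermediate equalities arise from normality with $(x,y) = (ef,e)$, $(e,f)$ and $(fe,f)$ respectively. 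The property $\psi_{\alpha,\alpha} = \mathrm{id}_{B_\alpha}$ is then immediate from $aea = a$ in the rectangular band $B_\alpha$, and a similar normality reduction applied to $(aea)h(aea)$ for $h \in B_\gamma$, combined with well-definedness, yields the transitivity $\psi_{\alpha,\beta}\psi_{\beta,\gamma} = \psi_{\alpha,\gamma}$.

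The main obstacle is to verify that each $\psi_{\alpha,\beta}$ is a morphism and that the original multiplication on $B$ agrees with the strong-semilattice multiplication. My plan is to establish the key structural identity $ae \, \mathcal{R} \, aea$ in $B_\beta$ for any $e \in B_\beta$: the equality $aeaea = a(eaea)a = aea$ (using idempotency $eaea = ea$ in the rectangular band $B_\beta$) together with $(ae)(aea) = aeaea$ forces $ae$ and $aea = a\psi_{\alpha,\beta}$ to lie in the same $\mathcal{R}$-class of $B_\beta$; dually $ea \, \mathcal{L} \, aea$. Combined with the rectangular structure of $B_\beta$, these identities show that the left and right action of $a$ on $B_\beta$ is fully determined by $a\psi_{\alpha,\beta}$, from which both the multiplicative compatibility $ab = (a\psi_{\alpha,\alpha\beta})(b\psi_{\beta,\alpha\beta})$ and the morphism property $(ab)\psi_{\alpha,\beta} = (a\psi_{\alpha,\beta})(b\psi_{\alpha,\beta})$ follow. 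This identifies $B$ with $[Y; B_\alpha; \psi_{\alpha,\beta}]$, completing the proof.
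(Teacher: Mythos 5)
The paper gives no proof of this lemma at all --- it is quoted verbatim from \cite[Proposition 4.6.14]{Howie94} --- so there is nothing internal to compare against; what you have written is essentially the standard textbook argument, and it is correct. The reverse direction is fine: any product in a strong semilattice collapses to the product of the images in $B_{\mu}$ for $\mu$ the meet of the supports, and $uvwu=u$ in a rectangular band kills both sides of the identity. In the forward direction your well-definedness chain $aea=aefea=aefa=afea=afefa=afa$ checks out (each middle step is an instance of $xyzx=xzyx$), and transitivity is immediate once well-definedness is known, since $a\psi_{\alpha,\gamma}=a\big((aea)h(aea)\big)a=(aea)h(aea)$. The only place where you are sketching rather than proving is the final paragraph: the identities $ae \, \mathcal{R} \, aea$ and $(ae)e=ae$ (so $ae \, \mathcal{L} \, e$ inside the rectangular band $B_{\beta}$) do pin down $ae=(a\psi_{\alpha,\beta})e$ and dually $ea=e(a\psi_{\alpha,\beta})$, and from these the remaining claims follow by short computations you should record explicitly: for $a\in B_{\alpha}$, $b\in B_{\gamma}$ and $\mu=\alpha\gamma$ one writes $ab=a(ba)b=(a\psi_{\alpha,\mu})(ba)(b\psi_{\gamma,\mu})=(a\psi_{\alpha,\mu})(b\psi_{\gamma,\mu})$, the last step because a three-fold product in the rectangular band $B_{\mu}$ equals the product of its outer factors; the morphism property of $\psi_{\alpha,\beta}$ is a similar four-letter collapse. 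These details are routine, so I would count the proof as complete in substance, but as written the last paragraph is a plan rather than a verification.
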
 

It then follows that a band is left normal if and only if it is normal with $\mathcal{D}$-classes being left zero (dually for right normal bands). The varieties of (left/right) regular bands are considered in the next section. 

\section{Homogeneous bands}\label{sec:homog basic} 

Recall that a band $B$ is homogeneous if every isomorphism between finite (equivalently, f.g.) subbands extends to an automorphism of $B$. In this section, we obtain a collection of general results on homogeneous bands. We also consider a weaker form of homogeneity by saying that a band $B$ is $n$-homogeneous if every isomorphism between subbands of cardinality $n$ extends to an automorphism of $B$, where $n\in \mathbb{N}$. Clearly, a band is homogeneous if and only if it is $n$-homogeneous for all $n\in \mathbb{N}$. 

The following isomorphism theorem for bands is folklore but proven here for completeness.

\begin{proposition} \label{iso band} Let $B=\bigcup_{\alpha\in Y} B_{\alpha}$ and $B'=\bigcup_{\alpha'\in Y'}B'_{\alpha'}$ be a pair of bands. Then for any morphism $\theta:B\rightarrow B'$, there exists a morphism $\pi:Y\rightarrow Y'$ and, for every $\alpha\in Y$, a morphism $\theta_{\alpha}:B_{\alpha}\rightarrow B'_{\alpha\pi}$ such that $\theta=\bigcup_{\alpha\in Y}\theta_{\alpha}$. 
 We denote $\theta$ as $[\theta_{\alpha},\pi]_{\alpha\in Y}$.  
\end{proposition}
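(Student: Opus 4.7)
The plan is to exploit the canonical decomposition of a band into $\mathcal{D}$-classes guaranteed by Proposition \ref{structure bands}, and to show that $\theta$ respects this decomposition. The key observation is that Green's relation $\mathcal{D}$ on a band is characterised algebraically by $e\,\mathcal{D}\,f \iff efe=e$ and $fef=f$, and so it is automatically preserved by every semigroup morphism: if $e\,\mathcal{D}\,f$ in $B$ then applying $\theta$ to these equations yields $(e\theta)(f\theta)(e\theta)=e\theta$ and $(f\theta)(e\theta)(f\theta)=f\theta$, giving $e\theta\,\mathcal{D}\,f\theta$ in $B'$.

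First I would use this to define $\pi$. For each $\alpha \in Y$, all elements of $B_\alpha$ lie in a common $\mathcal{D}$-class of $B$, so by the observation above their images under $\theta$ lie in a common $\mathcal{D}$-class of $B'$, say $B'_{\alpha\pi}$; this defines a map $\pi:Y\to Y'$. Setting $\theta_\alpha = \theta|_{B_\alpha}$, which is a morphism $B_\alpha \to B'_{\alpha\pi}$ as the restriction of a morphism to a subband, we obtain $\theta = \bigcup_{\alpha\in Y}\theta_\alpha$ by construction.

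It remains to check that $\pi$ is a semilattice morphism. Given $\alpha,\beta\in Y$, pick any $e\in B_\alpha$ and $f\in B_\beta$; then $ef\in B_{\alpha\beta}$ by Proposition \ref{structure bands}, so $(ef)\theta\in B'_{(\alpha\beta)\pi}$. On the other hand $(ef)\theta = (e\theta)(f\theta)\in B'_{\alpha\pi}B'_{\beta\pi}\subseteq B'_{(\alpha\pi)(\beta\pi)}$, again by Proposition \ref{structure bands} applied to $B'$. Since distinct $\mathcal{D}$-classes of $B'$ are disjoint and these two classes share the element $(ef)\theta$, they coincide, yielding $(\alpha\beta)\pi=(\alpha\pi)(\beta\pi)$.

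There is no real obstacle here: once one notices that $\mathcal{D}$ is invariant under morphisms (which is immediate from its equational description on bands), everything falls out of Proposition \ref{structure bands} by chasing elements through the decomposition. The one point that requires a moment's care is the well-definedness of $\pi$, which is exactly the $\mathcal{D}$-preservation remark of the first paragraph.
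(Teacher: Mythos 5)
Your proposal is correct and follows exactly the paper's approach: the paper's (much terser) proof likewise observes that $\mathcal{D}$ is preserved under morphisms and then invokes Proposition \ref{structure bands}. You have simply spelled out the details — the equational characterisation of $\mathcal{D}$ on bands, the well-definedness of $\pi$, and the verification that $\pi$ is a semilattice morphism via $B_{\alpha}B_{\beta}\subseteq B_{\alpha\beta}$ — which the paper leaves implicit.
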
 

\begin{proof} The relation $\mathcal{D}$ is preserved under morphisms, that is, if $e \, \mathcal{D} \, f$ in $B$ then $e\theta \, \mathcal{D} \, f\theta$ in $B'$ for any morphisms $\theta:B\rightarrow B'$. The results then follows from Proposition \ref{structure bands}.
\end{proof} 

We shall call $\pi$ the \textit{induced (semilattice) morphism} of $\theta$. Note also that the converse of the proposition above is not true in general, that is, given any morphism $\pi:Y\rightarrow Y'$ and collection $ \theta_{\alpha}:B_{\alpha}\rightarrow B_{\alpha\pi}$ of morphisms for each $\alpha \in Y$, then $\bigcup_{\alpha\in Y}\theta_{\alpha}$ need not be a morphism. 

By considering the relationship between the automorphisms of a band and the induced automorphisms of its structure semilattice, we may define a stronger form of homogeneity for bands: structure-homogeneity. 

\begin{definition}\label{defn SH} A band $B=\bigcup_{\alpha\in Y}B_{\alpha}$ is called \textit{structure-homogeneous} if given any pair of finite subbands $A_i=\bigcup_{\alpha\in Z_i}A_{\alpha}^i$ ($i=1,2$) and any isomorphism $\theta=[\theta_{\alpha},\pi]_{\alpha\in Z_1}$ from $A_1$ to $A_2$, then for any automorphism $\hat{\pi}$ extending $\pi$, there exists an automorphism $[\hat{\theta}_{\alpha},\hat{\pi}]_{\alpha\in Y}$ of $B$ extending $\theta$. 
\end{definition} 

Note that if $B$ is structure-homogeneous then for each automorphism $\pi$ of $Y$ there exists an automorphism of $B$ with $\pi$ as induced automorphism. Indeed, fix any $\alpha\in Y$, $e_{\alpha}\in B_{\alpha}$ and $e_{\alpha\pi}\in B_{\alpha\pi}$. Then $\{e_{\alpha}\}$ and $\{e_{\alpha\pi}\}$ are isomorphic trivial bands, and so as $\pi$ extends the isomorphism from $\{\alpha\}$ to $\{\alpha\pi\}$, we may extend (by the structure-homogeneity of $B$) the isomorphism from $\{e_{\alpha}\}$ to $\{e_{\alpha\pi}\}$ to an automorphism of $B$ with induced automorphism $\pi$. 

This new concept of homogeneity will be of particular use when considering spined products of bands, which we now define. 

\begin{definition} Let $S$ and $T$ be semigroups having a common morphic image $H$, and let $\phi$ and $\psi$ be morphisms from $S$ and $T$ to $H$, respectively. The \textit{spined product} of $S$ and $T$ with respect to $H, \phi, \psi$ is defined as the semigroup 
\[ S \bowtie T:=\{(s,t)\in S \times T: s\phi = t\psi\}.
\] 
\end{definition} 

 Kimura showed in \cite{Kimura58} that a band $B$ is regular if and only if it is a spined product of a left regular and right regular band (known as the \textit{left and right component} of $B$, respectively). Moreover, a band is left (right) regular if and only if it is a semilattice of left zero (right zero) semigroups. 

Since the classes of left regular, right regular and regular bands form varieties, it can be shown that if $A$ is a subband of a regular band $L\bowtie R$, then there exist subbands $L'$ of $L$ and $R'$ of $R$ such that $A=L'\bowtie R'$.  

We shall prove in this paper that a homogeneous band is necessarily regular. The following result was first proven by Kimura, and a simplified form can be found in \cite[Lemma V.1.10]{Petrich99}: 

\begin{proposition} \label{iso regular} Let  $B=L\bowtie R$ and $B'= L' \bowtie R'$ be a pair of regular bands with structure semilattices $Y$ and $Y'$, respectively. Let $\theta^l:L\rightarrow L'$ and $\theta^r:R\rightarrow R'$ be morphisms which induce the same morphism $\pi:Y\rightarrow Y'$. Define a mapping $\theta$ by 
\[ (l,r)\theta=(l\theta^l,r\theta^r)  \quad  ((l,r)\in B).
\] 
Then $\theta$ is an morphism from $B$ onto $B'$, denoted $\theta=\theta^l \bowtie \theta^r$, and every morphism from $B$ to $B'$ can be so obtained for unique $\theta^l$ and $\theta^r$. Moreover, $\pi,\theta^l$, and $\theta^r$ are surjective/injective if and only if $\theta$ is.  
\end{proposition}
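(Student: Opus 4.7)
The plan is to split the claim into three parts: (i) the formula $\theta = \theta^l \bowtie \theta^r$ really does define a morphism $B \to B'$; (ii) every morphism $\theta: B \to B'$ arises this way for unique $\theta^l, \theta^r$; and (iii) the injectivity/surjectivity correspondence. Part (i) is immediate: for $(l,r) \in B$, both $l$ and $r$ have the same image in $Y$, and since $\theta^l$ and $\theta^r$ both induce $\pi$, the images $l\theta^l$ and $r\theta^r$ share the same image in $Y'$, so $(l\theta^l, r\theta^r) \in B'$; multiplicativity then follows componentwise from the spined product operation.

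For (ii), given a morphism $\theta: B \to B'$, I would define $l\theta^l$ to be the $L'$-component of $(l, r)\theta$ for any $r$ with $(l, r) \in B$ (such $r$ exists because the structure-semilattice maps $L \to Y$ and $R \to Y$ are each surjective). Well-definedness rests on the observation that two elements $(l, r_1), (l, r_2) \in B$ are $\mathcal{R}$-related in $B$: the partners $r_1, r_2$ lie in the same $\mathcal{D}$-class of $R$, which is a right zero band, so $r_1 r_2 = r_2$ and $r_2 r_1 = r_1$, and hence $(l, r_1)(l, r_2) = (l, r_2)$ and dually. Since morphisms preserve $\mathcal{R}$, the images of these two elements agree in their $L'$-component. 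A componentwise check then shows $\theta^l$ is a morphism inducing $\pi$; construct $\theta^r$ dually. Uniqueness is forced, since the $L'$- and $R'$-components of $(l,r)\theta$ pin down $\theta^l$ and $\theta^r$ respectively.

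For (iii), the direction ``$\theta$ surjective/injective implies all three are'' is direct: surjectivity transfers along the commuting squares relating $\theta$ with $\theta^l, \theta^r$ via the surjective coordinate projections, while injectivity of $\pi$ follows because the natural order on a rectangular band is trivial, so $\theta$ cannot collapse two distinct $\mathcal{D}$-classes of $B$ into one $\mathcal{D}$-class of $B'$ (reduce to the comparable case via the meet in $Y$, then compare an element with a natural-order predecessor); once $\pi$ is known injective, $\theta^l$ and $\theta^r$ inherit injectivity by lifting a pair with equal $\theta^l$-image to common $R$-partners. The converse for injectivity is immediate from the componentwise formula.

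The main obstacle is the converse for surjectivity: knowing that $\theta^l$ hits $l'$ and $\theta^r$ hits $r'$ need not yield a single element $(l, r) \in B$ mapping to $(l', r')$, since the preimages may lie over distinct elements of $Y$. To bridge this I plan a product trick inside the rectangular $\mathcal{D}$-class $B'_{\alpha'}$ containing $(l', r')$. Pick $l \in L_\alpha$ with $l\theta^l = l'$ and $r \in R_\beta$ with $r\theta^r = r'$, where $\alpha \pi = \beta \pi = \alpha'$; then choose partners $r_\alpha \in R_\alpha$, $l_\beta \in L_\beta$ with $(l, r_\alpha), (l_\beta, r) \in B$, which exist since the structure-semilattice maps on $L$ and $R$ are surjective. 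The product $(l, r_\alpha)(l_\beta, r) = (l l_\beta, r_\alpha r)$ lies in $B$, and its image under $\theta$ is $(l' \cdot l_\beta\theta^l,\, r_\alpha\theta^r \cdot r')$. Both factors of the first coordinate lie in the left zero band $L'_{\alpha'}$, where $xy = x$, so the first coordinate collapses to $l'$; dually the second collapses to $r'$. Hence $(l', r')$ is in the image of $\theta$, completing the proof.
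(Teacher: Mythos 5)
Your proof is correct. Note that the paper itself gives no proof of this proposition --- it is quoted from Kimura via \cite[Lemma V.1.10]{Petrich99} --- so there is no in-paper argument to compare against; judged on its own terms, your argument is sound. The two points that actually require care are both handled properly: the well-definedness of $\theta^l$ in the converse of the decomposition (via the observation that $(l,r_1)\,\mathcal{R}\,(l,r_2)$ and that $\mathcal{R}$-related elements of $L'\bowtie R'$ share their $L'$-coordinate), and the converse for surjectivity, where the preimages of $l'$ and $r'$ may sit over different elements $\alpha,\beta$ of $Y$. Your product trick resolves the latter cleanly: $(l,r_\alpha)(l_\beta,r)=(ll_\beta,\,r_\alpha r)$ maps to $(l'\cdot l_\beta\theta^l,\;r_\alpha\theta^r\cdot r')=(l',r')$ because both first factors lie in the left zero band $L'_{\alpha'}$ and both second factors in the right zero band $R'_{\alpha'}$. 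The injectivity argument for $\pi$ (reduce to $\alpha>\beta$ via the meet, pick $e_\beta<e_\alpha$, and use that the natural order on a rectangular band is trivial) is also correct and matches facts the paper establishes elsewhere (non-emptiness of $B_\beta(e_\alpha)$ and preservation of the natural order under morphisms).
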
 

In general, a pair of regular bands with isomorphic left and right components need not be isomorphic. The ensuing lemma gives a condition on the components of the bands which forces them to be isomorphic.  

\begin{corollary}\label{SH regular}  Let  $B=L\bowtie R$ and $B'= L' \bowtie R'$ be a pair of regular bands with structure semilattices $Y$ and $Y'$, respectively, and with $L$ and $L'$ structure-homogeneous. Then $B\cong B'$ if and only if $L \cong L'$ and $R\cong R'$ (dually for $R$ and $R'$). 
\end{corollary}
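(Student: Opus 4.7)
The forward implication follows directly from Proposition \ref{iso regular}: any isomorphism $\theta: B \to B'$ decomposes uniquely as $\theta = \theta^l \bowtie \theta^r$ for morphisms $\theta^l: L \to L'$ and $\theta^r: R \to R'$, and by the final clause of that proposition both components must themselves be isomorphisms. So the substantive task is the converse.

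For the converse, my plan is to start from arbitrary isomorphisms $\phi: L \to L'$ and $\psi: R \to R'$ and modify one of them so that both induce a common morphism on structure semilattices; Proposition \ref{iso regular} will then glue them into an isomorphism of the spined products. First, by Proposition \ref{iso band}, $\phi$ and $\psi$ induce semilattice isomorphisms $\pi_L, \pi_R: Y \to Y'$ respectively. These need not coincide, but $\pi_L^{-1} \pi_R$ is an automorphism of $Y'$. Applying the remark that follows Definition \ref{defn SH} to the structure-homogeneous band $L'$ produces an automorphism $\sigma$ of $L'$ whose induced automorphism on $Y'$ is exactly $\pi_L^{-1} \pi_R$. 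Then $\phi \sigma: L \to L'$ is an isomorphism whose induced semilattice morphism is $\pi_L \cdot (\pi_L^{-1} \pi_R) = \pi_R$, matching that of $\psi$. A final appeal to Proposition \ref{iso regular} yields the desired isomorphism $(\phi \sigma) \bowtie \psi: B \to B'$, explicitly $(l, r) \mapsto (l \phi \sigma, r \psi)$, and all three of $\pi_R$, $\phi\sigma$, $\psi$ being isomorphisms forces this spined-product morphism to be an isomorphism as well.

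The only real obstacle is the mismatch between $\pi_L$ and $\pi_R$: without something like structure-homogeneity there is no mechanism to align the two induced semilattice morphisms, and the joint construction would fail because Proposition \ref{iso regular} requires a common induced morphism. Structure-homogeneity of $L'$ is precisely what is needed to absorb the discrepancy $\pi_L^{-1} \pi_R$ into an automorphism of one regular component while leaving the other component untouched. The assumption that $L$ itself is structure-homogeneous is automatic from $L \cong L'$ and is not needed separately in the argument; the parenthetical dual statement is handled by a symmetric modification, using structure-homogeneity of $R'$ to adjust $\psi$ instead of $\phi$.
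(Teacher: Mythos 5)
Your proposal is correct and follows essentially the same route as the paper: decompose via Proposition \ref{iso regular} for the forward direction, and for the converse use the structure-homogeneity of $L'$ (via the remark after Definition \ref{defn SH}) to produce an automorphism with induced semilattice automorphism $\pi_L^{-1}\pi_R$, thereby aligning the two induced morphisms before gluing with Proposition \ref{iso regular}. Your side observation that structure-homogeneity of $L$ itself is not separately needed is also consistent with the paper's argument, which only invokes it for $L'$.
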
 

\begin{proof} Let $\theta^r:R\rightarrow R'$ and $\theta^l:L\rightarrow L'$ be isomorphisms with induced isomorphisms $\pi_r$ and $\pi_l$ of $Y$ into $Y'$, respectively. Then as $L'$ is structure-homogeneous there exists an automorphism $\phi^l$ of $L'$ with induced automorphism $\pi_l^{-1}\pi_r$ of $Y'$. Hence $\theta^l\phi^l$ is an isomorphism from $L$ to $L'$ with induced isomorphism $\pi_l(\pi_l^{-1}\pi_r)=\pi_r$, and so $\theta^l\phi^l \bowtie \theta^r$ is an isomorphism from $B$ to $B'$ by Proposition \ref{iso regular}. The converse is immediate from Proposition \ref{iso regular}. 
\end{proof}
 
\begin{corollary} \label{structure homog reg} Let $B$ be the spined product of a homogeneous left regular band $L$ and homogeneous right regular band $R$. If either $L$ or $R$ are structure-homogeneous, then $B$ is homogeneous. Moreover, if both $L$ and $R$ are structure-homogeneous, then so is $B$. 
\end{corollary}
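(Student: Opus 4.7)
The plan is to exploit the decomposition of morphisms of regular bands given by Proposition \ref{iso regular} and match up the induced semilattice automorphisms using structure-homogeneity. Let me write $B=L\bowtie R$ with common structure semilattice $Y$. Take finite subbands $A_1,A_2$ of $B$ and an isomorphism $\theta:A_1\to A_2$. Since the class of regular bands is a variety, the remarks following the definition of spined product give $A_i=L_i'\bowtie R_i'$ for subbands $L_i'\leq L$ and $R_i'\leq R$ (with common structure semilattice $Z_i$). By Proposition \ref{iso regular} applied to $\theta$, we may write $\theta=\theta^l\bowtie\theta^r$ for isomorphisms $\theta^l:L_1'\to L_2'$ and $\theta^r:R_1'\to R_2'$, both inducing the same semilattice isomorphism $\pi:Z_1\to Z_2$.

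For the first assertion, assume without loss of generality that $L$ is structure-homogeneous. Homogeneity of $R$ extends $\theta^r$ to an automorphism $\hat\theta^r$ of $R$, and this has some induced automorphism $\hat\pi$ of $Y$, which necessarily extends $\pi$. Now by structure-homogeneity of $L$ applied to the isomorphism $\theta^l$ (with induced morphism $\pi$) and the automorphism $\hat\pi$ of $Y$, we obtain an automorphism $\hat\theta^l$ of $L$ extending $\theta^l$ whose induced automorphism of $Y$ is exactly $\hat\pi$. Since $\hat\theta^l$ and $\hat\theta^r$ share the induced automorphism $\hat\pi$, Proposition \ref{iso regular} assembles them into an automorphism $\hat\theta^l\bowtie\hat\theta^r$ of $B$; a direct computation on $(l,r)\in A_1$ shows this extends $\theta$. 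Hence $B$ is homogeneous.

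For the moreover, suppose both $L$ and $R$ are structure-homogeneous. Given any automorphism $\hat\pi$ of $Y$ extending $\pi$, applying structure-homogeneity of $L$ and of $R$ separately produces automorphisms $\hat\theta^l$ of $L$ and $\hat\theta^r$ of $R$, both with induced automorphism $\hat\pi$, extending $\theta^l$ and $\theta^r$ respectively. Proposition \ref{iso regular} then gives the automorphism $\hat\theta^l\bowtie\hat\theta^r=[\hat\theta_\alpha,\hat\pi]_{\alpha\in Y}$ of $B$ extending $\theta$, proving structure-homogeneity of $B$.

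The only real obstacle is the matching of the induced semilattice automorphisms: plain homogeneity of $L$ and $R$ would yield extensions $\hat\theta^l,\hat\theta^r$ but with possibly different induced automorphisms on $Y$, and such a pair cannot be reassembled via Proposition \ref{iso regular}. The role of structure-homogeneity is precisely to remove this obstruction by allowing us to prescribe the induced automorphism on one (or both) components.
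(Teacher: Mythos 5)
Your proposal is correct and follows essentially the same route as the paper: decompose $\theta$ as $\theta^l\bowtie\theta^r$ via Proposition \ref{iso regular}, extend $\theta^r$ by homogeneity of $R$ to get an induced automorphism $\hat\pi$ of $Y$, then use structure-homogeneity of $L$ to force the extension of $\theta^l$ to have the same induced automorphism, and reassemble. Your closing remark correctly identifies the obstruction that structure-homogeneity is there to remove.
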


\begin{proof} Suppose w.l.o.g. that $L$ is structure-homogeneous, with structure semilattice $Y$. Let $\theta=\theta^l\bowtie \theta^r$ be an isomorphism between finite subbands $A_1=L_1\bowtie R_1$ and $A_2=L_2\bowtie R_2$ of $B$. Then by Proposition \ref{iso regular}, the isomorphisms $\theta^l$ and $ \theta^r$ both induce an isomorphism $\pi$ between the structure semilattices $Y_1$ and $Y_2$ of $A_1$ and $A_2$, respectively. Since $R$ is homogeneous, we may extend $\theta^r:R_1\rightarrow R_2$ to an automorphism $\bar{\theta}^r$ of $R$, with induced automorphism $\bar{\pi}$ of $Y$ extending $\pi$. Since $L$ is structure-homogeneous, there exists an automorphism $\bar{\theta}^l$ of $L$ extending $\theta^l$ and with induced automorphism $\bar{\pi}$ of $Y$. Hence $\bar{\theta}^l\bowtie \bar{\theta}^r$ is an automorphism of $B$, which extends $\theta$ as required. 
The final result is proven in a similar fashion. 
\end{proof} 

In later sections, this will give us a useful method for building homogeneous regular bands from left regular and right regular bands.
 However, at this stage, it is unclear how the homogeneity of a regular band is affected by the homogeneity of its left and right components. 

 Given a band $B=\bigcup_{\alpha\in Y} B_{\alpha}$, we now define a number of crucial subsets of $B$. If $\alpha>\beta$ in $Y$ and $e_{\alpha}\in B_{\alpha}$ then we denote
\begin{enumerate}[label=(\roman*)]
\item $  B_{\beta}(e_{\alpha}): =\{e_{\beta}\in B_{\beta}:e_{\beta}<e_{\alpha}\};$ 
 \item $B_{\alpha,\beta}: =\bigcup_{f_\alpha\in B_{\alpha}} B_{\beta}(f_{\alpha})=\{e_{\beta}\in B_{\beta}:e_{\beta}<f_{\alpha} \text{ for some } f_{\alpha}\in B_{\alpha}\};$
 \item $\mathcal{R}(B_{\beta}(e_{\alpha})): =\{f_{\beta}\in B_{\beta}:\exists e_{\beta}\in B_{\beta}(e_{\alpha}), f_{\beta} \, \mathcal{R} \, e_{\beta}\}=\{f_{\beta}\in B_{\beta}:f_{\beta}<_r e_{\alpha}\};$ 
\end{enumerate}
 
and dually for $\mathcal{L}(B_{\beta}(e_{\alpha}))$.  

Note first that 
\[ \mathcal{R}(B_{\beta}(e_{\alpha}))=\{f_{\beta}\in B_{\beta}:\exists e_{\beta}\in B_{\beta}(e_{\alpha}), f_{\beta} \, \mathcal{R} \, e_{\beta}\}.
\] 
Indeed, we claim that if $f_{\beta}\in B_\beta$ and $e_\alpha\in B_\alpha$, then  $f_{\beta}<_r e_{\alpha}$ if and only if $f_{\beta} \, \mathcal{R} \, e_{\alpha}f_{\beta}e_{\alpha}$, to which the result follows as $e_{\alpha}f_{\beta}e_{\alpha}<e_{\alpha}$.
 If $f_{\beta}<_r e_{\alpha}$ then $f_{\beta}=e_{\alpha}f_{\beta}$, and so 
\[ f_{\beta}(e_{\alpha}f_{\beta}e_{\alpha})= (f_{\beta}e_{\alpha})(f_{\beta}e_{\alpha})= f_{\beta}e_{\alpha} = e_{\alpha}f_{\beta} e_{\alpha} 
\] 
so that $f_{\beta} >_{r} e_{\alpha}f_{\beta}e_{\alpha}$. Hence $f_{\beta}$ and $e_{\alpha}f_{\beta}e_{\alpha}$, being elements of $B_{\beta}$, are $\mathcal{R}$-related.  Conversely, if $f_{\beta} \, \mathcal{R} \, e_{\alpha}f_{\beta}e_{\alpha}$ then 
$f_{\beta} = (e_{\alpha}f_{\beta} e_{\alpha})f_\beta = e_{\alpha}f_{\beta}$ and the claim holds. 
We observe that each $B_{\beta}(e_{\alpha})$ is non-empty, since for any $e_{\beta}\in B_{\beta}$ we have $e_{\alpha}>e_{\alpha}e_{\beta}e_{\alpha}\in B_{\beta}$. Moreover, each of the sets defined above are subbands of $B_{\beta}$. Indeed, if $e_{\alpha}>e_{\beta}$ and $f_{\alpha}>f_{\beta}$ then 
\[ e_{\beta}f_{\beta}e_{\alpha}f_{\alpha}=e_{\beta}(f_{\beta}f_{\alpha})e_{\alpha}f_{\alpha}= e_{\beta}f_{\beta}(f_{\alpha}e_{\alpha}f_{\alpha})=e_{\beta}f_{\beta}f_{\alpha}=e_{\beta}f_{\beta}, 
\] 
and similarly $e_{\alpha}f_{\alpha}e_{\beta}f_{\beta}=e_{\beta}f_{\beta}$. Hence $e_{\alpha}f_{\alpha}>e_{\beta}f_{\beta}\in B_{\alpha,\beta}$, and so $B_{\alpha,\beta}$ is a subband. By taking $e_{\alpha}=f_{\alpha}$ gives $B_{\beta}(e_{\alpha})$ to be a subband. Finally $ \mathcal{R}(B_{\beta}(e_{\alpha}))$, being a collection of $ \mathcal{R}$-classes of $B_{\beta}$, is a subband. 

\begin{definition} Given a subset $A$ of a band $B$, we define the \textit{support of} $A$ as 
\[ \text{supp}(A):=\{\gamma\in Y: A\cap B_{\gamma}\neq \emptyset\}. 
\] 
\end{definition} 
If $A$ is a subband of $B$ then clearly supp($A$) is simply the structure semilattice of $A$. 

Given a semilattice $Y$ and $\beta\in Y$, we call $\alpha\in Y$ a \textit{cover} of $\beta$ if $\alpha>\beta$ and whenever $\alpha\geq \delta \geq \beta$ for some $\delta\in Y$, then $\alpha=\delta$ or $\delta=\beta$. A semilattice in which no element has a cover is called \textit{dense}. Note that a countable dense linear order without endpoints is isomorphic to $(\mathbb{Q},\leq)$. 

\begin{corollary}\label{basics} Let $B=\bigcup_{\alpha\in Y} B_{\alpha}$ be a homogeneous band where $Y$ is non-trivial. Then, for all $\alpha>\beta$ and $\alpha'> \beta'$ in $Y$, $e_{\alpha}\in B_{\alpha}$ and $e_{\alpha'}\in B_{\alpha'}$, we have 
\begin{enumerate}[label=(\roman*), font=\normalfont]
\item For every $e,f\in B$ there exists an automorphism of $B$ mapping $e$ to $f$. 
\item $ Y$  is dense and without maximal or minimal elements; 
\item  $B_{\alpha}\cong B_{\alpha'}, L_{\alpha}\cong L_{\alpha'}$ and $R_{\alpha}\cong R_{\alpha'};$ 
\item $B_{\beta}(e_{\alpha})\cong B_{\beta'}(e_{\alpha'})$.
\end{enumerate}
\end{corollary}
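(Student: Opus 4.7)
Part (i) is immediate from homogeneity applied to the trivially isomorphic one-element subbands $\{e\}$ and $\{f\}$. From this I obtain for free that $\operatorname{Aut}(Y)$ acts transitively on $Y$: any automorphism of $B$ induces an automorphism of the structure semilattice $Y$, and (i) lets me prescribe $\alpha \mapsto \alpha'$ for any $\alpha, \alpha' \in Y$ by choosing representatives $e \in B_\alpha$ and $f \in B_{\alpha'}$.

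Parts (iii) and (iv) both follow by applying homogeneity to a suitably small chain subband and then restricting the resulting automorphism. For (iii), the automorphism of $B$ obtained in (i) restricts to an isomorphism $B_\alpha \to B_{\alpha'}$; by Proposition \ref{rec bands iso} every such isomorphism decomposes as $\phi_L \times \phi_R$, which forces $L_\alpha \cong L_{\alpha'}$ and $R_\alpha \cong R_{\alpha'}$. For (iv), I pick $f_\beta \in B_\beta(e_\alpha)$ and $f_{\beta'} \in B_{\beta'}(e_{\alpha'})$ (non-empty, as noted after their definition), and observe that the two-element chain subbands $\{e_\alpha, f_\beta\}$ and $\{e_{\alpha'}, f_{\beta'}\}$ are isomorphic as semilattices. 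Extending the map $e_\alpha \mapsto e_{\alpha'}, f_\beta \mapsto f_{\beta'}$ to an automorphism $\phi$ of $B$ gives an induced automorphism $\pi$ of $Y$ with $\alpha\pi = \alpha'$ and $\beta\pi = \beta'$; since $g \in B_\beta(e_\alpha)$ iff $g\phi \in B_{\beta'}(e_{\alpha'})$ (the natural order is preserved by $\phi$), the restriction of $\phi$ is the sought isomorphism.

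The main obstacle is (ii), and my plan is to bootstrap a 3-chain out of the transitive action of $\operatorname{Aut}(Y)$. Since $Y$ is non-trivial, pick any $\alpha > \beta$ in $Y$; transitivity supplies $\pi \in \operatorname{Aut}(Y)$ with $\alpha\pi = \beta$, and the strict order-preservation of $\pi$ (injectivity plus monotonicity) yields the 3-chain $\alpha > \beta > \beta\pi$ in $Y$. For density, given arbitrary $\alpha_0 > \beta_0$ in $Y$, I lift the 3-chain to a chain $e_1 > e_2 > e_3$ in $B$ (using $e_{i+1} := e_i g_{i+1} e_i$ for any $g_{i+1}$ in the required $\mathcal{D}$-class) and lift $(\alpha_0, \beta_0)$ to any $e_{\alpha_0} > e_{\beta_0}$; the 2-element subbands $\{e_1, e_3\}$ and $\{e_{\alpha_0}, e_{\beta_0}\}$ are isomorphic, and extending the isomorphism to an automorphism of $B$ transports $e_2$ to an element whose $\mathcal{D}$-class lies strictly between $\alpha_0$ and $\beta_0$. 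Absence of extrema then follows from transitivity of $\operatorname{Aut}(Y)$ applied to the middle element $\beta$ of the 3-chain, which has strict neighbours both above and below inside the chain itself. The only real difficulty lies in constructing the initial 3-chain and its lift to $B$; once that is in place, the two-chain transport argument used throughout the other parts suffices.
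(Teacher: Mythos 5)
Your proposal is correct and follows essentially the same route as the paper: (i) from $1$-homogeneity, (iii) and (iv) by transporting one- and two-element chain subbands and restricting the resulting automorphism, and (ii) by pushing a $3$-chain through the isomorphism of a $2$-element chain via the induced automorphism of $Y$. The only (harmless) difference is in (ii), where you manufacture the third point of the chain directly from the transitivity of $\operatorname{Aut}(Y)$ on $Y$ (mapping $\alpha$ to $\beta$ forces $\beta>\beta\pi$), whereas the paper first rules out minimal elements and then picks $\gamma<\beta$; both arguments rest on the same transport mechanism.
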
 

\begin{proof} $(\mathrm{i})$ For any $e,f\in B$ we have $\{e\}\cong \{f\}$, so the result follows as $B$ is 1-homogeneous. 

$(\mathrm{ii})$  Suppose for contradiction that $\alpha$ is maximal, and let $\beta<\alpha$ in $Y$.
 Then for any $e_{\alpha}\in B_{\alpha}$ and $e_{\beta}\in B_{\beta}$, there exists by $(\mathrm{i})$ an automorphism mapping $e_{\alpha}$ to $e_{\beta}$. Hence by Proposition \ref{iso band} there exists an automorphism of $Y$ mapping $\alpha$ to $\beta$, contradicting $\alpha$ being maximal. The result is proven similarly for minimal elements. 

Now suppose $\alpha>\beta$ in $Y$. Since $\beta$ is not minimum, there exists $\gamma\in Y$ such that $\gamma<\beta$. Let $e_{\alpha}\in B_{\alpha}, e_{\beta}\in B_{\beta}(e_{\alpha})$ and $e_{\gamma}\in B_{\gamma}(e_{\beta})$, so $e_{\gamma}\in B_{\gamma}(e_{\alpha})$. Then by extending the isomorphism from  $\{e_{\alpha},e_{\gamma}\}$ to $\{e_{\alpha},e_{\beta}\}$ to an automorphism of $B$, it follows by considering the image of $\beta$ under the induced automorphism of $Y$ that there exists $\gamma'\in Y$ such that $ \alpha>\gamma'>\beta$, and so  $\alpha$ does not cover $\beta$. Hence $Y$ is dense. 

$(\mathrm{iii})$  By taking an automorphism $\theta$ of $B$ which sends $e_{\alpha}$ to $e_{\alpha'}$, it follows from Proposition \ref{iso band} that $B_{\alpha}\theta=B_{\alpha'}$.  The results for $L_{\alpha}$ and $R_{\alpha}$ are then immediate from Proposition \ref{rec bands iso}.

$(\mathrm{iv})$ Let $e_{\beta}\in B_{\beta}(e_{\alpha})$ and $e_{\beta'}\in B_{\beta'}(e_{\alpha'})$. Since $\{e_{\alpha},e_{\beta}\}$ and $\{e_{\alpha'},e_{\beta'}\}$ are isomorphic subbands, the result follows by extending the unique isomorphism between them to an automorphism of $B$. 
\end{proof}

If $B$ is a band with non-trivial semilattice, then by the proof of $(\mathrm{ii})$, it is clear that $B$, regarded as a poset under its natural order, cannot have maximal or minimal elements, since then natural order is preserved under automorphisms of $B$. 

One of our fundamental questions in this article is whether or not the homogeneity of a band is inherited by its structure semilattice.
 The answer is yes, but surprisingly we have not been able to find a direct proof. For now we are only able to partially answer this question: 

\begin{proposition}\label{linear homog} If $B=\bigcup_{\alpha\in Y} B_{\alpha}$ is a homogeneous band, then $Y$ is 2-homogeneous. Consequently, if $Y$ is linearly or semi-linearly ordered then $Y$ is homogeneous.
\end{proposition}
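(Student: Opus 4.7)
The plan is to prove $2$-homogeneity of $Y$ directly by lifting a pair of comparable elements of $Y$ to a chain of length two in $B$ under the natural order, invoking the homogeneity of $B$, and then reading off the induced automorphism of $Y$ via Proposition \ref{iso band}. The linear case of the consequence will follow almost immediately from Corollary \ref{basics}(ii), while the semi-linear case is the main obstacle and will demand additional work.

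For the proof of $2$-homogeneity, observe that any $2$-element subsemilattice of $Y$ is necessarily a chain, so it suffices to show that for comparable pairs $\alpha_1>\beta_1$ and $\alpha_2>\beta_2$ in $Y$ there is an automorphism of $Y$ sending $\alpha_1\mapsto\alpha_2$ and $\beta_1\mapsto\beta_2$. Using the observation made in the discussion preceding Corollary \ref{basics} that $B_\beta(e_\alpha)$ is non-empty whenever $\alpha>\beta$ and $e_\alpha\in B_\alpha$, I would pick $e_{\alpha_i}\in B_{\alpha_i}$ and $e_{\beta_i}\in B_{\beta_i}(e_{\alpha_i})$ for $i=1,2$, so that each $\{e_{\alpha_i},e_{\beta_i}\}$ is a $2$-element chain subband of $B$. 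The unique isomorphism $e_{\alpha_1}\mapsto e_{\alpha_2},\ e_{\beta_1}\mapsto e_{\beta_2}$ between these subbands extends, by homogeneity of $B$, to an automorphism $\theta$ of $B$, and Proposition \ref{iso band} then supplies an induced automorphism of $Y$ which by construction sends $(\alpha_1,\beta_1)$ to $(\alpha_2,\beta_2)$.

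For the linear consequence, Corollary \ref{basics}(ii) yields that $Y$ is countable, dense, and without endpoints, so $Y\cong(\mathbb{Q},\leq)$; since every finite subsemilattice of a chain is a finite subchain and every order-isomorphism between finite subchains of $\mathbb{Q}$ extends to an order-automorphism of $\mathbb{Q}$ by a standard back-and-forth, $Y$ is homogeneous. The semi-linear case is the main hurdle, since finite subsemilattices of $Y$ are now finite trees (each principal down-set in $Y$ being a chain) rather than chains, and $2$-homogeneity by itself does not extend isomorphisms of such larger subsemilattices. The plan here would be a back-and-forth on finite subsemilattices of $Y$: at each step, lift a partial isomorphism together with one additional element to a partial isomorphism between finite subbands of $B$, extend it via homogeneity of $B$, and then read off the induced automorphism of $Y$. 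The principal technical piece of work is to show that any finite subsemilattice $A$ of $Y$ admits a ``section'' $A\hookrightarrow B$ that respects multiplication, which should be achievable by building representatives $e_\alpha$ $(\alpha\in A)$ inductively along the tree structure of $A$, using the non-emptiness of the sets $B_\beta(e_\alpha)$ together with the $1$-homogeneity of $B$ (Corollary \ref{basics}(i)); this is the chief obstacle to completing the proof.
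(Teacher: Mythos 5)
Your proof of $2$-homogeneity is correct and is word-for-word the paper's own argument: pick $e_{\alpha_i}\in B_{\alpha_i}$, use the non-emptiness of $B_{\beta_i}(e_{\alpha_i})$ to get a two-element chain $\{e_{\alpha_i},e_{\beta_i}\}$ in $B$, extend the unique isomorphism between the two chains by homogeneity of $B$, and read off the induced automorphism of $Y$ via Proposition \ref{iso band}. The linear case of the consequence is also fine (density and absence of endpoints give $Y\cong(\mathbb{Q},\leq)$, which is homogeneous).

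The genuine gap is the semilinear case, which you yourself flag as unfinished, and the repair you sketch would not work as stated. Your plan requires that every finite subsemilattice $A$ of $Y$ admit a ``section'' $A\hookrightarrow B$, i.e.\ a subband of $B$ with support $A$ isomorphic to $A$ as a semilattice. This is Lemma \ref{embed Y} of the paper, which is proved only for \emph{surjective} normal bands, and it is genuinely false in general: in an image-trivial homogeneous normal band $T_{n,m,k}$ with $\alpha\perp\gamma$ and $\alpha\gamma=\beta$, any $e_\alpha\in B_\alpha$ and $e_\gamma\in B_\gamma$ satisfy $e_\alpha>\epsilon_{\alpha,\beta}$ and $e_\gamma>\epsilon_{\gamma,\beta}$, so $\langle e_\alpha,e_\gamma\rangle$ is isomorphic to the three-element non-chain only when $\epsilon_{\alpha,\beta}=\epsilon_{\gamma,\beta}$, i.e.\ when $\alpha$ and $\gamma$ lie in cones of $\beta$ belonging to the same element of $B_\beta$; for a prescribed pair $\alpha,\gamma$ this can fail, so the section over $\{\alpha,\gamma,\beta\}$ need not exist. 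The paper sidesteps the whole issue by staying inside $Y$: it quotes the purely order-theoretic fact (Proposition 2.1 of Droste--Kuske--Truss, \cite{Truss99}) that a linearly or semilinearly ordered semilattice which is $2$-homogeneous is already homogeneous. If you do not want to cite that result, the honest alternative is to prove the semilattice statement directly (e.g.\ via density plus a well-defined ramification order, which $2$-homogeneity supplies), not to lift finite subsemilattices back into $B$.
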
 

\begin{proof}
As the unique (up to isomorphism) 2 element semilattice is a chain, it suffices to consider a pair $\alpha_i>\beta_i$ ($i=1,2$) in $Y$. Fix $e_{\alpha_i}\in B_{\alpha_i}$ for each $i=1,2$ and consider any $e_{\beta_i}\in B_{\beta_i}(e_{\alpha_i})$. By extending the isomorphism between $\{e_{\alpha_1},e_{\beta_1}\}$ and $\{e_{\alpha_2},e_{\beta_2}\}$, it follows by Proposition \ref{iso band} that $Y$ is 2-homogeneous. The final result is then immediate from \cite[Proposition 2.1]{Truss99}. 
\end{proof}

To avoid falling into already complete classifications, we assume throughout this paper that $Y$ is non-trivial (so $B$ is not a rectangular band) and each $\mathcal{D}$-class is non-trivial (so $B$ is not a semilattice). 

\section{Homogeneous normal bands} \label{sec:normal}

 In this section we classify homogeneous normal bands. Our aim is helped by not only a classification theorem for normal bands which gives the local structure, but also a relatively simple isomorphism theorem (see \cite[Lemma IV.1.8]{Petrich99}, for example): 

\begin{proposition}\label{iso normal} Let $B=[Y;B_{\alpha};\psi_{\alpha,\beta}]$ and $B'=[Y';B_{\alpha'}';\psi_{\alpha',\beta'}']$ be a pair of normal bands. Let $\pi:Y\rightarrow Y'$ be an isomorphism, and for every $\alpha\in Y$, let $\theta_{\alpha}:B_{\alpha}\rightarrow B_{\alpha\pi}$ be an isomorphism such that for any $\alpha\geq \beta$ in $Y$, the diagram
\begin{align}\label{diagram}  \xymatrix{
B_{\alpha} \ar[d]^{\psi_{\alpha, \beta}} \ar[r]^{\theta_{\alpha}} &B_{\alpha \pi}' \ar[d]^{\psi_{\alpha \pi, \beta \pi}'} \\\
B_{\beta} \ar[r]^{\theta_{\beta}} &B_{\beta \pi}'}
\end{align}  
commutes. Then $\theta=[\theta_{\alpha},\pi]_{\alpha\in Y}$ is an isomorphism from $B$ into $B'$. Conversely, every isomorphism of $B$ into $B'$ can be so obtained for unique $\pi$ and $\theta_{\alpha}$. 
\end{proposition}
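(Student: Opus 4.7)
For the forward direction, I would define $\theta : B \to B'$ by $b\theta = b\theta_\alpha$ for $b \in B_\alpha$, and verify directly that it is a homomorphism using the explicit formula $a * b = (a\psi_{\alpha,\alpha\beta})(b\psi_{\beta,\alpha\beta})$ for the multiplication on a strong semilattice. Specifically, for $a \in B_\alpha$ and $b \in B_\beta$,
\[
(a*b)\theta = \bigl((a\psi_{\alpha,\alpha\beta})(b\psi_{\beta,\alpha\beta})\bigr)\theta_{\alpha\beta} = (a\psi_{\alpha,\alpha\beta}\theta_{\alpha\beta})(b\psi_{\beta,\alpha\beta}\theta_{\alpha\beta}),
\]
and the commuting diagram lets me replace each $\psi_{\gamma,\alpha\beta}\theta_{\alpha\beta}$ by $\theta_\gamma \psi'_{\gamma\pi,(\alpha\beta)\pi}$. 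Since $\pi$ is a semilattice morphism, $(\alpha\beta)\pi = (\alpha\pi)(\beta\pi)$, so the right hand side is exactly $(a\theta_\alpha)*(b\theta_\beta) = (a\theta)*(b\theta)$. Bijectivity is immediate: each $\theta_\alpha$ is a bijection, $\pi$ is a bijection, and the $B_\alpha$'s partition $B$.

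For the converse, let $\theta : B \to B'$ be an isomorphism. By Proposition \ref{iso band} there is an induced semilattice isomorphism $\pi:Y \to Y'$ (an isomorphism, not merely a morphism, because $\theta^{-1}$ supplies the inverse) together with the restrictions $\theta_\alpha := \theta|_{B_\alpha} : B_\alpha \to B'_{\alpha\pi}$, which are themselves isomorphisms of rectangular bands. Uniqueness of $\pi$ and the $\theta_\alpha$'s is then forced by the decomposition $\theta = \bigcup_{\alpha \in Y}\theta_\alpha$.

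The remaining task, and the main obstacle, is to show that the square \eqref{diagram} commutes. Here I would exploit the cited characterisation (\cite[Lemma IV.1.7]{Petrich99}) of the natural order in a strong semilattice of rectangular bands: for $\alpha \geq \beta$ and $e_\alpha \in B_\alpha$, the element $e_\alpha \psi_{\alpha,\beta}$ is the \emph{unique} element of $B_\beta$ lying below $e_\alpha$ in the natural order. (This uniqueness follows because $e_\alpha \geq f_\beta$ in a strong semilattice forces $(e_\alpha\psi_{\alpha,\beta})f_\beta = f_\beta = f_\beta(e_\alpha\psi_{\alpha,\beta})$ in the rectangular band $B_\beta$, and rectangular bands admit no nontrivial two-sided relations of that form.) Since morphisms preserve the natural order, $f_\beta := e_\alpha\psi_{\alpha,\beta} \leq e_\alpha$ implies $f_\beta\theta_\beta \leq e_\alpha\theta_\alpha$; applying the same uniqueness in $B'$ identifies $f_\beta\theta_\beta$ with $e_\alpha\theta_\alpha \psi'_{\alpha\pi,\beta\pi}$. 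Rearranging gives $e_\alpha\psi_{\alpha,\beta}\theta_\beta = e_\alpha\theta_\alpha \psi'_{\alpha\pi,\beta\pi}$, which is precisely the commutativity of the square.

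Both directions together yield the proposition, with the only subtle point being the order-theoretic identification of $\psi_{\alpha,\beta}$ that drives the converse.
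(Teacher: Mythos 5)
Your proof is correct. Note that the paper does not actually prove this proposition — it is quoted from the literature (Petrich--Reilly, Lemma IV.1.8) — so there is no internal argument to compare against; your write-up is the standard one and all steps check out. The forward direction is a routine computation with the strong-semilattice multiplication, and your key step in the converse — that $e_\alpha\psi_{\alpha,\beta}$ is the unique element of $B_\beta$ below $e_\alpha$ in the natural order, so that order-preservation of $\theta$ forces the square to commute — is exactly the characterisation the paper itself records just before Section 3 (citing Lemma IV.1.7 of the same source), and your parenthetical justification of the uniqueness inside a rectangular band is valid.
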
 

We denote the diagram \eqref{diagram} as $[\alpha,\beta;\alpha\pi,\beta\pi]$.

To understand the homogeneity of normal bands, we require a better understanding of the finite subbands. Since the class of all normal bands forms a variety, the following lemma is easily verified.

\begin{lemma} \label{sub normal} Let $A$ be a subband of a normal band $B=[Y;B_{\alpha};\psi_{\alpha,\beta}]$. Then $A=[Z;A_{\alpha};\psi_{\alpha,\beta}|_A]$, for some subsemilattice $Z$ of $Y$ and rectangular subbands $A_{\alpha}$ of $B_{\alpha}$ $(\alpha\in Z)$ for each $\alpha\in Z$.
\end{lemma}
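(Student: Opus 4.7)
The plan is to set $Z := \mathrm{supp}(A) = \{\alpha \in Y : A \cap B_\alpha \neq \emptyset\}$ and $A_\alpha := A \cap B_\alpha$ for each $\alpha \in Z$. First I would verify that $Z$ is a subsemilattice of $Y$: given $\alpha,\beta \in Z$, pick $a \in A_\alpha$, $b \in A_\beta$, and since $A$ is a subband, $ab \in A \cap B_{\alpha\beta}$, so $\alpha\beta \in Z$. Each $A_\alpha$ is a subsemigroup of $B_\alpha$ and therefore, as the class of rectangular bands is a variety, a rectangular subband of $B_\alpha$.

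The key step is to check that the connecting morphism $\psi_{\alpha,\beta}$ of $B$ restricts to a morphism $A_\alpha \to A_\beta$ whenever $\alpha \geq \beta$ in $Z$. For this I would establish the identity
\[ a\psi_{\alpha,\beta} = aea \quad \text{for any } a \in B_\alpha, \ e \in B_\beta, \ \alpha \geq \beta, \]
which follows directly from the strong-semilattice multiplication formula: computing in $B_\beta$ one has $ae = (a\psi_{\alpha,\beta})\,e$ and $ea = e\,(a\psi_{\alpha,\beta})$, so $aea = (a\psi_{\alpha,\beta})\,e\,(a\psi_{\alpha,\beta}) = a\psi_{\alpha,\beta}$ by the rectangular identity $xyx = x$ in $B_\beta$. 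Now, given $\alpha > \beta$ in $Z$, pick any $e \in A_\beta$ (possible since $\beta \in Z$); then for every $a \in A_\alpha$ we have $a\psi_{\alpha,\beta} = aea \in A$ (as $A$ is a subband) and it lies in $B_\beta$, hence in $A_\beta$. Thus $\psi_{\alpha,\beta}|_{A_\alpha}$ is a well-defined morphism $A_\alpha \to A_\beta$, and conditions (i), (ii) of a strong semilattice are inherited verbatim from $B$.

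Finally I would show that the strong semilattice $[Z; A_\alpha; \psi_{\alpha,\beta}|_A]$ coincides with $A$. Set-theoretically both equal $\bigcup_{\alpha\in Z} A_\alpha$, since every element of $A$ lies in some $B_\alpha$ with $\alpha\in Z$. For multiplication, if $a\in A_\alpha$ and $b\in A_\beta$, the strong-semilattice product is $(a\psi_{\alpha,\alpha\beta}|_A)(b\psi_{\beta,\alpha\beta}|_A) = (a\psi_{\alpha,\alpha\beta})(b\psi_{\beta,\alpha\beta})$, which by definition of the multiplication on $B$ equals $ab$, the product in $A$. This gives the required decomposition.

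The only real obstacle is the restriction step, and it hinges on producing a witness $e\in A_\beta$ to rewrite $a\psi_{\alpha,\beta}$ as a product of elements of $A$; the identity $a\psi_{\alpha,\beta} = aea$ is precisely what makes this possible, and the rest of the argument is bookkeeping.
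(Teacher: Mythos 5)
Your proof is correct and complete; the paper itself omits the argument entirely, remarking only that the lemma is ``easily verified'' because normal bands form a variety. Your direct verification --- identifying $Z=\mathrm{supp}(A)$, $A_\alpha=A\cap B_\alpha$, and using the identity $a\psi_{\alpha,\beta}=aea$ (equivalently, that $a\psi_{\alpha,\beta}$ is the unique element of $B_\beta$ below $a$ in the natural order) to show the connecting morphisms restrict to $A$ --- is exactly the intended filling-in of that omission.
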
 

Given a normal band $B=[Y;B_{\alpha};\psi_{\alpha,\beta}]$, we shall denote Im $\psi_{\alpha,\beta}$ as $I_{\alpha,\beta}$, or $I_{\alpha,\beta}^B$ if we need to distinguish the band $B$. 
 
\begin{lemma} \label{D full} Let $B=[Y;B_{\alpha};\psi_{\alpha,\beta}]$ be a homogeneous normal band. Then $B_{\beta} = \bigcup_{\alpha>\beta} I_{\alpha,\beta}$ for each $\beta\in Y$. 
\end{lemma}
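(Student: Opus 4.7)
The plan is to show that every $e_\beta \in B_\beta$ admits a strict upper bound in some higher $\mathcal{D}$-class under the natural order. By the identification recalled just after the construction of a strong semilattice of rectangular bands, having such an upper bound $e_\alpha \in B_\alpha$ with $\alpha > \beta$ is equivalent to $e_\alpha \psi_{\alpha,\beta} = e_\beta$, which is precisely the statement $e_\beta \in I_{\alpha,\beta}$.

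The first step is to manufacture at least one element of $B_\beta$ that lies in some $I_{\alpha,\beta}$. Since $Y$ has no maximal element by Corollary \ref{basics}(ii), we may pick some $\alpha > \beta$ in $Y$ and any $e_\alpha \in B_\alpha$. Setting $f_\beta := e_\alpha \psi_{\alpha,\beta}$ gives $f_\beta \in B_\beta \cap I_{\alpha,\beta}$, and $e_\alpha > f_\beta$ in the natural order on $B$.

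Next I use homogeneity to transport this witness to the given $e_\beta$. The singleton subbands $\{f_\beta\}$ and $\{e_\beta\}$ are trivially isomorphic, so by $1$-homogeneity (Corollary \ref{basics}(i), which is really just a restatement of homogeneity applied to singletons) there is an automorphism $\theta$ of $B$ with $f_\beta \theta = e_\beta$. By Proposition \ref{iso band}, $\theta$ has an induced automorphism $\pi$ of $Y$; because $f_\beta$ and $e_\beta$ both lie in $B_\beta$, necessarily $\beta \pi = \beta$. Since automorphisms of $B$ preserve the natural order, $e_\alpha \theta > e_\beta$, and $e_\alpha \theta \in B_{\alpha\pi}$ with $\alpha \pi > \beta \pi = \beta$. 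Therefore $(e_\alpha \theta)\psi_{\alpha\pi,\beta} = e_\beta$, which gives $e_\beta \in I_{\alpha\pi,\beta}$.

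There is no real obstacle here: the only point needing care is the translation between the natural-order condition $e_\alpha > e_\beta$ and membership in the image $I_{\alpha,\beta}$ of the connecting morphism, which is exactly what the cited fact from \cite[Lemma IV.1.7]{Petrich99} provides. The rest is a direct application of $1$-homogeneity combined with preservation of the natural order and of $\mathcal{D}$-classes under automorphisms.
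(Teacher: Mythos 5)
Your proof is correct and takes essentially the same approach as the paper: the paper's proof simply cites the fact (noted in the remark after Corollary~\ref{basics}) that $B$ has no maximal elements under its natural order and then applies the characterization $e_{\alpha}\geq e_{\beta}$ iff $\alpha\geq\beta$ and $e_{\alpha}\psi_{\alpha,\beta}=e_{\beta}$, whereas you unwind that ``no maximal elements'' fact explicitly via $1$-homogeneity and preservation of the natural order. The extra details you supply (that $\beta\pi=\beta$ and $\alpha\pi>\beta$) are exactly right.
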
 

\begin{proof} As a consequence of Corollary \ref{basics}, $B$ contains no maximal elements under its natural order. The result then follows as $e_{\alpha}\geq e_{\beta}$ if and only if $\alpha\geq \beta$ and $e_{\alpha}\psi_{\alpha,\beta}=e_{\beta}$. 
 \end{proof} 
 
\begin{lemma}\label{image same} Let $B=[Y;B_{\alpha};\psi_{\alpha,\beta}]$ be a homogeneous normal band. If $\alpha_i>\beta_i$ $(i=1,2)$ in $Y$ then there exists isomorphisms $\theta_{\alpha_1}:B_{\alpha_1}\rightarrow B_{\alpha_2}$ and $\theta_{\beta_1}:B_{\beta_1} \rightarrow B_{\beta_2}$ such that 
\[ \theta_{\alpha_1}\psi_{\alpha_2,\beta_2} = \psi_{\alpha_1,\beta_1}\theta_{\beta_1}
\]
In particular $I_{\alpha_1,\beta_1}\cong I_{\alpha_2,\beta_2}$ and $\psi_{\alpha_1,\beta_1}$ is surjective/injective if and only if $\psi_{\alpha_2,\beta_2}$ is also. 
\end{lemma}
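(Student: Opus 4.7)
The plan is to leverage the homogeneity of $B$ to produce an automorphism whose induced semilattice automorphism sends $\alpha_1\mapsto \alpha_2$ and $\beta_1\mapsto \beta_2$; the commuting diagram of Proposition \ref{iso normal} then automatically yields the required relation.

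First I would fix arbitrary $e_{\alpha_1}\in B_{\alpha_1}$ and $e_{\alpha_2}\in B_{\alpha_2}$ and set $e_{\beta_i}:=e_{\alpha_i}\psi_{\alpha_i,\beta_i}\in B_{\beta_i}$ for $i=1,2$, so that $e_{\alpha_i}>e_{\beta_i}$ in the natural order on $B$ (using the characterisation of $\leq$ on strong semilattices recalled in Section 2). The two-element bands $\{e_{\alpha_1},e_{\beta_1}\}$ and $\{e_{\alpha_2},e_{\beta_2}\}$ are then isomorphic via the unique chain isomorphism sending $e_{\alpha_1}\mapsto e_{\alpha_2}$, $e_{\beta_1}\mapsto e_{\beta_2}$. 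By homogeneity of $B$, this extends to an automorphism $\theta$ of $B$.

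Next, Proposition \ref{iso band} lets me write $\theta=[\theta_{\alpha},\pi]_{\alpha\in Y}$ for some induced automorphism $\pi$ of $Y$. Since $\theta$ sends $e_{\alpha_1}\in B_{\alpha_1}$ to $e_{\alpha_2}\in B_{\alpha_2}$ and $e_{\beta_1}\in B_{\beta_1}$ to $e_{\beta_2}\in B_{\beta_2}$, we have $\alpha_1\pi=\alpha_2$ and $\beta_1\pi=\beta_2$, and the component maps $\theta_{\alpha_1}:B_{\alpha_1}\to B_{\alpha_2}$ and $\theta_{\beta_1}:B_{\beta_1}\to B_{\beta_2}$ are isomorphisms. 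Applying Proposition \ref{iso normal} to the automorphism $\theta$ (viewed as an isomorphism of the normal band $B$ onto itself), the square $[\alpha_1,\beta_1;\alpha_2,\beta_2]$ commutes, which is exactly the identity $\theta_{\alpha_1}\psi_{\alpha_2,\beta_2}=\psi_{\alpha_1,\beta_1}\theta_{\beta_1}$.

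Finally, the ``in particular'' claims fall out from this square. Since $\theta_{\alpha_1}$ and $\theta_{\beta_1}$ are bijections,
\[ I_{\alpha_2,\beta_2}=B_{\alpha_2}\psi_{\alpha_2,\beta_2}=B_{\alpha_1}\theta_{\alpha_1}\psi_{\alpha_2,\beta_2}=B_{\alpha_1}\psi_{\alpha_1,\beta_1}\theta_{\beta_1}=I_{\alpha_1,\beta_1}\theta_{\beta_1}, \]
so $\theta_{\beta_1}$ restricts to an isomorphism $I_{\alpha_1,\beta_1}\cong I_{\alpha_2,\beta_2}$. Moreover, composing the commuting identity with $\theta_{\alpha_1}^{-1}$ on the left and $\theta_{\beta_1}^{-1}$ on the right shows that $\psi_{\alpha_1,\beta_1}$ and $\psi_{\alpha_2,\beta_2}$ are conjugate via bijections, and so they share the properties of being surjective or injective. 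There is no real obstacle here: the entire argument is a routine combination of the earlier homogeneity observations with the isomorphism theorem for normal bands, and the only small care needed is noting that the natural order on $B$ really does force $e_{\alpha_i}>e_{\beta_i}$ so that the 2-element subbands are genuinely isomorphic as chains.
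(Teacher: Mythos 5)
Your proposal is correct and follows essentially the same route as the paper: fix $e_{\alpha_i}$, form the two-element chains $\{e_{\alpha_i},e_{\alpha_i}\psi_{\alpha_i,\beta_i}\}$, extend their unique isomorphism to an automorphism of $B$ by homogeneity, and read off the commuting square $[\alpha_1,\beta_1;\alpha_2,\beta_2]$ from Proposition \ref{iso normal}. The paper leaves the ``in particular'' consequences as immediate, whereas you spell them out, but there is no substantive difference.
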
 

\begin{proof}
Let $e_{\alpha_i}\in B_{\alpha_i}$ ($i=1,2$) be fixed. By extending the unique isomorphism between the 2 element subbands $\{ e_{\alpha_1},e_{\alpha_1}\psi_{\alpha_1,\beta_1}\}$ and  $\{ e_{\alpha_2},e_{\alpha_2}\psi_{\alpha_2,\beta_2}\}$ to an automorphism of $B$, the diagram $[\alpha_1,\beta_1;\alpha_2,\beta_2]$ commutes by Proposition \ref{iso normal}, and the result is immediate. 
\end{proof}

Since a normal band is regular, it can be regarded as the spined product of a left regular and right regular band. The following results and subsequent Proposition \ref{normal spined} are obtained from \cite[Proposition 4.6.17]{Howie94}. Let $B=[Y;B_{\alpha};\psi_{\alpha,\beta}]$ be a normal band, where $B_{\alpha}=L_{\alpha}\times R_{\alpha}$ for some left zero semigroup $L_{\alpha}$ and right zero semigroup $R_{\alpha}$. Then the connecting morphisms determine morphisms  $ \psi_{\alpha,\beta}^l:L_{\alpha}\rightarrow L_{\beta}$ and $\psi_{\alpha,\beta}^r:R_{\alpha}\rightarrow R_{\beta}$ such that
\begin{equation}\label{spined a} (l_{\alpha},r_{\alpha})\psi_{\alpha,\beta}=(l_{\alpha}\psi_{\alpha,\beta}^l,r_{\alpha}\psi_{\alpha,\beta}^r)
\end{equation}
for every $(l_{\alpha},r_{\alpha})\in B_{\alpha}$. Moreover, $L=\bigcup \{L_{\alpha}:\alpha\in Y\}$ becomes a strong semilattice of left zero semigroups $[Y;L_{\alpha};\psi_{\alpha,\beta}^l]$ under $\circ$, where for $l_{\alpha}\in L_{\alpha}, l_{\beta}\in L_{\beta}$, 
\[ l_{\alpha} \circ l_{\beta}=(l_{\alpha}\psi_{\alpha,\alpha\beta}^l)(l_{\beta}\psi_{\beta,\alpha\beta}^l)=l_{\alpha}\psi_{\alpha,\alpha\beta}^l
\] 
since $L_{\alpha\beta}$ is left zero (dually for $R$). Take morphisms $\phi:L\rightarrow Y, \psi:R\rightarrow Y$ given by $l_{\alpha}\phi=\alpha$ and $r_{\alpha}\psi=\alpha$. Then the spined product of $L$ and $R$ consisting of pairs $(l,r)$ for which $l\phi=r\psi$ coincides with 
\[ \bigcup \{L_{\alpha}\times R_{\alpha}:\alpha\in Y\}.  
\] 
It then follows from \eqref{spined a} that $B=L \bowtie R$ and so: 

\begin{proposition}\label{normal spined} Every normal band $B$ is isomorphic to a spined product of a left normal and a right normal band. 
\end{proposition}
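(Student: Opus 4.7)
The plan is to formalize the construction already sketched in the paragraph preceding the proposition. Invoking \cite[Proposition 4.6.14]{Howie94}, write $B = [Y; B_\alpha; \psi_{\alpha,\beta}]$ with each $B_\alpha = L_\alpha \times R_\alpha$ a rectangular band. The first step is to decompose each connecting morphism as $\psi_{\alpha,\beta} = \psi^l_{\alpha,\beta} \times \psi^r_{\alpha,\beta}$ for unique morphisms $\psi^l_{\alpha,\beta}: L_\alpha \rightarrow L_\beta$ and $\psi^r_{\alpha,\beta}: R_\alpha \rightarrow R_\beta$. This is a morphism analogue of Proposition \ref{rec bands iso} and follows because semigroup morphisms preserve $\mathcal{R}$ and $\mathcal{L}$: the first coordinate of $(l_\alpha, r_\alpha)\psi_{\alpha,\beta}$ must depend only on $l_\alpha$ (since $(l_\alpha, r_\alpha) \, \mathcal{R} \, (l_\alpha, r_\alpha')$ in $B_\alpha$ and $\mathcal{R}$-classes of $B_\beta$ are determined by the first coordinate), and dually for the second coordinate.

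Next I would transfer the strong-semilattice conditions (i), (ii) for the $\psi_{\alpha,\beta}$ to the families $\{\psi^l_{\alpha,\beta}\}$ and $\{\psi^r_{\alpha,\beta}\}$; this is immediate from the uniqueness of the decomposition. This yields a strong semilattice $L := [Y; L_\alpha; \psi^l_{\alpha,\beta}]$ of left zero semigroups and a dual strong semilattice $R := [Y; R_\alpha; \psi^r_{\alpha,\beta}]$ of right zero semigroups, which are left normal and right normal respectively, as noted in the paragraph preceding the proposition. Define surjective morphisms $\phi: L \rightarrow Y$ and $\psi: R \rightarrow Y$ by $l_\alpha \phi = \alpha$ and $r_\alpha \psi = \alpha$. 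As a set, the spined product of $L$ and $R$ with respect to $Y, \phi, \psi$ is then precisely $\bigcup_{\alpha \in Y}(L_\alpha \times R_\alpha) = B$.

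It remains to verify that the identity map on this common underlying set is a semigroup isomorphism from $B$ to $L \bowtie R$. For $(l_\alpha, r_\alpha) \in B_\alpha$ and $(l_\beta, r_\beta) \in B_\beta$, the product in $B$ is computed via \eqref{spined a} followed by the rule $(i,j)(k,\ell) = (i,\ell)$ in $B_{\alpha\beta}$, giving $(l_\alpha \psi^l_{\alpha,\alpha\beta},\, r_\beta \psi^r_{\beta,\alpha\beta})$; the product in $L \bowtie R$ is $(l_\alpha \circ l_\beta,\, r_\alpha \cdot r_\beta)$, and using the operation on $L$ recalled in the preceding paragraph (together with its dual on $R$) this simplifies to the same expression, because $L_{\alpha\beta}$ is left zero and $R_{\alpha\beta}$ is right zero. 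This coordinate-wise check is the only real subtlety; the rest of the argument is routine bookkeeping verifying compatibility of the decomposition with the strong semilattice axioms.
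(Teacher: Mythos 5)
Your proposal is correct and follows essentially the same route as the paper: the paper's ``proof'' is exactly the paragraph preceding the proposition (drawn from \cite[Proposition 4.6.17]{Howie94}), which you formalize by decomposing the connecting morphisms coordinate-wise and checking that the two multiplications on $\bigcup_{\alpha\in Y}(L_\alpha\times R_\alpha)$ agree. Your justification of the decomposition $\psi_{\alpha,\beta}=\psi^l_{\alpha,\beta}\times\psi^r_{\alpha,\beta}$ via preservation of $\mathcal{R}$ and $\mathcal{L}$ is a correct filling-in of a step the paper merely asserts.
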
 

Consequently, by Proposition \ref{iso regular}, a pair of normal bands $L\bowtie R$ and $L'\bowtie R'$ are isomorphic if and only if there exists an isomorphism from $L$ to $L'$ and $R$ to $R'$ with the same induced isomorphism between the structure semilattices.

A normal band is called an \textit{image-trivial normal band} if the images of the non-identity connecting morphisms all have cardinality 1. A normal band is called a \textit{surjective normal band} if each connecting morphism is surjective. Note that a normal band is both image-trivial and surjective if and only if it is a semilattice.  Moreover, a normal band $L\bowtie R$ is an image-trivial/surjective normal band if and only if both $L$ and $R$ are also image-trivial/surjective.

\begin{lemma}\label{trivial or surj} Let $B=[Y;L_{\alpha};\psi_{\alpha,\beta}^l]\bowtie [Y;R_{\alpha};\psi_{\alpha,\beta}^r]=L\bowtie R$ be a homogeneous normal band. Then $R$ is either an image-trivial or surjective right normal band (dually for $L$).
\end{lemma}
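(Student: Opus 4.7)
The plan is to argue by contradiction on a common image cardinality. By Lemma \ref{image same}, together with the fact (via Proposition \ref{iso regular}) that automorphisms of $B$ restrict coordinatewise to isomorphisms of the right components, the cardinality $k := |I^R_{\alpha,\beta}|$ is independent of the pair $\alpha > \beta$ in $Y$; Corollary \ref{basics}(iii) simultaneously provides a common cardinality $m := |R_\gamma|$ for all $\gamma \in Y$. Since $R$ is image-trivial precisely when $k=1$ and surjective precisely when $k=m$ (as $|R_\alpha|=m=|R_\beta|$), it suffices to rule out $1 < k < m$.

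Assume for contradiction that $1 < k < m$, and fix $\alpha > \beta$ in $Y$. Choose $\ell \in I^L_{\alpha,\beta}$ (nonempty, as $I_{\alpha,\beta} = I^L_{\alpha,\beta} \times I^R_{\alpha,\beta}$ is nonempty), distinct $r_1,r_2 \in I^R_{\alpha,\beta}$ (possible since $k>1$), and $r_3 \in R_\beta \setminus I^R_{\alpha,\beta}$ (possible since $k<m$). Set $f_i := (\ell, r_i) \in B_\beta$, and pick $e_1 \in B_\alpha$ with $e_1 \psi_{\alpha,\beta} = f_1$. A direct computation with the rectangular band law confirms that
\[ A_{12} := \{e_1, f_1, f_2\} \quad \text{and} \quad A_{13} := \{e_1, f_1, f_3\} \]
are subbands of $B$, and that the map $\phi : A_{12} \to A_{13}$ fixing $e_1, f_1$ and sending $f_2 \mapsto f_3$ is a band isomorphism.

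By homogeneity, extend $\phi$ to an automorphism $\theta = [\theta_\gamma, \pi]_{\gamma \in Y}$ of $B$. Since $\theta$ fixes $e_1 \in B_\alpha$ and $f_1 \in B_\beta$, the induced semilattice automorphism $\pi$ fixes both $\alpha$ and $\beta$, so Proposition \ref{iso normal} yields commutativity of the square $[\alpha,\beta;\alpha,\beta]$, whence
\[ \theta_\beta(I_{\alpha,\beta}) = \theta_\beta\bigl(\psi_{\alpha,\beta}(B_\alpha)\bigr) = \psi_{\alpha,\beta}\bigl(\theta_\alpha(B_\alpha)\bigr) = \psi_{\alpha,\beta}(B_\alpha) = I_{\alpha,\beta}. \]
But $f_2 = (\ell, r_2) \in I_{\alpha,\beta}$ by the choice of $\ell, r_2$, while $f_3 = (\ell, r_3) \notin I_{\alpha,\beta}$ since $r_3 \notin I^R_{\alpha,\beta}$, contradicting $\theta_\beta(f_2) = f_3$. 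Hence $k \in \{1, m\}$, and the dual argument (interchanging the roles of $L$ and $R$) proves the parenthetical statement for $L$.

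The main conceptual obstacle is selecting the right pair of finite subbands: $A_{12}$ and $A_{13}$ must be isomorphic as abstract bands---so that homogeneity forces an extending automorphism---yet contain an element ($f_2$ vs.\ $f_3$) that detects the distinction between "liftable" and "unliftable" elements of $B_\beta$, which is precisely the information rigidified by the commuting connecting-morphism square. Once this setup is in place, the rest is routine structural bookkeeping from Propositions \ref{iso normal}--\ref{iso regular} and the rectangular band identity.
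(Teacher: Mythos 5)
Your argument is essentially the paper's: the same three-element subbands $\{e_1,f_1,f_2\}$ and $\{e_1,f_1,f_3\}$, the same isomorphism fixing $e_1$ and $f_1$, and the same contradiction via the commuting square $[\alpha,\beta;\alpha,\beta]$ showing that an automorphism must preserve $I_{\alpha,\beta}$. (The paper phrases the contradiction in the $R$-component, writing $(s_{\alpha}\theta^r_{\alpha})\psi^r_{\alpha,\beta}=s_{\beta}\theta^r_{\beta}=t_{\beta}$, but this is the same computation.)

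One small slip in your framing: \emph{surjective} is not equivalent to $k=m$ when $m=\aleph_0$, since $I^R_{\alpha,\beta}$ can be a proper infinite subset of $R_\beta$; so ``it suffices to rule out $1<k<m$'' does not literally cover all cases. The hypotheses your contradiction actually uses are $|I^R_{\alpha,\beta}|>1$ and $I^R_{\alpha,\beta}\neq R_\beta$, which is exactly the correct negation of the dichotomy (uniformly over pairs, by Lemma \ref{image same}), so the fix is just to replace the cardinality comparison by these two conditions.
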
 

\begin{proof} If $R$ is a semilattice then $B$ is isomorphic to $L$, and so the result is immediate. Assume instead that $|R_{\alpha}|>1$ for some $\alpha\in Y$. Then $|R_{\alpha}|>1$ for all $\alpha\in Y$ by Corollary \ref{basics} ($\mathrm{iii}$). Suppose there exists $\alpha>\beta$ in $Y$ such that $I^R_{\alpha,\beta} \neq R_{\beta}$.
 Let $r_{\alpha}\psi^r_{\alpha,\beta}=r_{\beta}$, $s_{\alpha}\psi^r_{\alpha,\beta}=s_{\beta}$ (with $r_{\alpha}\neq s_{\alpha}$) and $t_{\beta}\not\in I^R_{\alpha,\beta}$. Fix $l_{\alpha}\in L_{\alpha}$ and let $l_{\alpha}\psi_{\alpha,\beta}^l=l_{\beta}$. Note that for any $x_{\beta}\in R_{\beta}$ we have 
\[ (l_{\alpha},r_{\alpha})(l_{\beta},x_{\beta})=(l_{\beta},r_{\beta}x_{\beta})=(l_{\beta},x_{\beta}) \quad \text{and} \quad (l_{\beta},x_{\beta})(l_{\alpha},r_{\alpha})=(l_{\beta},r_{\beta}).
\] 
Hence if $x_{\beta}\neq r_{\beta}$ then $\langle (l_{\alpha},r_{\alpha}),(l_{\beta},x_{\beta})\rangle =\{(l_{\alpha},r_{\alpha}),(l_{\beta},x_{\beta}),(l_{\beta},r_{\beta}) \}$ is a 3 element subband. In particular, if $s_{\beta}\neq r_{\beta}$ then the map
\[ \phi:\langle (l_{\alpha},r_{\alpha}),(l_{\beta},s_{\beta})\rangle \rightarrow \langle (l_{\alpha},r_{\alpha}), (l_{\beta}, t_{\beta}) \rangle
\] 
fixing $(l_{\alpha},r_{\alpha})$  and such that $(l_{\beta},s_{\beta})\phi=(l_{\beta},t_{\beta})$ is clearly an isomorphism. Extend $\phi$ to an automorphism $\theta^l \bowtie \theta^r$ of $B$. Then as $\theta^r=[\theta_{\alpha}^r,\pi]_{\alpha\in Y}$ is an automorphism of $R$ we have, by the commutativity of $[\alpha,\beta;\alpha,\beta]$ in $R$, 
\[ (s_{\alpha}\theta^r_{\alpha})\psi^r_{\alpha,\beta}=(s_{\alpha}\psi^r_{\alpha,\beta})\theta^r_{\beta}=s_{\beta}\theta_{\beta}^r=t_{\beta},
\] 
contradicting $t_{\beta}\not\in I^R_{\alpha,\beta}$. Thus $s_{\beta}=r_{\beta}$, so that $I_{\alpha,\beta}$ has cardinality 1,   and so $R$ is an image-trivial normal band by Lemma \ref{image same}. The dual gives the result for left normal bands. 
\end{proof}

Hence if $B=L\bowtie R$ is a homogeneous normal band then $B$ is either an image-trivial normal band (if $L$ and $R$ are image-trivial), or the images of the connecting morphisms are a single $\mathcal{L/R}$-class (if $L$/$R$ is a surjective normal band and $R/L$ is an image-trivial normal band) or $\mathcal{D}$-class (if $L$ and $R$ are surjective normal bands).

We split our classification of homogeneous normal bands into three parts. In Section \ref{sec:trivial} we classify homogeneous image-trivial normal bands, and in Section \ref{sec:surj} homogeneous surjective normal bands. Using the results obtained in these sections, the final case (and its dual) is easily obtained at the end of Section \ref{sec:surj}. 

\subsection{Image-trivial normal bands} \label{sec:trivial}

In this section we are concerned with the classification of image-trivial homogeneous normal bands. 

Where convenient, we denote an image-trivial normal band $[Y;B_{\alpha};\psi_{\alpha,\beta}]$ such that $I_{\alpha,\beta}=\{\epsilon_{\alpha,\beta}\}$ for each $\alpha>\beta$, as $[Y;B_{\alpha};\psi_{\alpha,\beta};\epsilon_{\alpha,\beta}]$. Note that if $\alpha,\gamma>\beta$ in $Y$ are such that $\alpha\gamma>\beta$ then $B_{\alpha}\psi_{\alpha,\beta}=(B_{\alpha}\psi_{\alpha,\alpha\gamma})\psi_{\alpha\gamma,\beta}=\{\epsilon_{\alpha\gamma,\beta}\} = (B_{\gamma}\psi_{\gamma,\alpha\gamma})\psi_{\alpha\gamma,\beta}=B_{\gamma}\psi_{\gamma,\beta}$
and so 
\begin{align}\label{eq trivial} 
\epsilon_{\alpha,\beta}=\epsilon_{\alpha\gamma,\beta}=\epsilon_{\gamma,\beta}. 
\end{align}
Notice that \eqref{eq trivial} automatically holds if $\alpha>\gamma>\beta$. 

If $Y=\mathbb{Q}$ then for any $\beta\in \mathbb{Q}$ and $\alpha,\gamma>\beta$ we have $\epsilon_{\alpha,\beta}=\epsilon_{\gamma,\beta}$ by \eqref{eq trivial}. Hence any $e_{\beta}\in B_{\beta}\setminus \{\epsilon_{\alpha,\beta}\}$ is a maximal element in the poset $(B,\leq)$. Consequently, an  image-trivial homogeneous normal band with a linear structure semilattice is isomorphic to $\mathbb{Q}$ by Corollary \ref{basics}.   

While the following lemma is stronger than what is required in this section, it will be vital for later results, and the generalization adds little extra work. 

\begin{lemma} \label{trivial not universal} Let $B=[Y;B_{\alpha};\psi_{\alpha,\beta}]=L\bowtie R$ be a homogeneous normal band such that either $L$ or $R$ is a non-semilattice image-trivial normal band. Then $Y$ is a homogeneous semilinear order. 
\end{lemma}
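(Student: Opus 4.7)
The plan is to establish (a) $Y$ has no maximum or minimum, (b) $Y$ is non-linear, and (c) $Y$ contains no diamond. Since then $Y$ is a semilinear order, Proposition \ref{linear homog} will yield that $Y$ is homogeneous. Part (a) is immediate from Corollary \ref{basics} (ii). Throughout I assume without loss of generality that $R$ is the non-semilattice image-trivial component, so that $|R_{\beta}|\geq 2$ for every $\beta\in Y$ by Corollary \ref{basics} (iii).

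For (b), I would suppose $Y$ is linear and fix $\beta\in Y$. Any pair $\alpha_{1},\alpha_{2}>\beta$ is comparable, so $\alpha_{1}\alpha_{2}\in\{\alpha_{1},\alpha_{2}\}$ and hence $\alpha_{1}\alpha_{2}>\beta$. By equation \eqref{eq trivial}, $\epsilon^{r}_{\alpha_{1},\beta}=\epsilon^{r}_{\alpha_{2},\beta}$, so all these image-points coincide. Lemma \ref{D full} then gives $|R_{\beta}|=1$, contradicting the non-semilattice hypothesis.

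For (c), suppose for contradiction that $Y$ contains a diamond $\delta>\alpha,\gamma>\beta$ with $\alpha\perp\gamma$ and $\alpha\gamma=\beta$. Since $\delta$ is a common upper bound, two applications of \eqref{eq trivial} (via $\delta\alpha=\alpha>\beta$ and $\delta\gamma=\gamma>\beta$) yield $\epsilon^{r}_{\alpha,\beta}=\epsilon^{r}_{\delta,\beta}=\epsilon^{r}_{\gamma,\beta}$; call this element $r_{0}$. Using Lemma \ref{D full} together with $|R_{\beta}|\geq 2$, I would fix $\mu>\beta$ with $\epsilon^{r}_{\mu,\beta}\neq r_{0}$. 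A short case analysis via \eqref{eq trivial} (ruling out $\mu>\alpha$, $\mu<\alpha$, and any common upper bound of $\mu$ with $\alpha$ on pain of $\epsilon^{r}_{\mu,\beta}=r_{0}$, and similarly against $\gamma,\delta$) shows that $\mu$ is incomparable in $Y$ with each of $\alpha,\gamma,\delta$, that $\mu\alpha=\mu\gamma=\mu\delta=\beta$, and that $\mu$ shares no common upper bound with any of $\alpha,\gamma,\delta$.

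The contradiction then proceeds by computing the product $e_{\delta}\cdot e_{\mu}$ inside $B$ for any choice of $e_{\delta}\in B_{\delta},\,e_{\mu}\in B_{\mu}$. Writing $B=L\bowtie R$ and using the spined product formula, this product lies in $B_{\beta}$ with $R$-coordinate $\epsilon^{r}_{\mu,\beta}$ and $L$-coordinate determined by $\delta$. By Lemma \ref{D full}, $e_{\delta}e_{\mu}=\epsilon_{\xi,\beta}$ for some $\xi>\beta$, which forces $\xi\sim^{R}\mu$ and $\xi\sim^{L}\delta$ (where $\sim^{L},\sim^{R}$ denote the same-image equivalences on $\{\xi>\beta\}$ in $L$ and $R$). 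A parallel computation with $e_{\mu}\cdot e_{\delta}$ yields a mirrored conclusion. In the pure image-trivial case where both $L$ and $R$ are IT and (as I expect to verify using the homogeneity-controlled action of $\mathrm{Aut}(B)$ on $Y$ together with Proposition \ref{iso normal}) the relations $\sim^{L}$ and $\sim^{R}$ coincide with the common-upper-bound equivalence on $\{\xi>\beta\}$, the requirement $\xi\sim\delta$ and $\xi\sim\mu$ simultaneously forces $\delta\sim\mu$, contradicting our choice of $\mu$. The remaining case in which $L$ is surjective (and the symmetric case where $L$ rather than $R$ is the NS IT component) I would treat by the dual construction, exploiting whichever connecting morphism is image-trivial to extract the same contradiction.

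The main obstacle I anticipate is the final step of (c): making precise the claim that in this homogeneous setting the same-image equivalences agree with the common-upper-bound relation, so that the coordinate-swapping argument actually closes. All other ingredients (Corollary \ref{basics}, Lemma \ref{D full}, equation \eqref{eq trivial}, Proposition \ref{iso normal}, and Proposition \ref{linear homog}) are already available and slot in cleanly.
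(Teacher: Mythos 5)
Your parts (a) and (b) are fine and essentially reproduce what the paper does for the linear case (the remark preceding the lemma: in a chain all the points $\epsilon^{r}_{\alpha,\beta}$ with $\alpha>\beta$ coincide by \eqref{eq trivial}, so Lemma \ref{D full} forces $|R_{\beta}|=1$). The opening of (c) is also sound: the deduction that $\mu$ is incomparable with $\alpha,\gamma,\delta$, meets each of them at $\beta$, and shares no upper bound with them follows correctly from \eqref{eq trivial}. The genuine gap is the final step of (c), and it is exactly the one you flag. From $e_{\delta}e_{\mu}=(l_{\delta}\psi^{l}_{\delta,\beta},\epsilon^{r}_{\mu,\beta})$ and Lemma \ref{D full} you obtain only \emph{some} $\xi>\beta$ with $\epsilon^{r}_{\xi,\beta}=\epsilon^{r}_{\mu,\beta}$ and $l_{\delta}\psi^{l}_{\delta,\beta}\in I^{L}_{\xi,\beta}$; nothing contradictory follows from this alone. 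The bridging claim you would need --- that the same-image equivalence on $\{\xi>\beta\}$ coincides with the common-upper-bound relation --- is not a consequence of the ingredients you cite, and it is in fact false for the homogeneous image-trivial normal bands that do exist: in $T_{n,m,k}$ with $k>1$ each $e_{\beta}$ has $k$ distinct cones, i.e.\ many pairwise upper-bound-free elements $\xi>\beta$ with the same image $\epsilon_{\xi,\beta}$. (Moreover ``has a common upper bound'' is not transitive a priori, so it is not even an equivalence relation without argument.) So no purely multiplicative, coordinatewise computation of $e_{\delta}e_{\mu}$ and $e_{\mu}e_{\delta}$ can close the argument; any proof of the claim under the diamond hypothesis would essentially be the whole content of the lemma.

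For comparison, the paper's proof invests considerably more homogeneity at precisely this point. Having chosen $l_{\beta}=\epsilon^{l}_{\tau,\beta}$ outside the image class of the diamond (so $\alpha\tau=\beta$), it extends the isomorphism of $3$-chains $e_{\delta}>e_{\alpha}>e_{\beta}\mapsto e_{\alpha}>e_{\beta}>e_{\kappa}$ for some $\kappa<\beta$ to manufacture $\rho$ with $\alpha>\rho>\kappa$ and $\rho\tau=\kappa$; it then verifies (splitting on whether $R$ is image-trivial or surjective) that some $e_{\tau}>e_{\kappa}$ exists, and extends an isomorphism of $3$-element non-chain semilattices to produce a common upper bound $\sigma$ of $\rho$ and $\tau$; finally it shows $\sigma\alpha>\beta$ and applies \eqref{eq trivial} along $\alpha\geq\sigma\alpha$, $\sigma\geq\sigma\alpha$, $\sigma\geq\tau$ to force $\epsilon^{l}_{\alpha,\beta}=\epsilon^{l}_{\tau,\beta}=l_{\beta}$, the desired contradiction. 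You would need to supply an auxiliary construction of this kind (further applications of $2$- and $3$-homogeneity building new points of $Y$ around the diamond) to complete step (c); as written, the core of the lemma is not proved.
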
 

\begin{proof} Assume w.l.o.g. that $L=[Y;L_{\alpha};\psi_{\alpha,\beta}^l;\epsilon_{\alpha,\beta}^l]$ is a non-semilattice image-trivial normal band, so that $|L_{\alpha}|>1$ for all $\alpha\in Y$ by Corollary \ref{basics} ($\mathrm{iii}$).
 Note that $R=[Y;R_{\alpha};\psi_{\alpha,\beta}^r]$ is image-trivial or surjective by Lemma \ref{trivial or surj}.
  Suppose for contradiction that $Y$ contains a diamond $D=\{\delta,\alpha,\gamma,\beta\}$, where $\delta>\{\alpha,\gamma\}>\beta$. Fix $e_{\delta}=(l_{\delta},r_{\delta})\in B_{\delta}$ and let 
\[ e_{\alpha}=e_{\delta}\psi_{\delta,\alpha}=(\epsilon_{\delta,\alpha}^l,r_{\delta}\psi_{\delta,\alpha}^r), \quad  e_{\gamma}=e_{\delta}\psi_{\delta,\gamma}=(\epsilon_{\delta,\gamma}^l,r_{\delta}\psi_{\delta,\gamma}^r), \quad e_{\beta}=e_{\delta}\psi_{\delta,\beta}= (\epsilon_{\delta,\beta}^l,r_{\delta}\psi_{\delta,\beta}^r),
\] 
noting that $\epsilon_{\alpha,\beta}^l=\epsilon_{\delta,\beta}^l= \epsilon_{\gamma,\beta}^l$ by \eqref{eq trivial}. By construction $\{e_{\delta},e_{\alpha},e_{\gamma},e_{\beta}\}$ is  isomorphic to $D$. If there exists $l_{\beta}\in L_{\beta}\setminus \{\epsilon_{\delta,\beta}^l\}$, then by Lemma \ref{D full} there exists $\tau>\beta$ such that $l_{\beta}=\epsilon_{\tau,\beta}^l$. Note that $\alpha\tau=\beta$, since if $\alpha\tau>\beta$ then $l_{\beta}=\epsilon_{\tau,\beta}^l=\epsilon_{\alpha,\beta}^l$ by \eqref{eq trivial}. Let $\kappa<\beta$ and $e_{\kappa}=e_{\beta}\psi_{\beta,\kappa}=(\epsilon_{\beta,\kappa}^l,r_{\delta}\psi_{\delta,\kappa}^r)$. Extend the unique isomorphism between the 3-chains $e_{\delta}>e_{\alpha}>e_{\beta}$ and $e_{\alpha}>e_{\beta}>e_{\kappa}$ to an automorphism $\theta=[\theta_{\alpha},\pi]_{\alpha\in Y}$ of $B$. Let $e_{\rho}=e_{\gamma}\theta>e_{\beta}\theta=e_{\kappa}$ for some $\rho\in Y$. Then $\alpha>\rho>\kappa$ (since $\delta>\gamma>\beta$),  $\rho\beta=\kappa$ (since $\gamma\alpha=\beta$) and 
\[\rho\tau = (\rho\alpha)\tau= \rho(\alpha\tau)=\rho\beta=\kappa.
\] 
 We claim that there exists $e_{\tau}\in B_{\tau}$ such that $e_{\tau}>e_{\kappa}$. Indeed, if $R$ is also image-trivial and $B=[Y;B_{\alpha};\psi_{\alpha,\beta};\epsilon_{\alpha,\beta}]$, then the claim holds for any $e_{\tau}$ by \eqref{eq trivial}, as $\tau>\beta>\kappa$, so that $\epsilon_{\tau,\kappa}=\epsilon_{\beta,\kappa}=e_{\kappa}$. If $R$ is surjective, then there exists $r_{\tau}\in R_{\tau}$ such that $r_{\tau}\psi_{\tau,\kappa}^r=r_{\delta}\psi_{\delta,\kappa}^r$. Thus, for any $l_{\tau}$, we have $(l_{\tau},r_{\tau})\psi_{\tau,\kappa}=(\epsilon^l_{\tau,\kappa},r_{\tau}\psi^r_{\tau,\kappa})=(\epsilon^l_{\beta,\kappa},r_{\delta}\psi^r_{\delta,\kappa})=e_{\kappa}$, and so the claim is proven. Fix some $e_{\tau}>e_{\kappa}$. By extending any isomorphism between the 3 element non-chain semilattices $\langle e_{\rho},e_{\tau},e_{\kappa}\rangle$ and  $\langle e_{\alpha},e_{\gamma},e_{\beta} \rangle$, it follows that there exists $\sigma>\rho,\tau$ (as $\delta>\alpha,\gamma$). 

\begin{figure}[h]
\centering
\def\svgwidth{100pt} 
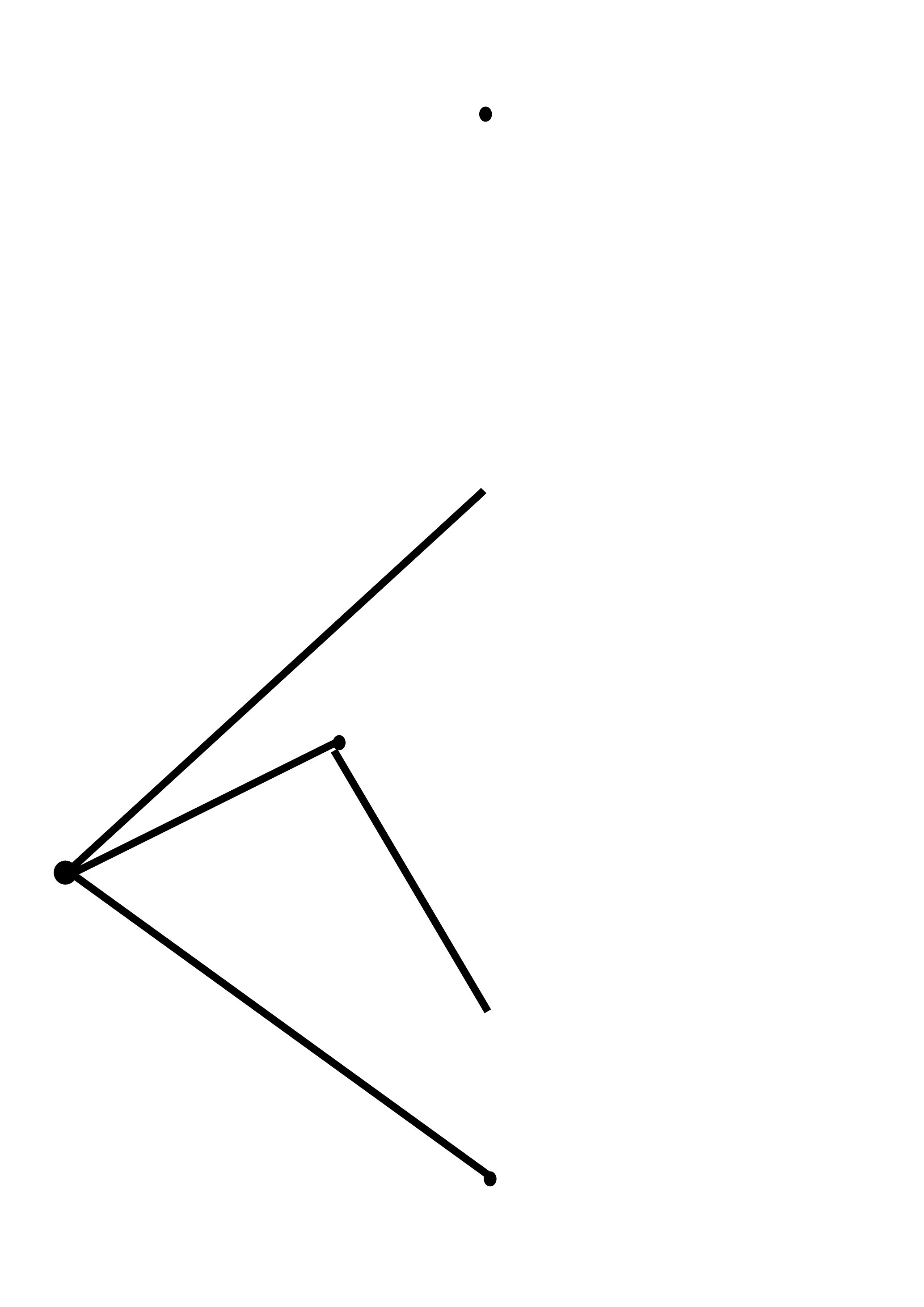 
\caption[Caption for LOF]{A subsemilattice of $Y$}
\label{subsemilattice of} 
\end{figure}

Since $\sigma>\tau>\beta$ and $\alpha>\beta$ we have $\sigma\alpha\geq \beta$. If $\sigma\alpha=\beta$ then $\beta\geq \rho$ (as $\sigma,\alpha>\rho$), and so as $\rho\beta=\kappa$ we have $\rho=\kappa$, a contradiction. Hence $\sigma\alpha>\beta$ and we thus arrive at the subsemilattice of $Y$ in Figure \ref{subsemilattice of}. Moreover,
\[ \epsilon_{\alpha,\beta}^l= \epsilon_{\sigma\alpha,\beta}^l = \epsilon_{\sigma,\beta}^l=\epsilon_{\tau,\beta}^l=l_{\beta}
\] 
by \eqref{eq trivial}, contradicting $l_{\beta} \neq\epsilon_{\delta,\beta}^l=\epsilon_{\alpha,\beta}^l$. Hence no such $l_{\beta}$ exists, and $|L_{\beta}|=1$, a contradiction. Hence, by Proposition \ref{linear homog}, $Y$ is a homogeneous linear or semilinear order. Since $B$ is a non-semilattice, the result follows by the note above the lemma. 
\end{proof}
In particular, image-trivial homogeneous normal bands have semilinear structure semilattices. To understand homogeneous image-trivial bands, it will be crucial to understand the structure of homogeneous semilinear orders. 

Let $Y$ be a dense semilinear order. We call a set $Z\subseteq Y$ \textit{connected} if for any $x,y\in Z$ there exists $z_1,\dots,z_n\in Z$  $(n\in \mathbb{N})$ with $z_1=x$, $z_n=y$, and $z_i\leq z_{i+1}$ or $z_{i+1}\leq z_i$ for all $1\leq i \leq n$. Given $\alpha\in Y$, we call the maximal connected subsets of $\{\gamma\in Y:\gamma>\alpha\}$  the \textit{cones of $\alpha$}, and let $C(\alpha)$ denote the set of all cones of $\alpha$.
 \begin{remark}[{{\cite[Remark 5.11]{Droste85}}}] \label{remark} Let $\alpha\in Y$,  $A\in C(\alpha)$ and $\gamma\in A$. Then for any $\delta\in Y$ we have $\delta \in A$ if and only if $\alpha<\delta\gamma$. 
 \end{remark} 
 Consequently, the cones of $\alpha\in Y$ partition the set $\{\gamma\in Y:\gamma>\alpha\}$. If there exists $r\in \mathbb{N}^*$ such that $|C(\alpha)|=r$ for all $\alpha\in Y$, then $r$ is known as the \textit{ramification order} of $Y$. Each homogeneous semilinear order has a ramification order \cite{Droste85}.

Let $B=[Y;B_{\alpha};\psi_{\alpha,\beta};\epsilon_{\alpha,\beta}]$ be an image-trivial normal band, where $Y$ is a semilinear order. Since $B$ is image-trivial, we may define a cone of $e_{\alpha}\in B_{\alpha}$ as a maximal connected subset of $\{\gamma\in Y:B_{\gamma}\psi_{\gamma,\alpha}=e_{\alpha}\}$. Let $C(e_{\alpha})$ denote the set of all cones of $e_{\alpha}$.  Let $\gamma,\gamma'>\alpha$ and suppose $\gamma$ is connected to $\gamma'$.  From Remark \ref{remark} we have that $\gamma\gamma'>\alpha$ and so by \eqref{eq trivial} $B_{\gamma}\psi_{\gamma,\alpha} = B_{\gamma'}\psi_{\gamma',\alpha}$. Consequently, the set $\{\gamma\in Y:\gamma>\alpha, B_{\gamma}\psi_{\gamma,\alpha}=e_{\alpha}\}$ is a union of cones of $\alpha$, and $C(\alpha)=\bigcup_{e_{\alpha}\in B_{\alpha}}C(e_{\alpha})$.

If there exists $k\in \mathbb{N}^*$ such that $|C(e_{\alpha})|=k$ for all $e_{\alpha}\in B$, then $k$ is called the \textit{ramification order} of $B$. If $B$ is homogeneous then (as $Y$ is homogeneous and $B$ is 1-homogeneous) the ramification orders exist for $Y$ and $B$, say, $r$ and $k$ respectively, and they are related according to $r=k \cdot |B_{\alpha}|$. Moreover, by Lemma \ref{D full}, $B_{\beta}=\bigcup_{\alpha>\beta}\epsilon_{\alpha,\beta}$ for each $\beta\in Y$. 

As shown in \cite[Theorem 6.21]{Droste85}, there exists for each $r\in \mathbb{N}^*$ a unique (up to isomorphism) countable homogeneous semilinear order of ramification order $r$, denoted $T_r$. Moreover, a semilinear order is isomorphic to $T_r$ if and only if it is dense and has ramification order $r$. 

We can reconstruct $T_r$ from any $\alpha\in T_r$ inductively by following the proof of Theorem 6.16 in \cite{Droste85}, as we now explain, but omitting the proof. Consider an enumeration of $T_r$ given by $T_r=\{a_i:i\in \mathbb{N}\}$, where $a_1=\alpha$. Let $Y_0=\emptyset$ and $Y_1=Z_0$ be a maximal chain in $T_r$ which contains $a_1$.
Suppose for some $i\in \mathbb{N}$, the semilattices $Y_j$ and posets $Z_{j-1}$  ($j\leq i$) have already been defined such that the following conditions hold for each $1\leq j \leq i$:
\begin{enumerate}[label=(\roman*), font=\normalfont]
\item $Y_j=Y_{j-1}\sqcup Z_{j-1} \text{ and }a_j\in Y_j$ (where $\sqcup$ denotes the disjoint union);
\item if $z\in Z_{j-1}$, then there exists a unique maximal chain $C$ in $T_r$ with
 $z \in C \subseteq Y_j$ \\ and $\{c \in C: z \leq c\} \subseteq Z_{j-1}$;
 \item if $z \in Z_{j-2} \,(j\geq 2)$ and $D$  is any cone of $z$ disjoint to $Y_{j-1}$, then $D \cap Z_{j-1}\neq \emptyset$. 
\end{enumerate}
It follows from $(\mathrm{ii})$ that whenever $1 \leq j \leq  i$, $z \in Y_j$, $y\in T_r$ and $y < z$, then $y \in Y_j$. Moreover, the conditions above trivially hold for the case $i=1$. 

If $a_{i+1}\not\in Y_i$ then it is shown that there exists $z\in Z_{i-1}$ such that $a_{i+1}$ belongs to some (unique) cone $A_z$ of $z$ which is disjoint to $Y_i$. 
For each $\beta\in Z_{i-1}$ take a maximal subchain of each cone $A\in C(\beta)$ such that $Y_i\cap A=\emptyset$, where if $\beta=z$ then we take a maximal subchain of $A_z$ which contains $a_{i+1}$. By condition $(\mathrm{ii})$ the set $\{y\in Y_{i}:\beta<y\}$ is a chain, and thus contained in a single cone of $\beta$, and so only one cone will intersect $Y_i$ non-trivially.

\begin{figure}[h]
\centering
\def\svgwidth{150pt} 
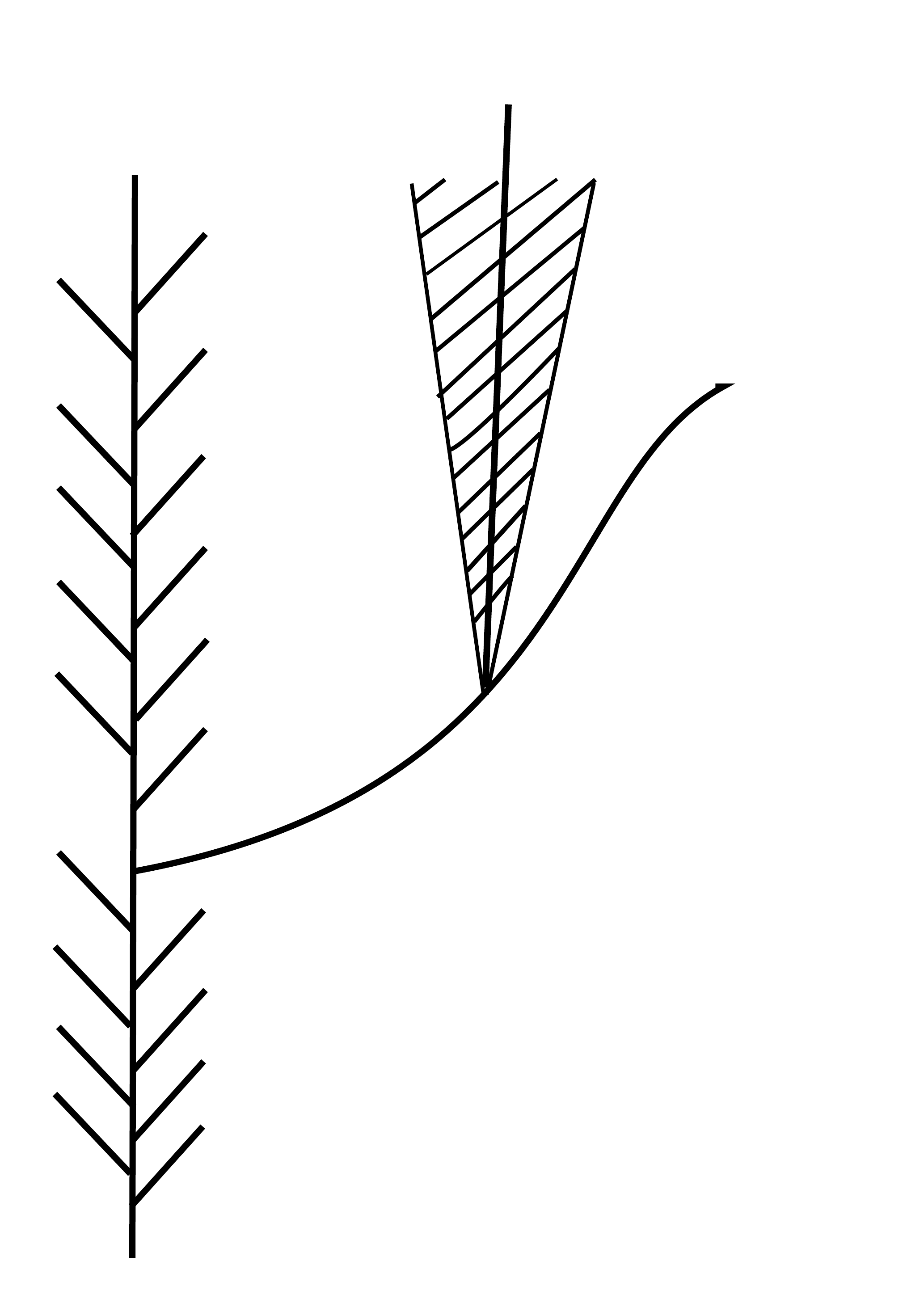 
\caption[Caption for LOF]{The case $i=2$ \cite[Page 68]{Droste85}}
\end{figure}

%

 Let $C_{\beta}$ be the disjoint union of the $r-1$ (or $r$ if $r$ is infinite) maximal subchains constructed.
We obtain $Y_{i+1}$ by adjoining at each $\beta\in Z_{i-1}$ the set $C_{\beta}$, that is, let
\[ Z_{i}=\bigsqcup_{\beta\in Z_{i-1}} C_{\beta} \quad \text{and} \quad Y_{i+1}=Y_i \sqcup Z_{i}.
\] 
Then conditions $(\mathrm{i}),(\mathrm{ii})$ and $(\mathrm{iii})$ are shown to hold, and $ \bigcup_{i\in \mathbb{N}} Y_i=T_r$  as desired.

\bigskip 

We can use this construction to obtain automorphisms of $T_r$. Suppose we also reconstruct $T_r$ from $\alpha'\in T_r$ via sets $Y_i',Z_i', C_{\beta}'$ (so that $ \bigcup_{i\in \mathbb{N}} Y_i'=T_r$).  
Let $\pi_1:Y_1\rightarrow Y_1'$ be an isomorphism such that $\alpha\pi_1=\alpha'$ (such an isomorphism exists as maximal chains are isomorphic to $\mathbb{Q}$, and $\mathbb{Q}$ is homogeneous). Suppose the isomorphism  $\pi_i:Y_i\rightarrow Y_i'$ has already been defined for some $i\in \mathbb{N}$. Then we may extend $\pi_i$ to $\pi_{i+1}:Y_{i+1}\rightarrow Y_{i+1}'$ as follows. For each $\beta\in Z_{i-1}$ the posets $C_{\beta}$ and $C_{\beta\pi_i}$ are both disjoint unions of the same number of copies of $\mathbb{Q}$, and are thus isomorphic (as posets). Let $\phi_{\beta}:C_{\beta}\rightarrow C_{\beta\pi_i}$ be an isomorphism, and let 
\[ \pi_{i+1}=\pi_i\sqcup  \bigsqcup_{\beta\in Z_{i-1}} \phi_{\beta}: Y_{i+1}\rightarrow Y_{i+1}'.
\] 
Then $\pi_{i+1}$ is an isomorphism, and so $\pi=\bigcup_{i\in \mathbb{N}} \pi_i$ is an automorphism of $T_r$. 

Before giving a full classification of homogeneous image-trivial normal bands, it is worth giving a simplified isomorphism theorem, which follows easily from Proposition \ref{iso normal}.  

\begin{corollary}\label{iso trivial} Let $B=[Y;B_{\alpha};\psi_{\alpha,\beta};\epsilon_{\alpha,\beta}]$ and $B'=[Y';B_{\alpha'}';\psi'_{\alpha',\beta'};\epsilon_{\alpha',\beta'}']$ be a pair of image-trivial normal bands. Let $\pi:Y\rightarrow Y'$ be an isomorphism, and for each $\alpha\in Y$ let $\theta_{\alpha}:B_{\alpha}\rightarrow B_{\alpha\pi}'$ be an isomorphism. Then $\bigcup_{\alpha\in Y}\theta_{\alpha}$ is an isomorphism from $B$ into $B'$ if and only if $\epsilon_{\alpha,\beta}\theta_{\beta}= \epsilon_{\alpha\pi,\beta\pi}'$ for each $\alpha\geq \beta$ in $Y$. 
\end{corollary}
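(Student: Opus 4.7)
The plan is to invoke Proposition \ref{iso normal} directly, exploiting the fact that image-triviality collapses every non-identity connecting morphism to a constant map. By that proposition, $\theta := \bigcup_{\alpha\in Y}\theta_\alpha$ is an isomorphism from $B$ to $B'$ if and only if, for every $\alpha \geq \beta$ in $Y$, the square $[\alpha,\beta;\alpha\pi,\beta\pi]$ commutes, i.e.\
\[
\psi_{\alpha,\beta}\,\theta_{\beta} \;=\; \theta_{\alpha}\,\psi'_{\alpha\pi,\beta\pi}
\]
as maps $B_{\alpha}\to B'_{\beta\pi}$. When $\alpha=\beta$ both connecting morphisms are identities and the condition is automatic, so only the case $\alpha>\beta$ requires attention.

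Next I would evaluate this condition pointwise using image-triviality. For any $x\in B_{\alpha}$, the left-hand side is $(x\psi_{\alpha,\beta})\theta_{\beta}=\epsilon_{\alpha,\beta}\theta_{\beta}$, a value independent of $x$ because $\psi_{\alpha,\beta}$ is constant with image $\{\epsilon_{\alpha,\beta}\}$. Likewise, since $x\theta_{\alpha}\in B'_{\alpha\pi}$ and $B'$ is image-trivial, the right-hand side is $(x\theta_{\alpha})\psi'_{\alpha\pi,\beta\pi}=\epsilon'_{\alpha\pi,\beta\pi}$, again independent of $x$. Hence the square $[\alpha,\beta;\alpha\pi,\beta\pi]$ commutes if and only if $\epsilon_{\alpha,\beta}\theta_{\beta}=\epsilon'_{\alpha\pi,\beta\pi}$, which gives both directions of the required equivalence simultaneously.

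No real obstacle is expected: the statement is essentially just Proposition \ref{iso normal} with its commutativity hypothesis rewritten in the simpler form available when every connecting morphism has singleton image.
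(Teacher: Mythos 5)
Your proof is correct and is exactly the argument the paper intends (the paper omits the proof, noting only that the corollary ``follows easily from Proposition \ref{iso normal}''): you reduce the commutativity of each square $[\alpha,\beta;\alpha\pi,\beta\pi]$ to the single equation $\epsilon_{\alpha,\beta}\theta_{\beta}=\epsilon'_{\alpha\pi,\beta\pi}$ using the fact that both $\psi_{\alpha,\beta}$ and $\psi'_{\alpha\pi,\beta\pi}$ are constant maps. The only point worth making explicit is that the ``only if'' direction uses the uniqueness clause of Proposition \ref{iso normal} to identify the decomposition of a given isomorphism $\bigcup_{\alpha}\theta_{\alpha}$ with the prescribed data $(\theta_{\alpha},\pi)$, but this is routine.
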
 

A subsemigroup $A$ of an image-trivial normal band $[Y;B_{\alpha};\psi_{\alpha,\beta};\epsilon_{\alpha,\beta}]$ is called a \textit{maximal chain} if $A$ is a semilattice and \text{supp}$(A)$ is a maximal chain in $Y$. Note that if $Y$ is a homogeneous semilinear order and $A$ is a maximal chain in $B$ then 
\[ A=\bigcup_{\alpha>\beta \text{ in } \text{supp} (A)} \epsilon_{\alpha,\beta} \cong \mathbb{Q}
\] 
We shall use the construction of $T_r$ above to prove the following: 

\begin{proposition}\label{1 trans} Let $B=[T_r;B_{\alpha};\psi_{\alpha,\beta};\epsilon_{\alpha,\beta}]$ and $B'=[T_r;B_{\alpha'}';\psi'_{\alpha',\beta'};\epsilon_{\alpha',\beta'}']$ be a pair of image-trivial normal bands with ramification order $k$ such that there exists $n,m\in \mathbb{N}^*$ with $B_{\alpha}\cong B_{\alpha'}'\cong B_{n,m}$ for all $\alpha,\alpha'\in T_r$. Let $e\in B$ and $f\in B'$, and consider a pair of sub-rectangular bands $M\subseteq B$ and $N \subseteq B'$ with $M>e$ and $N>f$. Then for any isomorphism $\Phi:M\cup \{e\} \rightarrow N\cup \{f\}$, there exists an isomorphism $\theta:B\rightarrow B'$ extending $\Phi$. Consequently, $B$ is 1-homogeneous. 
\end{proposition}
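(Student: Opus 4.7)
My strategy is to build $\theta = [\theta_\beta, \pi]_{\beta \in T_r}$ by a back-and-forth using the inductive reconstruction of $T_r$ recalled just above the proposition. By Corollary \ref{iso trivial}, it suffices to produce an isomorphism $\pi : T_r \to T_r$ and for each $\beta \in T_r$ an isomorphism $\theta_\beta : B_\beta \to B'_{\beta \pi}$ (which exists by Proposition \ref{rec bands iso} since every $\mathcal{D}$-class is $\cong B_{n,m}$) satisfying only the single compatibility condition $\epsilon_{\delta_1, \delta_2} \theta_{\delta_2} = \epsilon'_{\delta_1 \pi, \delta_2 \pi}$ for all $\delta_1 > \delta_2$ in $T_r$. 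A preliminary observation (in the case $M \neq \emptyset$): writing $\gamma$ for the $\mathcal{D}$-class of $M$ and $\alpha$ for that of $e$, the relation $me = em = e$ for $m \in M$ forces $\epsilon_{\gamma,\alpha} e = e \epsilon_{\gamma,\alpha} = e$ in the rectangular band $B_\alpha$, whence $e = \epsilon_{\gamma, \alpha}$ (and symmetrically $f = \epsilon'_{\gamma', \alpha'}$), so $\Phi$ already satisfies the compatibility condition on its support.

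For the base step, I pick a maximal chain $Y_1 \subseteq T_r$ containing $\{\alpha, \gamma\}$ (just $\{\alpha\}$ if $M = \emptyset$), and similarly $Y_1' \subseteq T_r$ containing $\{\alpha', \gamma'\}$. Inside the chain $Y_1$, \eqref{eq trivial} collapses the values $\epsilon_{\gamma^+, \gamma}$ for $\gamma^+ > \gamma$ in $Y_1$ to a single element $x_\gamma \in B_\gamma$ that depends only on the cone of $\gamma$ entered by the upper part of $Y_1$. Since the surjection $C(\gamma) \to B_\gamma$ (sending a cone to its $\epsilon$-image) hits every element of $B_\gamma$ with multiplicity $k$ (the parameters satisfy $r = k|B_\gamma|$), we have enough freedom on both sides to pick $x_\gamma$ and $x'_{\gamma'}$ so that the requirement ``$\theta_\gamma$ extends $\Phi|_M$ and sends $x_\gamma \mapsto x'_{\gamma'}$'' is realizable; this reduces to a case analysis on which of the $L$- and $R$-components of $x_\gamma$ lie inside $M$ (forced images) versus outside (free images), and Proposition \ref{rec bands iso} then produces the required $\theta_\gamma$. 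Extend $\Phi|_{\{e\}}$ to an arbitrary $\theta_\alpha$ (compatibility is automatic as $e = x_\alpha$), and for the remaining $\beta \in Y_1$ pick $\theta_\beta$ sending $x_\beta \mapsto x'_{\beta \pi_1}$. By \eqref{eq trivial} this forces the compatibility condition on every pair in $Y_1$.

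For the inductive step, enumerate $T_r$ on both sides and proceed through the reconstruction: at stage $i+1$, for each $\beta \in Z_{i-1}$ adjoin a maximal subchain in each new cone of $\beta$ (one of which contains the next unenumerated source element when possible) and pair these with analogous cones of $\beta \pi_i$ on the other side (absorbing the next target element). For each newly added $\delta$ lying in some $C_\beta$, compatibility with a prior $\delta' \leq \beta$ in $Y_i$ reduces by \eqref{eq trivial} (using $\delta\beta = \beta > \delta'$) to $\epsilon_{\beta, \delta'} \theta_{\delta'} = \epsilon'_{\beta\pi_i, \delta'\pi_i}$, which holds by induction; compatibility along the chain $C_\beta$ itself is handled exactly as in the base step via the single-chain argument. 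When at some later stage we adjoin new cones above a $\delta$ whose $\theta_\delta$ is already fixed, the surjections $C(\delta) \to B_\delta$ and $C(\delta\pi) \to B'_{\delta\pi}$ let us select cones on the two sides whose $\epsilon$-images correspond under $\theta_\delta$. Taking unions over $i$ yields $\theta = [\theta_\beta, \pi]$, which Corollary \ref{iso trivial} promotes to an isomorphism $B \to B'$ extending $\Phi$; specializing to $B' = B$, $M = N = \emptyset$ gives 1-homogeneity of $B$. The main hurdle throughout is the coordinated selection of cones on the two sides so that $\epsilon$-values align under the (mostly forced) rectangular-band isomorphisms; the parameter identity $r = k \cdot |B_\beta|$ is precisely what supplies the flexibility needed.
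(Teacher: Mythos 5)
Your proposal is correct and follows essentially the same route as the paper's proof: both reduce everything to the single compatibility condition of Corollary \ref{iso trivial} and run Droste's inductive reconstruction of $T_r$ simultaneously on both sides, using the identity $r=k\cdot nm$ to pair new cones so that their $\epsilon$-images correspond under the already-chosen $\theta_\beta$. The only differences are organizational — the paper indexes the adjoined chains by elements $e_\beta$ of the $\mathcal{D}$-classes (via the sets $C_{e_\beta}$), which makes your cone-counting step automatic, and it handles the base case by threading a maximal chain of the band through a chosen element of $M$ rather than by your case analysis on the components of $x_\gamma$ — but these amount to the same argument.
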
 

\begin{proof} We may assume $r>1$, else $B$ and $B'$ are isomorphic to $\mathbb{Q}$. 
Let $e_{\sigma},e_{\sigma'}\in B$,  $M$ a rectangular subband of $B_{\alpha}$, and $N$ a rectangular subband of  $B_{\delta}$, with $M>e_{\sigma}$ and $N>e_{\sigma'}$ for some $\sigma,\sigma',\alpha,\delta\in T_r$.
 Consider an isomorphism
\[ \Phi:M\cup \{e_{\sigma}\}\rightarrow N\cup \{e_{\sigma'}\},
\] 
so that $M\Phi=N$ and $e_{\sigma}\Phi=e_{\sigma'}$. By Corollary \ref{rec bands homog}, we may extend $\Phi|_M$ to an isomorphism $\Phi':B_{\alpha}\rightarrow B_{\delta}'$.
 Fix some $e_{\alpha}\in M$ and let $e_{\alpha}\Phi=e_{\delta}$. Consider a pair of enumerations $\{a_i:i\in \mathbb{N}\}$ and $\{b_i:i\in \mathbb{N}\}$ of $T_r$ (where $r=knm$) such that $\alpha=a_1$ and $\delta=b_1$. Let $A$ be a maximal chain in $B$ such that $e_{\sigma},e_{\alpha}\in A$, and let $Y_1=Z_0=\text{supp}(A)$ (so $Y_1\cong A$). Similarly obtain $e_{\sigma'},e_{\delta}\in \hat{A}$ and $\hat{Y}_1=\hat{Z}_0=\text{supp}(\hat{A})$ (so $\hat{Y_1}\cong \hat{A}$). Take an isomorphism $\pi_1:Y_1\rightarrow \hat{Y}_1$ such that $\sigma\pi_1=\sigma'$ and $\alpha\pi_1=\delta$ (again this is possible as $Y_1$ and $\hat{Y}_1$ are isomorphic to $\mathbb{Q}$). For each $\beta\in  Y_1\setminus\{\alpha\}$, take any isomorphism $\theta_{\beta}:B_{\beta}\rightarrow B_{\beta\pi_1}'$ such that $(B_{\beta}\cap A)\theta_{\beta}=B_{\beta\pi_1}' \cap \hat{A}$ (such an isomorphism exists by Corollary \ref{rec bands homog}), and let $\theta_{\alpha}=\Phi'$. Letting $D_1=[Y_1;B_{\alpha};\psi_{\alpha,\beta};\epsilon_{\alpha,\beta}]$ and $\hat{D}_1= [\hat{Y}_1;B_{\alpha'}';\psi'_{\alpha',\beta'};\epsilon_{\alpha',\beta'}']$, the map
\[ \theta_1=[\theta_{\beta},\pi_1]_{\beta\in Y_1}:D_1 \rightarrow \hat{D}_1
\] 
 is an isomorphism by Corollary \ref{iso trivial}, since $B_{\beta}\cap A=\{\epsilon_{\gamma,\beta}\}$ for all $\gamma>\beta$ in $Y_1$, and $B_{\beta\pi_1}\cap \hat{A}=\{\epsilon_{\gamma\pi_1,\beta\pi_1}\}$ for all $\gamma\pi_1>\beta\pi_1$ in $\hat{Y_1}$. 
 
Suppose for some $i\in \mathbb{N}$ the semilattices $Y_j,\hat{Y}_j$, posets $Z_{j-1},\hat{Z}_{j-1}$, bands 
 \[ D_j=[Y_j;B_{\alpha};\psi_{\alpha,\beta};\epsilon_{\alpha,\beta}], \\\ \hat{D}_j=[\hat{Y}_j;B_{\alpha'}';\psi'_{\alpha',\beta'};\epsilon_{\alpha',\beta'}']
 \]  and isomorphisms $\pi_j:Y_j\rightarrow \hat{Y}_j$, $\theta_j=[\theta_{\alpha},\pi_j]_{\alpha\in Y_j}:D_j\rightarrow \hat{D}_j$ have already been defined for each $j\leq i$, and are such that $Y_j,Z_{j-1}$ and $\hat{Y}_j,\hat{Z}_{j-1}$ satisfy conditions $(\mathrm{i}),(\mathrm{ii})$ and $(\mathrm{iii})$. As in the semilattice construction, if $a_{i+1}\not\in Y_i$ then we may fix $z\in Z_{i-1}$ such that $a_{i+1}$ belongs to some cone of $z$ which is disjoint to $Y_i$, and let $B_{a_{i+1}}\psi_{a_{i+1},z}=e_z$.
 
Consider the subset $X_{i-1}=\bigcup_{\gamma\in Z_{i-1}} B_{\gamma}$ of $B$. For each $e_{\beta}\in X_i$ (so $\beta\in Z_{i-1}$), take a maximal subchain of each cone $C\in C(e_{\beta})$ such that $Y_i\cap C=\emptyset$. If $e_{\beta}=\epsilon_{y,\beta}$ for (any) $y$ in the chain $\{y\in Y_i:\beta<y\}$, then by condition $(\mathrm{ii})$ precisely one cone will intersect $Y_i$ non-trivially. Otherwise, all cones of $e_{\beta}$ intersects $Y_i$ trivially. Moreover, if $a_{i+1}\not\in Y_i$ and $\beta=z$, we also require the maximal subchain of a cone of $C(e_z)$ to contain $a_{i+1}$. 
  
 Let $C_{e_\beta}$ be the disjoint union of the $k$ (or $k-1$ if $e_{\beta}=\epsilon_{y,\beta}$ for some $y\in Y_i$, and $k$ is finite) maximal subchains and let 
 \begin{align*} &  Z_i=\bigsqcup_{e_{\beta}\in X_{i-1}} C_{e_{\beta}}.
\end{align*} 
Let $Y_{i+1}=Y_i \sqcup Z_{i}$, and note that $\gamma \leq \gamma'$ for $\gamma,\gamma' \in Y_{i+1}$ if and only if  
\begin{align*} & \text{either } \gamma,\gamma' \in Y_i \text{ and } \gamma \leq \gamma' \text{ in } Y_i; \\
& \text{or } \gamma,\gamma' \in C_{e_{\beta}} \text{ for some } e_\beta\in X_{i-1} \text{ and } \gamma \leq \gamma' \text{ in } C_{e_\beta};\\
& \text{or } \gamma\in Y_i, \gamma' \in C_{e_\beta} \text{ for some } e_\beta\in X_{i-1} \text{ and } \beta \geq \gamma \text{ in } Y_i.
\end{align*}  Similarly obtain $ \hat{C}_{e_{\beta'}}$, $\hat{Z}_i$ and $\hat{Y}_{i+1}$, noting that as $B$ has ramification order $k$ the set  $ \hat{C}_{e_{\beta'}}$ will also be formed from $k$ (or $k-1$ if $e_{\beta'}=\epsilon'_{y',\beta'}$ for some $y'\in \hat{Y}_i$, and $k$ is finite)  maximum subchains. Let $D_{i+1}=[Y_{i+1};B_{\alpha};\psi_{\alpha,\beta};\epsilon_{\alpha,\beta}]$ and $\hat{D}_{i+1}=[\hat{Y}_{i+1};B_{\alpha'}';\psi'_{\alpha',\beta'};\epsilon_{\alpha',\beta'}']$. 
 
Recall that $C(\beta)=\bigcup_{e_{\beta}\in B_{\beta}}C(e_{\beta})$ for all $\beta\in T_r$. Hence as $\bigcup_{e_{\beta}\in B_{\beta}} C_{e_{\beta}}$ is a set of maximal subchains of the $r-1$ (or $r$ if $r$ is infinite) cones of $C(\beta)$ which intersect $Y_i$ non-trivially, it follows that conditions $(\mathrm{i}),(\mathrm{ii})$ and $(\mathrm{iii})$ are satisfied and $\bigcup_{i\in \mathbb{N}} Y_i = T_r$ (similarly for $\hat{Y}_i$). Consequently, $B=\bigcup_{i\in \mathbb{N}} D_i$ and $B'=\bigcup_{i\in \mathbb{N}} \hat{D}_i$.

For each $e_{\beta}\in X_{i-1}$, let $\sigma_{e_{\beta}}:C_{e_{\beta}}\rightarrow \hat{C}_{e_{\beta}\theta_i}$ be an isomorphism (as posets), and let 
\[ \pi_{i+1}=\pi_i\sqcup  \bigsqcup_{e_{\beta}\in X_{i-1}} \sigma_{e_\beta}: Y_{i+1}\rightarrow \hat{Y}_{i+1}.
\] 
By the order on $Y_{i+1}$ defined above, it is easily shown that $\pi_{i+1}$ is an isomorphism, and so $\pi=\bigcup_{i\in \mathbb{N}} \pi_i$ is an automorphism of $T_r$. For each $\gamma\in C_{e_{\beta}}$ ($e_{\beta}\in X_{i-1}$), let $\theta_{\gamma}:B_{\gamma}\rightarrow B_{\gamma\pi_{i+1}}$ be an isomorphism such that $\epsilon_{\gamma',\gamma}\theta_{\gamma}=\epsilon_{\gamma'\pi_{i+1},\gamma\pi_{i+1}}$ for (any) $\gamma'\in C_{e_{\beta}}$ with $\gamma'>\gamma$. We claim that the map 
\[ \theta_{i+1}=\theta_i\sqcup \bigsqcup_{\substack{\gamma\in C_{e_{\beta}} \\ e_{\beta}\in X_{i-1}}} \theta_{\gamma}= [\theta_{\alpha},\pi_{i+1}]_{\alpha\in Y_{i+1}}:D_{i+1}\rightarrow \hat{D}_{i+1}
\] 
is an isomorphism. Suppose $\gamma,\gamma'\in Y_{i+1}$ are such that $\gamma\leq \gamma'$. If $\gamma,\gamma'\in Y_{i}$, then as $\theta_i$ preserves the images of the connecting morphisms from $Y_i$, we have 
\[ \epsilon_{\gamma',\gamma}\theta_{i+1}=\epsilon_{\gamma',\gamma}\theta_i=\epsilon_{\gamma'\pi_i,\gamma\pi_i}= \epsilon_{\gamma'\pi_{i+1},\gamma\pi_{i+1}}.
\]
Similarly, if $\gamma,\gamma'\in C_{e_{\beta}}$ for some $e_{\beta}\in X_{i-1}$ then by construction $\epsilon_{\gamma',\gamma}\theta_{\gamma}=\epsilon_{\gamma'\pi_{i+1},\gamma\pi_{i+1}}$. Finally, if $\gamma\in Y_i$, $\gamma'\in C_{e_{\beta}}$ for some $e_{\beta}\in X_{i-1}$ and $\beta\geq \gamma$, then
\[ \epsilon_{\gamma',\gamma}\theta_{i+1}=\epsilon_{\beta,\gamma}\theta_{i+1}=\epsilon_{\beta,\gamma}\theta_{i}=\epsilon_{\beta\pi_{i},\gamma\pi_{i}} = \epsilon_{\beta\pi_{i+1},\gamma\pi_{i+1}}=\epsilon_{\gamma'\pi_{i+1},\gamma\pi_{i+1}}
\] 
by \eqref{eq trivial} since $\gamma'>\beta\geq \gamma$ and $\gamma'\pi_{i+1}>\beta\pi_{i+1}\geq \gamma\pi_{i+1}$. The claim then follows by Corollary \ref{iso trivial}. 
Hence $\theta=\bigcup_{i\in \mathbb{N}}\theta_i$ is an isomorphism from $B$ to $B'$ which extends $\Phi$. Taking $B=B'$ shows that $B$ is 1-homogeneous.
\end{proof} 

Consequently, for each collection $r,k,n,m\in \mathbb{N}^*$ such that $r=knm$, there exists a unique, up to isomorphism, image-trivial normal band  $B=[T_r;B_{\alpha};\psi_{\alpha,\beta};\epsilon_{\alpha,\beta}]$ with ramification order $k$ and $B_{\alpha}\cong B_{n,m}$ for all $\alpha\in T_r$. We shall denote such a band $T_{n,m,k}$, where $r=nmk$.

\begin{proposition}\label{T homog} A homogeneous image-trivial normal band is isomorphic to $T_{n,m,k}$ for some $n,m,k\in \mathbb{N}^*$. Conversely, every band $T_{n,m,k}$ is homogeneous. 
\end{proposition}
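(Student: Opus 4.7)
For the forward direction, the standing hypothesis $|B_\alpha|>1$ combined with Corollary \ref{basics}(iii) forces $B_\alpha\cong B_{n,m}$ for uniform $n,m\in\mathbb{N}^*$; Lemma \ref{trivial not universal} then gives that $Y$ is a homogeneous semilinear order, hence $Y\cong T_r$ for some $r\in\mathbb{N}^*$ by \cite[Theorem 6.21]{Droste85}. The 1-homogeneity afforded by Corollary \ref{basics}(i) forces the ramification order $k$ of $B$ to exist, and since the cones of $\alpha\in Y$ partition as $C(\alpha)=\bigsqcup_{e_\alpha\in B_\alpha}C(e_\alpha)$ we obtain $r=knm$. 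Proposition \ref{1 trans}, applied to singleton rectangular subbands $M=\{e_\alpha\}$, $N=\{f_\delta\}$ lying strictly above chosen elements $e\in B$ and $f\in T_{n,m,k}$ (with the obvious two-element chain isomorphism), then yields $B\cong T_{n,m,k}$.

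For the converse, let $\Phi:A\to A'$ be an isomorphism between finite subbands of $B=T_{n,m,k}$. By Lemma \ref{sub normal} and Proposition \ref{iso normal} we may write $\Phi=[\Phi_\alpha,\pi_0]_{\alpha\in Z}$ with $Z=\text{supp}(A)$, $Z'=\text{supp}(A')$, and $\pi_0:Z\to Z'$ an isomorphism. For $\alpha>\beta$ in $Z$ and $e_\alpha\in A_\alpha$, $e_\beta\in A_\beta$ the identity $e_\alpha e_\beta e_\alpha=\epsilon_{\alpha,\beta}$ shows that $\epsilon_{\alpha,\beta}\in A_\beta$ and $\epsilon_{\alpha,\beta}\Phi_\beta=\epsilon_{\alpha\pi_0,\beta\pi_0}$. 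Using Corollary \ref{rec bands homog}, extend each $\Phi_\alpha$ to an isomorphism $\theta^0_\alpha:B_\alpha\to B_{\alpha\pi_0}$; then by Corollary \ref{iso trivial} the map $\tilde\Phi=[\theta^0_\alpha,\pi_0]_{\alpha\in Z}$ is an isomorphism from $\bigcup_{\alpha\in Z}B_\alpha$ onto $\bigcup_{\alpha\in Z'}B_{\alpha\pi_0}$ extending $\Phi$.

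We now run a back-and-forth construction in the spirit of Proposition \ref{1 trans}, simultaneously extending $\pi_0$ to an automorphism $\pi$ of $T_r$ and extending the family $(\theta^0_\alpha)_{\alpha\in Z}$ to isomorphisms $(\theta_\alpha)_{\alpha\in T_r}$, so that $\theta=[\theta_\alpha,\pi]_{\alpha\in T_r}$ becomes an automorphism of $B$ extending $\tilde\Phi$. At each inductive step, when we adjoin a new point $\gamma$ on the domain side we use the homogeneity of $T_r$ to choose its image $\gamma\pi$, and use Corollary \ref{rec bands homog} to choose $\theta_\gamma:B_\gamma\to B_{\gamma\pi}$; the choices must respect the constraint $\epsilon_{\alpha,\beta}\theta_\beta=\epsilon_{\alpha\pi,\beta\pi}$ imposed by Corollary \ref{iso trivial} for every new pair $\alpha>\beta$. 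By \eqref{eq trivial} this constraint depends only on which cone of $\beta$ contains $\alpha$, so it reduces to matching the cone-partition of $\gamma$ into $nm$ groups of $k$ (induced by $B_\gamma$) with the corresponding partition at $\gamma\pi$, which is always possible since both partitions have the same shape.

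The main obstacle is the initial setup. In Proposition \ref{1 trans} one starts with $Y_1$ being a maximal chain of $T_r$ through the chosen points, making condition $(\mathrm{ii})$ of the $T_r$-reconstruction trivially valid; here however $Z$ is a genuine finite subsemilattice of $T_r$, not a chain. To rectify this, one first performs a preliminary enlargement of $\tilde\Phi$ over a suitable union of maximal chains of $T_r$ passing through the maximal elements of $Z$ (extended downward compatibly through the tree structure of $Z$), again invoking Corollary \ref{rec bands homog} and Corollary \ref{iso trivial} to produce the extended isomorphism. After this preparatory step, the induction proceeds exactly as in the proof of Proposition \ref{1 trans}, and the resulting $\theta=\bigcup_i\theta_i$ is an automorphism of $B$ extending $\Phi$ by Corollary \ref{iso trivial}.
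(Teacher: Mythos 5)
Your forward direction is essentially the paper's: Lemma \ref{trivial not universal} gives a homogeneous semilinear $Y\cong T_r$, 1-homogeneity gives the ramification order with $r=knm$, and Proposition \ref{1 trans} supplies the uniqueness statement identifying $B$ with $T_{n,m,k}$. For the converse, however, you take a genuinely different route from the paper, and it is here that there is a gap. The paper does \emph{not} rerun the back-and-forth of Proposition \ref{1 trans} from an arbitrary finite seed; it inducts on the size of the subband, peels off a maximal $\mathcal{D}$-class $M_{\delta}$ sitting over the unique $\beta$ covered by $\delta$, extends $\theta|_{M\setminus M_{\delta}}$ by the inductive hypothesis and $\theta|_{M_{\delta}\cup\{\epsilon_{\delta,\beta}\}}$ by Proposition \ref{1 trans}, and then glues the two automorphisms along the partition $T_r=[\epsilon_{\delta,\beta}]\sqcup(T_r\setminus[\epsilon_{\delta,\beta}])$, using \eqref{eq trivial} and Corollary \ref{iso trivial} to check the glued map is an automorphism. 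The point of this design is precisely to avoid ever having to run the $T_r$-reconstruction from a seed that is not a single maximal chain.

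Your proposal concentrates all of the difficulty into the ``preliminary enlargement'' over a union of maximal chains through the maximal elements of $Z$, and then asserts that ``the induction proceeds exactly as in the proof of Proposition \ref{1 trans}.'' That last claim is not true as stated: condition $(\mathrm{ii})$ of the reconstruction of $T_r$ fails for $Y_1$ a union of several maximal chains (below a branch point $\delta_i\delta_j$ there are two maximal chains contained in $Y_1$ with upper parts in $Z_0$, so uniqueness fails), and consequently the count of how many cones of each $e_{\beta}$ already meet $Y_i$ --- which governs whether one attaches $k$, $k-1$, or fewer new subchains at $e_{\beta}$ --- must be reworked. One also has to verify that at each point of the enlarged seed the finitely many cone constraints $\epsilon_{A}\mapsto\epsilon_{A'}$ are simultaneously realisable by a \emph{rectangular band} isomorphism (an arbitrary finite bijection of $B_{n,m}$ need not extend, by Proposition \ref{rec bands iso}); this does work out, because away from $Z$ all constraints collapse to a single point by \eqref{eq trivial} and at points of $Z$ the constraints are already realised by $\Phi_{\gamma}$ via $e_{\alpha}e_{\gamma}e_{\alpha}=\epsilon_{\alpha,\gamma}$, with the target-side chains chosen \emph{after} the $\theta^0_{\gamma}$ are fixed --- but none of this is in your write-up. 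So the strategy is salvageable, but as it stands the converse direction defers its only nontrivial content to an unproved assertion; the paper's peel-and-patch induction is the cleaner way to reduce the general case to the single-chain situation that Proposition \ref{1 trans} actually handles.
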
 

\begin{proof} By Lemma \ref{trivial not universal} an image-trivial homogeneous normal band has a semilinear structure semilattice, and has a ramification order by 1-homogeneity. Hence every homogeneous image-trivial normal band is isomorphic to some $T_{n,m,k}$ by the preceding results. We now prove that $T_{n,m,k}=[T_r;B_{\alpha};\psi_{\alpha,\beta};\epsilon_{\alpha,\beta}]$ is homogeneous. Since $T_{n,m,k}$ is 1-homogeneous by the proposition above, we may proceed by induction, by supposing all isomorphisms between finite subbands of size $j-1$ extend to an automorphism of $B$. Let $M=[Z_1;M_{\alpha};\psi_{\alpha,\beta}^M], N=[Z_2;N_{\alpha};\psi_{\alpha,\beta}^N]$ be a pair of finite subbands of $B$ of size $j$, and $\theta=[\theta_{\alpha},{\pi}]_{\alpha\in Z_1}$ an isomorphism from $M$ to $N$. By Proposition \ref{1 trans} we may assume that $Z_1$ and $Z_2$ are non-trivial (so $N,M$ are not rectangular bands). Let $\delta$ be maximal in $Z_1$, and $\delta\pi=\delta'$. Then by the inductive hypothesis the isomorphism ${\theta}|_{M\setminus M_{\delta}}: M\setminus M_{\delta}\rightarrow N \setminus N_{\delta'}$ extends to an automorphism $\theta^*=[\theta^*_{\alpha},\pi^*]_{\alpha\in T_r}$ of $B$. 

Since $Z_1$ is a finite semilinear order, there exists a unique $\beta\in Z_1$ covered by $\delta$. 
As $\theta$ is an isomorphism, it follows from Corollary \ref{iso trivial} that $\epsilon_{\delta,\beta}\theta_{\beta}^*=\epsilon_{\delta,\beta}\theta=\epsilon_{\delta',\beta\pi}=\epsilon_{\delta',\beta\pi^*}$. For each $e_{\tau}\in B$, consider the subsemilattice of $T_r$ given by 
\[ [e_{\tau}]:=\{\gamma\in T_r:B_{\gamma}\psi_{\gamma,\tau}=e_{\tau}\}=\text{supp}\{e\in B:e>e_{\tau}\}. 
\] 
Note that $[e_{\tau}]$ is the union of the cones of $e_{\tau}$, and so $T_r\setminus [e_{\tau}]$ forms a subsemilattice of $T_r$. Then $\hat{\pi}^*=\pi^*|_{T_r \setminus [\epsilon_{\delta,\beta}]}:T_r \setminus [\epsilon_{\delta,\beta}]\rightarrow T_r \setminus [\epsilon_{\delta',\beta\pi^*}]$ is an isomorphism, and we now aim to extend the isomorphism 
 \[ [\theta_{\gamma}^*, \hat{\pi}^*]_{\gamma\in T_r \setminus [\epsilon_{\delta,\beta}]} : \bigcup_{\gamma\in T_r \setminus [\epsilon_{\delta,\beta}]}B_{\gamma}\rightarrow \bigcup_{\gamma\in T_r \setminus [\epsilon_{\delta',\beta\pi^*}]}B_{\gamma}
 \]
 to an automorphism of $B$ which also extends ${\theta}$. 
 
Since $\theta$ maps $M_{\delta} \cup \{\epsilon_{\delta,\beta}\}$ to $N_{\delta\pi}\cup \{\epsilon_{\delta',\beta\pi}\}$, we may extend the isomorphism $\theta|_{M_{\delta} \cup \{\epsilon_{\delta,\beta}\}}$ to an automorphism $\bar{\theta}^*=[\bar{\theta}_{\gamma}^*,\bar{\pi}^*]_{\gamma\in T_r}$ of $B$ by Proposition \ref{1 trans}. Then as $\beta\hat{\pi}^* = \beta \bar{\pi}^*$, the bijection $\tilde{\pi}=\hat{\pi}^*|_{T_r \setminus [\epsilon_{\delta,\beta}]} \sqcup \bar{\pi}^*|_{[\epsilon_{\delta,\beta}]}$ is an automorphism of $T_r$. 

 We claim that $\mathbf{\tilde{\theta}}=[\mathbf{\tilde{\theta}}_{\gamma},\mathbf{\tilde{\pi}}]_{\gamma\in T_r}$, where 
 \begin{align*}
 \mathbf{\tilde{\theta}}_{\gamma} =
  \begin{cases}
   \theta_{\gamma}^* & \text{if } \gamma\in T_r \setminus [\epsilon_{\delta,\beta}],\\ 
  \bar{\theta}^*_{\gamma} & \text{if } \gamma\in [\epsilon_{\delta,\beta}],   \end{cases} 
\end{align*}
 is an automorphism of $T_{n,m,k}$. Indeed, by Corollary \ref{iso trivial} it is sufficient to prove that $\epsilon_{\gamma',\gamma} \mathbf{\tilde{\theta}} = \epsilon_{\gamma'\mathbf{\tilde{\pi}},\gamma\mathbf{\tilde{\pi}}}$ for any $\gamma'\geq \gamma$ in $T_r$. Note if $\gamma\in [\epsilon_{\delta,\beta}]$ then $\gamma'\geq\gamma > \beta$ and so $\gamma'\in  [\epsilon_{\delta,\beta}]$ by \eqref{eq trivial}. Hence, as $\theta^*$ and $\bar{\theta}^*$ are automorphisms of $B$ (and by the construction of $\mathbf{\tilde{\theta}}$), we only need consider the case where  $\gamma'\in  [\epsilon_{\delta,\beta}]$ and $\gamma\in T_r\setminus [\epsilon_{\delta,\beta}]$. If $\gamma\neq \beta$ then $\gamma'> \beta > \gamma$, so $\epsilon_{\gamma',\gamma}=\epsilon_{\beta,\gamma}$ by \eqref{eq trivial}, and so as $\beta,\gamma\in T_r\setminus [\epsilon_{\delta,\beta}]$, 
 \begin{align*} \epsilon_{\gamma',\gamma}\mathbf{\tilde{\theta}}= \epsilon_{\beta,\gamma}\mathbf{\tilde{\theta}}= \epsilon_{\beta,\gamma}\theta_{\gamma}^*=\epsilon_{\beta\hat{\pi}^*,\gamma\hat{\pi}^*} = \epsilon_{\beta\mathbf{\tilde{\pi}},\gamma\mathbf{\tilde{\pi}}} = \epsilon_{\gamma'\mathbf{\tilde{\pi}},\gamma\mathbf{\tilde{\pi}}}
 \end{align*}  
 with the final equality holding since $\gamma'\mathbf{\tilde{\pi}} > \beta\mathbf{\tilde{\pi}} > \gamma\mathbf{\tilde{\pi}}$. Finally, if $\gamma=\beta$ then 
 \[\epsilon_{\gamma',\gamma}\mathbf{\tilde{\theta}}= \epsilon_{\gamma',\beta}\mathbf{\tilde{\theta}}_{\beta}=\epsilon_{\gamma',\beta}\theta_{\beta}^* = \epsilon_{\gamma',\beta}\theta^*_{\beta}=  \epsilon_{\delta,\beta}\theta^*_{\beta} = \epsilon_{\delta',\beta\hat{\pi}^*}= \epsilon_{\gamma'\bar{\pi}^*,\beta\hat{\pi}^*}=\epsilon_{\gamma' \mathbf{\tilde{\pi}}, \beta \mathbf{\tilde{\pi}}} 
 \] 
 since $\gamma'\bar{\pi}^*\in [\epsilon_{\delta',\beta\pi^*}]= [\epsilon_{\delta',\beta\hat{\pi}^*}]$. 
 Thus $\mathbf{\tilde{\theta}}$ is indeed an automorphism of $B$, and extends $\theta$ by construction. 
\end{proof} 
It is worth noting that the spined product of a left and right homogeneous image-trivial normal band need not be homogeneous. For example suppose $B=T_{2,1,1}\bowtie T_{1,2,1}$ is homogeneous, and thus isomorphic to some $T_{n,m,k}$. Then $n=2,m=2$ and as $B$ has structure semilattice $T_2$, so must $T_{2,2,k}$, and so $2.2.k=4k=2$, contradicting $k\in \mathbb{N}^*$. Our aim is now to prove that the converse holds, that is, if an image-trivial normal band $L\bowtie R$ is homogeneous, then so are $L$ and $R$.

\begin{corollary}\label{trivial splits} Let $B=L\bowtie R$ be a homogeneous normal band such that $L$ is image-trivial. Then $L$ is homogeneous (dually for $R$).  
\end{corollary}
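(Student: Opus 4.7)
Let $\theta^l\colon L_1\to L_2$ be an isomorphism between finite subbands of $L$, and by Proposition~\ref{iso band} let $\pi\colon Y_1\to Y_2$ be its induced isomorphism of structure semilattices. My plan is to lift $\theta^l$ to an isomorphism between finite subbands of $B$, extend the lift to an automorphism of $B$ using its homogeneity, and then read off the left component as the desired automorphism of $L$.

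More precisely, I aim to construct finite subbands $R_1,R_2$ of $R$, together with an isomorphism $\theta^r\colon R_1\to R_2$ whose induced semilattice map also equals $\pi$ after the appropriate identification of supports with $Y_1,Y_2$. Granted such data, and using the fact (noted in the text preceding Proposition~\ref{iso regular}) that subbands of regular bands decompose as spined products of subbands of their components, the finite subbands $A_i:=L_i\bowtie R_i$ of $B$ admit $\theta^l\bowtie\theta^r\colon A_1\to A_2$ as an isomorphism by Proposition~\ref{iso regular}. Homogeneity of $B$ then extends $\theta^l\bowtie\theta^r$ to an automorphism $\bar\theta$ of $B$, and another application of Proposition~\ref{iso regular} gives $\bar\theta=\bar\theta^l\bowtie\bar\theta^r$; the left factor $\bar\theta^l$ is then an automorphism of $L$ restricting to $\theta^l$ on $L_1$, as required.

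The main work is therefore the construction of the triple $(R_1,R_2,\theta^r)$, and here I would split on Lemma~\ref{trivial or surj}, which says that $R$ is either image-trivial or surjective. If $R$ is image-trivial, each subsemilattice $Y_i$ lifts to a semilattice subband of $R$ by selecting the unique image elements $\epsilon^r_{\alpha,\beta}$ of the connecting morphisms, with the rigidity expressed by~\eqref{eq trivial} ensuring compatibility between overlapping selections and Lemma~\ref{image same} supplying the correspondence needed to define $\theta^r$ from $\pi$. If $R$ is surjective, $R_i$ is built by choosing an element lying above each maximal class of $Y_i$ and pushing down through the (surjective) connecting morphisms, and $\theta^r$ is obtained by making parallel choices on both sides, using Corollary~\ref{basics}(iii) to match the corresponding $\mathcal{D}$-classes.

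The principal obstacle is precisely this Step: verifying in both cases that one can choose $R_i$ as a subband of $R$ with support corresponding to $Y_i$ in such a way that the map $\theta^r$ induced by $\pi$ is a genuine morphism of $R$ (in particular, is compatible with the connecting morphisms along all of $Y_i$, and not merely chain by chain). The image-triviality of $L$ enters here through the rigid form taken by finite subbands of $L$ (via Lemma~\ref{sub normal} and~\eqref{eq trivial}), which restricts the possible shapes of $Y_i$ and the isomorphisms~$\pi$ that can arise, so that the corresponding $R_i$ really can be constructed inside $R$.
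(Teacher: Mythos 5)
Your scheme --- lift $\theta^l$ to an isomorphism of finite subbands of $B$, extend by homogeneity of $B$, and project back --- is exactly the paper's argument for Corollary \ref{surj split}, but the step you defer as ``the principal obstacle'' is a genuine obstruction here, not a verification, and it cannot be overcome by the means you indicate. The problem is the subcase where $R$ is also image-trivial. Any finite subband $A_1$ of $B$ with left component exactly $L_1$ has the form $L_1\bowtie R_1$ with $\text{supp}(R_1)=\text{supp}(L_1)=Y_1$; and for incomparable $\alpha,\gamma\in Y_1$ with $\alpha\gamma=\beta\in Y_1$, identity \eqref{eq trivial} says nothing (it applies only when $\alpha\gamma>\beta$), so whether $\epsilon^r_{\alpha,\beta}=\epsilon^r_{\gamma,\beta}$ is governed by the cone structure of $R$, which is independent of that of $L$ and hence of $\pi$. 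Concretely, take $B=T_{2,2,1}$, so $Y=T_4$, each of the four elements of $B_\beta$ owns exactly one cone of $\beta$, $L\cong T_{2,1,2}$ and $R\cong T_{1,2,2}$. Choose $\alpha,\gamma$ in the cones of $(l^1,r^1)$ and $(l^2,r^1)$ over $\beta$, and $\alpha',\gamma'$ in the cones of $(l^1,r^1)$ and $(l^2,r^2)$ over $\beta'$. Then $L_1=\{l_\alpha,l_\gamma\}\cup L_\beta$ and $L_2=\{l_{\alpha'},l_{\gamma'}\}\cup L_{\beta'}$ are isomorphic via a $\theta^l$ inducing $\pi\colon\alpha\mapsto\alpha',\gamma\mapsto\gamma',\beta\mapsto\beta'$; but every subband $R_1$ of $R$ with support $\{\alpha,\gamma,\beta\}$ contains the element $\epsilon^r_{\alpha,\beta}=\epsilon^r_{\gamma,\beta}=r^1_\beta$ lying below elements of both $R_\alpha$ and $R_\gamma$, whereas in any $R_2$ with support $\{\alpha',\gamma',\beta'\}$ no element of $R_{\beta'}$ lies below elements of both $R_{\alpha'}$ and $R_{\gamma'}$ (since $\epsilon^r_{\alpha',\beta'}=r^1_{\beta'}\neq r^2_{\beta'}=\epsilon^r_{\gamma',\beta'}$). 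As the natural order is preserved by isomorphisms, no $\theta^r\colon R_1\to R_2$ inducing $\pi$ exists, so this $\theta^l$ admits no lift of the kind your argument requires --- even though it does extend to an automorphism of $L$, because $T_{2,1,2}$ is homogeneous.

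The paper avoids lifting altogether: using Corollary \ref{basics} and Lemma \ref{trivial not universal} it shows $L$ has constant $\mathcal{D}$-class size $n$, structure semilattice $T_r$, and a well-defined ramification order $k$ (cones of $L$ are transported by the left components of automorphisms of $B$), whence $L\cong T_{n,1,k}$ by the uniqueness in Proposition \ref{1 trans}, and $T_{n,1,k}$ is homogeneous by Proposition \ref{T homog}. Any repair of your route would in effect re-prove Proposition \ref{T homog}. A smaller point: in your surjective subcase, ``choosing an element above each maximal class and pushing down'' does not give a transversal either, since distinct maximal elements of $Y_i$ meeting at $\beta$ may push down to distinct elements of $R_\beta$, and $Y_i$ need not have an upper bound in a semilinear order; what saves that subcase is that Lemma \ref{not iso uni} forces the connecting morphisms of a surjective $R$ to be isomorphisms (as $Y$ is semilinear), so one can lift upward from the minimum of $Y_i$.
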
 

\begin{proof} Let $B=[Y;L_{\alpha};\psi_{\alpha,\beta}^l;\epsilon_{\alpha,\beta}^l]\bowtie [Y;R_{\alpha};\psi_{\alpha,\beta}^r]$ be homogeneous.
  Then by Corollary \ref{basics} ($\mathrm{iii}$)  there exists $n\in \mathbb{N}^*$ such that $L_{\alpha}\cong B_{n,1}$ for all $\alpha\in Y$, and by Lemma \ref{trivial not universal} we may assume $Y=T_r$, where $r=nk$ for some $k\in \mathbb{N}^*$. Moreover, $L$ has a ramification order, since if $l_{\alpha},k_{\beta}\in L$, then by fixing any $r_{\alpha}\in R_{\alpha}$, $s_{\beta}\in R_{\beta}$, there exists an automorphism $\theta^l\bowtie \theta^r$ of $B$ sending $(l_{\alpha},r_{\alpha})$ to $(k_{\beta},s_{\beta})$ by Corollary \ref{basics} ($\mathrm{i}$). 
  In particular, $l_{\alpha}\theta^l=k_{\beta}$ and so $|C(l_{\alpha})|= |C(k_{\beta})|=k$. Hence $L\cong T_{n,1,k}$ by Proposition \ref{1 trans}, and is thus homogeneous. 
\end{proof}

\subsection{Surjective normal bands} \label{sec:surj}

We now study the homogeneity of surjective normal bands. 

\begin{lemma} \label{embed Y} Let $B=[Y;B_{\alpha};\psi_{\alpha,\beta}]$ be a homogeneous surjective normal band. Then for any finite subsemilattice $Z$ of $Y$, there exists a subband $A=[Z;\{e_{\alpha}\};\psi_{\alpha,\beta}^A]$ of $B$ isomorphic to $Z$. 
\end{lemma}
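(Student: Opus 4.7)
The plan is to prove the lemma by induction on $|Z|$. The base case $|Z|=1$ is immediate: choose any $e_\alpha \in B_\alpha$ for the unique $\alpha \in Z$.

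For the inductive step with $|Z| \geq 2$, I would split into cases. If $Z$ has a unique maximum $\delta$, then picking any $e_\delta \in B_\delta$ and setting $e_\alpha := e_\delta \psi_{\delta, \alpha}$ for all $\alpha \in Z$ works: a direct computation using the strong-semilattice multiplication formula and the transitivity of the connecting morphisms shows that $\{e_\alpha : \alpha \in Z\}$ is a subband isomorphic to $Z$, independently of any inductive hypothesis.

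If $Z$ has more than one maximal element, pick any maximal $\delta \in Z$, let $Z' := Z \setminus \{\delta\}$ (still a subsemilattice), and by the inductive hypothesis obtain a subband $A' = \{e_\alpha : \alpha \in Z'\}$. It then suffices to produce $e_\delta \in B_\delta$ with $e_\delta \psi_{\delta, \alpha} = e_\alpha$ for every $\alpha \in M := \{\alpha \in Z' : \alpha < \delta\}$: the remaining identities required for $A := A' \cup \{e_\delta\}$ to be a subband will follow automatically, since for $\beta \in Z' \setminus M$ the subsemilattice property of $Z$ forces $\beta \wedge \delta \in M$, and then $e_\delta e_\beta = e_\beta e_\delta = e_{\beta \wedge \delta}$ by a direct computation using the strong-semilattice formula.

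When $M$ has a unique maximum $\mu$, the surjectivity of $\psi_{\delta, \mu}$ supplies $e_\delta \in \psi_{\delta, \mu}^{-1}(e_\mu)$, and the required relations $e_\delta \psi_{\delta, \alpha} = e_\alpha$ for the other $\alpha \in M$ follow from the transitivity $\psi_{\delta, \mu}\psi_{\mu,\alpha} = \psi_{\delta,\alpha}$ combined with $A'$ being a subband. Note that this subcase always occurs when $Y$ is linearly or semilinearly ordered, since then $\{\gamma \in Y : \gamma < \delta\}$ is itself linearly ordered.

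The harder case is when $M$ has several maximal elements (so $Y$ contains a diamond $\{\delta,\mu_1,\mu_2,\mu_1\wedge\mu_2\}$). Here I would apply the inductive hypothesis a second time to the subsemilattice $E := \{\delta\} \cup M$, which is strictly smaller than $Z$ since the other maximal elements of $Z$ lie outside $E$. This yields a subband $F = \{f_\alpha : \alpha \in E\}$ with $f_\delta \in B_\delta$ projecting compatibly onto the $f_\alpha$. Since $\{e_\alpha : \alpha \in M\}$ and $\{f_\alpha : \alpha \in M\}$ are both subbands of $B$ isomorphic to $M$, the homogeneity of $B$ extends the isomorphism $f_\alpha \mapsto e_\alpha$ to an automorphism $\theta$ of $B$, and the natural candidate for $e_\delta$ is $f_\delta \theta$.

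The main obstacle is that the induced automorphism $\pi$ of $Y$ accompanying $\theta$ fixes $M$ pointwise but may carry $\delta$ to some other element $\delta^* \in Y$ lying above $M$, so a priori $f_\delta \theta \in B_{\delta^*}$ rather than $B_\delta$. Overcoming this step — either by showing that $\theta$ can be selected with $\delta\pi=\delta$, or by subsequently transporting the element of $B_{\delta^*}$ back into $B_\delta$ without disturbing its projections onto $\{e_\alpha : \alpha \in M\}$ — is the delicate point where I expect the 2-homogeneity of $Y$ from Proposition~\ref{linear homog} must be combined with the surjectivity of the connecting morphisms to close the argument.
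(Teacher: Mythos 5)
Your base case, your unique-maximum case, and your ``$M$ has a unique maximum'' subcase all go through, and together they essentially reproduce the paper's argument for linearly and semilinearly ordered $Y$ (there $\{\gamma\in Y:\gamma<\delta\}$ is a chain, so $M$ automatically has a unique maximum). The genuine gap is the remaining case, where $M$ has several maximal elements, and you flag it yourself without resolving it: homogeneity extends $f_\alpha\mapsto e_\alpha$ ($\alpha\in M$) to an automorphism $\theta$ of $B$, but the induced automorphism $\pi$ of $Y$ is only guaranteed to fix $M$ pointwise, so $f_\delta\theta$ lands in $B_{\delta\pi}$ for some $\delta\pi>M$ which may be distinct from, and even incomparable with, $\delta$. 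Neither of your two suggested repairs is carried out, and $2$-homogeneity of $Y$ (Proposition~\ref{linear homog}) by itself will not let you choose $\theta$ with $\delta\pi=\delta$, since the isomorphism being extended does not mention $\delta$ at all. As written, the induction does not close.

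The paper handles exactly this situation by a different route. It first shows that if $Y$ contains a diamond $\beta<\{\tau,\gamma\}<\sigma$ --- which it must whenever your problematic case occurs --- then \emph{any} two incomparable elements $\alpha,\delta\in Y$ have a common upper bound: surjectivity of the connecting morphisms produces three-element non-chain semilattices $\{e_{\alpha\delta},e_\alpha,e_\delta\}$ and $\{e_\beta,e_\tau,e_\gamma\}$, the latter lying below an element of $B_\sigma$, and extending the isomorphism between them to an automorphism of $B$ transports the upper bound $\sigma$ to an upper bound of $\{\alpha,\delta\}$. An easy induction then gives every finite subsemilattice of $Y$ an upper bound $\alpha>Z$, whereupon the desired copy of $Z$ is simply $\{e_\alpha\psi_{\alpha,\beta}:\beta\in Z\}$ --- no induction on $|Z|$ is needed in this case at all. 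This upward-directedness is also precisely what would rescue your construction (take $\eta\geq\delta,\delta\pi$, lift $f_\delta\theta$ into $B_\eta$ by surjectivity, and project down to $B_\delta$; transitivity of the connecting morphisms then gives the required projections onto the $e_\alpha$), so the missing ingredient is a substantive lemma about $Y$, not a routine detail.
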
 

\begin{proof} Suppose first that $Y$ is a linear or semilinear order.  The result trivially holds for the case where $|Z|=1$ by taking $A$ to be a trivial subband. Proceed by induction by assuming that the result holds for all subsemilattices of size $n-1$, and let $Z$ be a subsemilattice of $Y$ of size $n\in \mathbb{N}$. Let $\delta$ be maximal in $Z$, so $Z'=Z\setminus \{\delta\}$ is a subsemilattice of $Y$ of size $n-1$. By the inductive hypothesis, there exists a subband $A'=[Z';\{e_{\alpha}\};\psi_{\alpha,\beta}^{A'}]$ and an isomorphism $\phi:A'\rightarrow  Z'$. Since $Y$ is linearly or semilinearly ordered and $Z$ is finite, there is a unique $\beta\in Z'$ such that $\delta$ covers $\beta$. Let $\{e_{\beta}\}= A'\cap B_{\beta}$. Since $\psi_{\delta,\beta}$ is surjective, there exists $e_{\delta}\in B_{\delta}$ such that $e_{\delta}\psi_{\delta,\beta}=e_{\beta}$. Let $\phi'$ be the map from $A'\cup \{e_{\delta}\}$ to $Z$ given by $A'\phi'=A'\phi$ and $e_{\delta}\phi'=\delta$. Then $\phi'$ is clearly an isomorphism, and the inductive step is complete. 

Suppose instead that $Y$ contains a diamond $\beta<\{\tau,\gamma\}<\sigma$. We claim that any pair $\alpha,\delta\in Y$ with $\alpha \perp \delta$ has an upper cover. Let $e_{\alpha\delta}\in B_{\alpha\delta}$ be fixed. Since the connecting morphisms are surjective, $e_{\alpha\delta}=e_{\alpha}\psi_{\alpha,\alpha\delta}=e_{\delta}\psi_{\delta,\alpha\delta}$ for some $e_{\alpha}\in B_{\alpha}$ and $e_{\delta}\in B_{\delta}$. Similarly for $\tau$ and $\gamma$ we obtain $e_{\beta}=e_{\tau}\psi_{\tau,\beta}=e_{\gamma}\psi_{\gamma,\beta}$, and the claim follows by extending the isomorphism from $\{e_{\beta},e_{\tau},e_{\gamma}\}$ to $\{e_{\alpha\delta},e_{\alpha},e_{\delta}\}$ to an automorphism of $B$. Hence, by a simple inductive argument, every finite subsemilattice of $Y$ has an upperbound. Let $Z$ be a finite subsemilattice of $Y$ and $\alpha\in Y$ be such that $\alpha>Z$. Then for any $e_{\alpha}\in B_{\alpha}$, 
\[ \{e_\alpha\psi_{\alpha,\beta}: \beta\in Z\}\cong Z, 
\] 
as required. 
\end{proof} 

\begin{corollary}\label{Y homog surj} Let $B=[Y;B_{\alpha};\psi_{\alpha,\beta}]$ be a homogeneous  normal band. Then $Y$ is homogeneous. 
\end{corollary}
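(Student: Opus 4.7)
The plan is to reduce homogeneity of $Y$ to homogeneity of $B$ by finding, inside $B$, a concrete semilattice copy of each finite subsemilattice of $Y$, transferring isomorphisms back and forth through these copies.

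Concretely, suppose $\pi:Z_1\to Z_2$ is an isomorphism between finite subsemilattices of $Y$. By Lemma~\ref{embed Y}, applied separately to $Z_1$ and $Z_2$, there exist subbands
\[
A_i=[Z_i;\{e^i_\alpha\};\psi_{\alpha,\beta}^{A_i}] \quad (i=1,2)
\]
of $B$, each containing exactly one element in each $\mathcal{D}$-class labelled by $Z_i$. Because every $\mathcal{D}$-class of $A_i$ is a singleton, $A_i$ is in fact a semilattice, and the map $\alpha\mapsto e_\alpha^i$ is a semilattice isomorphism from $Z_i$ onto $A_i$.

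Next I would lift $\pi$ to a band isomorphism $\theta:A_1\to A_2$ by setting $\theta(e^1_\alpha)=e^2_{\alpha\pi}$; this is a semigroup isomorphism because $\pi$ is a semilattice isomorphism and because multiplication in each $A_i$ coincides with the meet on $Z_i$. Since $B$ is homogeneous, $\theta$ extends to an automorphism $\Theta$ of $B$. By Proposition~\ref{iso band}, $\Theta=[\Theta_\alpha,\bar\pi]_{\alpha\in Y}$ for some automorphism $\bar\pi$ of $Y$, and because $\Theta$ restricts to $\theta$ on $A_1$, the induced map $\bar\pi$ satisfies $\bar\pi|_{Z_1}=\pi$, giving the required extension.

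The only real ingredient is Lemma~\ref{embed Y}; once the surjectivity of the connecting morphisms is used there to realise each finite subsemilattice of $Y$ inside $B$, nothing further is needed. So the main point of the argument is that the homogeneity of $B$ transports automatically to the structure semilattice via the induced map on $\mathcal{D}$-classes, and the only obstacle — producing a semilattice section of $Z\hookrightarrow Y$ inside $B$ — has already been removed by Lemma~\ref{embed Y}.
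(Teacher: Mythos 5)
There is a genuine gap. Lemma~\ref{embed Y} is stated (and proved) only for homogeneous \emph{surjective} normal bands: its proof repeatedly uses the surjectivity of the connecting morphisms to lift a chosen element $e_\beta$ to some $e_\delta$ with $e_\delta\psi_{\delta,\beta}=e_\beta$. You apply it to an arbitrary homogeneous normal band, and in that generality the lemma is false. For instance, take $B=T_{n,m,1}$ with $nm>1$, so $Y=T_{nm}$ and the ramification order of $B$ is $k=1$. A three-element non-chain $Z=\{\alpha,\gamma,\beta\}$ in $Y$ with $\alpha\gamma=\beta$ cannot be realised as a subsemilattice of $B$: for any $e_\alpha\in B_\alpha$ and $e_\gamma\in B_\gamma$ we have $e_\alpha e_\gamma=\epsilon_{\alpha,\beta}\epsilon_{\gamma,\beta}$ and $e_\gamma e_\alpha=\epsilon_{\gamma,\beta}\epsilon_{\alpha,\beta}$; since $\alpha\gamma=\beta$, the elements $\alpha$ and $\gamma$ lie in distinct cones of $\beta$ by Remark~\ref{remark}, and $k=1$ forces $\epsilon_{\alpha,\beta}\neq\epsilon_{\gamma,\beta}$, whence $e_\alpha e_\gamma\neq e_\gamma e_\alpha$ because distinct elements of a rectangular band never commute. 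So there is no semilattice section over $Z$, and your ``only real ingredient'' is unavailable exactly in the non-surjective case.

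Your argument in the surjective case is precisely the paper's: build the sections $A_i$ via Lemma~\ref{embed Y}, lift $\pi$ to $[\theta_\alpha,\pi]_{\alpha\in Z_1}$, extend by homogeneity of $B$, and read off the induced automorphism of $Y$. What is missing is the case split. The paper writes $B=L\bowtie R$ and invokes Lemma~\ref{trivial or surj}: each of $L$ and $R$ is image-trivial or surjective. If both are surjective, so is $B$ and your argument applies; otherwise one component is a non-semilattice image-trivial normal band, and Lemma~\ref{trivial not universal} shows directly that $Y$ is a homogeneous semilinear order. You need to add this second branch (or an equivalent argument) for the proof to cover all homogeneous normal bands.
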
 

\begin{proof}  Suppose first that $B$ is a surjective normal band. Let $\pi:Z\rightarrow Z'$ be an isomorphism between finite subsemilattices of $Y$. By Lemma \ref{embed Y}, there exists subbands $A=[Z;\{e_{\alpha}\};\psi^A_{\alpha,\beta}]$ and $A'=[Z';\{e_{\alpha'}\};\psi^{A'}_{\alpha',\beta'}]$ isomorphic to $Z$ and $Z'$, respectively. Hence $[\theta_{\alpha},\pi]_{\alpha\in Z}$ is an isomorphism from $A$ to $A'$, where $\theta_{\alpha}$ maps $e_{\alpha}$ to $e_{\alpha\pi}$, and the result follows by the homogeneity of $B$. 
Now let $B=L\bowtie R$ be an arbitrary homogeneous normal band. By Lemma \ref{trivial or surj}, $L$ and $R$ are either image-trivial or surjective normal bands. If both $L$ and $R$ are surjective, then clearly so too is $B$, and so $Y$ is homogeneous by the first part. Otherwise, $Y$ is homogeneous by Lemma \ref{trivial not universal}. 
\end{proof} 

\begin{corollary}\label{surj split} Let $B=L\bowtie R$ be a homogeneous surjective normal band. Then $L$ and $R$ are homogeneous. 
\end{corollary}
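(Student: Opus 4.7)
The plan is to reduce the claim to the homogeneity of $B$ by realising the given isomorphism $\theta^l : L_1 \to L_2$ between finite subbands of $L$ as the left component of an isomorphism between finite subbands of $B$. By Proposition \ref{iso band}, write $\theta^l = [\theta^l_\alpha, \pi]_{\alpha \in Z_1}$, where $\pi : Z_1 \to Z_2$ is the induced isomorphism between the structure semilattices of $L_1$ and $L_2$.

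Since $B$ is a homogeneous surjective normal band, Lemma \ref{embed Y} furnishes, for each $i = 1, 2$, a semilattice subband $\{e^{(i)}_\alpha : \alpha \in Z_i\}$ of $B$ with $e^{(i)}_\alpha \in B_\alpha$. Writing $e^{(i)}_\alpha = (a^{(i)}_\alpha, b^{(i)}_\alpha)$ in $L \bowtie R$, the set $R'_i := \{b^{(i)}_\alpha : \alpha \in Z_i\}$ is a semilattice subband of $R$ isomorphic to $Z_i$. Put
\[
A_i := \bigcup_{\alpha \in Z_i} (L_i \cap L_\alpha) \times \{b^{(i)}_\alpha\};
\]
a direct calculation using the spined product multiplication and the semilattice property of $R'_i$ confirms that $A_i$ is a subband of $B$, in fact $A_i = L_i \bowtie R'_i$. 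Define $\theta^r : R'_1 \to R'_2$ by $b^{(1)}_\alpha \mapsto b^{(2)}_{\alpha\pi}$; this is a semilattice isomorphism whose induced map is $\pi$, matching that of $\theta^l$. By Proposition \ref{iso regular}, $\theta := \theta^l \bowtie \theta^r$ is then an isomorphism from $A_1$ onto $A_2$.

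Now invoke the homogeneity of $B$ to extend $\theta$ to an automorphism $\Theta$ of $B$, and decompose $\Theta = \Theta^l \bowtie \Theta^r$ via Proposition \ref{iso regular}. For each $l \in L_1$, choose $\alpha \in Z_1$ with $l \in L_\alpha$; then $(l, b^{(1)}_\alpha) \in A_1$ and
\[
(\Theta^l(l), \Theta^r(b^{(1)}_\alpha)) = \Theta(l, b^{(1)}_\alpha) = \theta(l, b^{(1)}_\alpha) = (\theta^l(l), b^{(2)}_{\alpha\pi}),
\]
whence $\Theta^l(l) = \theta^l(l)$. Thus $\Theta^l$ is an automorphism of $L$ extending $\theta^l$, so $L$ is homogeneous; the argument for $R$ is entirely dual.

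The main (and essentially the only) obstacle is to arrange that the $R$-component isomorphism $\theta^r$ induces the same semilattice map $\pi$ as $\theta^l$, since this matching is the hypothesis that Proposition \ref{iso regular} requires in order to glue the two sides into a morphism of $B$. Lemma \ref{embed Y} resolves this cleanly by producing single-element-per-$\mathcal{D}$-class copies of $Z_1$ and $Z_2$ inside $R$, on which $\pi$ can be transported verbatim; without such an embedding one would have to wrestle directly with fibres of the connecting morphisms, which is why surjectivity is critical here.
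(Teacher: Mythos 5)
Your argument is correct and is essentially the proof given in the paper: both use Lemma \ref{embed Y} to produce single-element-per-$\mathcal{D}$-class copies of $Z_1$ and $Z_2$ in $R$, transport $\pi$ to get a compatible $\theta^r$, glue via Proposition \ref{iso regular}, and extract the left component of the extending automorphism of $B$. No gaps.
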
 

\begin{proof}
Since $B$ is a surjective normal band, the normal bands $L=[Y;L_{\alpha};\psi_{\alpha,\beta}^l]$ and $R=[Y;R_{\alpha};\psi_{\alpha,\beta}^r]$ are also surjective. 
 Let $L_i=[Z_i;L_{\alpha}^i;\psi_{\alpha,\beta}^{L_i}]$ ($i=1,2$) be a pair of finite subbands of $L$ and $\theta^l=[\theta_{\alpha}^l,\pi]_{\alpha\in Z_1}$ an isomorphism from $L_1$ to $L_2$. By Lemma \ref{embed Y}, there exists subbands $A_i=\{(l_{\alpha}^i,r_{\alpha}^i):\alpha\in Z_i\}$ of $B$ isomorphic to $Z_i$. Hence $R_i=\{r_{\alpha}^i:\alpha\in Z_i\}$ is a subband of $R$ isomorphic to $Z_i$ for each $i$, and the map $\theta^r:R_1\rightarrow R_2$ given by $r_{\alpha}^1\theta^r=r_{\alpha\pi}^2$ is an isomorphism.
  By Proposition \ref{iso regular}, $\theta^l\bowtie \theta^r$ is an isomorphism from $L_1\bowtie R_1$ to $L_2 \bowtie R_2$, which we may extend to an automorphism $\hat{\theta}=\hat{\theta}^l\bowtie \hat{\theta}^r$ of $B$. Then $\hat{\theta}^l$ extends $\theta^l$ and so $L$ is homogeneous. Dually for $R$. 
\end{proof}

Consider now the case where $B=[Y;B_{\alpha};\psi_{\alpha,\beta}]$ is such that there exists $\alpha>\beta$ in $Y$ with $\psi_{\alpha,\beta}$ an isomorphism. Then by Lemma \ref{image same} every connecting morphism is an isomorphism. We extend our notation by defining the morphism $\psi_{\alpha,\beta}$ by 
\[ {\psi}_{\alpha,\beta}=\psi_{\alpha,\alpha\beta}(\psi_{\beta,\alpha\beta})^{-1}
\] 
 for any $\alpha,\beta\in Y$. Note that if $\alpha\geq \beta$ then $\psi_{\alpha,\beta}$ is the same morphism as before, and it is a simple exercise to show that ${\psi}_{\alpha,\beta}{\psi}_{\beta,\gamma}={\psi}_{\alpha,\gamma}$ for all $\alpha,\beta,\gamma\in Y$ by the transitivity of the connecting morphisms. Moreover, for each $\alpha,\beta\in Y$ we have $\psi_{\alpha,\beta}^{-1} = \psi_{\beta,\alpha}$. 

\begin{lemma} \label{iso surj} Let $B=[Y;B_{\alpha};\psi_{\alpha,\beta}]$ be a normal band such that each $\psi_{\alpha,\beta}$ is an isomorphism. Let $\pi \in \text{Aut }(Y)$ and, for a fixed $\alpha^*\in Y$, let $\theta_{\alpha^*}:B_{\alpha^*}\rightarrow B_{\alpha^*\pi}$ be an isomorphism. For each $\delta \in Y$, let $\theta_{\delta}:B_{\delta}\rightarrow B_{\delta\pi}$ be given by 
\begin{equation} \label{iso iso} {\theta}_{\delta}= {\psi}_{\delta,\alpha^*} \, {\theta}_{\alpha^*} \, {\psi}_{\alpha^*\pi,\delta \pi}. 
\end{equation}  
Then $\theta=[\theta_{\alpha},\pi]_{\alpha\in Y}$ is an automorphism of $B$. Conversely, every automorphism of $B$ can be so constructed. 
\end{lemma}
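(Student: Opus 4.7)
The statement has two directions. I would first verify the formula produces an automorphism, then show every automorphism has this form.

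\textbf{Forward direction.} Given $\pi\in\text{Aut}(Y)$ and an isomorphism $\theta_{\alpha^*}:B_{\alpha^*}\to B_{\alpha^*\pi}$, define $\theta_\delta$ by \eqref{iso iso} for each $\delta\in Y$. Each $\theta_\delta$ is an isomorphism since it is a composite of isomorphisms (recall that every $\psi_{\alpha,\beta}$ — including the extended ones — is an isomorphism under our hypothesis). By Proposition \ref{iso normal}, it suffices to verify that for each pair $\alpha\geq\beta$ in $Y$ the diagram $[\alpha,\beta;\alpha\pi,\beta\pi]$ commutes, i.e.\ $\psi_{\alpha,\beta}\,\theta_\beta=\theta_\alpha\,\psi_{\alpha\pi,\beta\pi}$. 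Substituting \eqref{iso iso} on both sides and using the transitivity ${\psi}_{a,b}{\psi}_{b,c}={\psi}_{a,c}$ (noted in the paragraph preceding the lemma), both sides collapse to ${\psi}_{\alpha,\alpha^*}\,\theta_{\alpha^*}\,{\psi}_{\alpha^*\pi,\beta\pi}$, so commutativity holds. Hence $\theta=[\theta_\alpha,\pi]_{\alpha\in Y}$ is an automorphism of $B$.

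\textbf{Converse direction.} Let $\theta=[\theta_\alpha,\pi]_{\alpha\in Y}$ be any automorphism of $B$; by Proposition \ref{iso band}, $\pi$ is an automorphism of $Y$ and each $\theta_\alpha$ is an isomorphism $B_\alpha\to B_{\alpha\pi}$. The goal is to show that for every $\delta\in Y$, the identity \eqref{iso iso} is forced by the choice of $\theta_{\alpha^*}$. When $\delta\leq\alpha^*$, commutativity of $[\alpha^*,\delta;\alpha^*\pi,\delta\pi]$ yields $\psi_{\alpha^*,\delta}\,\theta_\delta=\theta_{\alpha^*}\,\psi_{\alpha^*\pi,\delta\pi}$; inverting $\psi_{\alpha^*,\delta}$ and recalling $\psi_{\alpha^*,\delta}^{-1}=\psi_{\delta,\alpha^*}$ gives \eqref{iso iso}. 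The case $\delta\geq\alpha^*$ is symmetric.

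\textbf{The main obstacle: the incomparable case.} For general $\delta$ neither below nor above $\alpha^*$, we pass through the meet $\gamma=\delta\alpha^*$ in $Y$, which satisfies $\gamma\leq\delta$ and $\gamma\leq\alpha^*$. Applying the two comparable cases already established gives
\[
\theta_\gamma={\psi}_{\gamma,\alpha^*}\,\theta_{\alpha^*}\,{\psi}_{\alpha^*\pi,\gamma\pi}
\qquad\text{and}\qquad
\theta_\delta={\psi}_{\delta,\gamma}\,\theta_{\gamma}\,{\psi}_{\gamma\pi,\delta\pi}.
\]
Substituting the first into the second and invoking transitivity ${\psi}_{\delta,\gamma}{\psi}_{\gamma,\alpha^*}={\psi}_{\delta,\alpha^*}$ and ${\psi}_{\alpha^*\pi,\gamma\pi}{\psi}_{\gamma\pi,\delta\pi}={\psi}_{\alpha^*\pi,\delta\pi}$ (valid since $\pi$ preserves meets, so $\gamma\pi=\delta\pi\cdot\alpha^*\pi$) yields exactly \eqref{iso iso}. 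The technical point to be careful about is that the extended $\psi_{a,b}$ for incomparable $a,b$ is defined only via the meet, so transitivity must be applied through $\gamma$ rather than naively; once this routing is set up, the rest is bookkeeping.
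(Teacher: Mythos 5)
Your proof is correct and follows essentially the same route as the paper's: the forward direction verifies commutativity of each square $[\alpha,\beta;\alpha\pi,\beta\pi]$ by collapsing both sides via transitivity of the extended connecting morphisms, and the converse recovers \eqref{iso iso} by routing through the meet $\alpha^*\delta$ using the two commuting squares $[\alpha^*,\alpha^*\delta;\alpha^*\pi,(\alpha^*\delta)\pi]$ and $[\delta,\alpha^*\delta;\delta\pi,(\alpha^*\delta)\pi]$, exactly as in the paper. No gaps.
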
 

\begin{proof}
Let $\pi$ and $\theta_{\delta}$ be defined as in the statement of the lemma for each $\delta\in Y$. If $\delta\geq \gamma$ then 
\begin{align*}
  \psi_{\delta,\gamma} \, {\theta}_{\gamma} & =   \psi_{\delta,\gamma} \,  {\psi}_{\gamma,\alpha^*} \, {\theta}_{\alpha^*} \, {\psi}_{\alpha^*\pi,\gamma\pi} \\
 & =    {\psi}_{\delta,\alpha^*} \, {\theta}_{\alpha^*} \, {\psi}_{\alpha^*\pi,\delta\pi} \,  {\psi}_{\delta\pi,\gamma\pi} \\
 & = {\theta}_{\delta} \, \psi_{\delta\pi,\gamma\pi},
 \end{align*}
where the penultimate step follows from $\pi$ being an automorphism, so that $\delta\pi\geq \gamma\pi$. Hence $[\delta,\gamma;\delta\pi,\gamma\pi]$ commutes, and so $\theta$ is an automorphism of $B$. 

Conversely, suppose $\theta=[\theta_{\alpha},\pi]_{\alpha\in Y}$ is an automorphism of $B$,
and fix any $\alpha^*\in Y$. Then for each $\delta\in Y$, since the connecting morphisms are
 isomorphisms and both the diagrams $[\alpha^*,\alpha^*\delta;\alpha^*\pi,(\alpha^*\delta)\pi]$ and $
 [\delta,\alpha^*\delta;\delta\pi,(\alpha^*\delta)\pi]$ commute, it follows that $
 \theta_{\delta}$ has the form of \eqref{iso iso}.  
\end{proof}

\begin{proposition}\label{iso hom} Let $B=[Y;B_{\alpha};\psi_{\alpha,\beta}]$ be a normal band such that each connecting morphism is an isomorphism and $Y$ is a homogeneous semilattice. Then $B$ is structure-homogeneous. 
\end{proposition}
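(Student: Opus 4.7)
The strategy is to leverage Lemma \ref{iso surj}: an automorphism of $B$ with induced semilattice automorphism $\hat{\pi}$ is completely determined by its restriction $\hat{\theta}_{\alpha^*}: B_{\alpha^*} \to B_{\alpha^* \hat{\pi}}$ at any single reference point $\alpha^* \in Y$. My plan is to pick $\alpha^* \in Z_1$, use the compatibility of $\theta$ with the (now bijective) connecting morphisms to prescribe $\hat{\theta}_{\alpha^*}$ on a suitable subband $U \subseteq B_{\alpha^*}$, and then extend $\hat{\theta}_{\alpha^*}$ to all of $B_{\alpha^*}$ by homogeneity of rectangular bands (Corollary \ref{rec bands homog}).

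The case $Z_1 = \emptyset$ is trivial by Lemma \ref{iso surj}, so I assume $Z_1 \neq \emptyset$ and fix $\alpha^* \in Z_1$. I will consider the set
\[ U = \bigcup_{\delta \in Z_1} A^1_\delta \psi_{\delta, \alpha^*} \subseteq B_{\alpha^*}, \]
using the extended definition of $\psi_{\delta, \alpha^*}$. A short computation with the identity $(e_1 \psi_{\delta_1, \alpha^*})(e_2 \psi_{\delta_2, \alpha^*}) = \bigl((e_1 \psi_{\delta_1, \delta_1 \delta_2})(e_2 \psi_{\delta_2, \delta_1 \delta_2})\bigr) \psi_{\delta_1 \delta_2, \alpha^*}$ and the fact that $A_1$ is a subband show that $U$ is a rectangular subband of $B_{\alpha^*}$. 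On $U$, I define
\[ (e \psi_{\delta, \alpha^*})\xi := \theta_\delta(e) \, \psi_{\delta\hat{\pi}, \alpha^* \hat{\pi}} \qquad (e \in A^1_\delta,\ \delta \in Z_1). \]

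The central step is verifying that $\xi$ is a well-defined isomorphism onto its image. For well-definedness, suppose $e_1 \psi_{\delta_1, \alpha^*} = e_2 \psi_{\delta_2, \alpha^*}$ with $e_i \in A^1_{\delta_i}$. Applying the isomorphism $\psi_{\alpha^*, \gamma}$ for $\gamma = \delta_1 \delta_2 \alpha^* \in Z_1$ and using transitivity of the extended connecting morphisms yields $e_1 \psi_{\delta_1, \gamma} = e_2 \psi_{\delta_2, \gamma}$ in $A^1_\gamma$; applying $\theta_\gamma$ and invoking commutativity of the diagrams $[\delta_i, \gamma; \delta_i \pi, \gamma \pi]$ supplied by Proposition \ref{iso normal}, then composing with $\psi_{\gamma \hat{\pi}, \alpha^* \hat{\pi}}$, delivers the desired equality (using $\hat{\pi}|_{Z_1} = \pi$). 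Multiplicativity and injectivity of $\xi$ fall out of entirely parallel calculations: the image of a product reduces via the identity above to a single element of $A^1_{\delta_1 \delta_2}$, and injectivity is obtained by running the well-definedness argument with $\theta^{-1}$ in place of $\theta$.

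Once $\xi: U \to U\xi$ is an isomorphism of rectangular bands, Corollary \ref{rec bands homog} extends it to an isomorphism $\hat{\theta}_{\alpha^*}: B_{\alpha^*} \to B_{\alpha^* \hat{\pi}}$, and Lemma \ref{iso surj} then produces an automorphism $\hat{\theta} = [\hat{\theta}_\delta, \hat{\pi}]_{\delta \in Y}$ of $B$ via $\hat{\theta}_\delta = \psi_{\delta, \alpha^*} \hat{\theta}_{\alpha^*} \psi_{\alpha^* \hat{\pi}, \delta \hat{\pi}}$. Checking $\hat{\theta}$ extends $\theta$ is immediate: for $\delta \in Z_1$ and $e \in A^1_\delta$,
\[ e \hat{\theta}_\delta = (e \psi_{\delta, \alpha^*}) \xi \cdot \psi_{\alpha^* \hat{\pi}, \delta \hat{\pi}} = \theta_\delta(e) \, \psi_{\delta \hat{\pi}, \alpha^* \hat{\pi}} \psi_{\alpha^* \hat{\pi}, \delta \hat{\pi}} = \theta_\delta(e). \]
The main technical obstacle is the well-definedness of $\xi$; everything else is bookkeeping with the identities governing the extended connecting morphisms.
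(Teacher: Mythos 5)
Your proof is correct and follows essentially the same route as the paper: anchor the automorphism at a single $\mathcal{D}$-class using Lemma \ref{iso surj}, extend an isomorphism of rectangular subbands by Corollary \ref{rec bands homog}, and check compatibility via the commuting diagrams of Proposition \ref{iso normal}. The only difference is that the paper takes $\alpha^*$ to be the minimum of the finite semilattice $Z_1$, so every $\psi_{\delta,\alpha^*}$ is a genuine downward connecting morphism, your set $U$ collapses to $A^1_{\alpha^*}$, and $\xi$ is just $\theta_{\alpha^*}$ --- which renders the well-definedness verification (the main technical work in your write-up) unnecessary.
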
 

\begin{proof} 
Consider a pair of finite subbands of $B$ 
\begin{align*}  A &=[Z;A_{\alpha}; {\psi}_{\alpha,\beta}^{A}] \\
 A' & =  [Z';A_{\alpha'}';{\psi}_{\alpha',\beta'}^{A'}] 
\end{align*}
where $\psi_{\alpha,\beta}^A$ and $\psi_{\alpha',\beta'}^{A'}$, being restrictions of isomorphisms, are embeddings. Let $\theta=[\theta_{\alpha},\pi]_{\alpha\in Z}$ be an isomorphism from $A$ to $A'$, and $\hat{\pi}$ an automorphism of $Y$ extending $\pi$. Denote the minimum elements of $A$ and $A'$ as $\alpha^*$ and $\beta^*$, respectively. Then $\alpha^* \pi = \beta^*$, and for each $\delta\in Z$ the diagram  
\begin{align} \label{3} \xymatrix{
A_{\delta} \ar[d]^{{\psi}^{A}_{\delta, \alpha^*}} \ar[r]^{\theta_{\delta}} &A_{\delta \pi}' \ar[d]^{{\psi}^{A'}_{\delta \pi, \beta^*}} \\\
A_{\alpha^*} \ar[r]^{\theta_{\alpha^*}} &A_{\beta^*}'}
\end{align}  
commutes by Proposition \ref{iso normal}. 
By Corollary \ref{rec bands homog} we may extend $\theta_{\alpha^*}$ to an isomorphism $\hat{\theta}_{\alpha^*}:B_{\alpha^*} \rightarrow B_{\beta^*}$. 
For each $\delta\in Y$, let $\hat{\theta}_{\delta}:B_{\delta}\rightarrow B_{\delta \hat{\pi}}$ be the isomorphism given by 
\[ \hat{\theta}_{\delta}= {\psi}_{\delta,\alpha^*} \, \hat{\theta}_{\alpha^*} \, {\psi}_{\alpha^*\pi,\delta \hat{\pi}}
\] 
so that $\hat{\theta}=[\hat{\theta}_{\delta},\hat{\pi}]_{\delta\in Y}$ is an automorphism of $B$ by Lemma \ref{iso surj}.
 Then $\hat{\theta}_{\delta}$ extends $\theta_{\delta}$ for each $\delta\in Z$, since by \eqref{3},
\[ \theta_{\delta}= \psi_{\delta,\alpha^*}^{A} \, \theta_{\alpha^*} \, (\psi^{A'}_{\delta\pi,\beta^*})^{-1}|_{\text{Im } \psi^{A'}_{\delta\pi,\beta^*}},
\]  
where $\psi_{\delta,\alpha^*}$ extends $\psi_{\delta,\alpha^*}^A$,  $\hat{\theta}_{\alpha^*}$ extends $\theta_{\alpha^*}$ and ${\psi}_{\beta^*,\delta\hat{\pi}}$ extends $(\psi^{A'}_{\delta\pi,\beta^*})^{-1}|_{\text{Im } \psi^{A'}_{\delta\pi,\beta^*}}$. Hence $\hat{\theta}$ extends $\theta$, and $B$ is structure-homogeneous. 
 \end{proof} 

Moreover, it follows from the proceeding lemma that for any semilattice $Y$ and $n,m\in \mathbb{N}^*$, there exists a unique, up to isomorphism, normal band with connecting morphisms being isomorphisms, structure semilattice isomorphic to $Y$ and $\mathcal{D}$-classes isomorphic to $B_{n,m}$. The result can be obtained from \cite{Petrich74} but is proven here for completeness. 

\begin{lemma}\label{iso state} Let $B=[Y;B_{\alpha};\psi_{\alpha,\beta}]$ be a normal band such that each connecting morphism is an isomorphism. Then $B \cong Y \times B_{n,m}$ for some $n,m\in \mathbb{N}^*$. Conversely, every band $Y \times B_{n,m}$ is isomorphic to some normal band such that each connecting morphism is an isomorphism. 
\end{lemma}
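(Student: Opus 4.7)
The plan for the forward direction is to use the extended connecting morphisms $\psi_{\alpha,\beta} = \psi_{\alpha,\alpha\beta}(\psi_{\beta,\alpha\beta})^{-1}$ (which make sense for all $\alpha,\beta\in Y$ once every connecting morphism is an isomorphism) to ``trivialize'' the bundle of rectangular bands by pulling each fibre back to a single chosen fibre. First I would fix $\alpha^{*}\in Y$ and let $n,m\in\mathbb{N}^{*}$ be such that $B_{\alpha^{*}}\cong B_{n,m}$; by Lemma \ref{image same} every $B_{\alpha}$ is isomorphic to $B_{n,m}$, and indeed $\psi_{\alpha,\alpha^{*}}:B_{\alpha}\to B_{\alpha^{*}}$ is an isomorphism. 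I would then define
\[
\Phi:B\longrightarrow Y\times B_{\alpha^{*}},\qquad e_{\alpha}\longmapsto\bigl(\alpha,\, e_{\alpha}\psi_{\alpha,\alpha^{*}}\bigr).
\]

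The main calculation is that $\Phi$ is a homomorphism. Using the identity $\psi_{\alpha,\beta}\psi_{\beta,\gamma}=\psi_{\alpha,\gamma}$ for the extended morphisms (noted just before Lemma \ref{iso surj}), for $e_{\alpha}\in B_{\alpha}$, $f_{\beta}\in B_{\beta}$ one has
\[
\Phi(e_{\alpha}f_{\beta})=\bigl(\alpha\beta,\,(e_{\alpha}\psi_{\alpha,\alpha\beta})(f_{\beta}\psi_{\beta,\alpha\beta})\psi_{\alpha\beta,\alpha^{*}}\bigr)=\bigl(\alpha\beta,\,(e_{\alpha}\psi_{\alpha,\alpha^{*}})(f_{\beta}\psi_{\beta,\alpha^{*}})\bigr),
\]
which is exactly $\Phi(e_{\alpha})\Phi(f_{\beta})$ in $Y\times B_{\alpha^{*}}$. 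Injectivity of $\Phi$ is immediate from $\psi_{\alpha,\alpha^{*}}$ being a bijection, and surjectivity follows by sending $(\alpha,r)$ to $r\psi_{\alpha^{*},\alpha}$. Hence $B\cong Y\times B_{\alpha^{*}}\cong Y\times B_{n,m}$.

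For the converse, given $Y$ and $B_{n,m}$, I would realise $Y\times B_{n,m}$ as a normal band of the required form by taking $B_{\alpha}=\{\alpha\}\times B_{n,m}$ and the connecting morphisms $\psi_{\alpha,\beta}:(\alpha,x)\mapsto(\beta,x)$ for $\alpha\geq\beta$. Each $\psi_{\alpha,\beta}$ is plainly an isomorphism and the transitivity conditions (i) and (ii) for connecting morphisms are trivially satisfied, and a short check of the multiplication rule $a*b=(a\psi_{\alpha,\alpha\beta})(b\psi_{\beta,\alpha\beta})$ shows that the resulting strong semilattice of rectangular bands is precisely the direct product $Y\times B_{n,m}$. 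I do not anticipate a real obstacle here; the only point requiring slight care is that the extended morphisms $\psi_{\alpha,\beta}$ are used for arbitrary $\alpha,\beta$ (not just $\alpha\geq\beta$) in the multiplicativity check, so one should invoke the transitivity formula for the extended family rather than conditions (i) and (ii) directly.
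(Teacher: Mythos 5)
Your proposal is correct and takes essentially the same route as the paper: both trivialize the bundle by mapping $e_{\alpha}\mapsto(\alpha,e_{\alpha}\psi_{\alpha,\delta})$ into $Y\times B_{\delta}$ via the extended connecting morphisms and verifying multiplicativity through the transitivity identity, and both prove the converse by equipping $Y\times B_{n,m}$ with the obvious identity-like connecting morphisms. The only small quibble is your citation of Lemma \ref{image same}, which is stated for homogeneous normal bands; here the pairwise isomorphism of the $\mathcal{D}$-classes follows directly from the hypothesis, since $\psi_{\alpha,\alpha\beta}$ and $\psi_{\beta,\alpha\beta}$ are isomorphisms for any $\alpha,\beta\in Y$.
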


\begin{proof} Since each connecting morphism is an isomorphism, it follows that the $\mathcal{D}$-classes are pairwise isomorphic. Suppose $B_{\alpha}\cong B_{n,m}$ for each $\alpha\in Y$. Fix $\delta\in Y$ and let $\phi_{\delta}:B_{\delta} \rightarrow B_{n,m}$ be an isomorphism. Then the map $\theta:B\rightarrow Y \times B_{n,m}$ given by $e_{\alpha}\theta=(\alpha,e_{\alpha}\psi_{\alpha,\delta}\phi_{\delta})$ is a bijection. Moreover, if $ e_{\alpha},e_{\gamma}\in B$ then 
\begin{align*}
e_{\alpha}\theta \, e_{\gamma}\theta & = (\alpha, e_{\alpha}\psi_{\alpha,\delta}\phi_{\delta})(\gamma,e_{\gamma}\psi_{\gamma,\delta}\phi_{\delta}) \\
& = (\alpha\gamma, [(e_{\alpha}\psi_{\alpha,\alpha\gamma}\psi_{\alpha\gamma,\delta})(e_{\gamma}\psi_{\gamma,\alpha\gamma}\psi_{\alpha\gamma,\delta})]\phi_{\delta}) \\
& = (\alpha\gamma, ( e_{\alpha}\psi_{\alpha,\alpha\gamma} \, e_{\gamma} \psi_{\gamma,\alpha\gamma})\psi_{\alpha\gamma,\delta}\phi_{\delta}) \\
& = (e_{\alpha}e_{\gamma})\theta
\end{align*} 
and so $\theta$ is an isomorphism. 

Conversely, let $T_{\alpha}=\{(\alpha,e):e\in B_{n,m}\}$ for each $\alpha\in Y$. Clearly $T_{\alpha}$ is isomorphic to $B_{n,m}$. For each $\alpha\geq \beta$ in $Y$, let $\varphi_{\alpha,\beta}:T_{\alpha}\rightarrow T_{\beta}$ be the isomorphism given by $(\alpha,e)\varphi_{\alpha,\beta}=(\beta,e)$. Then $[Y;T_{\alpha};\varphi_{\alpha,\beta}]$ forms a strong semilattice of rectangular bands, and is isomorphic to $Y\times B_{n,m}$ by the forwards direction to the proof. 
\end{proof} 

Let $R$ be a right normal band with homogeneous structure semilattice $Y$. Then as $Y \times B_{n,1}$ is structure-homogeneous for any $n\in \mathbb{N}^*$, it follows from Corollary \ref{SH regular} that we may let $(Y \times B_{n,1}) \bowtie R$ denote the unique, up to isomorphism, normal band with left component isomorphic to $(Y \times B_{n,1})$ and right component isomorphic to $R$. Note that $Y \times B_{n,m}\cong (Y \times B_{n,1})\bowtie (Y \times B_{1,m})$. 

Furthermore, for any $n\in \mathbb{N}^*$ and homogeneous bands $R$ and $L$, where $R$ is right normal and $L$ is left normal, the bands $(Y \times B_{n,1}) \bowtie R$ and $L \bowtie (Y \times B_{1,n})$ are homogeneous by Corollary \ref{structure homog reg} and Proposition \ref{iso hom}.  

Finally, we examine the case where the connecting morphisms are surjective but not injective (so that the $\mathcal{D}$-classes are infinite). Let $B=[Y;B_{\alpha};\psi_{\alpha,\beta}]$ be a surjective normal band. For each $\alpha > \beta$, let $K_{\alpha,\beta}$ denote the congruence 
\[ \text{Ker }\psi_{\alpha,\beta}=\{(e_{\alpha},f_{\alpha}):e_{\alpha}\psi_{\alpha,\beta}=f_{\alpha}\psi_{\alpha,\beta}\}
\] 
 on $B_{\alpha}$,  or $K_{\alpha,\beta}^B$ if we need to distinguish the band $B$. Note that if $\alpha > \beta > \gamma$ then $K_{\alpha,\beta} \subseteq K_{\alpha,\gamma}$, for if $e_{\alpha}\psi_{\alpha,\beta}=f_{\alpha}\psi_{\alpha,\beta}$ then 
$e_{\alpha}\psi_{\alpha,\gamma}=e_{\alpha}\psi_{\alpha,\beta}\psi_{\beta,\gamma}=f_{\alpha}\psi_{\alpha,\beta}\psi_{\beta,\gamma}=f_{\alpha}\psi_{\alpha,\gamma}.
$ We then obtain what can be considered the dual of Lemma \ref{trivial not universal}: 

\begin{lemma}\label{not iso uni} Let $B=L \bowtie R$ be a homogeneous normal band such that the connecting morphisms of either $L$ or $R$ are surjective but not injective.
 Then $Y$ is the universal semilattice. 
\end{lemma}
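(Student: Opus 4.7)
The plan is to argue by contradiction. Suppose $Y$ is not the universal semilattice. By Corollary \ref{Y homog surj}, $Y$ is a countable homogeneous semilattice, so by the Droste--Truss classification cited in Section \ref{sec:theory}, $Y$ is either $(\mathbb{Q}, \leq)$ or a homogeneous semilinear order; in either case $Y$ contains no diamond. Without loss of generality, $R$ has surjective but non-injective connecting morphisms (the case for $L$ is entirely analogous). By Lemma \ref{image same} every $\psi^r_{\alpha,\beta}$ with $\alpha > \beta$ is then surjective but not injective. Fix such $\alpha > \beta$ in $Y$ and $r_\alpha \neq s_\alpha \in R_\alpha$ with common image $r_\beta \in R_\beta$. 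Fixing $l_\alpha \in L_\alpha$ and setting $l_\beta = l_\alpha \psi^l_{\alpha,\beta}$, the elements $e = (l_\alpha, r_\alpha)$, $f = (l_\alpha, s_\alpha)$, $g = (l_\beta, r_\beta)$ form a 3-element subband of $B$ with $e \,\mathcal{L}\, f$ and $e, f > g$; this mirrors the 3-element initial configuration of Lemma \ref{trivial not universal}.

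The goal is now to exploit the distinct preimage $s_\alpha$ (together with further preimages obtained by iterating surjectivity at higher levels) to produce, through a chain of finite-subband isomorphisms extended via the homogeneity of $B$, two incomparable elements of $Y$ with a common upper bound in $Y$, yielding a diamond and contradicting the no-diamond hypothesis. By Corollary \ref{basics}(ii) $Y$ has no maximum, so we may pick $\gamma > \alpha$ and, using surjectivity of $\psi^r_{\gamma,\alpha}$, lift $r_\alpha$ and $s_\alpha$ to distinct $r_\gamma, s_\gamma \in R_\gamma$, both still mapping to $r_\beta$ at $\beta$. A sequence of iso extensions in the spirit of the $\rho, \tau, \sigma$ construction of Lemma \ref{trivial not universal} then forces the existence of an element $\sigma \in Y$ bounding from above two elements that were previously incomparable, producing the contradictory diamond.

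The main obstacle is this final combinatorial step: identifying the precise pair of isomorphic finite subbands of $B$ whose extension to an automorphism of $B$, induced on the semilattice by Proposition \ref{iso band}, encodes the existence of a common upper bound in $Y$. In this dualisation the non-injectivity of the $\psi^r_{\alpha,\beta}$ plays the role analogous to the image-trivial identity \eqref{eq trivial} in Lemma \ref{trivial not universal}: there, the collapse of distinct elements of $L_\beta$ into a single image was the crucial obstruction; here, the presence of multiple distinct preimages of $r_\beta$ at each level above $\beta$ will be propagated by homogeneity until the assumed absence of diamonds in $Y$ is violated. Once the combinatorics is set up correctly, the contradiction is immediate, and so $Y$ is forced to be the universal semilattice.
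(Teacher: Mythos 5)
Your proposal gets the framing right (reduce via Corollary \ref{Y homog surj} and the classification of homogeneous semilattices to the case where $Y$ is linear or semilinear, hence diamond-free, and argue for a contradiction), but it contains a genuine gap that you yourself flag: the ``final combinatorial step'' is precisely the content of the lemma, and it is never supplied. Moreover, the route you sketch --- manufacturing two incomparable elements of $Y$ with a common upper bound --- is unlikely to close, because when $Y\cong(\mathbb{Q},\leq)$ there are no incomparable elements at all, and extending isomorphisms of finite subbands only ever produces images of configurations you already possess; nothing in your set-up can force incomparability into a linear order. (A smaller slip: $(l_\alpha,r_\alpha)$ and $(l_\alpha,s_\alpha)$ share their first coordinate and are therefore $\mathcal{R}$-related, so your three elements contain a right zero subband, not an $\mathcal{L}$-class pair.)

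The mechanism you are missing is the one the paper uses, and it is quite different from chasing a diamond directly: it exploits the monotonicity of the kernels $K_{\alpha,\beta}=\text{Ker }\psi^r_{\alpha,\beta}$ recorded just before the lemma, namely that $\alpha>\beta>\gamma$ implies $K_{\alpha,\beta}\subseteq K_{\alpha,\gamma}$. Since $\psi^r_{\alpha,\beta}$ is surjective but not injective, one can choose $e_\alpha,f_\alpha,g_\alpha\in R_\alpha$ with $(e_\alpha,f_\alpha)\in K_{\alpha,\beta}$ but $(e_\alpha,g_\alpha)\notin K_{\alpha,\beta}$. The elements $(l_\alpha,e_\alpha),(l_\alpha,f_\alpha),(l_\alpha,g_\alpha)$ form a right zero subband of a single $\mathcal{D}$-class, and the automorphism fixing the first and swapping the other two extends to an automorphism $\theta=[\theta_\alpha,\pi]_{\alpha\in Y}$ of $B$. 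Commutativity of the diagram $[\alpha,\beta;\alpha,\beta\pi]$ in $R$ then yields $(e_\alpha,g_\alpha)\in K_{\alpha,\beta\pi}$ and $(e_\alpha,f_\alpha)\notin K_{\alpha,\beta\pi}$, so the congruences $K_{\alpha,\beta}$ and $K_{\alpha,\beta\pi}$ are incomparable under inclusion. But if $Y$ is linear or semilinear then $\{\gamma:\gamma<\alpha\}$ is a chain, so $\beta$ and $\beta\pi$ are comparable and one of these kernels must contain the other --- a contradiction. Hence $Y$ contains a diamond and, being homogeneous by Corollary \ref{Y homog surj}, is the universal semilattice. Your opening configuration and appeal to Lemma \ref{image same} are fine, but without this kernel argument the proof does not go through.
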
 

\begin{proof} Suppose w.l.o.g. that $R=[Y;R_{\alpha};\psi_{\alpha,\beta}^r]$ has surjective but not injective connecting morphisms, so $|R_{\alpha}|=\aleph_0$ for all $\alpha\in Y$. Suppose for contradiction that $Y$ is a linear or semilinear order. Let $e_{\alpha}, f_{\alpha},g_{\alpha}\in R_{\alpha}$ be such that $(e_{\alpha},f_{\alpha})\in K^R_{\alpha,\beta}$ but $(e_{\alpha},g_{\alpha}) \notin K^R_{\alpha,\beta}$, noting that such elements exist as $\psi^r_{\alpha,\beta}$ is surjective but not injective. For any $l_{\alpha}\in L_{\alpha}$, extend the automorphism of the right zero subband $\{(l_\alpha,e_\alpha),(l_\alpha,f_\alpha),(l_\alpha,g_\alpha)\}$  which fixes $(l_{\alpha},e_{\alpha})$ and swaps $(l_{\alpha},f_{\alpha})$ and $(l_{\alpha},g_{\alpha})$ to an automorphism $\theta=[\theta_{\alpha},\pi]_{\alpha\in Y}$ of $B$. Then $\theta=\theta^l\bowtie \theta^r$ for some automorphisms $\theta^l=[\theta_{\alpha}^l,\pi]_{\alpha\in Y}$ and $\theta^r=[\theta_{\alpha}^r,\pi]_{\alpha\in Y}$ of $L$ and $R$, respectively. It follows by the commutativity of the diagram $[\alpha,\beta;\alpha,\beta\pi]$ in $R$ that 
\[ (e_{\alpha},g_{\alpha})\in K_{\alpha,\beta\pi} \, \text{ and } \, (e_{\alpha},f_{\alpha})\notin K_{\alpha,\beta\pi}.
\] However as $\{\gamma:\gamma<\alpha\}$ is a chain, either $\beta<\beta\pi$ or $\beta>\beta\pi$, which both contradict the note above the lemma. Hence $Y$ contains a diamond and, being homogeneous by Corollary \ref{Y homog surj}, is thus the universal semilattice. 
\end{proof}

To complete the classification of homogeneous surjective normal bands, we use a general method of Fra\"iss\'e for obtaining homogeneous structures. Here we apply this only to semigroups. 
Let $\mathcal{K}$ be a class of f.g.  semigroups. Then we say   
\begin{enumerate} [label=(\arabic*)] 
\item $\mathcal{K}$ is \textit{countable} if  it contains only countably many isomorphism types.
\item  $\mathcal{K} $ is \textit{closed under isomorphism} if whenever $A\in \mathcal{K}$ and $B\cong A$ then $B\in \mathcal{K}$.  
\item $\mathcal{K}$ has the \textit{hereditary property} (HP) if given $A\in \mathcal{K}$ and $B$ a f.g.  subsemigroup of $A$ then $B\in \mathcal{K}$. 
\item  $\mathcal{K}$ has the \textit{joint embedding property} (JEP) if given $B_1,B_2\in \mathcal{K}$, then there exists $C\in \mathcal{K}$ and embeddings $f_i:B_i\rightarrow C$ ($i=1,2$).
\item  $\mathcal{K}$ has the \textit{amalgamation property}\footnote{This is also known as the \textit{weak amalgamation property}.}  (AP) if given  $A, B_1, B_2\in \mathcal{K}$, where $A$ is non-empty, and embeddings $f_i:A\rightarrow B_i$ ($i=1,2$), then there exists $D\in \mathcal{K}$ and embeddings $g_i: B_i \rightarrow D$  such that 
\[     f_1 \circ g_1 = f_2 \circ g_2. 
\] 
\end{enumerate} 
 
The \textit{age} of a semigroup $S$ is the class of all f.g. semigroups which can be embedded in $S$. 

 \begin{theorem}[Fra\"iss\'e's Theorem for semigroups]\cite{Fraisse} Let  $\mathcal{K}$ be a non-empty countable class of f.g.  semigroups which  is closed under isomorphism and satisfies HP, JEP, and AP. Then there exists a unique, up to isomorphism, countable homogeneous semigroup $S$ such that $\mathcal{K}$ is the age of $S$. Conversely, the age of a countable homogeneous semigroup is closed under isomorphism, is countable and satisfies HP, JEP and AP. 
\end{theorem}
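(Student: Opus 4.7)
The plan is to carry out Fra\"iss\'e's classical construction in the semigroup setting. Fix an enumeration of a set of representatives of the isomorphism types in $\mathcal{K}$, which is possible since $\mathcal{K}$ is countable. I will build a countable chain $S_0 \hookrightarrow S_1 \hookrightarrow \cdots$ of members of $\mathcal{K}$ with a ``weak richness'' property: whenever $A \in \mathcal{K}$ is a f.g.\ subsemigroup of some $S_n$ and $f:A\to B$ is an embedding with $B\in \mathcal{K}$, there exists $m\geq n$ and an embedding $\hat{f}:B\to S_m$ extending the inclusion $A\hookrightarrow S_m$ composed with $f^{-1}$ suitably; more precisely, the inclusion $A\hookrightarrow S_m$ factors as $f$ followed by $\hat{f}$. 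Since every $S_n$ has only countably many f.g.\ subsemigroups and $\mathcal{K}$ has only countably many isomorphism types of embeddings, the bookkeeping for these requirements can be arranged by a standard dovetailing argument. At each stage, AP supplies the required extension $S_{n+1}\in \mathcal{K}$, while JEP is used at stage $0$ to ensure that every isomorphism type in $\mathcal{K}$ eventually embeds into some $S_m$.

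Set $S=\bigcup_{n\in \mathbb{N}} S_n$, which is a countable semigroup. Its age equals $\mathcal{K}$: each f.g.\ subsemigroup of $S$ is already contained in some $S_n\in \mathcal{K}$ (since finitely many generators lie in some $S_n$), so HP forces it into $\mathcal{K}$; conversely, every isomorphism type of $\mathcal{K}$ embeds into $S$ by the construction. To prove $S$ is homogeneous, enumerate $S=\{s_1,s_2,\dots\}$ and, given an isomorphism $\phi:A\to A'$ between f.g.\ subsemigroups, perform a back-and-forth in $\omega$ steps. At odd stages we extend the current partial isomorphism to include $s_k$ in the domain, using the richness property to embed $\langle A,s_k\rangle$ into some $S_m$ compatibly; at even stages we do the symmetric step for the range. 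The union of the finite partial isomorphisms built this way is an automorphism of $S$ extending $\phi$.

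Uniqueness of $S$ (up to isomorphism) is an immediate back-and-forth argument: given two countable semigroups $S,S'$ with the same age $\mathcal{K}$, both satisfying the weak richness property, one builds an isomorphism $S\to S'$ by alternately matching up elements of $S$ and $S'$, invoking richness on both sides to extend the partial isomorphism at each step. This actually proves slightly more (it gives homogeneity of $S$ directly), and the two ingredients fit together cleanly.

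For the converse, let $S$ be a countable homogeneous semigroup with age $\mathcal{K}$. Closure under isomorphism and HP are trivial, and countability of $\mathcal{K}$ follows since $S$ has only countably many f.g.\ subsemigroups. For JEP, given $B_1,B_2\in \mathcal{K}$, fix embeddings $g_i:B_i\to S$ and take $C=\langle g_1(B_1)\cup g_2(B_2)\rangle\in \mathcal{K}$. For AP, given embeddings $f_i:A\to B_i$ ($i=1,2$), first fix an embedding $g_1:B_1\to S$; then $g_1f_1:A\to S$ and any embedding $h:B_2\to S$ (which exists since $B_2\in \mathcal{K}$) restrict to two embeddings of $A$ into $S$ differing by an isomorphism of f.g.\ subsemigroups, which homogeneity extends to an automorphism $\sigma$ of $S$. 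Then $g_2:=\sigma h$ satisfies $f_1 g_1=f_2 g_2$, and the f.g.\ subsemigroup $D=\langle g_1(B_1)\cup g_2(B_2)\rangle$ of $S$ lies in $\mathcal{K}$ by HP. The main bookkeeping subtlety is the construction of the chain in Step 1, where one must simultaneously list all extension tasks and all isomorphism types so that none are lost; this is routine but is the only genuinely combinatorial part of the argument.
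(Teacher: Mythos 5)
The paper does not prove this statement at all: it is quoted as a classical result with a citation to Fra\"iss\'e's 1954 paper, so there is no in-paper argument to compare against. Your proposal is the standard Fra\"iss\'e construction transported to semigroups, and it is essentially correct: the chain of members of $\mathcal{K}$ with the extension (``richness'') property, the countability count on tasks (each $S_n$ is countable and each embedding of a f.g.\ $A$ into a representative $B\in\mathcal{K}$ is determined by finitely many images), the back-and-forth for homogeneity and uniqueness, and the converse all go through; note that the non-emptiness hypothesis in AP is harmless here since f.g.\ subsemigroups are non-empty. Two points are stated more loosely than they should be. First, JEP cannot be confined to ``stage $0$'': to make the age all of $\mathcal{K}$ you must joint-embed $S_n$ with a new representative of each isomorphism type cofinally often, so these tasks belong in the same dovetailed list as the amalgamation tasks. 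Second, your uniqueness argument assumes both countable semigroups with age $\mathcal{K}$ satisfy the richness property, but the theorem's uniqueness claim is about homogeneous semigroups; you need the (easy but necessary) lemma that a countable homogeneous $S'$ with age $\mathcal{K}$ automatically has the extension property --- given $A\leq B$ in $\mathcal{K}$ and an embedding $g:A\rightarrow S'$, embed $B$ into $S'$ somehow, then use homogeneity to move its copy of $A$ onto $g(A)$. You in fact use exactly this manoeuvre in your verification of AP for the converse, so the ingredient is present; it just needs to be invoked explicitly where uniqueness is proved.
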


We call $S$ the \textit{Fra\"iss\'e limit} of $\mathcal{K}$. 

Let $\mathcal{K}$ be a  Fra\"iss\'e subclass of a variety of bands $\mathcal{V}$ defined by the identity $a_1a_2\cdots a_n=b_1b_2\cdots b_m$. Then the Fra\"iss\'e limit $S$ of $\mathcal{K}$ is a member of $\mathcal{V}$. Indeed, if $x_1,x_2,\dots,x_n, y_1,y_2,$ 
$ \dots ,y_m\in S$ then $\langle x_1,x_2,\dots,x_n,y_1,y_2,\dots ,y_m \rangle \in \mathcal{K}$ and thus $x_1x_2 \cdots x_n=y_1y_2\cdots y_m$.

\begin{example} The rectangular band  $B_{\aleph_0,\aleph_0}$ is homogeneous by Corollary \ref{rec bands homog}, and clearly its age is the class of all finite rectangular bands. It follows that the class of all finite rectangular bands forms a Fra\"iss\'e class (with Fra\"iss\'e limit $B_{\aleph_0,\aleph_0}$). 
\end{example}
\begin{example} The class of all finite semilattices forms a Fra\"iss\'e class, with Fra\"iss\'e limit the universal semilattice (see \cite{Hall75}, for example). \end{example}

\begin{example} Let $\mathcal{K}$ be the class of all finite bands. Since the class of all bands forms a variety, $\mathcal{K}$ satisfies HP and is closed under (finite) direct product, and thus has JEP. However, it was shown in \cite[Page 12]{Imaoka76} that AP does not hold. 
\end{example} 

Consequently, there does not exist a universal homogeneous band, that is, one which embeds every finite band.
 However, if we refine our class to certain normal bands, AP is shown to hold. To this end, let $\mathcal{K}_N, \mathcal{K}_{RN}$ and $\mathcal{K}_{LN}$ be the classes of finite normal, finite right normal and finite left normal bands, respectively. 

\begin{lemma}\label{universal normals} The classes $\mathcal{K}_N, \mathcal{K}_{RN}$ and  $\mathcal{K}_{LN}$ form Fra\"iss\'e classes. 
\end{lemma}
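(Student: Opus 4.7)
My plan is to verify the five Fra\"iss\'e conditions in turn; only the amalgamation property requires genuine work. Countability is automatic because up to isomorphism there are only countably many finite bands. Closure under isomorphism is built into the definitions. HP holds because $\mathcal{K}_N$, $\mathcal{K}_{RN}$ and $\mathcal{K}_{LN}$ each consist of the finite members of a variety (Section 2), so every subsemigroup of a member is again a member, and finiteness is preserved under passing to subsemigroups. For JEP, given $B_1, B_2$ in any of the three classes, $B_1 \times B_2$ lies in the same variety and is still finite, and for any fixed $b_1^0 \in B_1$, $b_2^0 \in B_2$ the maps $b_1 \mapsto (b_1, b_2^0)$ and $b_2 \mapsto (b_1^0, b_2)$ are homomorphisms (the $b_i^0$ are idempotent) and clearly injective.

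The heart of the lemma is AP. I would first establish AP for $\mathcal{K}_{LN}$, with $\mathcal{K}_{RN}$ handled dually, and then bootstrap to $\mathcal{K}_N$ via the spined product decomposition of Proposition \ref{normal spined}. For left normal bands, I view each of $A = [Z; A_\alpha; \psi^A]$ and $B_i = [Y_i; B_\alpha^i; \psi^i]$ as a strong semilattice of left zero semigroups, and let the embeddings $f_i : A \to B_i$ induce semilattice embeddings $Z \hookrightarrow Y_i$. Since finite semilattices form a Fra\"iss\'e class (by the example above), I can amalgamate $Y_1$ and $Y_2$ over $Z$ to obtain a finite semilattice $W$, identifying $Y_1 \cap Y_2 = Z$ inside $W$. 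For each $\gamma \in W$ I assign a left zero semigroup $D_\gamma$: if $\gamma \in Y_i \setminus Z$ set $D_\gamma = B_\gamma^i$; if $\gamma \in Z$ take the set-theoretic pushout of $B^1_{f_1(\gamma)}$ and $B^2_{f_2(\gamma)}$ over $A_\gamma$ (which is automatically a left zero semigroup); for any newly adjoined $\gamma$ take $D_\gamma$ to be a singleton. Connecting morphisms $\psi_{\gamma,\delta}^D$ for $\gamma > \delta$ inherit from the $\psi^i$ when both indices lie in the same $Y_i$ (extended canonically to the pushout when $\delta \in Z$), and in mixed cases are forced by transitivity through the shared part $Z$. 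The resulting strong semilattice of left zero semigroups is a left normal band containing embedded copies of $B_1$ and $B_2$ agreeing on $A$.

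Once AP holds for $\mathcal{K}_{LN}$ and $\mathcal{K}_{RN}$, I address $\mathcal{K}_N$ by writing each normal band as a spined product of its left and right normal components (Proposition \ref{normal spined}). An amalgamation problem in $\mathcal{K}_N$ over a common subband $A$ decomposes into compatible amalgamation problems for the left and right components over a common amalgamated structure semilattice. Solving these in $\mathcal{K}_{LN}$ and $\mathcal{K}_{RN}$ with matching induced semilattice morphisms, and reassembling via Proposition \ref{iso regular}, produces the required amalgam in $\mathcal{K}_N$.

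The main obstacle will be verifying transitivity $\psi^D_{\gamma,\alpha}\psi^D_{\alpha,\delta} = \psi^D_{\gamma,\delta}$ in the mixed case where $\gamma \in Y_1 \setminus Z$ and $\delta \in Y_2 \setminus Z$. Here the meet $\gamma\delta \in W$ must actually lie in $Z$ (which is exactly the content of strong amalgamation for finite semilattices), so that $\psi_{\gamma,\delta}^D$ factors through $A_{\gamma\delta}$ as $\psi_{\gamma,\gamma\delta}^1 \cdot \psi_{\gamma\delta,\delta}^2$; the delicate combinatorial step is confirming that all such factorisations agree and that the strong amalgamation of semilattices can indeed be arranged so that no new element of $W$ lies below two incomparable elements coming from different $Y_i$.
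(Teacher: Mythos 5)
Your verification of countability, closure under isomorphism, HP and JEP is correct, and for these points the paper argues in essentially the same way. For AP, however, the paper does not construct the amalgam at all: it simply invokes Imaoka's work on free products with amalgamation of bands (\cite[Section 2]{Imaoka76}), restricted to finite bands, to conclude that $\mathcal{K}_{N}$, $\mathcal{K}_{LN}$ and $\mathcal{K}_{RN}$ have the weak amalgamation property. Your attempt to build the amalgam by hand is therefore a genuinely different route, but as written it contains a step that fails.

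The problem is your prescription for the $\mathcal{D}$-classes and connecting morphisms at points of $W$ outside $Y_1\cup Y_2$ and at points of $Z$. First, strong amalgamation of the structure semilattices really can force new meet elements that sit \emph{strictly between} two comparable elements of a single $Y_i$. For instance, let $Z=\{0,z_1,z_2\}$ with $z_1\perp z_2$ and $z_1z_2=0$, and let $Y_i=Z\cup\{\gamma_i\}$ with $\gamma_i>z_1,z_2$. If $\gamma_1$ and $\gamma_2$ remain incomparable in $W$ (which is what your final paragraph asks for), their meet is a new element $\nu$ with $z_1,z_2<\nu<\gamma_1$. Taking $D_\nu$ to be a singleton then forces $\psi^D_{\gamma_1,z_1}=\psi^D_{\gamma_1,\nu}\psi^D_{\nu,z_1}$ to be constant on $B^1_{\gamma_1}$, contradicting the requirement that it restrict to the given (possibly non-constant) morphism $\psi^1_{\gamma_1,z_1}$; and if instead you make $\gamma_1,\gamma_2$ comparable to avoid the new element, the induced map between $B^1_{\gamma_1}$ and $B^2_{\gamma_2}$ must simultaneously lift two prescribed morphisms to $D_{z_1}$, which is impossible when, say, $\psi^1_{\gamma_1,z_1}$ is injective and $\psi^2_{\gamma_2,z_1}$ is constant. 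Second, you never define the connecting morphism out of a pushout: for $\gamma\in Z$ and $\delta\in Y_1\setminus Z$ with $\gamma>\delta$, the map $D_\gamma=B^1_\gamma\sqcup_{A_\gamma}B^2_\gamma\rightarrow B^1_\delta$ has no canonical value on $B^2_\gamma\setminus A_\gamma$, and naive choices (e.g.\ collapsing onto the image of a fixed point of $A_\gamma$) violate transitivity against the $\psi^2$'s. These are precisely the difficulties that make band amalgamation delicate (recall AP fails for the class of all finite bands), and repairing them essentially amounts to redoing Imaoka's argument; the reduction of $\mathcal{K}_N$ to the one-sided cases via spined products is a reasonable plan but inherits the same gap.
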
 

 \begin{proof}  Since the class of (left/right) normal bands forms a variety, it is clear that the classes are closed under subbands and have JEP. The weak amalgamation property follows from \cite[Section 2]{Imaoka76} by taking all bands to be finite. Finally, since bands are ULF there exists only finitely many bands, up to isomorphism, of each finite cardinality, and so each class is countable.  
\end{proof} 

Let  $\mathcal{B}_N, \mathcal{B}_{RN}$ and $\mathcal{B}_{LN}$ be the Fra\"iss\'e limits of $\mathcal{K}_N, \mathcal{K}_{RN}$ and  $\mathcal{K}_{LN}$, respectively. We shall prove that $\mathcal{B}_{RN}$ is the unique homogeneous right normal band with surjective but not injective connecting morphisms. This will follow quickly from the subsequent result. 

\begin{lemma} Let $R=[Y;R_{\alpha};\psi_{\alpha,\beta}]$ be a homogeneous right normal band, where each connecting morphism is surjective but not injective. Let $\alpha>\beta_1,\dots,\beta_r$ in $Y$ for some $r\in \mathbb{N}$, where $\beta_i \perp \beta_j$ for all $i\neq j$. Then for any $e_{\beta_i}\in B_{\beta_i}$ such that $\langle e_{\beta_i}:1 \leq i \leq r\rangle$ forms a semilattice, we have 
\[ |\{e_{\alpha}\in R_{\alpha}:e_{\alpha}\psi_{\alpha,\beta_i}=e_{\beta_i} \text{ for all }  1\leq i \leq r\}|=\aleph_0.
\]  
\end{lemma}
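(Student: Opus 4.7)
The plan is to show that the set $X=\{e_\alpha\in R_\alpha : e_\alpha\psi_{\alpha,\beta_i}=e_{\beta_i}\text{ for all } 1\leq i\leq r\}$ is non-empty, and then, by passing to a level $\alpha'>\alpha$ of $Y$, to derive the identity $|X|=N\cdot|X|$ for some $N\geq 2$, whence $|X|=\aleph_0$ as $R$ is countable.

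I would first produce a single element of $X$. Apply Lemma~\ref{embed Y} to the finite subsemilattice $Z=\langle\alpha,\beta_1,\ldots,\beta_r\rangle$ of $Y$ to obtain a subband $\{f_z:z\in Z\}$ of $R$ isomorphic as a semilattice to $Z$; in particular $f_\alpha\in R_\alpha$ and $f_{\beta_i}\in R_{\beta_i}$ with $f_\alpha\psi_{\alpha,\beta_i}=f_{\beta_i}$. Since $\langle f_{\beta_i}:i\rangle$ and $\langle e_{\beta_i}:i\rangle$ are isomorphic semilattices, the homogeneity of $R$ extends the isomorphism $f_{\beta_i}\mapsto e_{\beta_i}$ to an automorphism $\theta$ of $R$ whose induced automorphism $\pi$ of $Y$ fixes each $\beta_i$. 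Setting $h=\theta(f_\alpha)\in R_{\alpha\pi}$, one has $h\psi_{\alpha\pi,\beta_i}=e_{\beta_i}$; if $\alpha\pi=\alpha$ we are already done. Otherwise, let $\tau=\alpha\cdot(\alpha\pi)$; then $\tau\leq\alpha,\alpha\pi$ and $\tau\geq\beta_i$ for each $i$ (as both $\alpha$ and $\alpha\pi$ exceed each $\beta_i$). Thus $h\psi_{\alpha\pi,\tau}\in R_\tau$ has $\psi_{\tau,\beta_i}$-image $e_{\beta_i}$, and any preimage of $h\psi_{\alpha\pi,\tau}$ under the surjective morphism $\psi_{\alpha,\tau}$ then lies in $X$.

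For the main count I would pick $\alpha'>\alpha$ in $Y$, possible by Corollary~\ref{basics}($\mathrm{ii}$), and let $X'$ denote the analogue of $X$ at level $\alpha'$, which is non-empty by the same argument. A direct verification gives the partition
\[
X'=\bigsqcup_{e\in X}\psi_{\alpha',\alpha}^{-1}(e).
\]
Extending isomorphisms of $2$-element chains via homogeneity shows that all fibres of $\psi_{\alpha',\alpha}$ have a common size $N$, so $|X'|=N\cdot|X|$. Moreover $N\geq 2$: Lemma~\ref{image same} propagates the non-injectivity of the connecting morphisms of $R$ to $\psi_{\alpha',\alpha}$ itself. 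Now for any fixed $e_\alpha\in X$ and $e_{\alpha'}\in X'$, the subbands $\langle e_\alpha,e_{\beta_i}\rangle$ and $\langle e_{\alpha'},e_{\beta_i}\rangle$ of $R$ are isomorphic semilattices, so the obvious isomorphism extends by homogeneity to an automorphism of $R$ whose restriction bijects $X$ with $X'$, giving $|X|=|X'|$. The resulting identity $|X|=N\cdot|X|$ with $N\geq 2$ forces $|X|$ to be infinite, and the countability of $R$ then gives $|X|=\aleph_0$.

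The main obstacle is the non-emptiness step: the automorphism extending $\{f_{\beta_i}\}\to\{e_{\beta_i}\}$ need not fix the level $\alpha$, so the element it produces lives in $R_{\alpha\pi}$ rather than $R_\alpha$. The cure is to descend to the common meet $\tau=\alpha\cdot(\alpha\pi)$, which remains above all the $\beta_i$, and then to re-lift along the surjection $\psi_{\alpha,\tau}$ to land back in $R_\alpha$.
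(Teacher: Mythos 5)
Your proof is correct, but it is organised quite differently from the paper's. The paper first proves the case $r=1$ by contradiction: if the $\psi_{\alpha,\beta}$-fibre of $e_{\beta}$ were finite, it would coincide with the $\psi_{\alpha,\gamma}$-fibre of $e_{\gamma}$ for $\gamma<\beta$ (same cardinality by homogeneity, and one contains the other), yet lifting a second preimage $f_{\beta}\neq e_{\beta}$ of $e_{\gamma}$ up to level $\alpha$ shows the containment is proper. It then reduces general $r$ to $r=1$ by transporting some $f_{\alpha}$ with its shadow $\{f_{\beta_i}\}$ onto $\{e_{\beta_i}\}$, and — crucially — invoking Lemma~\ref{not iso uni} to find a common \emph{upper} bound $\tau>\alpha,\delta$ in the universal semilattice before pushing back down to $\alpha$. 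You instead treat all $r$ uniformly: non-emptiness is obtained by descending to the meet $\alpha\cdot(\alpha\pi)$ (which still dominates every $\beta_i$) and re-lifting along the surjection $\psi_{\alpha,\alpha\cdot(\alpha\pi)}$, and infinitude follows from the identity $|X|=|X'|=N\cdot|X|$ with $N\geq 2$, where $X'$ is the solution set one level up at some $\alpha'>\alpha$ (which exists by Corollary~\ref{basics}). Both arguments rest on the same two homogeneity transfers — all fibres of a connecting morphism have equal cardinality, and the solution sets at different levels are in bijection via an automorphism fixing the $e_{\beta_i}$ — but your version never uses the existence of upper bounds in $Y$, only density, so it is marginally more self-contained and avoids the separate base case. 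Two small remarks: the non-injectivity of $\psi_{\alpha',\alpha}$ is already a hypothesis, so the appeal to Lemma~\ref{image same} is unnecessary; and when $r=1$ the meet $\alpha\cdot(\alpha\pi)$ may equal $\beta_1$ itself, but your re-lifting step still goes through verbatim in that case.
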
 

\begin{proof} By Lemma \ref{not iso uni}, $Y$ is the universal semilattice, and so every pair of elements has an upper cover. We first prove the result for $r=1$ (relabelling $\beta_1$ simply as $\beta$). Since the connecting morphisms are surjective, there exists $e_{\alpha}\in B_{\alpha}$ such that $e_{\alpha}\psi_{\alpha,\beta}=e_{\beta}$. Suppose for contradiction that 
\[ e_{\alpha}K_{\alpha,\beta}=\{f_{\alpha}:(e_{\alpha},f_{\alpha})\in K_{\alpha,\beta}\}
\] 
has finite cardinality $n$. Note that $n\neq 1$ since the connecting morphisms are not injective and $|e_{\alpha}K_{\alpha,\beta}|=|e_{\alpha'}K_{\alpha',\beta'}|$ for all $\alpha'>\beta'$ and $e_{\alpha'}\in R_{\alpha'}$, by a simple application of homogeneity. Moreover, for any $\gamma<\beta$ we have $|e_{\alpha} K_{\alpha,\beta}|=|e_{\alpha} K_{\alpha,\gamma}|$ and $K_{\alpha,\beta}\subseteq K_{\alpha,\gamma}$, and so $e_{\alpha}K_{\alpha,\beta}=e_{\alpha}K_{\alpha,\gamma}$. Let $e_{\beta}\psi_{\beta,\gamma}=e_{\gamma}$. Then choosing any $f_{\beta}\in e_{\beta}K_{\beta,\gamma}$ with $f_{\beta}\neq e_{\beta}$ there exists $f_{\alpha}\in R_{\alpha}$ such that $f_{\alpha}\psi_{\alpha,\beta}=f_{\beta}$, and thus 
\[f_{\alpha}\psi_{\alpha,\gamma}=f_{\beta}\psi_{\beta,\gamma}=e_{\beta}\psi_{\beta,\gamma}=e_{\gamma}.
\] 
Hence $f_{\alpha}\in e_{\alpha}K_{\alpha,\gamma}$, but $f_{\alpha}\not\in e_{\alpha}K_{\alpha,\beta}$, a contradiction and thus $n$ is infinite. 

Now consider the result for arbitrary $r\in \mathbb{N}$. Let $f_{\alpha}\in R_{\alpha}$, and let $f_{\alpha}\psi_{\alpha,\beta_i}=f_{\beta_i}$ for some $f_{\beta_i}$. Note that $\langle f_{\beta_i}:1\leq i \leq r \rangle$ is a semilattice, and is isomorphic to $\langle e_{\beta_i}:1\leq i \leq r \rangle$. We may thus extend the isomorphism between $\langle f_{\beta_i}:1\leq i \leq r \rangle$  and $\langle e_{\beta_i}:1\leq i \leq r \rangle$ which sends  $f_{\beta_i}$ to $e_{\beta_i}$ for each $i$, to an automorphism of $B$, to obtain some $\delta>\beta_i$ and $e_{\delta}\in B_{\delta}$ (as the image of $e_{\alpha}$) such that $e_{\delta}\psi_{\delta,\beta_i}=e_{\beta_i}$ for each $i$. Since $Y$ is the universal semilattice we may pick $\tau>\alpha,\delta$ in $Y$. Let  $e_{\tau}$ be such that $e_{\tau}\psi_{\tau,\delta}=e_{\delta}$, and suppose $e_{\tau}\psi_{\tau,\alpha}=e_{\alpha}$. Then 
\[ e_{\alpha}\psi_{\alpha,\beta_i}=e_{\tau}\psi_{\tau,\alpha}\psi_{\alpha,\beta_i}= e_{\tau}\psi_{\tau,\beta_i}=e_{\tau}\psi_{\tau,\delta}\psi_{\delta,\beta_i}=e_{\delta}\psi_{\delta,\beta_i}=e_{\beta_i}.
\] 
By the case where $r=1$ the set $e_{\tau}K_{\tau,\delta}$ is infinite, and thus so is the set  
\[ \{e_{\tau}\in R_{\tau}:e_{\tau}\psi_{\tau,\beta_i}=e_{\beta_i} \text{ for all }  1\leq i \leq r\}.
\] 
The result follows by extending the isomorphism from the semilattice $\langle e_{\tau},e_{\beta_i}:1\leq i \leq r \rangle$ to $\langle e_{\alpha},e_{\beta_i}: 1 \leq i \leq r \rangle$, which sends $e_{\tau}$ to $e_{\alpha}$ and fixes all other elements, to an automorphism of $R$. 

\end{proof} 

\begin{lemma}\label{universal right} Let $R=[Y;R_{\alpha};\psi_{\alpha,\beta}]$ be a homogeneous right normal band, where each connecting morphism is surjective but not injective. Then $R\cong \mathcal{B}_{RN}$ (dually for $\mathcal{B}_{LN}$).
\end{lemma}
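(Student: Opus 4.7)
The strategy is to apply Fra\"iss\'e's theorem. Since $\mathcal{B}_{RN}$ is, up to isomorphism, the unique countable homogeneous right normal band whose age is $\mathcal{K}_{RN}$, and since $R$ is itself homogeneous with age trivially contained in $\mathcal{K}_{RN}$ (right normal bands form a variety, so every f.g. subband of $R$ is a finite right normal band), it suffices to prove that every finite right normal band embeds into $R$.

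So let $T=[Z;T_\alpha;\phi_{\alpha,\beta}]$ be a finite right normal band. By Lemma \ref{not iso uni}, $Y$ is the universal semilattice, hence there is a semilattice embedding $\pi:Z\to Y$. The plan is to construct an embedding $\rho:T\to R$ inducing $\pi$ on structure semilattices by enumerating $Z$ via a linear extension of its partial order and defining $\rho$ one $\mathcal{D}$-class at a time, maintaining the invariant
\[ (x\rho)\psi_{\alpha\pi,\beta\pi}=(x\phi_{\alpha,\beta})\rho \quad \text{for all } x\in T_\alpha \text{ and } \beta<\alpha \text{ in } Z. \]
At the step for $\alpha\in Z$, let $\beta_1,\dots,\beta_r$ be the maximal elements of $\{\beta\in Z:\beta<\alpha\}$; these are pairwise incomparable in $Z$, and since $\pi$ preserves meets, the $\beta_i\pi$ are pairwise incomparable in $Y$ with $\alpha\pi>\beta_i\pi$ for each $i$. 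For any $x\in T_\alpha$ the projections $x\phi_{\alpha,\beta_1},\dots,x\phi_{\alpha,\beta_r}$ form a semilattice in $T$ (their products collapse symmetrically to $x\phi_{\alpha,\beta_i\beta_j}$ in the right zero $\mathcal{D}$-classes $T_{\beta_i\beta_j}$), and so, by the inductive invariant, their images $(x\phi_{\alpha,\beta_i})\rho$ form a semilattice in $R$. The preceding lemma thus supplies infinitely many candidates $x'\in R_{\alpha\pi}$ with $x'\psi_{\alpha\pi,\beta_i\pi}=(x\phi_{\alpha,\beta_i})\rho$ for every $i$. Processing the elements of $T_\alpha$ one at a time, we can pick distinct $x\rho:=x'$ from these infinite fibres (two distinct $x$'s either lie in disjoint fibres or share a common fibre which is infinite), ensuring injectivity. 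Transitivity of the connecting morphisms propagates the invariant from the maximal $\beta_i$ to every $\beta<\alpha$ in $Z$.

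A routine verification using the right-zero decomposition (for $s\in T_\alpha$, $t\in T_\beta$, both $\rho(st)$ and $\rho(s)\rho(t)$ simplify to $(t\phi_{\beta,\alpha\beta})\rho$) shows that $\rho$ is a homomorphism, and injectivity is built in; hence $T$ embeds into $R$. Fra\"iss\'e's theorem then delivers $R\cong\mathcal{B}_{RN}$. The main obstacle is the observation that the projection targets at each inductive step really do generate a semilattice in $R$, which is what licenses application of the preceding lemma; this relies crucially on these targets arising as the images of a \emph{single} element $x\in T_\alpha$ under the various connecting morphisms of $T$, rather than as arbitrary elements of the lower $\mathcal{D}$-classes.
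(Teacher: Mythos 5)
Your proof is correct and follows essentially the same route as the paper's: both reduce the statement to showing that every finite right normal band embeds in $R$ (so that Fra\"iss\'e's theorem applies), and both invoke the preceding lemma on infinite fibres over a prescribed subsemilattice to extend a partial embedding step by step. The only difference is organizational --- the paper inducts on the cardinality of the finite band, deleting an element of a maximal $\mathcal{D}$-class and re-adjoining it, whereas you fix the semilattice embedding $\pi$ in advance and build the map bottom-up one $\mathcal{D}$-class at a time along a linear extension of $Z$.
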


\begin{proof} We shall prove that all finite right normal bands embed in $R$. We proceed by induction, the base case being trivially true, by supposing that all right normal bands of size $n-1$ embed in $R$, and let $A=[Z;A_{\alpha};\phi_{\alpha,\beta}]$ be of size $n$. Let $\alpha$ be maximal in $Z$ and fix $e_{\alpha}\in A_{\alpha}$. Suppose $\alpha$ is the cover of $\beta_1,\dots,\beta_r$ in $Z$, and suppose $e_{\alpha}\phi_{\alpha,\beta_i}=e_{\beta_i}$. Then $A'=A\setminus \{e_{\alpha}\}=[\bar{Z};A'_{\alpha};\phi_{\alpha,\beta}']$ is a right normal band of size $n-1$, and so there exists an embedding $\theta: A' \rightarrow R$ (which induces an embedding $\pi:\bar{Z} \rightarrow Y$). 
Since $Y$ is the universal semilattice by Lemma \ref{not iso uni}, and in particular embeds all finite semilattices, it follows that  there exists $\delta\in Y$ such that $\bar{Z}\pi \cup \{ \delta\} \cong Z$, where we choose $\delta=\alpha\pi$ if $|A_{\alpha}|>1$, that is, if $\bar{Z}=Z$. Then by the previous lemma, we may pick an element $e_{\delta}$ of $R_{\delta}$ such that $e_{\delta} \not \in A'\theta$ and $e_{\delta}\psi_{\delta,\beta_i\pi}=e_{\beta_i}\theta$. Then it is easily verifiable that $A'\theta \cup \{e_{\delta}\}$ is isomorphic to $A$, and so the result follows by induction. By Fra\"iss\'e's Theorem $R$ is isomorphic to the Fra\"iss\'e limit of $\mathcal{K}_{RN}$. 
\end{proof} 

\begin{corollary}\label{BN=BLN RN} The band $\mathcal{B}_{N}$ is isomorphic to $\mathcal{B}_{LN}\bowtie \mathcal{B}_{RN}$. 
\end{corollary}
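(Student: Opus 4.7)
The plan is to decompose $\mathcal{B}_N$ via Proposition \ref{normal spined} as $\mathcal{B}_N=L\bowtie R$, identify the components as $L\cong\mathcal{B}_{LN}$ and $R\cong\mathcal{B}_{RN}$ using Lemma \ref{universal right}, and then glue these identifications into the desired isomorphism of spined products via Corollary \ref{SH regular}.

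First I would show that $\mathcal{B}_N$ is a surjective normal band. Since the age of $\mathcal{B}_N$ is all of $\mathcal{K}_N$, it contains finite normal bands whose connecting morphisms are not image-trivial; hence by Lemma \ref{trivial or surj} the components $L$ and $R$ cannot be image-trivial and must be surjective. By Corollary \ref{surj split}, both $L$ and $R$ are then homogeneous. Moreover, the age of $L$ is all of $\mathcal{K}_{LN}$: any finite left normal band $L_0$ is itself a finite normal band, so embeds in $\mathcal{B}_N=L\bowtie R$, and projecting onto the first coordinate gives a copy of $L_0$ in $L$.

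Next I would rule out $L$ having injective connecting morphisms. If they were injective, being also surjective they would be isomorphisms, and by Lemma \ref{iso state} we would have $L\cong Y\times B_{n,1}$ for some $n\in\mathbb{N}^*$. A direct check on the formula $(\alpha,e)(\beta,f)=(\alpha\beta,e)$ shows that every subband of $Y\times B_{n,1}$ has injective connecting morphisms, contradicting the existence in $\mathcal{K}_{LN}$ of finite left normal bands with non-injective connecting morphisms. Hence $L$ has surjective non-injective connecting morphisms, and by the dual of Lemma \ref{universal right} we obtain $L\cong\mathcal{B}_{LN}$; dually $R\cong\mathcal{B}_{RN}$.

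Finally, to assemble the isomorphism $\mathcal{B}_N\cong\mathcal{B}_{LN}\bowtie\mathcal{B}_{RN}$, I would invoke Corollary \ref{SH regular}. The main obstacle will be verifying its hypothesis that one of $\mathcal{B}_{LN},\mathcal{B}_{RN}$ is structure-homogeneous, since Proposition \ref{iso hom} does not apply directly (the connecting morphisms are surjective but not injective). I would address this via a Fra\"iss\'e-style back-and-forth construction that tracks the induced semilattice automorphism throughout: given a partial isomorphism with a specified extension $\hat{\pi}$ of the induced semilattice map, at each step a new element $x$ in $\mathcal{D}$-class $\alpha$ is added by forming the abstract amalgamated extension of the current image by a new element in $\mathcal{D}$-class $\alpha\hat{\pi}$, then using the homogeneity of the universal semilattice together with the extension/amalgamation properties of $\mathcal{K}_{LN}$ to realise this extension inside $\mathcal{B}_{LN}$ with the prescribed $\mathcal{D}$-class placement.
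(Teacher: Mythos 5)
Your route is genuinely different from the paper's: you decompose $\mathcal{B}_N=L\bowtie R$ and identify the two components, whereas the paper argues in the opposite direction, showing that the spined product $\mathcal{B}_{LN}\bowtie\mathcal{B}_{RN}$ embeds every finite normal band, so that its age is $\mathcal{K}_N$ and Fra\"iss\'e uniqueness applies. The first two thirds of your argument are sound: ruling out image-trivial components via the age, deducing surjectivity and homogeneity of $L$ and $R$ from Lemma \ref{trivial or surj} and Corollary \ref{surj split}, computing the age of $L$, excluding isomorphic connecting morphisms via Lemma \ref{iso state}, and then invoking Lemma \ref{universal right} to get $L\cong\mathcal{B}_{LN}$ and $R\cong\mathcal{B}_{RN}$ all work.

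The gap is in the final gluing step. You propose to verify the hypothesis of Corollary \ref{SH regular} by proving that $\mathcal{B}_{LN}$ is structure-homogeneous, but this is false: by Proposition \ref{SH normal} the only structure-homogeneous normal bands are those of the form $Y\times B_{n,m}$, and the obstruction is precisely the non-injectivity of the connecting morphisms (if $e_\alpha\psi_{\alpha,\beta}=f_\alpha\psi_{\alpha,\beta}$ with $e_\alpha\neq f_\alpha$, a prescribed extension $\hat{\pi}$ sending $\alpha$ to $\beta$ while fixing a point above forces a collapse of $e_\alpha$ and $f_\alpha$). So no back-and-forth can deliver what you ask of it. What your argument actually needs is much weaker, namely a single automorphism of $\mathcal{B}_{LN}$ whose induced semilattice automorphism is $\pi_l^{-1}\pi_r$ --- the lifting of an arbitrary automorphism of the universal semilattice --- and even that requires a careful back-and-forth using the richness lemma preceding Lemma \ref{universal right}, which you do not carry out. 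The paper sidesteps the problem entirely by working with one finite normal band at a time: the two induced embeddings $\pi_l,\pi_r$ only need to be reconciled on a finite subsemilattice $Z$, and Lemma \ref{embed Y} realises $Z\pi_l$ and $Z\pi_r$ as semilattice subbands of $\mathcal{B}_{LN}$, so ordinary homogeneity of $\mathcal{B}_{LN}$ already supplies an automorphism correcting the discrepancy. You should either adopt that finite-matching device or isolate and prove the lifting property; as written, the proof does not close.
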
 
\begin{proof} Let $L\bowtie R$ be a finite normal band with structure semilattice $Z$. Then there exists embeddings $\theta^l:L\rightarrow \mathcal{B}_{LN}$ and $\theta^r:R\rightarrow \mathcal{B}_{RN}$ with induced embeddings $\pi_l$ and $\pi_r$ from $Z$ to $Y$, respectively. Hence $\pi=(\pi_l)^{-1}|_{Z\pi_l}\pi_r$ is an isomorphism between $Z\pi_l$ to $Z\pi_r$.
 By Lemma \ref{embed Y} there exists subbands $A=\{e_{\alpha}:\alpha\in Z\pi_l\}$ and $A'=\{f_{\alpha}:\alpha\in Z\pi_r\}$ of $\mathcal{B}_{LN}$ isomorphic to $Z\pi_l$ and $Z\pi_r$, respectively. Consequently, the map $\phi:A\rightarrow A'$ given by $e_{\alpha}\phi = f_{\alpha\pi}$ ($\alpha\in Z\pi_l$) is an isomorphism, which we may extend to an automorphism $\hat{\theta}^l=[\hat{\theta}_{\alpha}^l,\hat{\pi}]$ of $\mathcal{B}_{LN}$. 
 In particular, $\hat{\pi}$ extends $\pi$ and  $\theta^l\hat{\theta}^l$ is an embedding of $L$ into $\mathcal{B}_{LN}$, with induced embedding $\pi_l\hat{\pi}=\pi_l(\pi_l)^{-1}|_{Z\pi_l}\pi_r=\pi_r$ of $Z$ into $Y$.
  Hence $\theta^l\hat{\theta}^l\bowtie \theta^r: L\bowtie R\rightarrow \mathcal{B}_{LN}\times \mathcal{B}_{RN}$ is an embedding by Proposition \ref{iso regular}, and so $\mathcal{B}_{LN}\bowtie \mathcal{B}_{RN}$ embeds all finite normal bands as required. 
\end{proof} 

We now summarise our findings in this section.  

\begin{proposition}\label{homog surj} A surjective normal band is homogeneous if and only if it is isomorphic to either $Y\times B_{n,m}$, $(U\times B_{n,1})\bowtie \mathcal{B}_{RN}, \mathcal{B}_{LN}\bowtie (U \times B_{1,n})$ or $\mathcal{B}_N$, for some homogeneous semilattice $Y$ and some $n,m\in \mathbb{N}^*$, where $U$ is the universal semilattice. 
\end{proposition}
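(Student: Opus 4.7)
The plan is to prove both directions by combining the classification lemmas already established in this subsection. The converse direction is essentially an assembly of earlier observations. The band $Y\times B_{n,m}$ is structure-homogeneous, hence homogeneous, by Proposition \ref{iso hom} together with Lemma \ref{iso state} (its connecting morphisms are isomorphisms and $Y$ is homogeneous). The mixed spined products $(U\times B_{n,1})\bowtie \mathcal{B}_{RN}$ and $\mathcal{B}_{LN}\bowtie (U\times B_{1,n})$ are homogeneous by the remark immediately preceding the proposition, which applies Corollary \ref{structure homog reg} together with the structure-homogeneity of $U\times B_{n,1}$ and $U\times B_{1,n}$. Finally, $\mathcal{B}_N$ is homogeneous as a Fra\"iss\'e limit by construction.

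For the forward direction, the idea is a four-way case analysis on the nature of the connecting morphisms of the two components. Writing $B=L\bowtie R$ via Proposition \ref{normal spined}, Corollary \ref{surj split} guarantees that $L$ and $R$ are themselves homogeneous, and they are surjective (left and right) normal bands because $B$ is. Lemma \ref{image same} then forces each of the two sets of connecting morphisms to be either uniformly isomorphisms or uniformly strictly surjective (surjective but not injective), giving exactly four cases.

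I would then identify each component case-by-case. When both sides have all connecting morphisms being isomorphisms, Lemma \ref{iso state} yields $L\cong Y\times B_{n,1}$ and $R\cong Y\times B_{1,m}$, where $Y$ is the common structure semilattice of $L$ and $R$ (which is homogeneous by Corollary \ref{Y homog surj}), and the note following Lemma \ref{iso state} gives $B\cong Y\times B_{n,m}$. When exactly one side, say $L$, has strictly surjective connecting morphisms, Lemma \ref{not iso uni} forces $Y$ to be the universal semilattice $U$; the dual of Lemma \ref{universal right} identifies $L\cong \mathcal{B}_{LN}$ while Lemma \ref{iso state} gives $R\cong U\times B_{1,n}$, yielding $B\cong \mathcal{B}_{LN}\bowtie (U\times B_{1,n})$. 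The symmetric case gives $(U\times B_{n,1})\bowtie \mathcal{B}_{RN}$. Finally, when both sides have strictly surjective connecting morphisms, $Y=U$ again, Lemma \ref{universal right} identifies $L\cong \mathcal{B}_{LN}$ and $R\cong \mathcal{B}_{RN}$, and Corollary \ref{BN=BLN RN} then gives $B\cong \mathcal{B}_{LN}\bowtie \mathcal{B}_{RN}\cong \mathcal{B}_N$.

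The bulk of the genuine work has already been done in the preparatory results, so the remaining task is largely bookkeeping. The most delicate point to verify is the compatibility of structure semilattices when assembling the spined product in the mixed cases: one must ensure that the copy of $U$ arising as the structure semilattice of $L$ coincides with the one arising from $R$, which it does because both are the structure semilattice $Y$ of $B$ itself. Apart from that, the argument is a routine patch-up of the preceding lemmas.
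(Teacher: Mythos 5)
Your proposal is correct and follows essentially the same route as the paper: decompose $B=L\bowtie R$, use Corollary \ref{surj split} and Corollary \ref{Y homog surj} to get homogeneity of the components and of $Y$, use Lemma \ref{image same} to split into the cases where each component's connecting morphisms are uniformly isomorphisms or uniformly non-injective, and then identify the components via Lemma \ref{iso state}, Lemma \ref{not iso uni}, Lemma \ref{universal right} and Corollary \ref{BN=BLN RN}, with Corollary \ref{SH regular} (via structure-homogeneity of $Y\times B_{n,m}$ from Proposition \ref{iso hom}) justifying that the spined product is determined by its components. The "delicate point" you flag at the end is exactly what Corollary \ref{SH regular} is for, and the paper cites it explicitly at that step.
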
 

\begin{proof} 
Suppose first that $B=L\bowtie R$ is a homogeneous surjective normal band. Then by Corollary \ref{Y homog surj} and Corollary \ref{surj split}, each of $Y, L$ and $R$ are homogeneous.
If a non-trivial connecting morphism of $L$ is an isomorphism, then $L$  is isomorphic to  $Y \times B_{n,1}$ by Lemma \ref{image same} and Lemma \ref{iso state}.  Otherwise, the connecting morphisms of $L$ are non-injective and $L$ is isomorphic to $\mathcal{B}_{LN}$ by  Lemma \ref{universal right}. Dually for $R$. 
  Since the band $Y\times B_{n,m}$ is structure homogeneous for any homogeneous semilattice $Y$ by Proposition \ref{iso hom}, the result  follows by Corollary \ref{SH regular},  Lemma \ref{not iso uni} and Corollary \ref{BN=BLN RN}.

Conversely, $\mathcal{B}_N,\mathcal{B}_{RN},\mathcal{B}_{LN}$ are homogeneous by Fra\"iss\'e's Theorem. 
Since each $Y \times B_{n,m}$ is structure-homogeneous, the final cases are homogeneous by Corollary \ref{structure homog reg}.
\end{proof}

For a complete classification of homogeneous normal bands, it thus suffices to consider the spined product of an image-trivial normal band with a surjective normal band. 

To this end, let $B=L\bowtie R$ be a homogeneous normal band, where $L$ is image-trivial and $R$ is surjective. We may also assume $L$ and $R$ are not semilattices, since otherwise $B$ would be image-trivial or surjective. Then $L$ is homogeneous by Corollary \ref{trivial splits}, and so $L\cong T_{n,1,k}$ for some $n,k\in \mathbb{N}^*$. Since the structure semilattice of $B$ is a semilinear order, it follows from Lemma \ref{not iso uni} that the connecting morphisms of $R$ must be isomorphisms, and thus we may assume that $R= T_{nk} \times B_{1,m}$ for some $m\in \mathbb{N}^*$. Conversely, $T_{n,1,k}\bowtie ( T_{nk} \times B_{1,m})$ is homogeneous for any $n,m,k\in \mathbb{N}^*$ by Proposition \ref{iso hom} and Corollary \ref{structure homog reg}.

This, together with its dual and Proposition \ref{T homog} and Proposition \ref{homog surj}, gives a complete list of homogeneous normal bands. In the classification theorem below, the three cases (up to duality) are given by: image-trivial normal bands in $(\mathrm{i})$, surjective normal bands in $(\mathrm{ii}),(\mathrm{iii}), (\mathrm{iv})$,$(\mathrm{v})$, and finally the spined product of an image-trivial normal band with a surjective normal band in $(\mathrm{vi})$ and $(\mathrm{vii})$. 

\begin{theorem}[Classification Theorem of homogeneous normal bands] A normal band is homogeneous if and only if it is isomorphic to either: 
\begin{enumerate}[label=(\roman*), font=\normalfont]
\item  $T_{n,m,k}$;
\item $Y \times B_{n,m}$; 
\item $\mathcal{B}_{LN} \bowtie (U \times B_{1,m})$; 
\item $(U \times B_{n,1} ) \bowtie \mathcal{B}_{RN}$; 
\item $\mathcal{B}_N$; 
\item $(T_{mk} \times B_{n,1}) \bowtie T_{1,m,k}$; 
\item $T_{n,1,k} \bowtie (T_{nk} \times B_{1,m} )$; 
\end{enumerate}  
for some homogeneous semilattice $Y$ and some $n,m,k\in \mathbb{N}^*$, where $U$ is the universal semilattice. 
\end{theorem}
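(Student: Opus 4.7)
The plan is to organize the proof around the Kimura splitting $B = L \bowtie R$ and apply the dichotomy of Lemma \ref{trivial or surj}, then invoke the classifications already established for pure image-trivial and pure surjective normal bands.

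First I would use Lemma \ref{trivial or surj} to partition the problem into three cases (up to duality): (a) both $L$ and $R$ are image-trivial, so $B$ itself is image-trivial; (b) both are surjective, so $B$ itself is surjective; (c) exactly one of $L, R$ is image-trivial and the other is surjective. Case (a) is settled immediately by Proposition \ref{T homog}, giving form (i); case (b) by Proposition \ref{homog surj}, giving forms (ii)--(v).

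The main work is case (c). Without loss of generality take $L$ image-trivial and $R$ surjective, and assume neither is a semilattice (else we collapse into (a) or (b)). I would first invoke Corollary \ref{trivial splits} to conclude $L$ is homogeneous, hence $L \cong T_{n,1,k}$ for some $n,k \in \mathbb{N}^*$ by Proposition \ref{T homog}. By Lemma \ref{trivial not universal}, the structure semilattice $Y$ is a homogeneous semilinear order, in particular not the universal semilattice. The contrapositive of Lemma \ref{not iso uni} then forces the surjective connecting morphisms of $R$ to also be injective, hence isomorphisms. Combining Corollary \ref{surj split} with Lemma \ref{iso state} (and Corollary \ref{basics}(iii) for a uniform $\mathcal{D}$-class size) yields $R \cong Y \times B_{1,m} = T_{nk} \times B_{1,m}$ for some $m \in \mathbb{N}^*$, producing form (vii). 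The dual assignment (where $L$ is surjective and $R$ is image-trivial) produces form (vi).

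For the converse, I would check homogeneity of each listed band: (i) by Proposition \ref{T homog}; (ii)--(v) by Proposition \ref{homog surj}; and for (vi), (vii), the component of the form $T_r \times B_{n,1}$ or $T_r \times B_{1,m}$ is structure-homogeneous by Proposition \ref{iso hom} (since its connecting morphisms are isomorphisms and $T_r$ is a homogeneous semilattice), the other component is homogeneous by Proposition \ref{T homog}, and Corollary \ref{structure homog reg} assembles them into a homogeneous spined product. The main anticipated obstacle is the ruling-out step in case (c), namely confirming that no other form of surjective connecting morphism can occur when paired with an image-trivial partner. This resolves cleanly via the contrapositive of Lemma \ref{not iso uni} once semilinearity of $Y$ is in hand.
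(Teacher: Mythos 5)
Your proposal is correct and follows essentially the same route as the paper: the same case split via Lemma \ref{trivial or surj}, the same treatment of the mixed case via Corollary \ref{trivial splits}, Lemma \ref{trivial not universal} and the contrapositive of Lemma \ref{not iso uni}, and the same converse via Propositions \ref{T homog}, \ref{homog surj}, \ref{iso hom} and Corollary \ref{structure homog reg}. (Your citation of Corollary \ref{surj split} in case (c) is superfluous and strictly speaking inapplicable there, since $B$ is not surjective in that case; but Lemma \ref{iso state} alone already yields $R\cong T_{nk}\times B_{1,m}$ once the connecting morphisms of $R$ are known to be isomorphisms, so nothing is lost.)
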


We finish this section by giving a complete classification of structure-homogeneous normal bands. 

\begin{proposition}\label{SH normal} A normal band is structure-homogeneous if and only if isomorphic to $Y \times B_{n,m}$ for some homogeneous semilattice $Y$ and $n,m\in \mathbb{N}^*$. 
\end{proposition}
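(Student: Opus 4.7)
The reverse direction is immediate: if $B \cong Y \times B_{n,m}$ with $Y$ a homogeneous semilattice, Lemma \ref{iso state} gives that every connecting morphism is an isomorphism, so Proposition \ref{iso hom} yields structure-homogeneity. For the forward direction, my plan is to use that structure-homogeneity strengthens homogeneity, so $B$ falls under the Classification Theorem of homogeneous normal bands, and then rule out every family in that list except (ii) by producing, in each, a finite partial isomorphism whose structure-homogeneous extension is impossible.

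For the three image-trivial cases (i), (vi), (vii), write $B = L \bowtie R$ and note that in each case at least one component (the whole band in (i); $R = T_{1,m,k}$ in (vi); $L = T_{n,1,k}$ in (vii)) is an image-trivial non-semilattice normal band. Pick distinct $r_\alpha, r'_\alpha$ in a single $\mathcal{D}$-class of this factor and any $l_\alpha$ in the opposite one, and consider the singleton isomorphism $\theta \colon \{(l_\alpha, r_\alpha)\} \to \{(l_\alpha, r'_\alpha)\}$, whose induced semilattice map is the identity on $\{\alpha\}$ and extends to $\hat\pi = \mathrm{id}_Y$. By structure-homogeneity there is $\hat\theta = \hat\theta^l \bowtie \hat\theta^r$ extending $\theta$ and inducing $\mathrm{id}_Y$. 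Commutativity of each diagram $[\gamma, \beta; \gamma, \beta]$ in the image-trivial factor forces $\hat\theta^r_\beta$ to fix every $\epsilon_{\gamma, \beta}$; by Lemma \ref{D full} these exhaust $R_\beta$, and since $Y$ has no maximal elements (Corollary \ref{basics}$(\mathrm{ii})$) the analogous argument with the roles of $\alpha, \beta$ reversed gives $\hat\theta^r = \mathrm{id}_R$, contradicting $\hat\theta(l_\alpha, r_\alpha) = (l_\alpha, r'_\alpha)$.

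For the three surjective non-injective cases (iii), (iv), (v), the band contains a Fra\"iss\'e-limit component ($\mathcal{B}_{LN}$, $\mathcal{B}_{RN}$ or both) whose connecting morphisms are surjective but not injective, so Lemma \ref{not iso uni} forces $Y = U$. Pick a diamond $\alpha > \beta_1, \beta_2$ with $\beta_1 \perp \beta_2$ in $U$, and, exploiting the Fra\"iss\'e universality of the relevant component, choose $r_\alpha, r'_\alpha$ in the $\mathcal{D}$-class $R_\alpha$ with $r_\alpha \psi^r_{\alpha, \beta_1} = r'_\alpha \psi^r_{\alpha, \beta_1}$ but $r_\alpha \psi^r_{\alpha, \beta_2} \neq r'_\alpha \psi^r_{\alpha, \beta_2}$. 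Fix $l_\alpha$ in the opposite component and take the swap isomorphism $\theta$ on $\{(l_\alpha, r_\alpha), (l_\alpha, r'_\alpha)\}$; it induces the identity on $\{\alpha\}$, and by homogeneity of $U$ there is $\hat\pi \in \mathrm{Aut}(U)$ fixing $\alpha$ and exchanging $\beta_1 \leftrightarrow \beta_2$. Structure-homogeneity then yields $\hat\theta$ extending $\theta$ and inducing $\hat\pi$, so the commutativity $\hat\theta^r_\alpha \psi^r_{\alpha, \beta_2} = \psi^r_{\alpha, \beta_1} \hat\theta^r_{\beta_1}$ of the diagram $[\alpha, \beta_1; \alpha, \beta_2]$ in $R$, evaluated at both $r_\alpha$ and $r'_\alpha$ and combined with $r_\alpha \psi^r_{\alpha, \beta_1} = r'_\alpha \psi^r_{\alpha, \beta_1}$, forces $r_\alpha \psi^r_{\alpha, \beta_2} = r'_\alpha \psi^r_{\alpha, \beta_2}$, contradicting the construction of the pair.

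The main obstacle is justifying the existence claim used in the last argument: one must know that a pair $r_\alpha, r'_\alpha$ with equal images in $B_{\beta_1}$ but unequal images in $B_{\beta_2}$ actually exists in the Fra\"iss\'e-limit component. This is where the Fra\"iss\'e universality is essential; the required configuration is the six-element right (or left) normal band built on $\{\alpha, \beta_1, \beta_2, \beta_1\beta_2\}$ with two $\alpha$-level elements collapsing at $\beta_1$ but not at $\beta_2$, a perfectly legitimate finite member of $\mathcal{K}_{RN}$ (respectively $\mathcal{K}_{LN}$) which therefore embeds in the component by definition of its Fra\"iss\'e limit. Once cases (i) and (iii)--(vii) are ruled out, $B$ must be case (ii), and the homogeneity of the semilattice $Y$ there is part of that case's statement in the Classification Theorem, completing the forward direction.
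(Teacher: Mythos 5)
Your argument is essentially sound, but it takes a genuinely different route from the paper. The paper never invokes the Classification Theorem here: it works directly with an arbitrary structure-homogeneous $B=[Y;B_{\alpha};\psi_{\alpha,\beta}]$ and proves every connecting morphism is an isomorphism, whence $B\cong Y\times B_{n,m}$ by Lemma \ref{iso state}. Non-surjectivity is ruled out by extending a one-point map $\{e_{\beta}\}\rightarrow\{f_{\beta}\}$ (with $e_{\beta}\notin I_{\alpha,\beta}$ and $f_{\beta}\in I_{\alpha,\beta}$) over the identity of $Y$, since the commuting square $[\alpha,\beta;\alpha,\beta]$ forces $\theta_{\beta}$ to preserve $I_{\alpha,\beta}$ setwise; non-injectivity is ruled out by lifting $e_{\alpha},f_{\alpha}$ with equal images in $B_{\beta}$ to some $e_{\delta},f_{\delta}$ with $\delta>\alpha$ and extending their swap over an automorphism of $Y$ fixing $\delta$ and sending $\alpha$ to $\beta$. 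Your two eliminations are in effect special cases of these: the image-trivial cases are an instance of the non-surjectivity argument, and your diamond argument for the Fra\"iss\'e components is the non-injectivity argument with the pinned and moved feet of the diagram arranged differently (incomparable $\beta_1,\beta_2$ below a fixed $\alpha$, instead of $\alpha\mapsto\beta$ below a fixed $\delta$). The paper's route buys independence from the lengthy classification and uniform coverage of all parameter values; your route needs the full Classification Theorem plus some bookkeeping for degenerate parameters (for instance $T_{1,1,k}\cong T_k\times B_{1,1}$ is a semilattice and hence \emph{is} of the allowed form, so case $(\mathrm{i})$ must not be eliminated wholesale --- your argument correctly only bites when the relevant factor is a non-semilattice, but you should say so explicitly). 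Finally, both proofs quietly use that a structure-homogeneous band is homogeneous (you to invoke the classification, the paper to invoke Corollary \ref{Y homog surj}); this is not literally immediate from Definition \ref{defn SH}, whose hypothesis is vacuous for a partial isomorphism whose induced semilattice map extends to no automorphism of $Y$, so if you want a fully self-contained argument this step deserves a sentence.
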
 

\begin{proof}
Let $B=[Y;B_{\alpha};\psi_{\alpha,\beta}]$ be a structure-homogeneous, so that $Y$ is homogeneous by Corollary \ref{Y homog surj}.
 We shall show that each connecting morphism is an isomorphism, so that the result will follow by Lemma \ref{iso state}. Suppose first that $\psi_{\alpha,\beta}$ is not surjective, say, $e_{\beta}\not\in I_{\alpha,\beta}$. 
 Let $f_{\alpha}\in B_{\alpha}$ with $f_{\alpha}\psi_{\alpha,\beta}=f_{\beta}$. Then by extending the isomorphism between $\{e_{\beta}\}$ and $\{f_{\beta}\}$ (with induced isomorphism the trivial map fixing $\beta$) to an automorphism of $B$ with induced automorphism $1_Y$, a contradiction is obtained. Hence each connecting morphism is surjective. 

Suppose $e_{\alpha}\psi_{\alpha,\beta}=e_{\beta}=f_{\alpha}\psi_{\alpha,\beta}$, and fix some $\delta>\alpha$. Then as $\psi_{\delta,\alpha}$ is surjective there exists $e_{\delta},f_{\delta}\in B_{\delta}$ with $e_{\delta}\psi_{\delta,\alpha}=e_{\alpha}$ and $f_{\delta}\psi_{\delta,\alpha}=f_{\alpha}$. Let $\pi$ be an automorphisms of $Y$ such that $\alpha\pi=\beta$ and $\delta\pi=\delta$ (such a map exists by the homogeneity of $Y$). Extend the isomorphism swapping $\{e_{\delta}\}$ and $\{f_{\delta}\}$ to an automorphism $\theta=[\theta_{\alpha},\pi]_{\alpha\in Y}$ of $B$. Then as the diagram $[\delta,\alpha;\delta,\beta]$ commutes,  
\[ e_{\alpha}\theta_{\alpha}=e_{\delta}\psi_{\delta,\alpha}\theta_{\alpha}=e_{\delta}\theta_{\delta}\psi_{\delta,\beta} = f_{\delta}\psi_{\delta,\beta}=e_{\beta} 
\] 
and similarly $f_{\alpha}\theta_{\alpha}=e_{\beta}$. Hence $e_{\alpha}=f_{\alpha}$, and so $\psi_{\alpha,\beta}$ is injective. 

The converse follows from Proposition \ref{iso hom}. 
\end{proof}

\section{Homogeneous linearly ordered bands} \label{sec:linear}

We call a band $B=\bigcup_{\alpha\in Y}B_{\alpha}$ \textit{linearly ordered} if  $Y$ is a linear order. A homogeneous linearly ordered band $B$ has structure semilattice $\mathbb{Q}$ by Proposition \ref{linear homog}. 
We  observe that if $B$ is not normal, then there exists $\alpha>\beta$ and $e_{\alpha}\in B_{\alpha}$ such that the subband $B_{\beta}(e_{\alpha})$ of $B_{\beta}$ contains more than one $\mathcal{L}$-class or $\mathcal{R}$-class. Hence if $B$ is homogeneous, then by Corollary \ref{basics} ($\mathrm{iv}$) the same is true for $B_{\gamma}(e_{\delta})$ for any  $\delta>\gamma$ and $e_{\delta}\in B_{\delta}$.

\begin{lemma}\label{all R} Let $B$ be a linearly ordered homogeneous non-normal band. If $B_{\beta}(e_{\alpha})$ intersects more than one $\mathcal{L}$-class, then $B_{\beta}(e_{\alpha})= \mathcal{R}(B_{\beta}(e_{\alpha}))$ (dually for $\mathcal{R}$). 
\end{lemma}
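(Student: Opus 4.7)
The approach is by contradiction. Suppose there is some $f_\beta\in\mathcal{R}(B_\beta(e_\alpha))\setminus B_\beta(e_\alpha)$. Unfolding definitions, $e_\alpha f_\beta = f_\beta$ while $f_\beta e_\alpha\neq f_\beta$; setting $g_\beta:=f_\beta e_\alpha = e_\alpha f_\beta e_\alpha$, the characterization given immediately after the definition of $\mathcal{R}(B_\beta(e_\alpha))$ places $g_\beta$ in $B_\beta(e_\alpha)$ with $f_\beta\,\mathcal{R}\,g_\beta$ in $B_\beta$ and $g_\beta\neq f_\beta$. Using the rectangular-band product form $B_\beta\cong L_\beta\times R_\beta$ (Proposition \ref{rec bands iso}), any subband of $B_\beta$ has product shape, so $B_\beta(e_\alpha)=L'\times R'$ for some $L'\subseteq L_\beta$, $R'\subseteq R_\beta$. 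The hypothesis ``more than one $\mathcal{L}$-class'' is $|R'|\geq 2$, while $f_\beta\in\mathcal{R}(B_\beta(e_\alpha))$ forces $f_\beta$ and $g_\beta$ to share an $L$-coordinate lying in $L'$.

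Since the $\mathcal{R}$-class of $g_\beta$ in $B_\beta$ meets $B_\beta(e_\alpha)$ in $|R'|\geq 2$ elements, pick $y\in B_\beta(e_\alpha)$ with $y\,\mathcal{R}\,g_\beta$ in $B_\beta$ and $y\neq g_\beta$. Then $\{e_\alpha,y,g_\beta\}$ is a subband (both $y,g_\beta<e_\alpha$ and $\{y,g_\beta\}$ is right-zero), and the involution $\sigma$ fixing $e_\alpha$ and exchanging $y\leftrightarrow g_\beta$ is an automorphism — all defining relations are symmetric in $y$ and $g_\beta$. Homogeneity lifts $\sigma$ to some $\theta\in\mathrm{Aut}(B)$. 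Because $\theta(e_\alpha)=e_\alpha$, the automorphism $\theta$ preserves both $B_\beta(e_\alpha)$ and $\mathcal{R}(B_\beta(e_\alpha))$, and since $\theta(g_\beta)=y\in B_\beta$ the induced semilattice map fixes $\beta$, so $\theta$ restricts to an automorphism of $B_\beta$.

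Tracking $f_\beta$ under $\theta$, multiplicativity gives
\[\theta(f_\beta)\cdot e_\alpha \;=\; \theta(f_\beta\cdot e_\alpha) \;=\; \theta(g_\beta) \;=\; y,\]
so $\theta(f_\beta)\in\mathcal{R}(B_\beta(e_\alpha))\setminus B_\beta(e_\alpha)$ and its right-multiplication by $e_\alpha$ returns $y$ rather than $g_\beta$. Letting $y$ range over the whole $\mathcal{R}$-class of $g_\beta$ inside $B_\beta(e_\alpha)$ produces $|R'|$ distinct elements of $\mathcal{R}(B_\beta(e_\alpha))\setminus B_\beta(e_\alpha)$ sharing the $L$-coordinate of $g_\beta$. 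A symmetric swap argument applied to two such elements (both lying in a single $\mathcal{R}$-class of $B_\beta$) then shows the ``correction map'' $h\mapsto he_\alpha$ from $\mathcal{R}(B_\beta(e_\alpha))\setminus B_\beta(e_\alpha)$ to the $\mathcal{R}$-class of $g_\beta$ in $B_\beta(e_\alpha)$ behaves as rigidly as a bijection fibre-by-fibre, and combining this with the cross-$\mathcal{D}$-class uniformity of Corollary~\ref{basics}(iv) is intended to force $R'=R_\beta$, contradicting the existence of $f_\beta$.

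The main obstacle is this closing ``collision''. The swap alone only yields $|R_\beta\setminus R'|\geq|R'|$, not equality $R_\beta=R'$; nailing down the contradiction will require introducing a further element — most plausibly an $e_\gamma\in B_\gamma$ with $\gamma<\beta$ lying below $g_\beta$, whose existence is guaranteed by the density of $Y=\mathbb{Q}$ and the absence of minimal elements from Corollary~\ref{basics}(ii) — and extending an isomorphism of a four- or five-element subband built from $e_\alpha$, $f_\beta$, the chosen $y$'s, and $e_\gamma$ to a global automorphism whose action produces two incompatible values for $\theta(f_\beta)\cdot e_\alpha$. Making this configuration work smoothly, while also respecting the non-normality hypothesis (which is what allows $B_\beta(e_\alpha)$ to be a proper sub-rectangle of $B_\beta$ in the first place), is where the delicate combinatorial bookkeeping sits.
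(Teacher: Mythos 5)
There is a genuine gap here, and you flag it yourself: your argument never closes. The set-up in your first paragraph matches the paper's (take $f_\beta$ with $e_\alpha f_\beta = f_\beta \neq f_\beta e_\alpha$, put $g_\beta := f_\beta e_\alpha = e_\alpha f_\beta e_\alpha \in B_\beta(e_\alpha)$, note $f_\beta \, \mathcal{R}\, g_\beta$ and that the $\mathcal{R}$-class of $g_\beta$ meets $B_\beta(e_\alpha)$ in at least two elements). But the automorphism you then extend --- fixing $e_\alpha$ and swapping two elements that both already lie inside $B_\beta(e_\alpha)$ --- can only transport the defect around: it shows each fibre of $h \mapsto he_\alpha$ over the $\mathcal{R}$-class of $g_\beta$ in $B_\beta(e_\alpha)$ is nonempty, which is a cardinality inequality, not a contradiction, exactly as you concede. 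A telling symptom is that your argument (including the speculative repair via some $e_\gamma$ with $\gamma<\beta$) never uses the hypothesis that $Y$ is linearly ordered, which is precisely the hypothesis the lemma is named for and the one the paper's proof exploits.

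The missing idea is to extend the \emph{other} swap, the one that moves the outside element. Pick $y \in B_\beta(e_\alpha)$ with $y \, \mathcal{R}\, g_\beta$ and $y \neq g_\beta$ (your $|R'|\geq 2$ observation). Then $\{f_\beta, y, g_\beta\}$ is a right-zero subband, and the automorphism fixing $g_\beta$ and exchanging $f_\beta \leftrightarrow y$ extends to some $\theta \in \mathrm{Aut}(B)$. Now $e_\alpha$ is \emph{not} fixed: writing $e_{\alpha'} = e_\alpha\theta$, from $e_\alpha > y$ and $e_\alpha > g_\beta$ one gets $e_{\alpha'} > f_\beta$ and $e_{\alpha'} > g_\beta$, and applying $\theta$ to $f_\beta e_\alpha = g_\beta$ gives $y e_{\alpha'} = g_\beta$. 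Since $Y$ is a chain, $\alpha$ and $\alpha'$ are comparable. If $\alpha' \geq \alpha$ then $e_\alpha e_{\alpha'} e_\alpha = e_\alpha$, yet $y(e_\alpha e_{\alpha'} e_\alpha) = y e_{\alpha'} e_\alpha = g_\beta e_\alpha = g_\beta \neq y$, contradicting $y < e_\alpha$. If $\alpha' < \alpha$ then $e_{\alpha'} e_\alpha e_{\alpha'} = e_{\alpha'}$, so $f_\beta = f_\beta e_{\alpha'} = f_\beta(e_{\alpha'} e_\alpha e_{\alpha'}) = f_\beta e_\alpha e_{\alpha'} = g_\beta e_{\alpha'} = g_\beta$, which is absurd since $f_\beta \notin B_\beta(e_\alpha)$ while $g_\beta \in B_\beta(e_\alpha)$. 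This is the paper's proof (its $g_\beta$ is your $f_\beta$); no auxiliary element below $\beta$ is needed, and the comparability of $\alpha$ with $\alpha'$ is where linearity does its work.
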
 

\begin{proof} Suppose for contradiction that there exists $g_{\beta}\in B_{\beta}$ such that $g_{\beta}<_{r} e_{\alpha}$ but $g_{\beta} \not< e_{\alpha}$. Then $g_{\beta}=e_{\alpha}g_{\beta}$ and so $g_{\beta}e_{\alpha}=e_{\alpha}g_{\beta}e_{\alpha}  \, \mathcal{R} \, g_{\beta}$ and $g_{\beta}e_{\alpha}\in B_{\beta}(e_{\alpha})$.  Since the subband $B_{\beta}(e_{\alpha})$ contains more than one $\mathcal{L}$-class, we may pick $f_{\beta} \, \mathcal{R} \, {g_{\beta}e_{\alpha}}$ such that $f_{\beta}\in B_{\beta}(e_{\alpha})\setminus\{g_{\beta}e_{\alpha}\}$. Extend the automorphism of the right zero subsemigroup $\{f_\beta, g_{\beta}, g_{\beta}e_{\alpha}\}$ which fixes $g_{\beta}e_{\alpha}$ and swaps $f_{\beta}$ and $g_{\beta}$ to $\theta\in \text{Aut}(B)$. Then $e_{\alpha'}=e_{\alpha}\theta>g_{\beta}e_{\alpha},g_{\beta}$ and $g_{\beta}e_{\alpha}=(g_{\beta}e_{\alpha})\theta=g_{\beta}\theta e_{\alpha}\theta=f_{\beta}e_{\alpha'}$. Hence 
\[ f_{\beta}(e_{\alpha}e_{\alpha'}e_{\alpha})=f_{\beta}e_{\alpha'}e_{\alpha}=g_{\beta}e_{\alpha}e_{\alpha}=g_{\beta}e_{\alpha}
\] 
and so $f_{\beta}\not\leq  e_{\alpha}e_{\alpha'}e_{\alpha}$. If $\alpha'\geq \alpha$ then $e_{\alpha}e_{\alpha'}e_{\alpha}=e_{\alpha}$, contradicting $e_{\alpha}>f_{\beta}$. Hence $\alpha'<\alpha$, so that $e_{\alpha'}e_{\alpha}e_{\alpha'}=e_{\alpha'}$ and 
\[ g_{\beta}=g_{\beta}e_{\alpha'}=g_{\beta}e_{\alpha'}e_{\alpha}e_{\alpha'}=g_{\beta}e_{\alpha}e_{\alpha'}=g_{\beta}e_{\alpha},
\] 
a contradiction.
\end{proof}

Let $B$ be a linearly ordered homogeneous non-normal band. Then by the lemma above, if $B_{\beta}(e_{\alpha})$ contains a square (that is, it intersects more than one $\mathcal{L}$ and $\mathcal{R}$-class) then $B_{\beta}(e_{\alpha})=B_{\beta}$. Thus $B_{\beta}(e_{\alpha})$ is a single $\mathcal{K}$-class, where $\mathcal{K}=\mathcal{L,R}$ or $\mathcal{D}$. Moreover, for all $\alpha'>\beta'$ and $e_{\alpha'}\in B_{\alpha'}$, it follows from Corollary \ref{basics} $(\mathrm{iv})$ that every $B_{\beta'}(e_{\alpha'})$ is also a single $\mathcal{K}$-class. 

Note that if $\mathcal{K}=\mathcal{D}$, then $B$ has the following property, which we shall follow \cite{Petrich74} in calling \textit{$\mathcal{D}$-covering}: 
\[ \text{If } e,f\in B, \text{ then either } e \, \mathcal{D} \, f \text{ or } e>f \text{ or } e<f.
\] 

\begin{proposition}\label{lin band reg} A homogeneous linearly ordered band is regular. 
\end{proposition}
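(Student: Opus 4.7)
I plan to verify the regular band identity $zxzyz=zxyz$ for all $x,y,z\in B$. If $B$ is normal the result is immediate since $\mathcal{N}\subseteq\mathcal{G}$. Assume $B$ is non-normal. Then by Corollary~\ref{basics}(iv) together with Lemma~\ref{all R} and its dual, the subband $B_\beta(e_\alpha)$ is, uniformly in $\alpha>\beta$ in $Y$ and $e_\alpha\in B_\alpha$, either a single $\mathcal{L}$-class of $B_\beta$, a single $\mathcal{R}$-class, or the whole $\mathcal{D}$-class $B_\beta$. I would treat each case separately.

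In the $\mathcal{D}$-covering case ($\mathcal{K}=\mathcal{D}$) the first step is to observe that $\alpha>\beta$ in $Y$ forces $e>f$ for every $e\in B_\alpha$ and $f\in B_\beta$: by $\mathcal{D}$-covering the distinct $\mathcal{D}$-classes are comparable, and $e<f$ is excluded since $e=ef\in B_{\alpha\beta}=B_\beta$ would contradict $\alpha\ne\beta$. With this strong dominance the regular identity reduces to a short case check on the linear ordering of the classes $\xi,\eta,\zeta$ of $x,y,z$: using $ab=ba=b$ whenever $a>b$ together with the rectangular identity $aba=a$ inside each $B_\alpha$, both sides of $zxzyz=zxyz$ collapse to the same element of $B_{\zeta\xi\eta}$.

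For $\mathcal{K}=\mathcal{L}$ (and dually $\mathcal{K}=\mathcal{R}$) my plan is to show that $B$ admits a spined product decomposition $B=L\bowtie R$ with $L$ left regular and $R$ right regular, which yields regularity by Kimura's theorem. The hypothesis that each $B_\beta(e_\alpha)$ is a single $\mathcal{L}$-class translates, in the rectangular coordinates $B_\beta=L_\beta\times R_\beta$, into a statement that the natural order below $e_\alpha$ is determined solely by an $R$-coordinate. Using 1-homogeneity of $B$ and Corollary~\ref{basics} to propagate this uniformly, one should be able to descend $B$ onto a left regular band via the $L$-coordinate projection and onto a right regular band via the $R$-coordinate projection, identifying $B$ with the spined product of the two.

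The main obstacle is verifying that the coordinate projections are well-defined band homomorphisms, i.e., that the $L$-coordinate of a product in $B$ depends only on the $L$-coordinates of the factors, and similarly for $R$; equivalently, one must show that the function $e_\alpha\mapsto r(e_\alpha,\beta)$ assigning the distinguished $R$-coordinate of the $\mathcal{L}$-class $B_\beta(e_\alpha)$ is itself a band morphism from $B_\alpha$ to $R_\beta$. Homogeneity, via Corollary~\ref{basics}, is the tool that makes this rigidity available, and is precisely what fails in the merely $\mathcal{D}$-covering setting, so here the spined product route is unavoidable rather than optional.
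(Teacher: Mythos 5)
Your reduction to the non-normal case and your trichotomy (each $B_\beta(e_\alpha)$ a single $\mathcal{L}$-class, a single $\mathcal{R}$-class, or all of $B_\beta$, uniformly over $B$) match the paper, and your argument in the $\mathcal{D}$-covering case is correct: comparability plus $ef\in B_{\alpha\beta}$ forces $e_\alpha>f_\beta$ for all representatives, after which $zxzyz=zxyz$ is a routine check. The genuine gap is in the other two cases, which are precisely the substantive ones (they produce the non-normal homogeneous bands $D_{n,1}\bowtie(\mathbb{Q}\times B_{1,m})$ and its dual). There you propose to exhibit $B$ as a spined product of a left regular and a right regular band and invoke Kimura, and you flag as the ``main obstacle'' that the two coordinate projections must be shown to be band morphisms. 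But that obstacle \emph{is} the statement to be proved: the projections being morphisms amounts to $\mathcal{L}$ and $\mathcal{R}$ being congruences on $B$, which by Kimura's theorem is equivalent to regularity. So for the crucial cases you have restated the goal rather than reduced it, and no actual argument is supplied.

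The missing idea is a one-sided absorption law that makes a direct verification of the identity possible. Suppose each $B_\beta(e_\alpha)$ is a union of $\mathcal{R}$-classes --- note this covers the single-$\mathcal{R}$-class case and your $\mathcal{D}$-covering case simultaneously, so no three-way split is needed. A full $\mathcal{R}$-class of the rectangular band $B_\beta$ meets every $\mathcal{L}$-class, so $\mathcal{L}(B_\beta(e_\alpha))=B_\beta$, i.e.\ $f_\beta e_\alpha=f_\beta$ for \emph{every} $f_\beta\in B_\beta$ whenever $\beta<\alpha$. With this, for $z\in B_\gamma$, $x\in B_\tau$, $y\in B_\sigma$ the identity $zxzyz=zxyz$ collapses immediately unless $\tau=\gamma>\sigma$; in that remaining case one shows $zy=zxy$ by checking these two elements of $B_\sigma$ are both $\mathcal{L}$-related to $y$ and both lie in $B_\sigma(z)$ (hence are $\mathcal{R}$-related when $B_\sigma(z)$ is a single $\mathcal{R}$-class, the sub-case $B_\sigma(z)=B_\sigma$ being trivial since then $zy=y=xy=zxy$), and then post-multiplies by $z$ using $zxz=z$. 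The single-$\mathcal{L}$-class case is dual. This is essentially the route of the paper's proof of Proposition \ref{lin band reg}, via Lemma \ref{all R} and Corollary \ref{basics}, and it requires no spined product machinery at all.
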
 

\begin{proof} Let $B=\bigcup_{\alpha\in \mathbb{Q}}B_{\alpha}$ be a homogeneous non-normal linearly ordered band (noting that if $B$ was normal, then it would automatically be regular). By Lemma \ref{all R} we may assume first that each $B_{\beta}(e_{\alpha})$ is either a single $\mathcal{R}$-class or is $B_{\beta}$ (noting in both cases, $B_{\beta}(e_{\alpha})$ is a union of $\mathcal{R}$-classes). Hence  $\mathcal{L}(B_{\beta}(e_{\alpha}))=B_{\beta}$, and so $f_{\beta}e_{\alpha}=f_{\beta}$ for all $f_{\beta}\in B_{\beta}$. 
Given any $\gamma,\tau,\sigma\in Y$ and any elements $e_{\gamma}\in B_\gamma ,f_{\tau}\in B_\tau$ and $g_{\sigma}\in B_\sigma$, it suffices to show that
\begin{equation}\label{regular eq} e_{\gamma}f_{\tau}e_{\gamma}g_{\sigma}e_{\gamma}=e_{\gamma}f_{\tau}g_{\sigma}e_{\gamma}.
\end{equation}
If $\tau<\gamma$ then $f_{\tau}e_{\gamma}=f_{\tau}$, while if $\gamma<\tau$ then $e_{\gamma}f_{\tau}=e_{\gamma}$, and \eqref{regular eq} is seen to hold in both cases. Assume instead that $\tau = \gamma$ and $\gamma>\sigma$ (since if $\gamma\leq \sigma$ then both sides of \eqref{regular eq} cancel to $e_{\gamma}$). Then $e_{\gamma}g_{\sigma} \, \mathcal{L} \, g_{\sigma} \, \mathcal{L} \, f_{\gamma}g_{\sigma} \, \mathcal{L} \, e_{\gamma}f_{\gamma}g_{\sigma}$ and
\[ (e_{\gamma}g_{\sigma})(e_{\gamma}f_{\gamma}g_{\sigma})= e_{\gamma}g_{\sigma} f_{\gamma}g_{\sigma} = e_{\gamma}g_{\sigma}
\] 
so that $e_{\gamma}g_{\sigma} \, \mathcal{R} \, e_{\gamma}f_{\gamma}g_{\sigma}$, and thus $e_{\gamma}g_{\sigma} = e_{\gamma}f_{\gamma}g_{\sigma}$. By post-multiplying by $e_{\gamma}$, and noting that $e_{\gamma}=e_{\gamma}f_{\tau}e_{\gamma}$ we obtain \eqref{regular eq}, and thus $B$ is regular. The case where each $B_{\beta}(e_{\alpha})$ is a union of $\mathcal{L}$-classes is proven dually. 
\end{proof}

Let $B=L\bowtie R$ be a homogeneous non-normal linearly ordered band, where $L=\bigcup_{\alpha\in \mathbb{Q}} L_{\alpha}$ and $R=\bigcup_{\alpha\in \mathbb{Q}}R_{\alpha}$. Then for any finite chain $\alpha_1>\alpha_2>\cdots >\alpha_n$ in $\mathbb{Q}$, we pick  $l_{\alpha_1}\in L_{\alpha_1}$ to construct a chain $l_{\alpha_1}>l_{\alpha_2}>\cdots >l_{\alpha_n}$ in $L$.
By an identical argument to the proof of Corollary \ref{surj split}, we have that $R$ is homogeneous, and dually so is $L$. Hence by Lemma \ref{all R}, each $L_{\beta}(l_{\alpha})$ is a single $\mathcal{R}$ or $\mathcal{L}$-class of $L$. Since $L_{\beta}$ is left zero, the first case is equivalent to $L$ being normal, and so by the Classification Theorem for homogeneous normal bands we have $L\cong \mathbb{Q} \times B_{n,1}$ for some $n\in \mathbb{N}^*$.
 Otherwise, each $L_{\beta}(l_{\alpha})$ is a single $\mathcal{L}$-class, so that $L_{\beta}(l_{\alpha})=L_{\beta}$ and  $L$ satisfies $\mathcal{D}$-covering. 

Consequently, it suffices to consider the homogeneity of linearly ordered bands satisfying $\mathcal{D}$-covering. 

\begin{proposition} Let $B=\bigcup_{\alpha\in \mathbb{Q}}B_{\alpha}$ and $B'=\bigcup_{\alpha\in \mathbb{Q}}B'_{\alpha}$ be bands satisfying $\mathcal{D}$-covering such that $B_{\alpha}\cong B_{\beta}$ and $B'_{\alpha}\cong B'_{\beta}$ for all $\alpha,\beta\in \mathbb{Q}$. 
 If $\pi \in \text{Aut }(\mathbb{Q})$ and $\theta_{\alpha}:B_{\alpha}\rightarrow B'_{\alpha\pi}$ an isomorphism for each $ \alpha$, then $\theta=\bigcup_{\alpha\in \mathbb{Q}} \theta_{\alpha}$ is an isomorphism from $B$ to $B'$. Moreover, every isomorphism can be constructed in this way.
\end{proposition}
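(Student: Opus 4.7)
The plan is to exploit the rigidity imposed by $\mathcal{D}$-covering. The key observation to set up first is that when $B$ satisfies $\mathcal{D}$-covering and $e\in B_\alpha, f\in B_\beta$ with $\alpha\neq \beta$ in $\mathbb{Q}$, then $e$ and $f$ lie in different $\mathcal{D}$-classes and so must be comparable under the natural order; combined with the fact that the natural order on $B$ is compatible with the order on the structure semilattice (so $e > f$ forces $\alpha \geq \beta$), the direction of the comparison is forced by whether $\alpha > \beta$ or $\alpha < \beta$ in $\mathbb{Q}$. Hence $ef = fe$ equals whichever of $e,f$ lies in the lower $\mathcal{D}$-class, and multiplication across distinct $\mathcal{D}$-classes carries no extra data beyond the linear order on $\mathbb{Q}$. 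The same holds in $B'$.

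For the forward direction, first note that any $\pi \in \text{Aut}(\mathbb{Q})$, being a semilattice automorphism of $(\mathbb{Q},\min)$, is automatically order-preserving (since $\alpha\leq\beta$ iff $\alpha\wedge\beta=\alpha$, a condition preserved by $\pi$). The map $\theta = \bigcup_{\alpha\in\mathbb{Q}}\theta_\alpha$ is a bijection because each $\theta_\alpha$ is a bijection onto $B'_{\alpha\pi}$, $\pi$ is a bijection, and the $B_\alpha$ partition $B$. To verify that $\theta$ is multiplicative I would split into cases on whether $e\in B_\alpha$ and $f\in B_\beta$ lie in the same $\mathcal{D}$-class: if $\alpha = \beta$, multiplicativity is immediate from $\theta_\alpha$ being a morphism; if $\alpha > \beta$ (dually for $\alpha < \beta$), then $ef=f$ by the key observation, and since $\alpha\pi > \beta\pi$ with $B'$ also satisfying $\mathcal{D}$-covering, $(e\theta)(f\theta) = f\theta = (ef)\theta$ in $B'$. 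Thus $\theta$ is a homomorphism, hence an isomorphism.

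For the converse, given an arbitrary isomorphism $\theta:B\to B'$, Proposition \ref{iso band} immediately provides the required decomposition $\theta = [\theta_\alpha,\pi]_{\alpha\in\mathbb{Q}}$, where $\pi:\mathbb{Q}\to\mathbb{Q}$ is the induced semilattice morphism and each $\theta_\alpha:B_\alpha \to B'_{\alpha\pi}$ is a morphism; since $\theta$ is bijective, so are $\pi$ and each $\theta_\alpha$. I do not expect any genuine obstacle here: the entire content of the proposition lies in the forward direction, which rests on the single observation that under $\mathcal{D}$-covering the product between different $\mathcal{D}$-classes is pinned down by the linear order alone, allowing one to glue together an arbitrary $\pi$ and arbitrary rectangular-band isomorphisms $\theta_\alpha$ into a morphism without any compatibility constraint.
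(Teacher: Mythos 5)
Your proof is correct and follows essentially the same route as the paper: the forward direction rests on the observation that $\mathcal{D}$-covering forces $e_{\alpha}e_{\beta}=e_{\beta}e_{\alpha}=e_{\beta}$ whenever $\alpha>\beta$ (so cross-class products are determined by the order on $\mathbb{Q}$ alone), and the converse is an immediate application of Proposition \ref{iso band}. The paper's proof is just a terser version of the same case analysis.
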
 

\begin{proof} Clearly $\theta$ is an bijection, and if $\alpha>\beta$ then, for any $e_{\alpha}\in B_{\alpha}$ and $e_{\beta}\in B_{\beta}$,  
\[ (e_{\alpha}e_{\beta})\theta=e_{\beta}\theta_{\beta}=(e_{\alpha}\theta_{\alpha})(e_{\beta}\theta_{\beta})=(e_{\alpha}\theta)(e_{\beta}\theta)
\] 
and similarly $(e_{\beta}e_{\alpha})\theta = (e_{\beta}\theta)(e_{\alpha}\theta)$. It follows that $\theta$ is a morphism, since each of the maps $\theta_{\alpha}$ are also. 
The converse follows from Proposition \ref{iso band}.  
\end{proof} 

We denote $D_{n,m}$ as the unique, up to isomorphism, linearly ordered band with structure semilattice $\mathbb{Q}$, satisfying $\mathcal{D}$-covering, and such that $B_{\alpha}\cong B_{n,m}$ for all $\alpha\in \mathbb{Q}$, where  $n,m\in \mathbb{N}^*$. 
We observe that, by uniqueness,  $D_{n,m}\cong D_{n,1}\bowtie D_{1,m}$. 

\begin{corollary} The band $D_{n,m}$ is structure-homogeneous for any $n,m\in \mathbb{N}$.  
\end{corollary}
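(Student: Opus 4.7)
The plan is to leverage the isomorphism theorem for $\mathcal{D}$-covering linearly ordered bands that was just established, which asserts that \emph{any} bijection arising from an automorphism $\pi$ of $\mathbb{Q}$ together with an arbitrary choice of isomorphisms $\theta_\alpha \colon B_\alpha \to B_{\alpha\pi}$ assembles into an automorphism of $D_{n,m}$. This freedom essentially does all the work: we only need to assemble our desired $\hat\theta_\alpha$ on each $\mathcal{D}$-class separately, then glue.

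So, let $B = D_{n,m} = \bigcup_{\alpha\in\mathbb{Q}}B_\alpha$, and consider finite subbands $A_i = \bigcup_{\alpha\in Z_i}A^i_\alpha$ ($i=1,2$), an isomorphism $\theta = [\theta_\alpha,\pi]_{\alpha\in Z_1}\colon A_1\to A_2$, and an automorphism $\hat\pi$ of $\mathbb{Q}$ extending $\pi$. First I would, for each $\alpha\in Z_1$, extend the isomorphism $\theta_\alpha\colon A^1_\alpha\to A^2_{\alpha\pi}$ between rectangular subbands of $B_\alpha$ and $B_{\alpha\hat\pi}$ to an isomorphism $\hat\theta_\alpha\colon B_\alpha\to B_{\alpha\hat\pi}$, which is possible by Corollary \ref{rec bands homog} since $B_\alpha \cong B_{\alpha\hat\pi} \cong B_{n,m}$. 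For each $\alpha\in\mathbb{Q}\setminus Z_1$, I would simply pick any isomorphism $\hat\theta_\alpha\colon B_\alpha\to B_{\alpha\hat\pi}$, which exists by Proposition \ref{rec bands iso}.

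Finally, by the preceding proposition (applied to $B' = B$), the map
\[
\hat\theta = \bigcup_{\alpha\in\mathbb{Q}} \hat\theta_\alpha = [\hat\theta_\alpha,\hat\pi]_{\alpha\in\mathbb{Q}}
\]
is an automorphism of $D_{n,m}$, and by construction it extends $\theta$ and has induced automorphism $\hat\pi$, proving structure-homogeneity.

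There is essentially no obstacle here: the real content has been packaged into the preceding isomorphism theorem for $\mathcal{D}$-covering bands, which is what makes $D_{n,m}$ behave so rigidly. The only routine point to check is that $\hat\theta$ extends $\theta$, which is immediate since on each $A^1_\alpha$ ($\alpha\in Z_1$) the restriction of $\hat\theta_\alpha$ is precisely $\theta_\alpha$, and the induced semilattice map $\hat\pi$ agrees with $\pi$ on $Z_1$.
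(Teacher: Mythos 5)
Your proposal is correct and follows essentially the same route as the paper: both reduce everything to the preceding isomorphism theorem for $\mathcal{D}$-covering linearly ordered bands, extend each $\theta_\alpha$ on the $\mathcal{D}$-classes meeting the subband via Corollary \ref{rec bands homog}, choose arbitrary isomorphisms $B_\alpha\rightarrow B_{\alpha\hat{\pi}}$ elsewhere, and glue. The only cosmetic difference is that the paper writes the finite subbands out explicitly as chains $A_{\alpha_1}>\cdots>A_{\alpha_k}$, whereas you work directly with the $[\theta_\alpha,\pi]$ notation.
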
 

\begin{proof} Let $A=\bigcup_{1\leq i \leq k} A_{\alpha_i}$ and $A'=\bigcup_{1\leq i \leq k} A'_{\beta_i}$ be a finite subband of $D_{n,m}$, where $\alpha_1>\alpha_2> \cdots > \alpha_k$ and $\beta_1>\beta_2> \cdots > \beta_k$. Then $A_{\alpha_i}>A_{\alpha_j}$ if and only if $\alpha_i>\alpha_j$, and similarly for $A'$. Let $ \theta:A\rightarrow A'$ be an isomorphism, so that there exists isomorphisms $\theta_i:A_{\alpha_i}\rightarrow A'_{\beta_i}$ such that $\theta=\bigcup_{1\leq i \leq k} \theta_i$. Let $\pi \in \text{Aut }(\mathbb{Q})$ extend the unique isomorphism between $\{\alpha_1,\dots,\alpha_k\}$ and $\{\beta_1,\dots ,\beta_k\}$. By Corollary \ref{rec bands homog} we may extend each $\theta_i$ to an isomorphism $\hat{\theta}_{\alpha_i}:B_{\alpha_i}\rightarrow B_{\beta_i}$. For each $\alpha\not\in \{\alpha_1,\dots,\alpha_k\}$, take an isomorphism $\hat{\theta}_{\alpha}:B_{\alpha}\rightarrow B_{\alpha\pi}$. 
Then $\hat{\theta}=\bigcup_{\alpha\in \mathbb{Q}} \hat{\theta}_{\alpha}$ is an automorphism of $D_{n,m}$ by the previous proposition, and extends $\theta$ as required. 
\end{proof}

Now let $B=L\bowtie R$ be a homogeneous non-normal linearly ordered band not satisfying $\mathcal{D}$-covering. If $L\cong \mathbb{Q} \times B_{n,1}$ then, as shown after Proposition \ref{lin band reg}, $R$ satisfies $\mathcal{D}$-covering since $B$ is not normal. Hence $R\cong D_{1,m}$ for some $m\in \mathbb{N}^*$, and so $B\cong (\mathbb{Q} \times B_{n,1})\bowtie D_{1,m}$ by Corollary \ref{SH regular}; dually for the case $R\cong \mathbb{Q} \times B_{1,n}$.

Conversely, the bands $(\mathbb{Q} \times B_{n,1})\bowtie D_{1,m}$ and $D_{n,1}\bowtie (\mathbb{Q} \times B_{1,m})$ are structure-homogeneous (and thus homogeneous) by Corollary \ref{structure homog reg}. We thus get a complete classification of homogeneous linearly ordered bands:

\begin{theorem} \label{homog lin order}
The following are equivalent for a linearly ordered band $B$: 
\begin{enumerate}[label=(\roman*), font=\normalfont]
 \item $B$ is homogeneous;
 \item $B$ is structure-homogeneous;
 \item $B$ is isomorphic to either $\mathbb{Q}\times B_{n,m}, D_{n,m},  (\mathbb{Q} \times B_{n,1})\bowtie D_{1,m}, \text{ or } D_{n,1}\bowtie (\mathbb{Q} \times B_{1,m})$,  \\
for some $n,m\in \mathbb{N}^*$.
\end{enumerate} 
\end{theorem}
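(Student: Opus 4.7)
My plan is to prove the cycle $(\mathrm{ii})\Rightarrow(\mathrm{i})\Rightarrow(\mathrm{iii})\Rightarrow(\mathrm{ii})$. The first implication is immediate from the definition of structure-homogeneity. For $(\mathrm{iii})\Rightarrow(\mathrm{ii})$ I would verify each of the four families on the list: $\mathbb{Q}\times B_{n,m}$ is structure-homogeneous by Proposition \ref{iso hom}, since its connecting morphisms are isomorphisms and $\mathbb{Q}$ is a homogeneous semilattice; $D_{n,m}$ is structure-homogeneous by the corollary immediately preceding the theorem; and the two spined products are structure-homogeneous by Corollary \ref{structure homog reg} applied to the two previous cases, noting that each of $\mathbb{Q}\times B_{n,1}$, $\mathbb{Q}\times B_{1,m}$, $D_{n,1}$ and $D_{1,m}$ has already been shown to be structure-homogeneous.

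The substantive direction is $(\mathrm{i})\Rightarrow(\mathrm{iii})$. By Proposition \ref{linear homog} the structure semilattice of a homogeneous linearly ordered band $B$ is $\mathbb{Q}$, and by Proposition \ref{lin band reg} $B$ is regular, so we may write $B=L\bowtie R$ for a left regular band $L$ and a right regular band $R$. I would split into cases according to whether $B$ is normal. If $B$ is normal, I would appeal to the Classification Theorem of homogeneous normal bands and observe that, among its seven families, only $Y\times B_{n,m}$ can have a chain structure semilattice: the $T_r$-based families are genuinely semilinear (non-chain) unless $r=1$, in which case they collapse into the $Y\times B_{n,m}$ case with $Y=\mathbb{Q}$, while the $U$-based families fail since the universal semilattice contains diamonds. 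Hence $B\cong \mathbb{Q}\times B_{n,m}$.

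If $B$ is not normal, I would invoke the analysis already carried out in the paragraphs following Proposition \ref{lin band reg}: $L$ and $R$ are each homogeneous, and by Lemma \ref{all R} (applied to each component via the trick of lifting chains into $L$ and $R$) each is either isomorphic to $\mathbb{Q}\times B_{k,1}$ when normal, or to $D_{k,1}$ when satisfying $\mathcal{D}$-covering. Since $B$ is non-normal, at least one of $L,R$ must satisfy $\mathcal{D}$-covering. Using the factorization $B_\beta(e_\alpha)=L_\beta(l_\alpha)\times R_\beta(r_\alpha)$ inside the spined product, the case where both components satisfy $\mathcal{D}$-covering forces $B$ itself to satisfy $\mathcal{D}$-covering so that $B\cong D_{n,m}$, while the two mixed cases yield $B\cong (\mathbb{Q}\times B_{n,1})\bowtie D_{1,m}$ or $B\cong D_{n,1}\bowtie (\mathbb{Q}\times B_{1,m})$ by Corollary \ref{SH regular}, applied using the structure-homogeneity of the normal component.

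The main obstacle I anticipate is largely clerical: carefully confirming that within the Classification Theorem of homogeneous normal bands the only family producing a chain structure semilattice is $Y\times B_{n,m}$, and that the matching in the mixed non-normal cases is forced uniquely (up to isomorphism) by Corollary \ref{SH regular} via the structure-homogeneity of one component. Beyond this case analysis no new ideas are required, since essentially all the structural work has already been done earlier in this section.
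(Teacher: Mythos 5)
Your proposal is correct and follows essentially the same route as the paper, whose proof of this theorem is just the accumulation of the results of Sections 4 and 5: Proposition \ref{linear homog} and Proposition \ref{lin band reg} to reduce to regular bands over $\mathbb{Q}$, the Classification Theorem for homogeneous normal bands in the normal case, and in the non-normal case the component-wise dichotomy from Lemma \ref{all R} (each of $L$, $R$ is $\mathbb{Q}\times B_{k,1}$ or satisfies $\mathcal{D}$-covering), with Corollaries \ref{SH regular} and \ref{structure homog reg} and the structure-homogeneity of $D_{n,m}$ handling uniqueness and the converse. The case analysis and the lemmas invoked coincide with the paper's, so no further comment is needed.
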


\section{The final case} \label{sec:final}

Throughout this section we let $B=\bigcup_{\alpha\in Y}B_{\alpha}$ be a non-normal band, where $Y$ is non-linear, so we may fix a three element non-chain $\alpha,\gamma,\beta$, where $\alpha\gamma=\beta$. For $e_{\alpha}\in B_{\alpha}$ and $e_{\gamma}\in B_{\gamma}$ we consider the subband $A=\langle e_{\alpha},e_{\gamma} \rangle= \{ e_{\alpha},e_{\gamma},e_{\alpha}e_{\gamma},e_{\gamma}e_{\alpha}, e_{\alpha}e_{\gamma}e_{\alpha}, e_{\gamma}e_{\alpha}e_{\gamma}  \}$, as shown in Figure  \ref{subA}.  

\begin{figure}[h]
\def\svgwidth{200pt} 
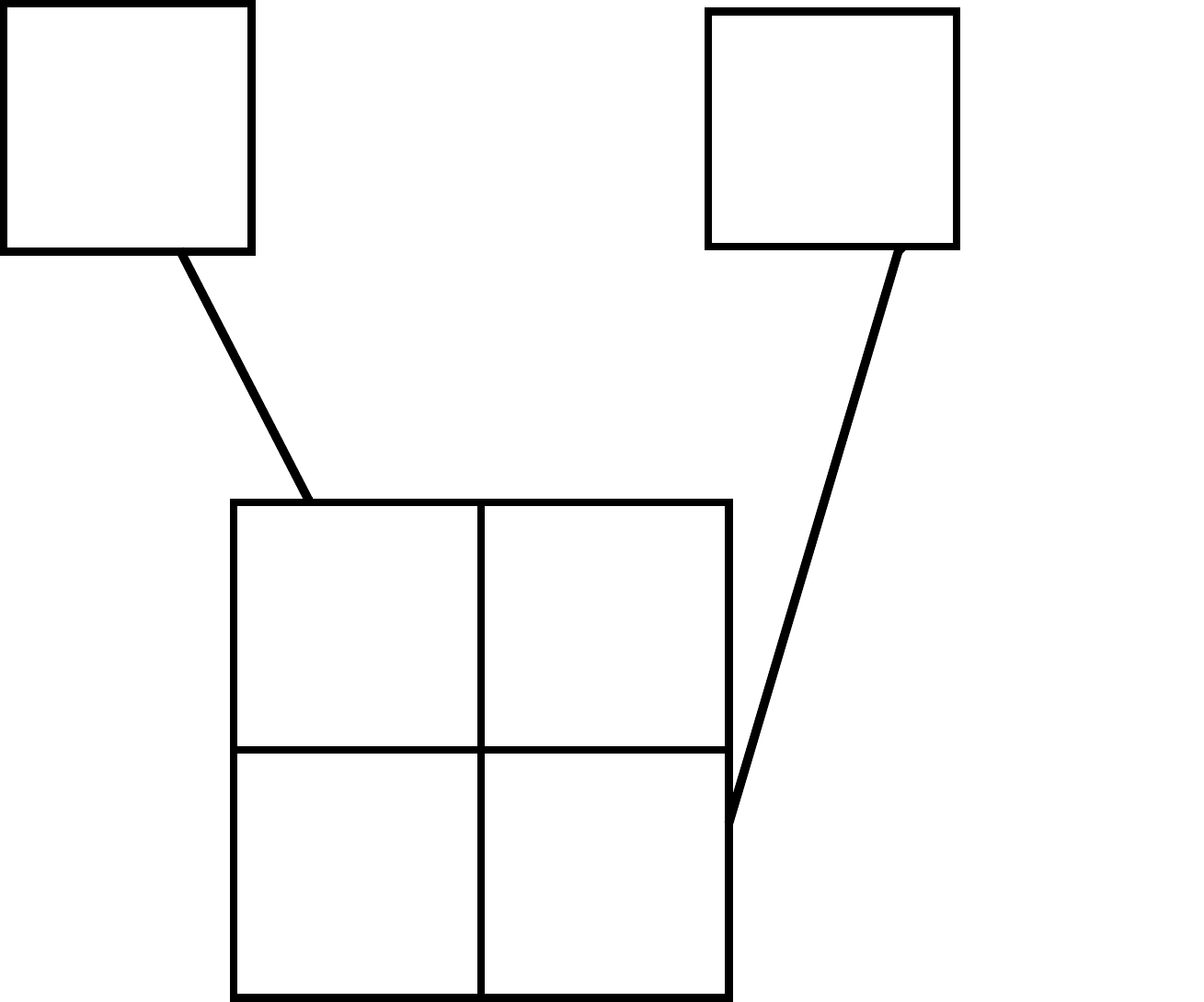 
\caption[Caption for LOF]{The subband $A$ }
\label{subA}
\end{figure}

Then $A$ is isomorphic to one of 4 bands, depending on if $A\cap B_{\beta}$ is trivial, a left zero or right zero band of size 2, or a 2 by 2 square. We will show that none of these possibilities can occur if $B$ is homogeneous.

\begin{lemma} \label{less 1} For $e_{\alpha}\in B_{\alpha}$ and $e_{\gamma}\in B_{\gamma}$ we have  $|B_{\beta}(e_{\alpha}) \cap B_{\beta}(e_{\gamma})|=1$ if and only if $B_{\beta}(e_{\alpha}) \cap B_{\beta}(e_{\gamma})\neq \emptyset$ if and only if $|A\cap B_{\beta}|=1$.
\end{lemma}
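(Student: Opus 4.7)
My plan is to prove the three conditions are equivalent by the cycle $(3)\Rightarrow(1)\Rightarrow(2)\Rightarrow(3)$, noting that $(1)\Rightarrow(2)$ is immediate. The key structural fact I will lean on throughout is that $B_\beta$, being a $\mathcal{D}$-class, is a rectangular band by Proposition~\ref{structure bands}, and consequently the natural order restricted to $B_\beta$ is trivial: if $e,f\in B_\beta$ with $e\leq f$, then $e=f$ (this is direct from the coordinate description $(i,j)(k,\ell)=(i,\ell)$). I expect no serious obstacle; the content is a careful computation with the six-element subband $A$ of Figure~\ref{subA}.

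For $(2)\Rightarrow(3)$, I would fix $e_\beta\in B_\beta(e_\alpha)\cap B_\beta(e_\gamma)$, so $e_\alpha e_\beta=e_\beta e_\alpha=e_\beta$ and $e_\gamma e_\beta=e_\beta e_\gamma=e_\beta$. Then I compute
\[(e_\alpha e_\gamma)e_\beta=e_\alpha(e_\gamma e_\beta)=e_\alpha e_\beta=e_\beta\quad\text{and}\quad e_\beta(e_\alpha e_\gamma)=(e_\beta e_\alpha)e_\gamma=e_\beta e_\gamma=e_\beta,\]
yielding $e_\beta\leq e_\alpha e_\gamma$ in $B$; since both elements lie in $B_\beta$, triviality of the natural order on $B_\beta$ forces $e_\alpha e_\gamma=e_\beta$. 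The same manipulation applied to each of the remaining three products $e_\gamma e_\alpha$, $e_\alpha e_\gamma e_\alpha$, $e_\gamma e_\alpha e_\gamma$ gives $e_\beta\leq$ each of them in $B_\beta$, hence equality. Thus $A\cap B_\beta=\{e_\beta\}$, establishing $(3)$.

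For $(3)\Rightarrow(1)$, suppose $|A\cap B_\beta|=1$ and let $f_\beta$ denote the common value of the four nontrivial products in $A$. The standard identities $(e_\alpha e_\gamma e_\alpha)e_\alpha = e_\alpha(e_\alpha e_\gamma e_\alpha)=e_\alpha e_\gamma e_\alpha$ show $e_\alpha e_\gamma e_\alpha\leq e_\alpha$, and dually $e_\gamma e_\alpha e_\gamma\leq e_\gamma$; collapsing these to $f_\beta$ places $f_\beta$ inside $B_\beta(e_\alpha)\cap B_\beta(e_\gamma)$, so the intersection is nonempty. Uniqueness is free: by $(2)\Rightarrow(3)$, every element of $B_\beta(e_\alpha)\cap B_\beta(e_\gamma)$ must equal $e_\alpha e_\gamma=f_\beta$. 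Hence $|B_\beta(e_\alpha)\cap B_\beta(e_\gamma)|=1$, completing the cycle.
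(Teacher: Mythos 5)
Your proof is correct and follows essentially the same route as the paper's: the paper invokes Lemma~\ref{less compatible} to show that any common lower bound $e_\beta$ of $e_\alpha$ and $e_\gamma$ in $B_\beta$ must coincide with $e_\alpha e_\gamma$ (and hence with all four products in $A\cap B_\beta$), which is exactly your $(2)\Rightarrow(3)$ computation made explicit, together with the triviality of the natural order on the rectangular band $B_\beta$. Your $(3)\Rightarrow(1)$ direction likewise matches the paper's implicit converse via $e_\alpha e_\gamma e_\alpha\leq e_\alpha$ and $e_\gamma e_\alpha e_\gamma\leq e_\gamma$.
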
 

\begin{proof} Suppose that $e_{\beta}<e_{\alpha},e_{\gamma}$. Then $e_{\alpha}e_{\gamma}\in B_{\beta}$ and, by Lemma \ref{less compatible}, $e_{\alpha}e_{\gamma}\leq e_{\beta}$, so that $e_{\beta}=e_{\alpha}e_{\gamma}$. 
Hence $B_{\beta}(e_{\alpha}) \cap B_{\beta}(e_{\gamma})=\{e_{\alpha}e_{\gamma}\}= \{e_{\gamma}e_{\alpha}\}$ and the result follows.
\end{proof}

\begin{lemma}\label{size 6} For $e_{\alpha} \in B_{\alpha}$ and $e_{\gamma}\in B_{\gamma}$ we have
\begin{enumerate}[label=(\roman*), font=\normalfont]
\item $|\langle e_{\alpha},e_{\gamma} \rangle|=6$ if and only if $\mathcal{R}(B_{\beta}(e_{\alpha}))\cap \mathcal{R}(B_{\beta}(e_{\gamma}))= \emptyset = \mathcal{L}(B_{\beta}(e_{\alpha}))\cap \mathcal{L}(B_{\beta}(e_{\gamma}))$; 
\item $|\langle e_{\alpha},e_{\gamma} \rangle| = 4$ with $e_{\alpha}e_{\gamma}e_{\alpha}=e_{\gamma}e_{\alpha}$ if and only if $\mathcal{R}(B_{\beta}(e_{\alpha}))\cap \mathcal{R}(B_{\beta}(e_{\gamma}))\neq \emptyset$ and \\
$\mathcal{L}(B_{\beta}(e_{\alpha}))\cap \mathcal{L}(B_{\beta}(e_{\gamma}))= \emptyset$. Moreover, in this case 
\begin{align*} \mathcal{R}(B_{\beta}(e_{\alpha}))\cap \mathcal{R}(B_{\beta}(e_{\gamma})) \subseteq R_{e_{\alpha}e_{\gamma}}=R_{e_{\gamma}e_{\alpha}}. 
\end{align*} 
Dually for $|\langle e_{\alpha},e_{\gamma} \rangle| = 4$ with $e_{\alpha}e_{\gamma}e_{\alpha}=e_{\alpha}e_{\gamma}$.
\end{enumerate} 
\end{lemma}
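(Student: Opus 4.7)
The plan is to classify the six potential elements of $A$ and determine exactly when each pair of them coincides. Name them $e_\alpha \in B_\alpha$, $e_\gamma \in B_\gamma$, and
\[
a := e_\alpha e_\gamma,\quad b := e_\gamma e_\alpha,\quad c := e_\alpha e_\gamma e_\alpha,\quad d := e_\gamma e_\alpha e_\gamma,
\]
all of which lie in $B_\beta$. Since $e_\alpha, e_\gamma$ sit in distinct $\mathcal{D}$-classes from $a,b,c,d$, the cardinality of $A$ is determined by counting coincidences among $\{a,b,c,d\}$. Direct computation using $e_\alpha^2 = e_\alpha$, $e_\gamma^2 = e_\gamma$ records the baseline relations $a, c <_r e_\alpha$, $a <_l e_\gamma$, $b, d <_r e_\gamma$, $b <_l e_\alpha$, $c < e_\alpha$, and $d < e_\gamma$.

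The core observation is that each pairwise coincidence among $\{a,b,c,d\}$ encodes precisely a membership in one or both of $\mathcal{R}(B_\beta(e_\alpha)) \cap \mathcal{R}(B_\beta(e_\gamma))$ and $\mathcal{L}(B_\beta(e_\alpha)) \cap \mathcal{L}(B_\beta(e_\gamma))$. For instance $a = c$ is equivalent to $ae_\alpha = a$, i.e.\ $a <_l e_\alpha$, which combined with the baseline $a <_l e_\gamma$ places $a$ in the $\mathcal{L}$-intersection; dually $b = d$ does the same via $b$. The equality $a = d$ is equivalent to $e_\gamma a = a$, putting $a$ in the $\mathcal{R}$-intersection; dually $b = c$ puts $b$ there. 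Finally $a = b$ forces $a$ below both $e_\alpha$ and $e_\gamma$ in the natural order, whence $a$ lies in both intersections, and $c = d$ does the same via $c$. This table instantly yields part (i): both intersections empty precisely rules out every one of the six coincidences, so $|A| = 6$, and conversely any nonempty intersection forces a coincidence.

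For part (ii) I would exploit that $B_\beta$ is rectangular, so $x \mathcal{R} y$ iff $xy = y$ within $B_\beta$ (dually for $\mathcal{L}$). Given $f_\beta \in \mathcal{R}(B_\beta(e_\alpha)) \cap \mathcal{R}(B_\beta(e_\gamma))$, the identities $e_\alpha f_\beta = f_\beta = e_\gamma f_\beta$ yield $af_\beta = f_\beta = bf_\beta$, so $a \mathcal{R} f_\beta \mathcal{R} b$ in $B_\beta$; this forces $ab = b$ and $ba = a$, i.e.\ $c = b$ and $d = a$, and simultaneously gives $R_{e_\alpha e_\gamma} = R_{e_\gamma e_\alpha}$ together with the claimed inclusion. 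The dual on $\mathcal{L}$ gives $c = a$ and $d = b$. Assuming $|A| = 4$ with $c = b$ puts $b$ in the $\mathcal{R}$-intersection, and if the $\mathcal{L}$-intersection were also nonempty both dual equalities would collapse $\{a,b,c,d\}$ to a single element, contradicting $|A| = 4$; conversely, if the $\mathcal{R}$-intersection is nonempty and the $\mathcal{L}$-intersection is empty, then $c = b$ and $d = a$ while $a \neq b$ (else $a$ would land in the $\mathcal{L}$-intersection), giving $|A| = 4$ as required. The only real obstacle is the careful bookkeeping of the six coincidences, but once the table is tabulated the rest is elementary; the crucial algebraic input at every stage is that inside the rectangular $\mathcal{D}$-class $B_\beta$ the quasi-order $<_r$ agrees with $\mathcal{R}$, which converts each set-theoretic hypothesis into an equational identity.
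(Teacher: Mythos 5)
Your proof is correct and takes essentially the same route as the paper: both arguments reduce the set-theoretic hypotheses to the identities $e_{\alpha}e_{\gamma}e_{\alpha}=e_{\gamma}e_{\alpha}$ (resp.\ $e_{\alpha}e_{\gamma}e_{\alpha}=e_{\alpha}e_{\gamma}$) by showing that any element of the $\mathcal{R}$- (resp.\ $\mathcal{L}$-) intersection is $\mathcal{R}$- (resp.\ $\mathcal{L}$-) related to $e_{\gamma}e_{\alpha}$ inside the rectangular band $B_{\beta}$, your computation $af_{\beta}=f_{\beta}=bf_{\beta}$ being the same step the paper phrases via left compatibility of $\leq_r$. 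Your explicit table of the six coincidences is simply a more systematic write-up of the bookkeeping the paper leaves to the reader.
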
 

\begin{proof} We first show that $e_{\alpha}e_{\gamma}e_{\alpha}=e_{\gamma}e_{\alpha}$  if and only if 
\[ e_{\gamma}e_{\alpha}\in \mathcal{R}(B_{\beta}(e_{\alpha}))\cap \mathcal{R}(B_{\beta}(e_{\gamma})).
\]
Since $e_{\gamma} e_{\gamma}e_{\alpha} = e_{\gamma} e_{\alpha}$ we automatically have $e_{\gamma}e_{\alpha}<_r e_{\gamma}$. Hence if $e_{\gamma}e_{\alpha}=e_{\alpha}e_{\gamma}e_{\alpha}$ then $e_{\gamma}e_{\alpha}\in B_{\beta}(e_{\alpha})$. The converse holds trivially. 

 Now suppose $e_{\beta}<_r e_{\alpha},e_{\gamma}$, so that $e_{\beta}\leq_r e_{\gamma}e_{\alpha}$, as $\leq_r$ is left compatible with multiplication.
  Hence $e_{\beta} \, \mathcal{R} \, e_{\gamma}e_{\alpha}$, and so $\mathcal{R}(B_{\beta}(e_{\alpha}))\cap \mathcal{R}(B_{\beta}(e_{\gamma}))$ is contained in $R_{e_{\alpha}e_{\gamma}}$. 
  In particular, we have shown that  $\mathcal{R}(B_{\beta}(e_{\alpha}))\cap \mathcal{R}(B_{\beta}(e_{\gamma}))$ is non-empty if and only if it contains $e_{\gamma}e_{\alpha}$. This, together with the first part of the proof gives the results. 
\end{proof}

\begin{lemma} \label{tree} Suppose there exists $\sigma>\delta>\tau$ and $e_{\sigma}>e_{\delta}$ such that $B_{\tau}(e_{\sigma})=B_{\tau}(e_{\delta})$. Then $B$ is not homogeneous. 
\end{lemma}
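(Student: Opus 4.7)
Suppose for contradiction that $B$ is homogeneous; I argue in three stages: transport of the equality via homogeneity, a rigidity extracted from non-normality, and a contradiction drawn from the non-linearity of $Y$.

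\emph{Transport.} Pick any $e_\tau\in B_\tau(e_\delta)$; this set is nonempty because $e_\delta g e_\delta\in B_\tau(e_\delta)$ for every $g\in B_\tau$. Then $\{e_\sigma,e_\delta,e_\tau\}$ is a $3$-element chain subband of $B$. For any other $3$-chain $\{e'_\sigma,e'_\delta,e'_\tau\}$ in $B$ lying over $\sigma'>\delta'>\tau'$ in $Y$, the homogeneity of $B$ provides an automorphism $\theta$ sending the former subband to the latter. Since $\theta$ preserves the natural order and carries $B_\tau$ onto $B_{\tau'}$, we deduce $B_{\tau'}(e'_\sigma)=B_{\tau'}(e'_\delta)$. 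Thus the hypothesised equality propagates to every $2$-chain $e'_\sigma>e'_\delta$ in $B$ together with every $\tau'<\delta'$ in $Y$.

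\emph{Multiplicity from non-normality.} A band is normal if and only if every $B_\beta(e_\alpha)$ (with $\alpha>\beta$) is a singleton: the forward direction is the uniqueness of the image under the connecting morphism in a strong semilattice of rectangular bands, and the converse follows from Lemma~\ref{less compatible}, which ensures that $e_\alpha\mapsto B_\beta(e_\alpha)$ gives a well-defined transitive family of morphisms whenever the images are singletons. Since $B$ is non-normal, some $B_\beta(e_\alpha)$ has at least two elements, and by Corollary~\ref{basics}$(\mathrm{iv})$ every such set does. Pick distinct $e_\delta,f_\delta\in B_\delta(e_\sigma)$; by Transport, $B_\tau(e_\sigma)=B_\tau(e_\delta)=B_\tau(f_\delta)$, so by Lemma~\ref{less compatible} every element of the rectangular subband $\langle e_\delta,f_\delta\rangle\subseteq B_\delta(e_\sigma)$ lies strictly above every element of $B_\tau(e_\sigma)$ in the natural order.

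\emph{Contradiction via non-linearity.} The non-linearity of $Y$ (the standing assumption of Section~\ref{sec:final}) together with $2$-homogeneity (Proposition~\ref{linear homog}), density (Corollary~\ref{basics}$(\mathrm{ii})$), and $1$-transitivity (Corollary~\ref{basics}$(\mathrm{i})$) allows one to locate an incomparable pair $\alpha_0,\gamma_0\in Y$ positioned relative to the chain $\sigma>\delta>\tau$ so that Lemma~\ref{size 6} applies to a suitable $\langle e_{\alpha_0},e_{\gamma_0}\rangle$. The subband analysis of Lemma~\ref{size 6} produces elements in $B_{\alpha_0\gamma_0}$ which, combined with a homogeneity-extended automorphism of a carefully chosen finite subband, yield an element of $B_\tau$ lying below $e_\sigma$ but not below $e_\delta$, contradicting Transport.

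The hard part is this final stage: constructing the explicit finite subband isomorphism whose extension exhibits the forbidden element. The choice of $\alpha_0,\gamma_0$ branches on whether $Y$ is semilinear (in which case elements below $\sigma$ form a chain, forcing the incomparable partner to be sought above $\delta$) or contains a diamond (in which case one may take $\gamma_0<\sigma$ with $\gamma_0\perp\delta$); either case must be matched against the rigid layered structure from the multiplicity stage to produce the contradictory descent.
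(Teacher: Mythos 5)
Your first two stages are sound and the first one matches the paper exactly: transporting the equality $B_{\tau}(e_{\sigma})=B_{\tau}(e_{\delta})$ to every $3$-chain via homogeneity is precisely how the paper's proof begins. But the third stage is not a proof; it is a declaration of intent. You write that one can ``locate an incomparable pair $\alpha_0,\gamma_0$'' and that a ``carefully chosen finite subband'' will ``yield an element of $B_\tau$ lying below $e_\sigma$ but not below $e_\delta$,'' and then you explicitly concede that constructing this is ``the hard part.'' That hard part is the entire content of the lemma. The paper's argument at this point is a delicate multi-step construction: it first uses the transported equality together with Lemma~\ref{less 1} to show $Y$ must be a semilinear order (a diamond $\eta>\{\mu,\epsilon\}>\zeta$ would force $B_\zeta(e_\mu)=B_\zeta(e_\epsilon)$ to be a non-singleton intersection, which Lemma~\ref{less 1} forbids); it then produces an element $g_\tau\in\mathcal{L}(B_\tau(e_\sigma))\setminus B_\tau(e_\sigma)$, extends an automorphism of the left-zero band $\{e_\tau,f_\tau,g_\tau\}$, splits into the cases $|B_\tau(e_\sigma)|>2$ and $|B_\tau(e_\sigma)|=2$, and in the second case uses the semilinearity of $Y$ to rule out a diamond $\bar\sigma>\{\bar\sigma\sigma',\bar\sigma\sigma\}>\tau$ before reaching the final contradiction $e_{\bar\sigma}g_\tau=e_\tau\neq f_\tau$. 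None of this is recoverable from your sketch.

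A secondary point: your final stage keeps the diamond case of $Y$ alive as a branch to be handled separately, but the transported equality already kills it (this is the Lemma~\ref{less 1} observation above), so half of your proposed case analysis is chasing a configuration that cannot occur. Your ``multiplicity'' stage is correct but does not feed into anything concrete. As it stands the proposal has a genuine gap: the contradiction is asserted, not derived.
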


\begin{proof}
Suppose for contradiction that $B$ is homogeneous and $B_{\tau}(e_{\sigma})=B_{\tau}(e_{\delta})$ for some $\sigma>\delta>\tau$ and $e_{\sigma}>e_{\delta}$. Let $\sigma'>\delta'>\tau'$ in $Y$ and $e_{\sigma'}>e_{\delta'}$. Then by extending the isomorphism from $e_{\sigma}>e_{\delta}>e_{\tau}$ to $e_{\sigma'}>e_{\delta'}>e_{\tau'}$, for some $e_{\tau},e_{\tau'}$, it follows by the homogeneity of $B$ that  $B_{\tau'}(e_{\sigma'})=B_{\tau'}(e_{\delta'})$. The semilattice $Y$ is a semilinear order, since if $\eta>\{\mu,\epsilon\}>\zeta$ is a diamond in $Y$ then for any $e_{\eta}\in B_{\eta}$ with $e_{\eta}>e_{\mu},e_{\epsilon}$ we have 
\[ B_{\zeta}(e_{\mu})=B_{\zeta}(e_{\eta})=B_{\zeta}(e_{\epsilon}) 
\] 
contradicting Lemma \ref{less 1}, as $B$ is not normal. Hence $Y$ is homogeneous by  Proposition \ref{linear homog}. 
 Suppose w.l.o.g. that $B_{\tau}(e_{\sigma})$ has more than 1 $\mathcal{R}$-class. 
We claim that there exists $g_{\tau} \in \mathcal{L}(B_{\tau}(e_{\sigma}))\setminus B_{\tau}(e_{\sigma})$.
 Suppose for contradiction that no such $g_{\tau}$ exists. Then $\mathcal{R}(B_{\tau}(e_{\sigma}))=B_{\tau}$, so that for any $\nu\in Y$ with $\nu\sigma=\tau$ and $e_{\nu}\in B_{\nu}$ we would have 
\[ \mathcal{R}(B_{\tau}(e_{\sigma}))\cap \mathcal{R}(B_{\tau}(e_{\nu}))=\mathcal{R}(B_{\tau}(e_{\nu})) = B_{\tau}
\] 
by the homogeneity of  $B$. Hence $B_\tau$ has 1 $\mathcal{R}$-class by the previous pair of lemmas, a contradiction, and thus the claim holds.  

Let $g_{\tau} \in \mathcal{L}(B_{\tau}(e_{\sigma}))\setminus B_{\tau}(e_{\sigma})$. Then as $g_{\tau}<_l e_{\sigma}$ we have $g_{\tau}e_{\sigma}=g_{\tau}$,   $e_{\sigma}g_{\tau} <e_{\sigma}$ and $e_{\sigma}g_{\tau} \, \mathcal{L} \, g_{\tau}$. Letting $e_{\sigma}g_{\tau}=e_{\tau}$, then as $B_{\tau}(e_{\sigma})$ has more than one $\mathcal{R}$-class, we may pick $f_{\tau}\in B_{\tau}(e_{\sigma})$ with $f_{\tau} \, \mathcal{L} \, e_{\tau}$ and $f_{\tau}\neq e_{\tau}$. Let $A=\{e_{\tau},f_\tau , g_\tau\}$, a left zero subsemigroup of $B$.  By extending the automorphism $\theta$ of $A$ which fixes $e_{\tau}$ and swaps $f_{\tau}$ and $g_{\tau}$, to an automorphism $\bar{\theta}$ of $B$, we have $e_{\sigma}\bar{\theta}=e_{\sigma'}>e_{\tau},g_{\tau}$ and  $e_{\sigma'}f_{\tau}=e_{\tau}$. 

If $|B_{\tau}(e_{\sigma})|>2$ then there exists $x_{\tau} \not\in \{e_{\tau},f_{\tau}\}$ with $x_{\tau}\in B_{\tau}(e_{\sigma})$ and $x_{\tau}$ being $\mathcal{L}$- or $\mathcal{R}$-related to $e_{\tau}$. We may assume that $\bar{\theta}$ also extends the automorphism of $A\cup \{x_{\tau}\}$ which extends $\theta$ and fixes $x_\tau$. By the homogeneity of $B$ we have $e_{\sigma'}>e_{\tau},x_{\tau}$, so that $\sigma \sigma'>\tau$ to avoid contradicting Lemma \ref{less 1}. Then 
\[ e_{\sigma}e_{\sigma'}e_{\sigma}\cdot f_{\tau}=e_{\sigma}e_{\sigma'}f_{\tau}=e_{\sigma}e_{\tau}=e_{\tau}
\] 
so that $f_{\tau}\nless e_{\sigma}e_{\sigma'}e_{\sigma}$ and $\sigma\neq \sigma\sigma'$ (else $f_{\tau} \nless e_{\sigma}e_{\sigma'}e_{\sigma} = e_{\sigma}$). Hence $e_{\sigma}e_{\sigma'}e_{\sigma}<e_{\sigma}$ and $B_{\tau}(e_{\sigma})=B_{\tau}(e_{\sigma}e_{\sigma'}e_{\sigma})$, contradicting $f_{\tau}\not\in B_{\tau}(e_{\sigma}e_{\sigma'}e_{\sigma})$. 

It follows that $B_{\tau}(e_{\sigma})=\{e_{\tau},f_{\tau}\}$, $B_{\tau}(e_{\sigma'})=\{e_{\tau},g_{\tau}\}$, $\sigma \sigma'=\tau$ and 
\[ e_{\sigma}e_{\sigma'}e_{\sigma}=e_{\sigma'}e_{\sigma}e_{\sigma'}=e_{\tau}
\]
 by Lemma \ref{less 1}. Now extend the automorphism of $A$ which fixes $g_{\tau}$ and swaps $e_{\tau}$ and $f_{\tau}$ to an automorphism $\phi$ of $B$. Then $e_{\sigma}\phi=e_{\bar{\sigma}}>e_{\tau},f_{\tau}$, so that $\bar{\sigma}\sigma>\tau$ and $e_{\bar{\sigma}}g_{\tau}=f_{\tau}$ since $e_{\sigma}g_{\tau}=e_{\tau}$. Since $\bar{\sigma},\sigma'>\tau$ we have $\bar{\sigma}\sigma'\geq \tau$. Suppose for contradiction that  $\bar{\sigma}\sigma'>\tau$. Then we claim that $\bar{\sigma}>\{\bar{\sigma}\sigma',\bar{\sigma}\sigma\}>\tau$ forms a diamond.
Notice that $\bar{\sigma}\sigma' \neq \bar{\sigma}\sigma$, since otherwise $\bar{\sigma}\sigma=\bar{\sigma}\sigma \bar{\sigma}\sigma' = \tau$, a contradiction. If $\bar{\sigma}=\bar{\sigma}\sigma '$ then $\sigma \bar{\sigma}=\sigma \bar{\sigma}\sigma ' = \tau$ since $\sigma\sigma'=\tau$, and so $\bar{\sigma}\neq \bar{\sigma}\sigma '$, similarly $\bar{\sigma}\neq \bar{\sigma}\sigma$. Thus, as the elements are distinct, the set forms a diamond as claimed, which contradicts $Y$ being a semilinear order. Hence $\bar{\sigma}\sigma'=\tau$.  Now  $e_{\bar{\sigma}},e_{\sigma'}>e_{\tau}$, so that  $e_{\bar{\sigma}}e_{\sigma'}=e_{\sigma'}e_{\bar{\sigma}}=e_{\tau}$ and so 
  \[ e_{\bar{\sigma}}g_{\tau}= e_{\bar{\sigma}}(e_{\sigma'}g_{\tau})=e_{\tau}g_{\tau}=e_{\tau}
  \] 
as $e_{\tau} \, \mathcal{L} \, g_{\tau}$. 
However this contradicts $ {\bar{\sigma}}g_{\tau}=f_{\tau}$, and $B$ is therefore not homogeneous.
\end{proof}

\begin{lemma} \label{D class same el} Let $B$ be homogeneous and let $\alpha,\gamma,\beta\in Y$ be distinct elements such that $\alpha\gamma=\beta$. Then for any $e_{\alpha},f_{\alpha}\in B_{\alpha}$ and $e_{\gamma}\in B_{\gamma}$ such that $e_{\alpha},f_{\alpha}> e_{\alpha} e_{\gamma}e_{\alpha}, f_{\alpha} e_{\gamma} f_{\alpha}$, we have $e_{\alpha} e_{\gamma}e_{\alpha} = f_{\alpha} e_{\gamma} f_{\alpha}$.
\end{lemma}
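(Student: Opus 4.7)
My plan is to argue by contradiction. Set $e_\beta := e_\alpha e_\gamma e_\alpha$ and $f_\beta := f_\alpha e_\gamma f_\alpha$; both lie in the rectangular $\mathcal{D}$-class $B_\beta$, and by hypothesis each is strictly below both $e_\alpha$ and $f_\alpha$ in the natural order. Suppose for contradiction that $e_\beta \neq f_\beta$.

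The first step is to exploit the symmetry between $e_\alpha$ and $f_\alpha$ via homogeneity. The sets $\{e_\alpha, e_\beta, f_\beta\}$ and $\{f_\alpha, e_\beta, f_\beta\}$ are subbands of $B$: the $\alpha$-element acts as an identity on $\{e_\beta,f_\beta\}$ from both sides (since it dominates both in the natural order), and $\{e_\beta,f_\beta\}$ multiplies within the rectangular band $B_\beta$. The bijection fixing $e_\beta, f_\beta$ pointwise while sending $e_\alpha \mapsto f_\alpha$ is therefore a subband isomorphism. By homogeneity it extends to an automorphism $\Theta$ of $B$. The induced semilattice automorphism $\pi$ fixes $\alpha$ and $\beta$ (since $e_\alpha\Theta = f_\alpha \in B_\alpha$ and $e_\beta\Theta = e_\beta \in B_\beta$), so writing $\gamma' := \gamma\pi$ we have $\alpha\gamma' = \beta$. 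Setting $e_{\gamma'} := e_\gamma\Theta \in B_{\gamma'}$ and $f'_\alpha := f_\alpha\Theta \in B_\alpha$, applying $\Theta$ to the two defining equalities gives
\[ f_\alpha\, e_{\gamma'}\, f_\alpha = e_\beta \qquad\text{and}\qquad f'_\alpha\, e_{\gamma'}\, f'_\alpha = f_\beta. \]
Thus the triple $(e_\alpha, f_\alpha, e_{\gamma'})$ again satisfies the hypothesis, but with the roles of $e_\beta$ and $f_\beta$ interchanged on the $e_\alpha$- and $f_\alpha$-sides.

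The final step, which I expect to be the main obstacle, is to derive the contradiction from these two mirror-image configurations. The plan is to study the subband generated by $\{e_\alpha, f_\alpha, e_\gamma, e_{\gamma'}\}$ and in particular the products $f_\alpha e_\gamma e_{\gamma'} f_\alpha$ and $f_\alpha e_{\gamma'} e_\gamma f_\alpha$, both of which lie in $B_\beta$ (as $\alpha\gamma\gamma'\alpha = \beta$). Using the rectangular-band identity $xyx = x$ inside $B_\beta$ together with the established equalities and the natural-order relations $e_\alpha, f_\alpha > e_\beta, f_\beta$, the goal is to show that such a product must equal both $e_\beta$ and $f_\beta$ simultaneously. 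A case analysis on whether $e_\alpha, f_\alpha$ are $\mathcal{L}$-related, $\mathcal{R}$-related, or span a rectangle in $B_\alpha$ (and similarly for $e_\beta, f_\beta$ in $B_\beta$) is likely required, the most delicate case being when both pairs span; here the rigidity of the $2\times 2$ rectangular substructures should close the argument.

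If this direct algebraic collapse turns out to be resistant, the alternative route is to use the second automorphism $\Theta$ to build a 3-chain $\sigma > \delta > \tau$ in $Y$ and an element $e_\sigma > e_\delta$ for which the equality $f_\alpha e_\gamma f_\alpha = f_\beta$ and its $\Theta$-image $f_\alpha e_{\gamma'} f_\alpha = e_\beta$ force $B_\tau(e_\sigma) = B_\tau(e_\delta)$, contradicting Lemma \ref{tree}. Either route yields $e_\beta = f_\beta$, completing the proof by contradiction.
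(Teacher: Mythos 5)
Your proposal is a plan rather than a proof: the step that would actually force $e_\beta = f_\beta$ is never carried out. After the symmetry swap you arrive at the two equalities $f_\alpha e_{\gamma'} f_\alpha = e_\beta$ and $f'_\alpha e_{\gamma'} f'_\alpha = f_\beta$, and then the remainder consists of a proposed computation (``the goal is to show that such a product must equal both $e_\beta$ and $f_\beta$ simultaneously''), an unspecified case analysis, and a fallback route via Lemma \ref{tree} --- all hedged with ``should'' and ``if this turns out to be resistant''. That missing step is precisely the content of the lemma, so the gap is essential. There are also two local errors in what you did write: first, $\{e_\alpha, e_\beta, f_\beta\}$ need not be a subband, since $e_\beta f_\beta$ and $f_\beta e_\beta$ can be new elements of the rectangular band $B_\beta$; you must work with $\{e_\alpha\}\cup\langle e_\beta,f_\beta\rangle$ (the map you want is still an isomorphism, because $e_\alpha$ and $f_\alpha$ both act as two-sided identities on $\langle e_\beta,f_\beta\rangle$). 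Second, the claim that ``the triple $(e_\alpha,f_\alpha,e_{\gamma'})$ again satisfies the hypothesis with the roles of $e_\beta$ and $f_\beta$ interchanged'' does not follow: the equalities you obtained involve $f_\alpha$ and $f_\alpha\Theta$, not $e_\alpha$ and $f_\alpha$, and nothing is known about $e_\alpha e_{\gamma'} e_\alpha$ or about whether $f_\alpha$ lies above it.

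For comparison, the paper's proof is direct and avoids all of this. It uses Corollary \ref{basics}(iv) to pick $\sigma<\beta$ and $e_\sigma,f_\sigma\in B_\sigma(e_\alpha e_\gamma e_\alpha)$ with $\langle e_\sigma,f_\sigma\rangle\cong\langle e_\alpha e_\gamma e_\alpha, f_\alpha e_\gamma f_\alpha\rangle$, then extends the isomorphism $\langle e_\alpha,f_\alpha,e_\sigma,f_\sigma\rangle\to\langle e_\alpha,f_\alpha,e_\alpha e_\gamma e_\alpha,f_\alpha e_\gamma f_\alpha\rangle$ to an automorphism. The image of $e_\alpha e_\gamma e_\alpha$ is an element $e_\tau$ with $\{e_\alpha,f_\alpha\}>e_\tau>\{e_\alpha e_\gamma e_\alpha, f_\alpha e_\gamma f_\alpha\}$, and then
\[
e_\alpha e_\gamma e_\alpha = e_\tau(e_\alpha e_\gamma e_\alpha)e_\tau = (e_\tau e_\alpha)e_\gamma(e_\alpha e_\tau) = e_\tau e_\gamma e_\tau,
\]
with the same computation for $f_\alpha$. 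The key idea you are missing is this interpolating idempotent $e_\tau$, which lets both products collapse to a common expression; your swap automorphism does not by itself produce any such collapse.
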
 

\begin{proof} Let $\sigma<\beta$ and choose $e_{\sigma},f_{\sigma}\in B_{\sigma}(e_{\alpha}e_{\gamma}e_{\alpha}$) such that $\langle e_{\sigma},f_{\sigma} \rangle$ is isomorphic to $ \langle e_{\alpha} e_{\gamma}e_{\alpha}, f_{\alpha} e_{\gamma} f_{\alpha} \rangle$, noting that such elements exist by Corollary \ref{basics} $(\mathrm{iv}$). Extend the isomorphism from $\langle e_{\alpha},f_{\alpha},e_{\sigma},f_{\sigma} \rangle$ to $\langle e_{\alpha},f_{\alpha}, e_{\alpha} e_{\gamma}e_{\alpha}, f_{\alpha} e_{\gamma} f_{\alpha} \rangle$ which maps generators in order, to an automorphism of $B$. Then it follows that there exists $\tau\in Y$ and $e_{\tau}\in B_{\tau}$ (as the image of $e_{\alpha}e_\gamma e_{\alpha}$) such that $\alpha>\tau>\beta$ and 
\[ \{e_{\alpha},f_{\alpha}\}> e_{\tau} > \{  e_{\alpha} e_{\gamma}e_{\alpha}, f_{\alpha} e_{\gamma} f_{\alpha}\}.
\] 
Then \[ e_{\alpha} e_{\gamma} e_{\alpha} = e_{\tau}(e_{\alpha} e_{\gamma} e_{\alpha})e_{\tau} = (e_{\tau}e_{\alpha}) e_{\gamma} (e_{\alpha}e_{\tau})=e_{\tau}e_{\gamma}e_{\tau},
\]  
and similarly $f_{\alpha} e_{\gamma} f_{\alpha} =e_{\tau}e_{\gamma}e_{\tau}$, and the result follows.
%
%
\end{proof}

\begin{lemma} \label{case 1} If $B$ is homogeneous, $e_{\alpha}\in B_{\alpha}$ and $e_{\gamma}\in B_{\gamma}$ then $|\langle e_{\alpha},e_{\gamma} \rangle|\neq 6$.
\end{lemma}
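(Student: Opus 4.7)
The plan is to argue by contradiction, supposing that there exist $e_\alpha\in B_\alpha$ and $e_\gamma\in B_\gamma$ with $|\langle e_\alpha,e_\gamma\rangle|=6$. By Lemma \ref{size 6}(i), both intersections $\mathcal{R}(B_\beta(e_\alpha))\cap\mathcal{R}(B_\beta(e_\gamma))$ and $\mathcal{L}(B_\beta(e_\alpha))\cap\mathcal{L}(B_\beta(e_\gamma))$ are empty. Using $(e_\alpha e_\gamma)^2=e_\alpha e_\gamma$ and its dual, one checks that the four products $e_\alpha e_\gamma,\, e_\gamma e_\alpha,\, e_\alpha e_\gamma e_\alpha,\, e_\gamma e_\alpha e_\gamma$ form a $2\times 2$ sub-rectangular band of $B_\beta$, with $e_\alpha e_\gamma e_\alpha<e_\alpha$ and $e_\gamma e_\alpha e_\gamma<e_\gamma$ in the natural order, while $e_\alpha e_\gamma e_\alpha$ and $e_\gamma e_\alpha e_\gamma$ lie in distinct $\mathcal R$- and $\mathcal L$-classes of $B_\beta$.

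The main goal will be to manufacture a second element $f_\alpha\in B_\alpha\setminus\{e_\alpha\}$ satisfying the hypotheses of Lemma \ref{D class same el}, namely $e_\alpha,f_\alpha>e_\alpha e_\gamma e_\alpha$ and $e_\alpha,f_\alpha>f_\alpha e_\gamma f_\alpha$, but with $f_\alpha e_\gamma f_\alpha\neq e_\alpha e_\gamma e_\alpha$, which will directly contradict that lemma. To build such an $f_\alpha$, I would first use density of $Y$ (Corollary \ref{basics}(ii)) to pick $\delta\in Y$ with $\alpha>\delta>\beta$, then invoke homogeneity to extend the chain $e_\alpha>e_\alpha e_\gamma e_\alpha$ to a three-element chain $e_\alpha>e_\delta>e_\alpha e_\gamma e_\alpha$ in $B$. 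With this auxiliary $e_\delta$ available, I would exhibit a finite band isomorphism that swaps the two $\mathcal L$-classes of the $2\times 2$ subband of $B_\beta$ while fixing its two $\mathcal R$-classes, and whose induced semilattice map fixes both $\alpha$ and $\beta$; by homogeneity this extends to an automorphism $\theta$ of $B$, and $f_\alpha=e_\alpha\theta$ will lie in $B_\alpha$ and satisfy the required properties, since $e_\alpha e_\gamma e_\alpha\in B_\beta$ is fixed by $\theta$ while $f_\alpha e_\gamma f_\alpha$ is forced to equal the $\mathcal L$-class-swapped partner of $e_\alpha e_\gamma e_\alpha$ in the $2\times 2$ structure.

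The main obstacle is ensuring that the finite map one writes down is a genuine band isomorphism whose induced map on $Y$ is consistent: in particular one needs to pin down $\alpha$ (so that $f_\alpha\in B_\alpha$) while still having enough freedom to permute $B_\beta$ non-trivially, and one must verify that $f_\alpha>e_\alpha e_\gamma e_\alpha$ really holds after extension. The $2\times 2$ structure in $B_\beta$ forced by the six-element hypothesis is exactly what supplies the symmetry needed, and Corollary \ref{rec bands homog} guarantees that any such prescribed permutation of the $\mathcal D$-class $B_\beta$ can be realized by an isomorphism of rectangular bands. If the direct construction proves too delicate, a fallback strategy would be to iterate this configuration to locate $\sigma>\delta>\tau$ in $Y$ and $e_\sigma>e_\delta$ with $B_\tau(e_\sigma)=B_\tau(e_\delta)$, producing a contradiction via Lemma \ref{tree} instead.
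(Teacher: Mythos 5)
Your overall target is the right one --- the paper also derives the contradiction from Lemma \ref{D class same el} --- but the automorphism you propose cannot be produced, and this is a genuine gap rather than a technicality. The partial map that ``swaps the two $\mathcal{L}$-classes of the $2\times 2$ square while fixing its $\mathcal{R}$-classes'' sends $e_{\gamma}e_{\alpha}e_{\gamma}\mapsto e_{\gamma}e_{\alpha}$ and $e_{\alpha}e_{\gamma}e_{\alpha}\mapsto e_{\alpha}e_{\gamma}$. But $e_{\gamma}e_{\alpha}e_{\gamma}<e_{\gamma}$ in the natural order, whereas $e_{\gamma}e_{\alpha}$ is only $\leq_{r}e_{\gamma}$ and not $\leq e_{\gamma}$ (since $e_{\gamma}e_{\alpha}e_{\gamma}\neq e_{\gamma}e_{\alpha}$ under the six-element hypothesis); as morphisms preserve the natural order, no finite isomorphism fixing $e_{\gamma}$ can realise this swap, and the same obstruction appears if you keep $e_{\alpha}$ in the domain (because $e_{\alpha}e_{\gamma}\not< e_{\alpha}$). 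So homogeneity cannot be invoked. There is also an internal inconsistency: you need $\theta$ to fix $e_{\alpha}e_{\gamma}e_{\alpha}$ in order to conclude $f_{\alpha}>e_{\alpha}e_{\gamma}e_{\alpha}$, yet the $\mathcal{L}$-class swap moves $e_{\alpha}e_{\gamma}e_{\alpha}$; and if $\theta$ did fix both $e_{\gamma}$ and $e_{\alpha}e_{\gamma}e_{\alpha}$, then $f_{\alpha}e_{\gamma}f_{\alpha}=(e_{\alpha}e_{\gamma}e_{\alpha})\theta=e_{\alpha}e_{\gamma}e_{\alpha}$ and no contradiction arises. Finally, the hypothesis $e_{\alpha}>f_{\alpha}e_{\gamma}f_{\alpha}$ required by Lemma \ref{D class same el} is never secured.

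The paper avoids all of this by swapping $e_{\alpha}e_{\gamma}e_{\alpha}$ not with its $\mathcal{L}$-class partner in the square but with a \emph{different} element $e_{\beta}\in B_{\beta}(e_{\alpha})$, fixing $e_{\gamma}$: using Lemma \ref{size 6}(i) one checks that $\langle e_{\gamma},e_{\beta}\rangle$ and $\langle e_{\gamma},e_{\alpha}e_{\gamma}e_{\alpha}\rangle$ are both five-element bands with no collapses, so this swap genuinely is an isomorphism and extends to $\theta\in\mathrm{Aut}(B)$. Crucially the paper does \emph{not} insist that $e_{\alpha}\theta$ stay in $B_{\alpha}$; instead, writing $e_{\alpha'}=e_{\alpha}\theta$, it passes to the sandwich elements $e_{\alpha}e_{\alpha'}e_{\alpha}$ and $e_{\alpha'}e_{\alpha}e_{\alpha'}$ in $B_{\alpha\alpha'}$, which by Lemma \ref{less compatible} automatically lie above both $e_{\alpha}e_{\gamma}e_{\alpha}$ and $e_{\beta}$ and whose $e_{\gamma}$-sandwiches are $e_{\alpha}e_{\gamma}e_{\alpha}$ and $e_{\beta}$ respectively; Lemma \ref{D class same el} applied in $B_{\alpha\alpha'}$ then forces $e_{\alpha}e_{\gamma}e_{\alpha}=e_{\beta}$, the desired contradiction. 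Your fallback via Lemma \ref{tree} is not developed enough to assess, but as written the main construction does not go through.
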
 

\begin{proof} Suppose for contradiction that $|\langle e_{\alpha},e_{\gamma} \rangle|=6$ and let $e_{\beta}\in B_{\beta}(e_{\alpha})$. Note that if $D$ is a rectangular band and $x,y,z\in D$ then $xyz=(xy)(zxz)=x(yz)x z = xz$. Hence 
\begin{align*}
& e_{\beta}e_{\gamma}e_{\beta}=e_{\beta}(e_{\alpha}e_{\gamma}e_{\alpha})e_{\beta}=e_{\beta}\\
& e_{\gamma}e_{\beta}e_{\gamma}=(e_{\gamma}e_{\alpha})e_{\beta}(e_{\alpha} e_{\gamma})=(e_{\gamma}e_{\alpha})(e_{\alpha}e_{\gamma})=e_{\gamma}e_{\alpha}e_{\gamma}.
\end{align*}
By Lemma \ref{size 6} $(\mathrm{i})$, $e_{\beta}$ is not $\mathcal{L}$ or $\mathcal{R}$-related to $e_{\gamma}e_{\alpha}e_{\gamma}$, so 
\[ \langle e_{\gamma},e_{\beta} \rangle = \{e_{\gamma},e_{\beta},e_{\gamma}e_{\beta},e_{\beta}e_{\gamma},e_{\gamma}e_{\alpha}e_{\gamma} \}
\]
contains no repetitions. Hence for any $e_{\beta},f_{\beta}<e_{\alpha}$ we have $\langle e_{\gamma},e_{\beta} \rangle \cong \langle e_{\gamma},f_{\beta} \rangle$. In particular, the map fixing $e_{\gamma}$ and swapping some $e_{\beta}\in B_{\beta}(e_{\alpha})\setminus \{e_{\alpha}e_{\gamma}e_{\alpha}\}$ with $e_{\alpha}e_{\gamma}e_{\alpha}$ is an isomorphism, which can extended to $\theta\in \text{Aut}(B)$. 
Then $e_{\alpha}>e_{\beta},e_{\alpha}e_{\gamma}e_{\alpha}$ gives $e_{\alpha}\theta=e_{\alpha'}>e_{\alpha}e_{\gamma}e_{\alpha},e_{\beta}$, so that $\alpha\alpha'>\beta$ by Lemma \ref{less 1}. Moreover, $(e_{\alpha}e_{\gamma}e_{\alpha})\theta= e_{\alpha'}e_{\gamma}e_{\alpha'}=e_{\beta}$, so
\[ (e_{\alpha}e_{\alpha'}e_{\alpha})e_{\gamma}(e_{\alpha}e_{\alpha'}e_{\alpha})=(e_{\alpha}e_{\alpha'})(e_{\alpha}e_{\gamma}e_{\alpha})(e_{\alpha'}e_{\alpha})=e_{\alpha}e_{\gamma}e_{\alpha}
\] 
by Lemma \ref{less compatible}, and similarly $(e_{\alpha'}e_{\alpha}e_{\alpha'})e_{\gamma}(e_{\alpha'}e_{\alpha}e_{\alpha'})= e_{\alpha'}e_{\gamma}e_{\alpha'}$. 
Hence  
\[ \{e_{\alpha}e_{\alpha'}e_{\alpha}, e_{\alpha'}e_{\alpha}e_{\alpha'}\}>\{e_{\alpha}e_{\gamma}e_{\alpha},e_{\beta}\},
\] 
and $\{e_{\alpha}e_{\gamma}e_{\alpha},e_{\beta}\}=\{(e_{\alpha}e_{\alpha'}e_{\alpha})e_{\gamma}(e_{\alpha}e_{\alpha'}e_{\alpha}),(e_{\alpha'}e_{\alpha}e_{\alpha'})e_{\gamma}(e_{\alpha'}e_{\alpha}e_{\alpha'})\}$. Since $(\alpha\alpha')\gamma=\beta$ with $\alpha\alpha'\neq \gamma$ we have $e_{\alpha}e_{\gamma}e_{\alpha} =e_{\beta}$ by Lemma \ref{D class same el}, a contradiction. 
\end{proof} 

\begin{lemma}  If $B$ is homogeneous, $e_{\alpha}\in B_{\alpha}$ and $e_{\gamma}\in B_{\gamma}$ then  $|\langle e_{\alpha},e_{\gamma} \rangle|\neq 4$.
\end{lemma}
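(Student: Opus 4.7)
The plan is to argue by contradiction: assume $|\langle e_\alpha,e_\gamma\rangle|=4$. By Lemma~\ref{size 6}(ii), applying duality (swapping the roles of $\alpha$ and $\gamma$) if needed, we may assume $e_\alpha e_\gamma e_\alpha=e_\gamma e_\alpha$. A direct computation then yields $e_\gamma e_\alpha e_\gamma=(e_\alpha e_\gamma e_\alpha)e_\gamma=(e_\alpha e_\gamma)^2=e_\alpha e_\gamma$, so $A=\{e_\alpha,e_\gamma,e_\alpha e_\gamma,e_\gamma e_\alpha\}$, with $e_\gamma e_\alpha\in B_\beta(e_\alpha)$, $e_\alpha e_\gamma\in B_\beta(e_\gamma)$, and $e_\alpha e_\gamma\,\mathcal{R}\,e_\gamma e_\alpha$ lying in distinct $\mathcal{L}$-classes of $B_\beta$. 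From here, the plan is to follow the strategy of Lemma~\ref{case 1}, splitting on the size of $B_\beta(e_\alpha)$.

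In the case $|B_\beta(e_\alpha)|=1$, Corollary~\ref{basics}(iv) gives $|B_{\mu'}(x)|=1$ for every $\mu>\mu'$ in $Y$ and every $x\in B_\mu$. I would then verify that $B$ satisfies the normal band identity $xyzx=xzyx$ for all $x\in B_\mu$, $y\in B_\nu$, $z\in B_\rho$: if $\mu\nu\rho<\mu$, then $xyzx\in B_{\mu\nu\rho}(x)$ is the unique element of a singleton, hence independent of $y,z$, so $xyzx=xzyx$; if $\mu\nu\rho=\mu$, then $xyzx\in B_\mu$ and $xyzx\leq x$ (from $x\cdot xyzx=xyzx=xyzx\cdot x$), so by the triviality of the natural order on the rectangular band $B_\mu$ we get $xyzx=x$, and similarly $xzyx=x$. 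Hence $B$ would be normal, contradicting the standing hypothesis of Section~\ref{sec:final}.

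In the remaining case $|B_\beta(e_\alpha)|\geq 2$, I would pick $e_\beta\in B_\beta(e_\alpha)\setminus\{e_\gamma e_\alpha\}$ and observe that $e_\beta\notin B_\beta(e_\gamma)$: otherwise Lemma~\ref{less 1} would give $|A\cap B_\beta|=1$, contradicting $|A\cap B_\beta|=2$. A computation in the rectangular band $B_\beta$ should show that $\langle e_\gamma,e_\beta\rangle$ is a 3-element subband structurally isomorphic to $\langle e_\gamma,e_\gamma e_\alpha\rangle=\{e_\gamma,e_\gamma e_\alpha,e_\alpha e_\gamma\}$, with the map fixing $e_\gamma$ and sending $e_\gamma e_\alpha\mapsto e_\beta$ serving as an isomorphism. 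Extending this by homogeneity to $\theta\in\mathrm{Aut}(B)$ and setting $e_{\alpha'}=\theta(e_\alpha)$, from $e_\alpha>e_\gamma e_\alpha$ one obtains $e_{\alpha'}>e_\beta$; Lemma~\ref{less 1} then forces $\alpha\alpha'>\beta$. Mimicking the final computation in Lemma~\ref{case 1} via Lemma~\ref{less compatible}, I would arrive at $\{e_\alpha e_{\alpha'}e_\alpha,\,e_{\alpha'}e_\alpha e_{\alpha'}\}>\{e_\gamma e_\alpha,e_\beta\}$; since $(\alpha\alpha')\gamma=\beta$ with $\alpha\alpha'\neq\gamma$, Lemma~\ref{D class same el} then forces $e_\gamma e_\alpha=e_\beta$, contradicting the choice of $e_\beta$. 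The principal obstacle is this last case, and specifically the structural computation showing $\langle e_\gamma,e_\beta\rangle\cong\langle e_\gamma,e_\gamma e_\alpha\rangle$, which requires careful rectangular-band product calculations in $B_\beta$ exploiting the constraint $e_\beta\in B_\beta(e_\alpha)\setminus B_\beta(e_\gamma)$.
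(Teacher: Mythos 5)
Your reduction to the case $e_\alpha e_\gamma e_\alpha=e_\gamma e_\alpha$ and your disposal of the degenerate case $|B_\beta(e_\alpha)|=1$ (via normality) are fine, but the main case has a genuine gap: the relevant dichotomy is not $|B_\beta(e_\alpha)|=1$ versus $|B_\beta(e_\alpha)|\geq 2$, but whether $B_\beta(e_\alpha)$ meets more than one $\mathcal{L}$-class or is a left zero band. Your structural claim about $\langle e_\gamma,e_\beta\rangle$ holds only when $e_\beta$ can be chosen $\mathcal{R}$-related to $e_\gamma e_\alpha$. Writing $e_\gamma e_\alpha=(i,j)$ and $e_\alpha e_\gamma=(i,j')$ in $B_\beta=L\times R$, if $B_\beta(e_\alpha)$ is left zero with at least two elements then every $e_\beta\in B_\beta(e_\alpha)\setminus\{e_\gamma e_\alpha\}$ has the form $(i'',j)$ with $i''\neq i$, whence $e_\gamma e_\beta=(e_\gamma e_\alpha)e_\beta=e_\gamma e_\alpha\neq e_\beta$ and $e_\beta e_\gamma=(i'',j')$, so that $\langle e_\gamma,e_\beta\rangle$ has five elements and cannot be isomorphic to the three-element band $\langle e_\gamma,e_\gamma e_\alpha\rangle$. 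This left zero case is precisely where the paper has to do its real work: it first rules out the multi-$\mathcal{L}$-class case by an argument like yours, and then, for the left zero case, shows that every $e_\tau$ with $e_\alpha>e_\tau$ and $\alpha>\tau>\beta$ satisfies $B_\beta(e_\tau)=B_\beta(e_\alpha)$, contradicting Lemma~\ref{tree} --- a lemma your proposal never invokes and whose purpose is exactly to kill this configuration. Since non-normality together with Corollary~\ref{basics}~$(\mathrm{iv})$ forces $|B_\beta(e_\alpha)|\geq 2$ anyway, the broken case is not peripheral; it is the heart of the proof.

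A secondary problem arises even in the sub-case where $e_\beta\,\mathcal{R}\,e_\gamma e_\alpha$: extending only the three-element isomorphism $\langle e_\gamma,e_\gamma e_\alpha\rangle\to\langle e_\gamma,e_\beta\rangle$ yields $e_{\alpha'}>e_\beta$ but gives no control over the preimage of $e_\gamma e_\alpha$, so you obtain a single common lower bound of $e_\alpha$ and $e_{\alpha'}$ in $B_\beta$; by Lemma~\ref{less 1} this is perfectly consistent with $\alpha\alpha'=\beta$, so you cannot conclude $\alpha\alpha'>\beta$. The paper avoids this by extending the automorphism of the four-element band $\langle e_\gamma,e_\beta,e_\gamma e_\alpha\rangle$ that \emph{swaps} $e_\beta$ and $e_\gamma e_\alpha$, which places $e_{\alpha'}$ above both elements and yields two distinct common lower bounds. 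That defect is easily patched; the missing left zero case is not.
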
 

\begin{proof} Suppose for contradiction that $|\langle e_{\alpha},e_{\gamma} \rangle|= 4$, and assume w.l.o.g. that $e_{\alpha}e_{\gamma}e_{\alpha}=e_{\gamma}e_{\alpha}$, so $e_{\gamma}e_{\alpha}e_{\gamma}=e_{\alpha}e_{\gamma}$. By Lemma \ref{size 6} $(\mathrm{ii})$ we have $\mathcal{L}(B_{\beta}(e_{\alpha}))\cap \mathcal{L}(B_{\beta}(e_{\gamma}))= \emptyset$ and $\mathcal{R}(B_{\beta}(e_{\alpha}))\cap \mathcal{R}(B_{\beta}(e_{\gamma}))\subseteq R_{e_{\gamma}e_{\alpha}}=R_{e_{\alpha}e_{\gamma}}$. Suppose $B_{\beta}(e_{\alpha})$ has more than 1 $\mathcal{L}$-class, so there exists $e_{\beta}\in  B_{\beta}(e_{\alpha})$ such that $e_{\beta} \, \mathcal{R} \, e_{\gamma}e_{\alpha}$ but $e_{\beta}\neq e_{\gamma}e_{\alpha}$, noting that $e_{\beta}\neq e_{\alpha}e_{\gamma}$ as $|\langle e_{\alpha},e_{\gamma} \rangle|\neq 3$ (see Figure \ref{rec pic}).

\begin{figure}[h]
\centering
\def\svgwidth{130pt} 
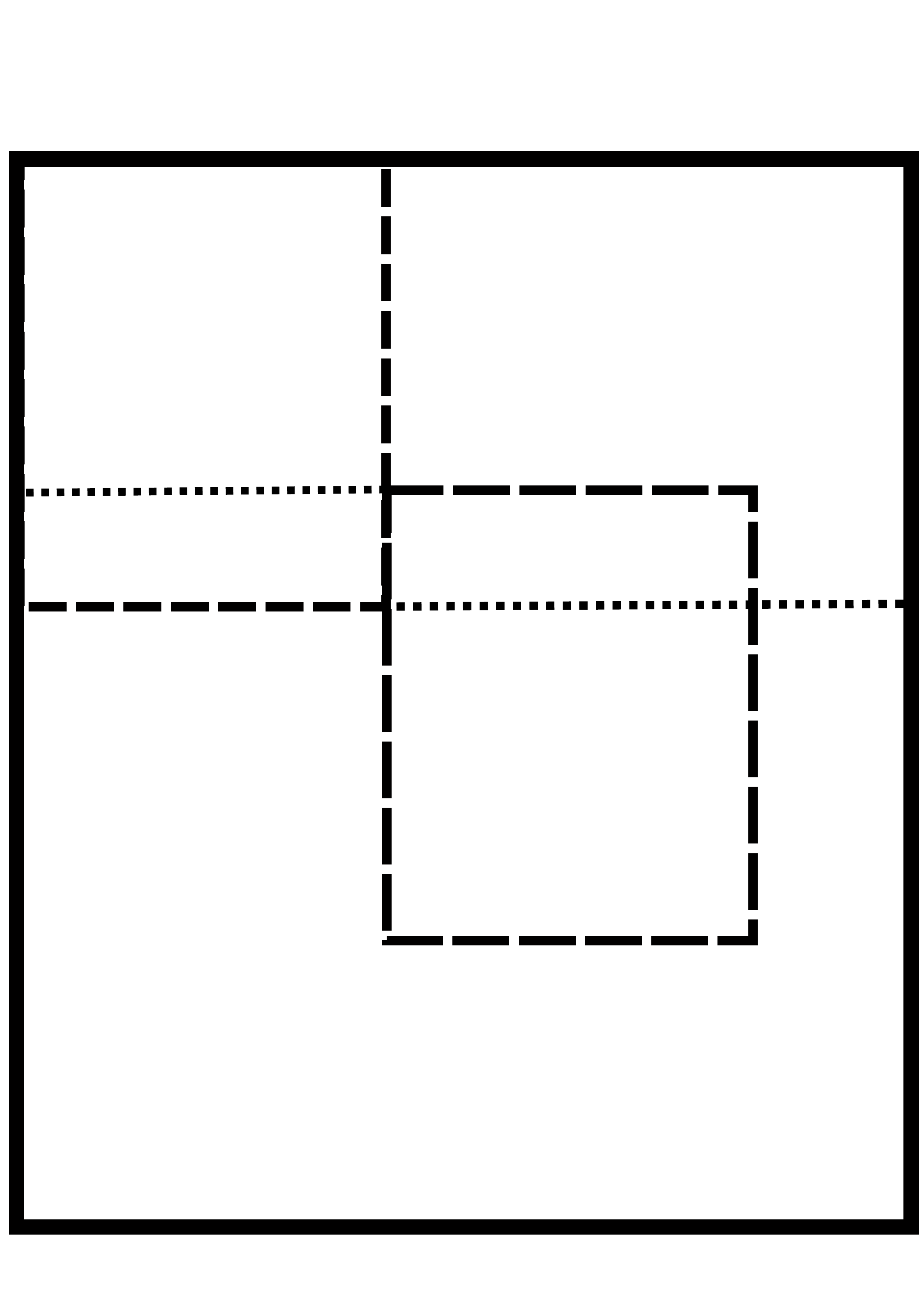 
\caption[Caption for LOF]{The rectangular band $B_{\beta}$ }
\label{rec pic} 
\end{figure}

 Since $e_{\beta},e_{\gamma}e_{\alpha}<_r e_{\gamma}$ and $e_{\beta}e_{\gamma}=e_{\beta}e_{\alpha}e_{\gamma}=e_{\alpha}e_{\gamma}$ we have that the subband
\[ C=\langle e_{\gamma},e_{\beta},e_{\gamma}e_{\alpha} \rangle =\{e_{\gamma},e_{\beta},e_{\gamma}e_{\alpha}, e_{\alpha}e_{\gamma}\}
\] 
contains no repetitions. We may thus extend the automorphism of $C$ which fixes $e_{\gamma}$ and $e_{\alpha}e_{\gamma}$ and swaps $e_{\beta}$ and $e_{\gamma}e_{\alpha}$, to an automorphism $\theta$ of $B$. Then  $e_{\alpha'}=e_{\alpha}\theta>e_{\beta},e_{\gamma}e_{\alpha}$, so that $\alpha\alpha'>\beta$, and $(e_{\gamma}e_{\alpha})\theta=e_{\gamma}e_{\alpha'}=e_{\beta}$. Following the proof of the previous lemma we obtain $\{e_{\alpha}e_{\alpha'}e_{\alpha}, e_{\alpha'}e_{\alpha}e_{\alpha'}\}> \{(e_{\alpha}e_{\alpha'}e_{\alpha})e_{\gamma}(e_{\alpha}e_{\alpha'}e_{\alpha}),(e_{\alpha'}e_{\alpha}e_{\alpha'})e_{\gamma}(e_{\alpha'}e_{\alpha}e_{\alpha'})\}$ and so $e_{\alpha}e_{\gamma}e_{\alpha} =e_{\gamma}e_{\alpha}= e_{\beta}$, a contradiction. Hence $B_{\beta}(e_{\alpha})$ is a left zero band. 

Let $\tau\in Y$ and $e_{\tau}\in B_{\tau}$ be such that $\beta<\tau<\alpha$, so $\tau\gamma=\beta$, and $e_{\tau}<e_{\alpha}$, so that $B_{\beta}(e_{\tau})\subseteq B_{\beta}(e_{\alpha})$. If $e_{\tau}\ngtr e_{\gamma}e_{\alpha}$, then $\mathcal{R}(B_{\beta}(e_{\tau}))\cap \mathcal{R}(B_{\beta}(e_{\gamma}))=\emptyset$ as $B_{\beta}(e_{\tau})$ is also left zero by Corollary \ref{basics} $(\mathrm{iv})$. Hence $|\langle e_{\tau},e_{\gamma} \rangle|=6$ by Lemma \ref{size 6}, a contradiction, and thus $e_{\tau}>e_{\gamma}e_{\alpha}$. Suppose for contradiction that $e_{\tau}\ngtr f_{\beta}$, for some $f_{\beta}\in B_{\beta}(e_{\alpha})$. Then by extending the automorphism of $\{e_{\alpha},f_\beta, e_\gamma e_\alpha \}$ which fixes $e_{\alpha}$ and swaps $f_{\beta}$ with $e_{\gamma}e_{\alpha}$ to an automorphism of $B$, we obtain some $e_{\sigma}$ (the image of $e_{\tau}$) such that $\alpha>\sigma>\beta$ and $e_{\sigma}\ngtr e_{\gamma}e_{\alpha}$, a contradiction. Thus $B_{\beta}(e_{\alpha})=B_{\beta}(e_{\tau})$, and so $B$ is not homogeneous by Lemma \ref{tree}. 
\end{proof} 

\begin{lemma} If $B$ is a non-normal homogeneous band then it is linearly ordered. 
\end{lemma}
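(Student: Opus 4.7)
The plan is to suppose, for contradiction, that $B$ is homogeneous, and to apply Lemma~\ref{tree} to obtain a contradiction. Under the standing hypothesis of this section, the previous two lemmas leave only $|\langle e_\alpha,e_\gamma\rangle|=3$ as a possibility, so for every $e_\alpha\in B_\alpha$ and $e_\gamma\in B_\gamma$ the product $e_\alpha e_\gamma=e_\gamma e_\alpha$ is the unique common lower bound in $B_\beta$. By homogeneity and Corollary~\ref{basics}, the same 3-element identity (in particular $he_\sigma=e_\sigma h$) holds for every pair $(e_\sigma,h)$ with $h\in B_{\gamma'}$ provided $\sigma\gamma'=\beta$ and $\sigma\perp\gamma'$.

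Pick $\tau\in Y$ with $\alpha>\tau>\beta$ (possible by Corollary~\ref{basics}(ii)) and $e_\tau\in B_\tau(e_\alpha)$. A routine semilattice check shows that for any $\gamma'\in Y$ with $\alpha\gamma'=\beta$ one has $\tau\perp\gamma'$ and $\tau\gamma'=\beta$: comparability would force $\tau\leq\alpha\wedge\gamma'=\beta$ or $\gamma'<\alpha$, both impossible, while $\tau\gamma'\leq\alpha\gamma'=\beta$ together with $\tau,\gamma'>\beta$ forces $\tau\gamma'=\beta$. The aim is to prove $B_\beta(e_\alpha)=B_\beta(e_\tau)$, directly contradicting Lemma~\ref{tree}.

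Given $e_\beta\in B_\beta(e_\alpha)$, the map $\{e_\alpha,e_\alpha e_\gamma\}\to\{e_\alpha,e_\beta\}$ fixing $e_\alpha$ is an isomorphism of two-chain subbands, so by homogeneity it extends to an automorphism of $B$ whose induced semilattice automorphism $\pi$ fixes $\alpha$ and $\beta$. Setting $\gamma':=\gamma\pi$ and $h:=e_\gamma\pi\in B_{\gamma'}$, we have $e_\alpha h=e_\beta$ and $\alpha\gamma'=\beta$. Using $he_\tau=e_\tau h$ and $he_\alpha=e_\alpha h$ (both from the 3-element structure applied to $\gamma'$), together with $e_\alpha e_\tau=e_\tau e_\alpha=e_\tau$ and $h^2=h$, one computes
\[
(e_\alpha h)(e_\tau h)=e_\alpha(he_\tau)h=(e_\alpha e_\tau)h^2=e_\tau h,
\]
and symmetrically $(e_\tau h)(e_\alpha h)=e_\tau h$. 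Writing $X=e_\alpha h$ and $Y=e_\tau h$ in $B_\beta=L_\beta\times R_\beta$, the simultaneous relations $XY=Y$ and $YX=Y$ force $l_X=l_Y$ and $r_X=r_Y$, giving $X=Y$. Hence $e_\beta=e_\tau h$, and $e_\tau h<e_\tau$ by the 3-element structure of $\langle e_\tau,h\rangle$, so $e_\beta\in B_\beta(e_\tau)$; combined with the obvious reverse inclusion this yields the contradictory equality.

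The main obstacle is that one cannot in general arrange $\gamma'=\gamma$, since the extending automorphism typically moves $\gamma$; the key computation must therefore be robust under varying $\gamma'$. This is delivered precisely by the uniform semilattice observation that $\tau\perp\gamma'$ and $\tau\gamma'=\beta$ for \emph{every} $\gamma'$ with $\alpha\gamma'=\beta$, together with the homogeneity transfer of the 3-element identity to all such pairs.
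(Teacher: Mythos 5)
Your proof is correct and follows the same skeleton as the paper's: reduce via the two preceding lemmas to $|\langle e_\alpha,e_\gamma\rangle|=3$, establish $B_\beta(e_\alpha)=B_\beta(e_\tau)$ for an intermediate $e_\tau$, and invoke Lemma~\ref{tree}. Where you diverge is in proving that equality. The paper first shows that every $e_\delta$ with $e_\alpha>e_\delta$ and $\alpha>\delta>\beta$ lies above $e_\alpha e_\gamma$ (else $\langle e_\delta,e_\gamma\rangle$ would be too large), then gets a contradiction by applying the automorphism swapping $e_\alpha e_\gamma$ with a putative $e_\beta\in B_\beta(e_\alpha)\setminus B_\beta(e_\delta)$ to the element $e_\delta$. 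You instead apply the extension of the same partial isomorphism ($e_\alpha\mapsto e_\alpha$, $e_\alpha e_\gamma\mapsto e_\beta$) to the element $e_\gamma$, producing a conjugate $h$ with $e_\alpha h=e_\beta$, and compute directly in the rectangular band $B_\beta$ that $e_\beta=e_\tau h<e_\tau$. This buys you a constructive containment $B_\beta(e_\alpha)\subseteq B_\beta(e_\tau)$ and sidesteps the paper's slightly informal appeal to a swap automorphism of the set $\{e_\alpha,e_\alpha e_\gamma,e_\beta\}$, which need not be multiplicatively closed when $e_\beta$ is neither $\mathcal{L}$- nor $\mathcal{R}$-related to $e_\alpha e_\gamma$. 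The one step you should state explicitly is why the $3$-element conclusion transfers to the new non-chains $\{\alpha,\gamma',\beta\}$ and $\{\tau,\gamma',\beta\}$: the cleanest justification is that the proofs of the two preceding lemmas use nothing about the particular non-chain fixed at the start of the section, which is exactly the observation the paper relies on when it asserts $|\langle e_\delta,e_\gamma\rangle|>3$.
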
 

\begin{proof} Let $B=\bigcup_{\alpha}B_{\alpha}$ be a non-normal homogeneous band. Suppose for contradiction that $Y$ contains a three element non-chain $\alpha,\gamma,\beta$, where $\alpha\gamma=\beta$. It follows from the preceding lemmas that for any $e_{\alpha}\in B_{\alpha}$ and $e_{\gamma}\in B_{\gamma}$ we have $\langle e_{\alpha},e_{\gamma}\rangle =\{e_{\alpha},e_{\gamma},e_{\alpha}e_{\gamma}\}$. Hence $B_{\beta}(e_{\alpha})\cap B_{\beta}(e_{\gamma})=\{e_{\alpha}e_{\gamma}\}$ and $e_{\alpha}e_{\gamma}=e_{\gamma}e_{\alpha}=e_{\alpha}e_{\gamma}e_{\alpha}=e_{\gamma}e_{\alpha}e_{\gamma}$. 

For any $\alpha>\delta>\beta$ and $e_{\alpha}>e_{\delta}$ we have  $e_{\delta}>e_{\alpha}e_{\gamma}$. Indeed, if $e_{\delta}\ngtr e_{\alpha}e_{\gamma}$ then as 
\[ B_{\beta}(e_{\delta})\cap B_{\beta}(e_{\gamma})\subseteq B_{\beta}(e_{\alpha})\cap B_{\beta}(e_{\gamma}) = \{e_{\alpha}e_{\gamma}\}
\] 
we have $ B_{\beta}(e_{\delta})\cap B_{\beta}(e_{\gamma})= \emptyset$ and so $|\langle e_{\delta},e_{\gamma} \rangle|>3$, a contradiction. For any $e_{\beta}\in B_{\beta}(e_{\alpha})\setminus B_{\beta}(e_{\delta})$, extend the automorphism of $\{e_\alpha, e_{\alpha} e_\gamma , e_\beta \}$ which fixes $e_{\alpha}$ and swaps $e_{\alpha}e_{\gamma}$ and $e_\beta$ to an automorphism $\theta$ of $B$. Letting $e_{\delta}\theta=e_{\tau}$, then $e_{\tau}<e_{\alpha}=e_{\alpha}\theta$ and $e_{\tau}\not> e_{\alpha}e_{\gamma}$, a contradiction. Thus $B_{\beta}(e_{\alpha})=B_{\beta}(e_{\delta})$, contradicting Lemma \ref{tree}.
\end{proof}

This, together with the Classification Theorem for homogeneous normal bands and Theorem \ref{homog lin order} gives: 

 \begin{theorem}[Classification Theorem for homogeneous bands] A band is homogeneous if and only if isomorphic to either a homogeneous normal band or a homogeneous linearly ordered band. 
 \end{theorem}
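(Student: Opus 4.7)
The final theorem is essentially a summation of what the paper has already established, so the proof is brief. My plan is to combine the Classification Theorem for homogeneous normal bands (end of Section 4), Theorem \ref{homog lin order} (end of Section 5), and the final lemma of Section 6 stating that every non-normal homogeneous band is linearly ordered.

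For the forward direction, let $B$ be a homogeneous band. Either $B$ is normal, in which case the Classification Theorem for homogeneous normal bands puts $B$ into one of the seven families listed there, so in particular $B$ is a homogeneous normal band; or $B$ is not normal, in which case the final lemma of Section 6 gives that $B$ is linearly ordered, and then Theorem \ref{homog lin order} places $B$ into one of the four families listed in (iii) of that theorem, so $B$ is a homogeneous linearly ordered band. Either way, $B$ is isomorphic to a member of one of the two classes appearing in the statement.

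For the converse, a homogeneous normal band is homogeneous by definition (or by Proposition \ref{homog surj}, Proposition \ref{T homog}, and the final remarks of Section 4 which assemble the spined-product cases via Corollary \ref{structure homog reg}); similarly, a homogeneous linearly ordered band is homogeneous by Theorem \ref{homog lin order}. Thus the two conditions are equivalent.

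The genuinely hard work underlying this theorem has already been carried out: the obstacle was proving that a non-normal homogeneous band must be linearly ordered, which was the content of the chain of lemmas in Section 6 culminating in the analysis of $\langle e_\alpha,e_\gamma\rangle$ for $\alpha\gamma=\beta$ with $\alpha,\gamma,\beta$ a non-chain. Given that lemma and the two previous classification theorems, no further calculation is required and the result follows by a simple dichotomy on whether $B$ is normal.
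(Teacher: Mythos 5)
Your proposal is correct and follows exactly the paper's route: the paper deduces the theorem from the final lemma of Section 6 (a non-normal homogeneous band is linearly ordered) together with the Classification Theorem for homogeneous normal bands and Theorem \ref{homog lin order}, via the same dichotomy on normality. Nothing is missing.
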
 

An immediate consequence is that the structure semilattice of a homogeneous band is homogeneous.  \\

\noindent {\bf Open Problem.} Prove directly that the homogeneity of a band is inherited by its structure semilattice.  \\

By Proposition \ref{SH normal} and Theorem \ref{homog lin order}  we achieve a complete list of structure-homogeneous bands: 

\begin{theorem}[Classification Theorem for structure-homogeneous bands] A band is \,  structure-homogeneous if and only if isomorphic to either a homogeneous linearly ordered band or the band $Y \times B_{n,m}$, for some homogeneous semilattice $Y$ and $n,m\in \mathbb{N}^*$. 
\end{theorem}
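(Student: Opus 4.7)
The plan is to combine two earlier classification results: Proposition~\ref{SH normal}, which classifies the structure-homogeneous normal bands, and Theorem~\ref{homog lin order}, which tells us that structure-homogeneity coincides with homogeneity in the linearly ordered case. The Classification Theorem for homogeneous bands then glues these together.

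For the easy direction, I would observe that homogeneous linearly ordered bands are structure-homogeneous by Theorem~\ref{homog lin order}, and that $Y \times B_{n,m}$ can be presented, via Lemma~\ref{iso state}, as a normal band all of whose connecting morphisms are isomorphisms; Proposition~\ref{iso hom} then yields structure-homogeneity.

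For the hard direction, I would take $B$ structure-homogeneous and split into two cases. If $B$ is a normal band, Proposition~\ref{SH normal} immediately gives $B \cong Y \times B_{n,m}$ for some homogeneous semilattice $Y$ and $n,m \in \mathbb{N}^*$. If $B$ is not normal, the goal is to show that $B$ is a homogeneous linearly ordered band; the strategy is first to upgrade structure-homogeneity to full homogeneity, after which the Classification Theorem for homogeneous bands together with non-normality forces $B$ to be linearly ordered, and Theorem~\ref{homog lin order} finishes the job.

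To upgrade SH to homogeneity, I would use that structure-homogeneity lifts any isomorphism $\theta$ between finite subbands whose induced semilattice isomorphism $\pi$ extends to $\hat{\pi} \in \text{Aut}(Y)$; hence it suffices to show $Y$ is a homogeneous semilattice. The remark following Definition~\ref{defn SH} already tells us every automorphism of $Y$ is induced by some automorphism of $B$. Conversely, to promote an isomorphism between finite subsemilattices of $Y$ to an automorphism of $Y$, I would lift each finite subsemilattice to a subband of $B$, for instance by picking one element from each $\mathcal{D}$-class along a compatible descending system as in Lemma~\ref{embed Y}, and then apply structure-homogeneity to the lifted isomorphism. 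The main obstacle is exactly this step: proving $Y$ is homogeneous directly from SH runs parallel to the open problem flagged at the end of Section~\ref{sec:final}. A cleaner workaround is to re-run the arguments of Section~\ref{sec:final} under the SH hypothesis --- each isomorphism invoked there is between small, explicit subbands whose induced $\pi$ plainly extends to $\text{Aut}(Y)$ in any of the semilattice shapes that can arise (chain, semilinear, or universal), so SH already delivers the automorphisms those arguments need and the Section~\ref{sec:final} dichotomy goes through, placing $B$ in the linearly ordered family.
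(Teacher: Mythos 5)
Your skeleton is exactly the paper's: the published proof of this theorem is the single sentence preceding it, combining Proposition \ref{SH normal} (the normal case) with Theorem \ref{homog lin order} (the linearly ordered case), with the Classification Theorem for homogeneous bands supplying the dichotomy between those two cases. Your easy direction is fine and uses the same ingredients (Lemma \ref{iso state}, Proposition \ref{iso hom}, Theorem \ref{homog lin order}).

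The one place you go beyond the paper is in trying to justify the step it leaves implicit --- that a structure-homogeneous band is homogeneous, so that the normal/linearly-ordered dichotomy applies --- and your proposed justification is circular. Re-running Section \ref{sec:final} under the SH hypothesis requires, at each application of homogeneity there, that the induced semilattice isomorphism $\pi$ of the relevant finite subbands extends to an automorphism of $Y$; you justify this by appealing to ``the semilattice shapes that can arise (chain, semilinear, or universal)'', but those are precisely the shapes of \emph{homogeneous} semilattices, and homogeneity of $Y$ is exactly what is at stake. Indeed even Corollary \ref{basics}(i) (transitivity of the automorphism group on $B$), which Section \ref{sec:final} uses constantly, already presupposes that $\text{Aut}(Y)$ acts transitively on $Y$, and the literal Definition \ref{defn SH} does not supply this: if no automorphism of $Y$ extends $\pi$, the SH condition is vacuous for that $\theta$. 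To be fair, the paper has the same soft spot (the proof of Proposition \ref{SH normal} cites Corollary \ref{Y homog surj}, a statement about homogeneous bands, for a structure-homogeneous one), so you have correctly located the missing link; but your workaround does not repair it. A correct writeup either needs to read ``structure-homogeneous'' as including ``homogeneous'' by fiat, or needs an independent argument that the structure semilattice of a structure-homogeneous band is homogeneous.
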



\begin{thebibliography}{99}

 \bibitem{Cherlin91}
 G. Cherlin, U. Felgner,
\newblock `Homogeneous solvable groups',
\newblock {\em J. London Math. Soc.} \textbf{44} (1991) 102--120.
 
  \bibitem{Cherlin93}
  G. Cherlin, D. Saracino, C. Wood,
\newblock `On homogeneous nilpotent groups and rings',
\newblock {\em Proc. Amer. Math. Soc.} \textbf{119} (1993) 1289--1306.

\bibitem{Cherlin2000}
G. Cherlin, U. Felgner,
\newblock `Homogeneous finite groups',
\newblock {\em J. London Math. Soc.}  \textbf{62} (2000) 784--794. 

\bibitem{Droste85}
 M. Droste, 
\newblock `Structure of partially ordered sets with transitive automorphism groups',
\newblock {\em Mem. Amer. Math. Soc.} \textbf{57} (1985). 
 
\bibitem{Droste87} 
M. Droste,
\newblock `Structure of partially ordered sets with transitive automorphism groups',
\newblock {\em Proc. Amer. Math. Soc.} \textbf{3} (1987) 517--543. 

\bibitem{Truss99} 
M. Droste, D. Kuske, J. K. Truss, 
\newblock `On homogeneous semilattices and their automorphism groups',
\newblock {\em Order} \textbf{16} (1999) 31--56.

\bibitem{Fennemore70}
C. Fennemore, 
\newblock `All varieties of bands',
\newblock {\em Semigroup Forum} \textbf{1} (1970) 172--179.

\bibitem{Fraisse} 
R. Fra\"iss\'e,
\newblock  `Sur l'extension aux relations de quelques propri\'et\'es des ordres',
\newblock {\em Ann. Sci. \'Ecole Norm. Sup.} \textbf{71} (1954) 363--388.

\bibitem{Hall75}
T. E. Hall, 
\newblock `Free products with amalgamation of inverse semigroups',
\newblock {\em J. of Algebra} \textbf{34} (1975) 375--385.

\bibitem{Hodges97}
W. Hodges,
\newblock `A shorter model theory',
\newblock {\em Cambridge University Press} (1997). 

\bibitem{Howie94}
   J. M. Howie,
  \newblock `Fundamentals of semigroup theory',
 \newblock {\em Oxford University Press}  (1995).

\bibitem{Imaoka76}
T. Imaoka, 
\newblock `Free products with amalgamation of bands*', 
\newblock {\em Mem. Fac. Lit.} \& {\em Scl., Shimane Univ.}  \textbf{10} (1976) 7--17. 

\bibitem{Kimura58}
N. Kimura, 
\newblock `The structure of idempotent semigroups (I)',
\newblock {\em Pacific J. Math.}  \textbf{8} (1958) 257--275. 

 \bibitem{Lachlan80}
 A. H. Lachlan, R. E Woodrow,
\newblock `Countable ultrahomogeneous undirected graphs',
\newblock {\em  Trans. Amer. Math. Soc.}  \textbf{262} (1980) 51--94.
 
\bibitem{Lean54}
D. McLean, 
\newblock `Idempotent semigroups',
\newblock {\em Amer. Math. Monthly} \textbf{61} (1954) 110--113. 

  \bibitem{Li99}
C.H. Li, 
\newblock `A complete classification of finite homogeneous groups', 
\newblock {\em Bull. Austral. Math. Soc.}  \textbf{60} (1999) 331--334. 

\bibitem{Petrich74}
M. Petrich, 
\newblock `The structure of completely regular semigroups',
\newblock {\em Trans. Amer. Math. Soc.}  \textbf{189} (1974) 211--236. 

\bibitem{Petrich99} 
M. Petrich, N. R. Reilly, 
\newblock `Completely regular semigroups', 
\newblock {\em Wiley, New York} (1999).

 \bibitem{Saracino84}
 D. Saracino, C. Wood,
\newblock `QE commutative nil rings',
 \newblock {\em J. Symbolic Logic} \textbf{49} (1984) 644--651.

\bibitem{Schmerl79}
 J. Schmerl,
\newblock `Countable homogeneous partially ordered sets',
 \newblock {\em Algebra Universalis} \textbf{9} (1997) 317--321.
  




\end{thebibliography}
\end{document}